\documentclass[11pt]{article}
\usepackage{amsfonts}
\usepackage{latexsym}
\usepackage{amsmath}
\usepackage{amssymb}
\usepackage{amsthm}
\usepackage{amsmath}
\usepackage{hyperref}
\usepackage{verbatim}
\usepackage{bibspacing}

\newtheorem{thm}{Theorem}[section]
\newtheorem*{thm*}{Theorem}
\newtheorem{cor}[thm]{Corollary}

\newtheorem{example}[thm]{Example}
\newtheorem{lem}[thm]{Lemma}

\newtheorem{prop}[thm]{Proposition}
\newtheorem*{prop*}{Proposition}

\newtheorem{conj}[thm]{Conjecture}
\newtheorem*{conj*}{Conjecture}

\newtheorem*{defn*}{Definition}
\theoremstyle{definition}
\newtheorem{rem}[thm]{\textbf{Remark}}
\newtheorem*{rmk*}{Remark}
\newtheorem*{fact*}{Fact}

\theoremstyle{proof}

\newcommand{\Var}{\textrm{Var}}

\newcommand{\norm}[1]{\left\Vert#1\right\Vert}
\newcommand{\snorm}[1]{\Vert#1\Vert}
\newcommand{\abs}[1]{\left\vert#1\right\vert}
\newcommand{\set}[1]{\left\{#1\right\}}
\newcommand{\brac}[1]{\left(#1\right)}
\newcommand{\scalar}[1]{\left \langle #1 \right \rangle}

\newcommand{\Real}{\mathbb{R}}

\newcommand{\II}{\text{II}}
\newcommand{\M}{\mathcal{M}}
\renewcommand{\H}{\mathcal{H}}
\newcommand{\eps}{\epsilon}
\newcommand{\K}{\mathcal{K}}
\renewcommand{\S}{\mathcal{S}}
\newcommand{\B}{B_H}
\newcommand{\BNH}{B}
\newcommand{\D}{D}
\renewcommand{\SS}{S}
\newcommand{\sgn}{\text{sgn}}
\newcommand{\unc}{\text{uncond}}

\newlength{\defbaselineskip}
\setlength{\defbaselineskip}{\baselineskip}

\numberwithin{equation}{section}

\oddsidemargin 0.5in \evensidemargin 0.5in 
\textwidth 5.4in
\textheight 7.3in

\begin{document}

\title{Local $L^p$-Brunn--Minkowski inequalities for $p < 1$}
\date{}

\author{Alexander V. Kolesnikov\textsuperscript{1} and Emanuel Milman\textsuperscript{2}}

\footnotetext[1]{Faculty of Mathematics, Higher School of Economics, Moscow, Russia. 
The  author was supported by RFBR project 17-01-00662 and DFG project RO 1195/12-1. 
The article was prepared within the framework of the Academic Fund Program at the National Research University Higher
School of Economics (HSE) in 2017--2018 (grant No 17-01-0102) and by the Russian Academic Excellence Project ``5-100".
Emails: akolesnikov@hse.ru, sascha77@mail.ru.}

\footnotetext[2]{Department of Mathematics, Technion - Israel
Institute of Technology, Haifa 32000, Israel. 
The research leading to these results is part of a project that has received funding from the European Research Council (ERC) under the European Union's Horizon 2020 research and innovation programme (grant agreement No 637851). Email: emilman@tx.technion.ac.il.}

\begingroup    \renewcommand{\thefootnote}{}    \footnotetext{2010 Mathematics Subject Classification: 52A40, 52A23, 35P15, 58J50.}
    \footnotetext{Keywords: $L^p$-Brunn--Minkowski theory,  Convex bodies, Aleksandrov body, Hilbert--Brunn--Minkowski operator, Poincar\'e inequality, Local uniqueness in $L^p$-Minkowski Problem,     Isoperimetric stability estimates.}
\endgroup

\maketitle

\begin{abstract}
The $L^p$-Brunn--Minkowski theory for $p\geq 1$, proposed by Firey and developed by Lutwak in the 90's, replaces the Minkowski addition of convex sets by its $L^p$ counterpart, in which the support functions are added in $L^p$-norm. Recently, B\"{o}r\"{o}czky, Lutwak, Yang and Zhang have proposed to extend this theory further to encompass the range $p \in [0,1)$. In particular, they conjectured an $L^p$-Brunn--Minkowski inequality for origin-symmetric convex bodies in that range, which constitutes a strengthening of the classical Brunn-Minkowski inequality. Our main result confirms this conjecture locally for all (smooth) origin-symmetric convex bodies in $\Real^n$ and $p \in [1 - \frac{c}{n^{3/2}},1)$. In addition, we confirm the local log-Brunn--Minkowski conjecture (the case $p=0$) for small-enough $C^2$-perturbations of the unit-ball of $\ell_q^n$ for $q \geq 2$, when the dimension $n$ is sufficiently large, as well as for the cube, which we show is the conjectural extremal case. For unit-balls of $\ell_q^n$ with $q \in [1,2)$, we confirm an analogous result for $p=c \in (0,1)$, a universal constant. It turns out that the local version of these conjectures is equivalent to a minimization problem for a spectral-gap parameter associated with a certain differential operator, introduced by Hilbert (under different normalization) in his proof of the Brunn--Minkowski inequality. 
As applications, we obtain local uniqueness results in the even $L^p$-Minkowski problem, as well as improved stability estimates in the Brunn--Minkowski and anisotropic isoperimetric inequalities. 
\end{abstract}

\tableofcontents

\bigskip

\bigskip

\section{Introduction}

The celebrated Brunn--Minkowski inequality \cite{Schneider-Book,GardnerSurveyInBAMS} states that if $K_0,K_1$ are two convex sets in $\Real^n$ then:
\begin{equation} \label{eq:BM}
V((1-\lambda) K_0+\lambda K_1)^{\frac{1}{n}} \geq (1-\lambda) V(K_0)^{\frac{1}{n}} + \lambda V(K_1)^{\frac{1}{n}} \;\;\; \forall \lambda \in [0,1] . 
\end{equation}
Here $V$ denotes Lebesgue measure (volume) and
\[
 (1-\lambda) K_0 + \lambda K_1 = \set{(1-\lambda) a+ \lambda b \; ; \; a\in K_0, b \in K_1}
 \]
 denotes Minkowski addition (or interpolation). This inequality and its generalizations lie at the heart 
of the Brunn--Minkowski theory of convex sets, which is by now a classical object of study, having applications in a multitude of other fields \cite{Schneider-Book,GardnerGeometricTomography2ndEd,BuragoZalgallerBook,GardnerSurveyInBAMS}. 

\medskip
In the 60's, W.~J.~Firey \cite{Firey-Sums} proposed an $L^p$ extension ($p \in [1,\infty]$) of the Minkowski addition operation, the so-called  $L^p$-Firey--Minkowski addition, or simply $L^p$-sum. To describe it, let $h_K$ denote the support function of a convex body $K$ (see Section \ref{sec:notation} for definitions). If $K_0,K_1$ are convex compact sets containing the origin in their interior (``convex bodies"), their $L^p$ interpolation, denoted $(1-\lambda) \cdot K_0 +_p \lambda \cdot K_1$, is defined when $p \geq 1$ and $\lambda \in [0,1]$ as the convex body with support function:
\begin{equation} \label{eq:intro-supp}
h_{(1-\lambda) \cdot K_0 +_p \lambda \cdot K_1} = (1-\lambda) \cdot h_{K_0} +_p \lambda \cdot h_{K_1} := ((1-\lambda)  h_{K_0}^p + \lambda  h_{K_1}^p)^{\frac{1}{p}} . 
\end{equation}
The case $p=1$ corresponds to the usual Minkowski sum. Note that $\lambda \cdot K = \lambda^{\frac{1}{p}} K$, and the implicit dependence of $\cdot$ on $p$ is suppressed. 
 Firey established the following $L^p$-Brunn--Minkowski inequality when $p \geq 1$:
\begin{equation} \label{eq:Firey-p-BM}
V((1-\lambda) \cdot K_0+_p \lambda \cdot K_1)^{\frac{p}{n}} \geq (1-\lambda) V(K_0)^{\frac{p}{n}} + \lambda V(K_1)^{\frac{p}{n}} \;\;\; \forall \lambda \in [0,1] ,
\end{equation}
which turns out to be a consequence of the classical $p=1$ case (\ref{eq:BM}) by a simple application of Jensen's inequality. The resulting $L^p$-Brunn--Minkowski theory was extensively developed by E.~Lutwak \cite{Lutwak-Firey-Sums,Lutwak-Firey-Sums-II}, leading to a rich theory with many parallels to the classical one (see also \cite{HaberlLYZ-OrliczMinkowskiProblem} and the references therein for further extensions to more general Orlicz norms).  

\medskip

Fairly recently, K.~B\"{o}r\"{o}czky, Lutwak, D.~Yang and G.~Zhang \cite{BLYZ-logMinkowskiProblem,BLYZ-logBMInPlane} have proposed to extend the $L^p$-Brunn--Minkowski theory to the range $p \in [0,1)$. 
To describe their extension, recall that the Aleksandrov body (or Wulff shape) associated to a positive (Wulff) function $w \in C(S^{n-1})$, is defined as the following convex body:
\[
A[w] := \bigcap_{\theta \in S^{n-1}} \set{ x \in \Real^n \; ; \; \scalar{x,\theta} \leq w(\theta) } . 
\]
In other words, $A[w]$ it the largest convex body $K$ so that $h_K \leq w$. 
While the right-hand-side of (\ref{eq:intro-supp}) is no longer a support function in general when $p \in [0,1)$, B\"{o}r\"{o}czky--Lutwak--Yang--Zhang defined:
\[
(1-\lambda) \cdot K_0 +_p \lambda \cdot K_1 := A[ ((1-\lambda)  h_{K_0}^p + \lambda  h_{K_1}^p)^{\frac{1}{p}}] ,
\]
interpreting the case $p=0$ in the limiting sense as:
\[
(1-\lambda) \cdot K_0 +_0 \lambda \cdot K_1 := A[ h_{K_0}^{1-\lambda} h_{K_1}^\lambda ] .
\]
This of course coincides with Firey's definition when $p \geq 1$. 
With this notation, they proposed the following conjectural extension of (\ref{eq:Firey-p-BM}) when $p \in [0,1)$ and $n \geq 2$ (the case $n=1$ is trivial):

\begin{conj*}[$L^p$-Brunn--Minkowski Conjecture given \textbf{$p \in [0,1)$}] 
The $L^p$-Brunn--Minkowski inequality holds for all \textbf{origin-symmetric} convex bodies $K_0,K_1$ in $\Real^n$:
\begin{equation} \label{eq:p-BM}
V((1-\lambda) \cdot K_0+_p \lambda \cdot K_1) \geq \brac{(1-\lambda) V(K_0)^{\frac{p}{n}} + \lambda V(K_1)^{\frac{p}{n}}}^{\frac{n}{p}} \;\;\; \forall \lambda \in [0,1] .
\end{equation}
In the case $p=0$, called the log-Brunn--Minkowski Conjecture, (\ref{eq:p-BM}) is interpreted in the limiting sense as:
\begin{equation} \label{eq:log-BM}
V((1-\lambda) \cdot K_0+_0 \lambda \cdot K_1) \geq V(K_0)^{1-\lambda} V(K_1)^{\lambda} \;\;\; \forall \lambda \in [0,1] .
\end{equation}
\end{conj*}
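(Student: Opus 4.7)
The plan is to reduce the global conjecture to an infinitesimal spectral statement about the Hilbert--Brunn--Minkowski operator $L_K$ and then establish that statement uniformly over the class of origin-symmetric convex bodies.

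First I would pass from (\ref{eq:p-BM}) to its infinitesimal form. Approximate $K_0,K_1$ by smooth strictly convex origin-symmetric bodies with positive Gauss curvature, and join them by the path $\lambda \mapsto K_\lambda := (1-\lambda) \cdot K_0 +_p \lambda \cdot K_1$. Along this path the support function of $K_\lambda$ satisfies a perturbation of the form $h_t = (h^p + t\varphi)^{1/p}$ with $h = h_{K_0}$ and $\varphi$ an even function on $S^{n-1}$. Computing two derivatives of $V(K_\lambda)^{p/n}$ and demanding that the result be nonpositive along every admissible path reduces (\ref{eq:p-BM}) to a Poincar\'e-type inequality of the form
\begin{equation}
\int_{S^{n-1}} \varphi^2 \, h^{1-p} \, dS_K \;\leq\; \frac{1}{1-p} \int_{S^{n-1}} \sscalar{(D^2 h + h\, I)^{-1} \nabla \varphi, \nabla \varphi} \, h \, dS_K
\end{equation}
for every \emph{even} $\varphi \in C^2(S^{n-1})$ orthogonal (in the natural weighted $L^2$ pairing) to $h^p$, the dilation direction. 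Here $S_K$ is the surface-area measure of $K$ and $(D^2 h + h\, I)$ is the reverse Weingarten map of $\partial K$. The right-hand side is the Dirichlet form of $L_K$, so the conjecture becomes a uniform lower bound on the second eigenvalue of $L_K$ restricted to even functions.

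Second, I would attack this spectral gap via Bochner/Reilly integration-by-parts on $\partial K$. Under the Gauss map, even functions on $S^{n-1}$ correspond to antipodally invariant functions on $\partial K$, and $L_K$ identifies with a weighted elliptic operator with respect to the cone measure. A $\Gamma_2$-type calculation in the Bakry--\'Emery framework should produce a Lichnerowicz-style lower bound on the spectrum, with curvature contributions expressed through the second fundamental form of $\partial K$. The classical argument already delivers the eigenvalue $1$ (which recovers the $p=1$ case of Firey--Brunn--Minkowski), and its eigenfunctions are the linear coordinates $\theta \mapsto \sscalar{v,\theta}$, which are odd and therefore excluded on the even subspace. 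The conjecture is thus the quantitative enhancement of the spectral gap afforded by removing the space of odd harmonics.

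The production of that quantitative gap uniformly over origin-symmetric $K$ is the main obstacle. For highly eccentric symmetric bodies such as long thin cylinders or the $\ell_\infty^n$-ball, the next even eigenmode concentrates near low-dimensional facial structure and the gap approaches the conjectural threshold, so the inequality is sharp and no slack can be lost. I would pursue a two-pronged strategy: use $L^p$-compatible Steiner-type symmetrizations that preserve origin-symmetry and contract the Dirichlet form, reducing to model bodies (cubes, cross-polytopes, ellipsoids) where the spectrum can be diagonalized by symmetry; and complement this with a weighted Bochner identity that exploits origin-symmetry of $K$ and evenness of $\varphi$ \emph{simultaneously}, extracting the required factor $\frac{1}{1-p}$ from a symmetry-sensitive Poincar\'e inequality on $S^{n-1}$. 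Once the infinitesimal inequality is established for smooth $K$, passage to general origin-symmetric convex bodies is routine using approximation and continuity of $V$ together with monotonicity of the Aleksandrov body construction.
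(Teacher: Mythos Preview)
Your framework---reducing the global inequality to an infinitesimal spectral-gap statement for the Hilbert--Brunn--Minkowski operator $L_K$, and then attacking the latter via a Reilly/Bochner formula---is precisely the approach developed in the paper. However, the statement you are trying to prove is a \emph{conjecture}, and the paper does \emph{not} prove it; it establishes only partial results (the local $p$-BM inequality for $p \in [1 - c n^{-3/2}, 1)$, and the local log-BM inequality for specific bodies such as $C^2$-neighborhoods of $B_q^n$ with $q \geq 2$). Your proposal therefore cannot be assessed against a ``paper's own proof,'' and the question is whether your outline closes the remaining gaps. It does not, on two counts.

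First, your passage from the global statement (\ref{eq:p-BM}) to the infinitesimal one is not routine when $p < 1$. You write ``join them by the path $\lambda \mapsto K_\lambda$'' and later ``passage to general origin-symmetric convex bodies is routine using approximation.'' But for $p < 1$ the Wulff function $((1-\lambda) h_{K_0}^p + \lambda h_{K_1}^p)^{1/p}$ is generally \emph{not} a support function, so $K_\lambda$ need not lie in $\K^2_{+,e}$ even if the endpoints do; by Aleksandrov's lemma, $\partial K_\lambda$ then has singularities and the second variation of volume is not under control. The paper isolates exactly this obstruction (Corollary \ref{cor:Aleksandrov}, Lemma \ref{lem:loc-to-global}) and explicitly formulates the local-to-global equivalence as an open problem (Conjecture \ref{conj:local-global}). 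Your proposal does not address it.

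Second, the ``production of that quantitative gap uniformly'' is the entire content of the conjecture once the reduction is granted, and your two suggested mechanisms are not proofs. There is no known $L^p$-compatible Steiner-type symmetrization that contracts the relevant Dirichlet form while preserving origin-symmetry; and the Reilly/Bochner computation the paper carries out yields only $\lambda_{1,e}(-L_K) \geq 1 + \frac{1}{(n-1) B_H(K)}$ for a certain boundary Poincar\'e constant $B_H(K)$ (Theorem \ref{thm:B2LK}), which in general is only bounded by $C n^{3/2}$ (via the isotropic position and Lee--Vempala), giving $p = 1 - c n^{-3/2}$ rather than $p = 0$. The paper verifies $B_H(B_\infty^n) = 1$ exactly (so the cube saturates the conjecture), but proving $\inf_{T \in GL_n} B_H(T(K)) \leq 1$ for \emph{all} $K \in \K_e$ is stated as an open conjecture. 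Your ``weighted Bochner identity that exploits origin-symmetry and evenness simultaneously'' is not specified enough to distinguish it from what the paper already does; if you have a concrete identity that extracts the factor $\frac{n}{n-1}$ uniformly, that would be genuinely new, but as written it is a hope rather than an argument.
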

The motivation for considering this question in \cite{BLYZ-logBMInPlane} was an equivalence established by B\"{o}r\"{o}czky--Lutwak--Yang--Zhang between the conjectured (\ref{eq:log-BM}) for all origin-symmetric convex bodies, and the \emph{uniqueness} question in the even log-Minkowski problem, the $p=0$ analogue of the classical Minkowski problem (for the \emph{existence} question, a novel necessary and sufficient condition was obtained in \cite{BLYZ-logMinkowskiProblem}, see Section \ref{sec:Mink} or Subsection \ref{subsec:applications} below). The restriction to origin-symmetric convex bodies is an interesting feature of the above conjectures, which we will elucidate in this work (see Subsection \ref{subsec:LK}); for now, let us just remark that the conjectures are known to be false for general convex bodies, as seen by selecting $K_1$ to be an infinitesimal translated version of an origin-symmetric convex body $K_0$. As before, it is easy to check using Jensen's inequality that the above conjectures become stronger as $p$ decreases from $1$ to $0$, with the strongest case (implying the rest) being the $p=0$ one. For general origin-symmetric convex bodies, there is no point to consider the $L^p$-Brunn--Minkowski conjecture for $p < 0$, since (\ref{eq:p-BM}) is then easily seen to be false when $K_0 = \times_{i=1}^n [-a_i,a_i]$ and $K_1 = \times_{i=1}^n [-b_i,b_i]$ are two non-homothetic origin-symmetric cubes. However, other particular bodies $K_0,K_1$ may satisfy (\ref{eq:p-BM}) with $p < 0$, explaining the convention of putting the exponent $\frac{n}{p}$ on the right-hand-side (when $p > 0$, this makes no difference).

\medskip

Being a conjectural strengthening of the ubiquitous Brunn--Minkowski inequality, establishing the validity of the $L^p$-Brunn--Minkowski conjecture, for any $p$ strictly smaller than $1$, would be of fundamental importance to the Brunn--Minkowski theory and its numerous applications. 

\subsection{Previously Known Partial Results}

The following partial results in regards to the log-Brunn--Minkowski conjecture have been previously obtained:
\begin{itemize}
\item B\"{o}r\"{o}czky--Lutwak--Yang--Zhang \cite{BLYZ-logBMInPlane} confirmed the conjecture in the plane $\Real^2$. See also Ma \cite{MaLogBMInPlane} for an alternative proof, and Xi--Leng \cite{XiLeng-DarAndLogBMInPlane} for an extension of this result which does not require origin-symmetry. 
\item C.~Saroglou \cite{Saroglou-logBM1} verified the conjecture when $K_0,K_1 \subset \Real^n$ are both simultaneously unconditional with respect to the same orthogonal basis, meaning that they are invariant under reflections with respect to the principle coordinate hyperplanes $\set{x_i=0}$. In that case, a stronger version of the conjecture follows from a multiplicative version of the Pr\'ekopa--Leindler inequality on the positive orthant (e.g. \cite[Proposition 10]{CFM-BConjecture}).
\item L.~Rotem \cite{Rotem-logBM} observed that the conjecture for complex convex bodies $K_0,K_1 \subset \mathbb{C}^n$ follows from a more general theorem of D.~Cordero--Erausquin \cite{CorderoErausquin-ComplexSantalo}.
\item A.~Colesanti, G.~Livshyts and A.~Marsiglietti \cite{CLM-LogBMForBall} verified the conjecture locally for small-enough $C^2$-perturbations of the Euclidean ball $B_2^n$ (see below for more details). 
\item The log-Brunn--Minkowski conjecture has been shown by Saroglou \cite{Saroglou-logBM1,Saroglou-logBM2} to be intimately related to the generalized $B$-conjecture. Further connections 
to a conjecture of R.~Gardner and A.~Zvavitch \cite{GardnerZvavitch-GaussianBM} on a dimensional Brunn--Minkowski inequality for even log-concave measures were observed by Livshyts, Marsiglietti, P.~Nayar and Zvavitch \cite{LMNZ-BMforMeasures}. A surprising relation to a conjecture of S.~Dar \cite{DarConjecture} was observed by Xi and Leng in \cite{XiLeng-DarAndLogBMInPlane}. 
\end{itemize}

\subsection{Main Results}

Our first main result in this work confirms the following \emph{local} version of the $L^p$-Brunn--Minkowski conjecture for a certain range of $p$'s strictly smaller than $1$. Let $\K^2_{+}$ denote the class of convex bodies with $C^2$ smooth boundary and strictly positive curvature, and let $\K^2_{+,e}$ denote its origin-symmetric members. We reserve the symbols $c,C,C'$ etc.. to denote positive universal numeric constants, independent of any other parameter.  

\begin{thm}[Local $L^p$-Brunn--Minkowski] \label{thm:intro-main}
Let $n \geq 2$ and $p \in [1-\frac{c}{n^{3/2}},1)$. Then for any $K_0,K_1 \in \K^2_{+,e}$ so that:
\begin{equation} \label{eq:intro-smooth-geodesic}
 (1-\lambda) \cdot K_0 +_p \lambda \cdot K_1 \in \K^2_{+,e} \;\;\; \forall \lambda \in [0,1]  ~,
\end{equation}
the $L^p$-Brunn--Minkowski conjecture (\ref{eq:p-BM}) for $K_0,K_1$ holds true:
\[
V((1-\lambda) \cdot K_0 +_p \lambda \cdot K_1) \geq \brac{(1-\lambda) V(K_0)^{\frac{p}{n}} + \lambda V(K_1)^{\frac{p}{n}}}^{\frac{n}{p}} \;\;\; \forall \lambda \in [0,1] . 
\]
\end{thm}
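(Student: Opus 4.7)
The plan is to reduce the integrated inequality to an infinitesimal spectral condition along the $L^p$-geodesic, and then to verify that condition uniformly over all origin-symmetric $K \in \K^2_{+,e}$. Under the hypothesis (\ref{eq:intro-smooth-geodesic}), the curve $\lambda \mapsto K_\lambda := (1-\lambda) \cdot K_0 +_p \lambda \cdot K_1$ stays in $\K^2_{+,e}$, so $F(\lambda) := V(K_\lambda)^{p/n}$ is twice continuously differentiable, and the target inequality is precisely the concavity of $F$. Since every sub-arc of $(K_\lambda)$ is itself an $L^p$-geodesic between its endpoints, it suffices to prove $F''(0) \leq 0$ for an arbitrary $K_0 \in \K^2_{+,e}$ and an arbitrary infinitesimal direction $K_1 \in \K^2_{+,e}$.

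Using $h_\lambda^p = (1-\lambda) h_{K_0}^p + \lambda h_{K_1}^p$ and Aleksandrov's variational formulas, $F''(0)$ rewrites as a quadratic form in the (necessarily even) test function $\phi := \partial_\lambda \log h_\lambda \bigm|_{\lambda=0}$ on $S^{n-1}$. After integration by parts, $F''(0) \leq 0$ becomes equivalent to a weighted Poincar\'e-type inequality of the shape
$$ \mathcal{E}_{K_0}(\phi) \;\geq\; (1-p)\!\int_{S^{n-1}} \phi^2\, d\mu_{K_0} + \frac{p}{n\,V(K_0)} \brac{\int_{S^{n-1}} \phi\, d\mu_{K_0}}^2 \quad\forall\, \phi\in C^2_{\mathrm{even}}(S^{n-1}), $$
where $d\mu_K := \frac{1}{n} h_K\, dS_K$ is the cone measure and $\mathcal{E}_K(\phi) := \int_{S^{n-1}} \sscalar{\mathrm{cof}(\bar\nabla^2 h_K + h_K I)\, \bar\nabla \phi,\, \bar\nabla \phi}\, d\sigma$ is the Dirichlet form of the Hilbert--Brunn--Minkowski operator $L_K$. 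Spectrally, this is a lower bound on the first nontrivial eigenvalue of $L_K$ on the \emph{even} subspace; at $p=1$ the bound reduces to the classical one (saturated by the odd linear functions $\phi(\theta) = \sscalar{v,\theta}$, which represent translations of $K$), while for $p<1$ it is strictly stronger.

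The crux is then to establish this even-subspace spectral improvement, of size at least $\sim c/n^{3/2}$, uniformly over all origin-symmetric $K \in \K^2_{+,e}$. The heuristic motor is that the $p=1$ extremizers are odd and thus absent from the even subspace, so the next eigenvalue should be strictly larger. Quantitatively I would combine: (a) a Bochner/Reilly-type integration-by-parts for $L_K$, exploiting that each row of $\mathrm{cof}(\bar\nabla^2 h_K + h_K I)$ is divergence-free on $S^{n-1}$, to produce curvature remainders controlled by the geometry of $K$; (b) an eigenvalue comparison between $L_K$ and the spherical Laplacian $-\Delta_{S^{n-1}}$ on the even subspace, using that the first eigenvalue of $-\Delta_{S^{n-1}}$ jumps from $n-1$ (full spectrum) to $2n$ (even subspace); and, if needed, (c) a transport of the question to a log-concave measure on $\Real^n$ having $K$ as a sub-level set, combined with a Brascamp--Lieb-type estimate that respects the reflection symmetry.

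The main obstacle is producing the quantitative $n^{-3/2}$ gain uniformly in $K$. Unlike the Euclidean-ball case handled in \cite{CLM-LogBMForBall}, where $L_K$ commutes with $-\Delta_{S^{n-1}}$ and the analysis reduces to classical spherical harmonics, for a generic $K$ the operator $L_K$ has highly non-constant coefficients and does not diagonalize canonically, while the cone measure $d\mu_K$ is far from uniform. The bound must remain valid as $K$ degenerates toward pathological origin-symmetric bodies (cubes, highly eccentric ellipsoids), which forces a robust but lossy estimate and accounts for the suboptimal $n^{-3/2}$ rate compared to the more natural $n^{-1}$; the payoff is that any positive improvement, however small, already breaks below $p=1$ for the first time in general origin-symmetric dimension.
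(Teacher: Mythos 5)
Your reduction of the theorem to a uniform \emph{even} spectral-gap estimate for the Hilbert--Brunn--Minkowski operator is correct and is exactly the paper's route: under the hypothesis (\ref{eq:intro-smooth-geodesic}), Aleksandrov's lemma guarantees that the Wulff functions along the curve are genuine support functions, so concavity of $\lambda\mapsto V(K_\lambda)^{p/n}$ follows from the second-variation inequality at each $K_\lambda$, and that inequality is equivalent to $\lambda_{1,e}(-L_{K_\lambda})\geq \frac{n-p}{n-1}$ on the even subspace orthogonal to constants. Up to this point your proposal matches Sections 3--5 of the paper.

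The gap is in what you yourself call the crux: you never prove the uniform quantitative bound $\lambda_{1,e}(-L_K)\geq 1+\frac{c}{n^{3/2}}$ for \emph{all} $K\in\K^2_{+,e}$, and the tools you list will not produce it. Item (b), an eigenvalue comparison of $L_K$ with $-\Delta_{S^{n-1}}$ on the even subspace, cannot be uniform in $K$: the comparison constants degenerate for eccentric bodies, and any such comparison is not $GL_n$-invariant, so it cannot match the linearly invariant quantity $\lambda_{1,e}(-L_K)$ without first choosing a position of $K$ -- a choice you never make. Item (c) is only a heuristic. The paper's actual mechanism is quite specific and none of it appears in your sketch beyond the word ``Reilly'': given an even boundary datum one solves the Neumann problem $\Delta u=0$ in $K$, $u_\nu=\Psi$ on $\partial K$, applies the generalized Reilly formula, discards a nonnegative remainder by Cauchy--Schwarz, and is left with the boundary Poincar\'e-type inequality $\int_{\partial K}\frac{u_\nu^2}{\scalar{x,\nu}}dx\leq \B(K)\int_K\norm{\nabla^2 u}^2dx$; one then bounds the larger constant $\D(K)$ by $\frac{1}{r^2}\brac{n\,C_{Poin}^2(K)+2R\,C_{Poin}(K)}$ via an integration by parts of $\mathrm{div}(\abs{\nabla u}^2 x)$ and the Poincar\'e inequality applied to the odd partials $u_i$. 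Crucially, the Cauchy--Schwarz step destroys linear invariance, so one must place $K$ in \emph{isotropic} position, where $r\gtrsim 1$, $R\lesssim n$, and the Lee--Vempala bound $C_{Poin}(K)\leq C n^{1/4}$ toward the KLS conjecture gives $\D(K)\leq Cn^{3/2}$; this is precisely the source of the exponent $3/2$ in the admissible range of $p$, which in your write-up is left unexplained. Without this chain (or a genuine substitute for it), the quantitative heart of the theorem is missing.
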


The condition (\ref{eq:intro-smooth-geodesic}) is deceptively appealing: while for $p \geq 1$ it is always automatically satisfied, this is not the case in general for $p \in [0,1)$ (see Corollary \ref{cor:Aleksandrov}). Consequently, we do not know how to conclude the validity of the (global) $L^p$-Brunn--Minkowski conjecture for all origin-symmetric $K_0,K_1$ and $p$ in the above range. 
On the other hand, for every $K \in \K^2_{+,e}$, there exists a $C^2$-neighborhood $N_{K}$ so that for all $K_0,K_1 \in N_K$, (\ref{eq:intro-smooth-geodesic}) is satisfied, so we can  confirm the $L^p$-Brunn--Minkowski conjecture locally. See also Section \ref{sec:local-global} for an equivalent local formulation in terms of the second variation $\frac{d^2}{(d\eps)^2} V(A[h_K +_p \eps \cdot f])^{\frac{p}{n}} \leq 0$ for appropriate test functions $f$. 

\medskip

Let us mention here that we conjecture the logical equivalence between the local and global formulations of the $L^p$-Brunn--Minkowski conjecture - see Conjecture \ref{conj:local-global} and the preceding discussion. This is an extremely interesting and tantalizing question, which leads to the study of the second variation of volume of an Aleksandrov body $\frac{d^2}{(d\eps)^2} V(A[h_K + \eps f])$, when no smoothness is assumed on $K$ or $f$. While the first variation has been studied by Aleksandrov himself and is well understood (see \cite[Lemma 6.5.3]{Schneider-Book}), to the best of our knowledge, nothing concrete is known regarding the second variation when $K$ is not assumed smooth, even when $f$ is the difference of two support functions. 

\medskip

Our second type of results pertain to the verification of the local log-Brunn--Minkowski conjecture (case $p=0$) for various classes of origin-symmetric convex bodies. For instance, we obtain the following result regarding $B_q^n$, the unit-ball of $\ell_q^n$. Note that $B_q^n \notin \K^2_{+,e}$ unless $q = 2$. To emphasize the invariance of the conjecture under non-degenerate linear transformations $GL_n$, we state it explicitly.

\begin{thm}[Local log-Brunn--Minkowski for $B_q^n$]\label{thm:intro-lq}
For all $q \in [2,\infty)$, there exists $n_q \geq 2$ so that for all $n \geq n_q$, there exists a neighborhood $N^C_{B_q^n}$ of $B_q^n$ in the Hausdorff topology, so that for any $K \in N^C_{B_q^n} \cap \K^2_{+,e}$, there exists a $C^2$-neighborhood $N_K$ of $K$ in $\K^2_{+,e}$ so that for all $T \in GL_n$ and $K_1,K_0 \in T(N_{K})$, the log-Brunn--Minkowski conjecture (\ref{eq:log-BM}) for $K_0,K_1$ holds true:
\[
V((1-\lambda) \cdot K_0 +_0 \lambda \cdot K_1) \geq V(K_0)^{1-\lambda} V(K_1)^{\lambda} \;\;\; \forall \lambda \in [0,1] . 
\]
\end{thm}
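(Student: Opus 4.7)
The plan is to exploit the equivalence, established in the preceding sections of the paper, between the local log-Brunn--Minkowski inequality at $K \in \K^2_{+,e}$ and a spectral gap bound for the Hilbert--Brunn--Minkowski operator $L_K$ restricted to even functions on $S^{n-1}$ -- specifically, that the first non-trivial even eigenvalue $\lambda_1^{\text{even}}(L_K)$ satisfies the lower bound corresponding to $p=0$. Since this spectral gap is lower semi-continuous in $K$ in the Hausdorff topology, it suffices to prove a \emph{strict} version of this bound, with some margin $\delta = \delta(q,n) > 0$, at $K = B_q^n$ itself; the bound then propagates to a Hausdorff-neighborhood of $B_q^n$, and the $C^2$-neighborhood of each smooth $K$ sitting in that Hausdorff-neighborhood comes from the local-to-$C^2$-neighborhood direction of the equivalence. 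Because $B_q^n \notin \K^2_{+,e}$ for $q \ne 2$, the operator $L_{B_q^n}$ must be interpreted through smooth approximations, and one has to check that the spectral gap does not degenerate in the limit. The $GL_n$-invariance in the statement is immediate from the $GL_n$-invariance of the log-Brunn--Minkowski inequality and plays no role beyond normalization.

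Next I would exploit the symmetries of $B_q^n$: the body is invariant under the group $G = \{\pm 1\}^n \rtimes S_n$ of signed coordinate permutations, so $L_{B_q^n}$ commutes with the induced $G$-action on $L^2(S^{n-1})$ and its spectrum on even functions block-diagonalizes over the isotypic components of the $G$-representation. On the fully $G$-invariant block -- simultaneously unconditional functions -- the required spectral inequality is equivalent to log-Brunn--Minkowski for unconditional perturbations of $B_q^n$, which is exactly Saroglou's theorem, derived from the multiplicative Pr\'ekopa--Leindler inequality on the positive orthant. This settles the ``diagonal'' component with room to spare.

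It remains to estimate $\lambda_1$ on the non-$G$-invariant isotypic components of $L^2_{\text{even}}(S^{n-1})$. The coefficients of $L_{B_q^n}$ are explicit in terms of the support function $h_{B_q^n}(\theta) = \snorm{\theta}_{q/(q-1)}$ and its second derivatives, so I would use the $S_n$-symmetry to reduce to a short list of model eigenvalue problems indexed by the irreducible $G$-representations, and then invoke, for $q \geq 2$, the high-dimensional concentration of the cone or surface-area measure on $\partial B_q^n$ around the coordinate hyperplanes -- a phenomenon that sharpens as $n \to \infty$ -- to obtain the desired strict bound once $n \geq n_q$. This is the main obstacle: Saroglou's tensorization handles only the simultaneously unconditional block, and the remaining components of $L^2_{\text{even}}$ lack any product structure, so one is forced into explicit spectral estimates adapted to the $\ell_q$-geometry, leveraging the large-dimension regime in which concentration absorbs the loss incurred in passing from unconditional to merely even test functions. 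A secondary but genuine technical difficulty is the non-smoothness of $B_q^n$ for $q \ne 2$, which is precisely the reason the statement asks only for a Hausdorff (rather than $C^2$) neighborhood of $B_q^n$ and forces one to justify passing to the limit in the approximation.
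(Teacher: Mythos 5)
Your framing---the spectral reformulation via $\lambda_{1,e}(-L_K)\geq \frac{n}{n-1}$ (Corollary \ref{cor:LK-SG}), the need for a strict margin, the $GL_n$-invariance, and the passage from a local inequality with room to spare to the global statement in a $C^2$-neighborhood (Proposition \ref{prop:local-equiv})---matches the paper's framework, but the core of the proof is missing and your propagation step does not work as described. For $q\neq 2$ the quantity $\lambda_{1,e}(B_q^n)$ is only \emph{defined} as a liminf over smooth Hausdorff-approximations (which need not inherit any hyperoctahedral symmetry), so ``proving a strict bound at $B_q^n$ itself'' already amounts to proving the desired spectral bound for all smooth origin-symmetric bodies Hausdorff-close to $B_q^n$; the $G$-isotypic decomposition of the formal operator $L_{B_q^n}$ gives no control over $L_K$ for a nearby non-symmetric $K$, and the only spectral continuity available (Theorem \ref{thm:LK}(4)) requires $C^2$-convergence, not Hausdorff convergence. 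The paper resolves precisely this difficulty by replacing the spectral gap with the boundary Poincar\'e constant $\B(K)$ for harmonic functions: it is defined for arbitrary non-smooth $K\in\K_e$, the bound $\B(T_0(K))\leq\frac{1}{1-p}$ is a \emph{sufficient} condition for the local $p$-BM inequality via the Reilly formula (Theorems \ref{thm:sufficient} and \ref{thm:main-tool}), and---crucially---$\B$ is genuinely continuous (not merely lower semi-continuous) in the Hausdorff topology (Proposition \ref{prop:continuity}), so an estimate at $B_q^n$ itself transfers to all smooth bodies in a Hausdorff neighborhood. Your proposal contains no substitute for this device.

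The quantitative estimate is also not supplied. On the non-unconditional isotypic components you offer only the hope that ``concentration of the cone measure near the coordinate hyperplanes'' gives the bound for large $n$; the actual mechanism in the paper is analytic and does not split by symmetry at all: one applies the generalized Reilly formula (equivalently, Theorem \ref{thm:B-estimate}) with the log-convex weight $e^W$, $W=\frac{1}{q}\norm{x}_q^q$, so that $\norm{\nabla^2 W}_{op}\leq q-1$ on $B_q^n$, combined with the Sodin / Lata{\l}a--Wojtaszczyk estimate $C_{Poin}(B_q^n)\simeq n^{-1/q}$, yielding $\B(B_q^n)\leq C(n^{-1/q}+qn^{-2/q})\leq\frac12$ for $n\geq n_q$ (Theorem \ref{thm:BBqn})---a uniform margin over \emph{all} even test functions, which after the continuity step gives local $p$-BM with $p=-\frac13$ on $N^C_{B_q^n}\cap\K^2_{+,e}$ and then the theorem via Proposition \ref{prop:local-equiv}. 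Finally, your treatment of the unconditional block via Saroglou's theorem only yields the borderline bound $\frac{n}{n-1}$ with no $\delta$-margin, which is incompatible with your own propagation-by-semicontinuity strategy and with the conclusion you need: the global log-BM inequality for all pairs in $T(N_K)$ requires the local inequality to hold with strict room throughout a $C^2$-neighborhood, which is exactly what the uniform bound $\B<1$ (i.e.\ some $p<0$) provides and what a non-strict bound on one isotypic component cannot.
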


For more general statements, see Theorems \ref{thm:Ball}, \ref{thm:dual-BL}, \ref{thm:B-estimate}, \ref{thm:BBqn} and \ref{thm:ellq}. The two extremal cases above are of particular interest. For the Euclidean ball $B_2^n$, Theorem \ref{thm:intro-lq} in fact holds with $n_2=2$, i.e. in all dimensions $n \geq 2$. In this particular case, Theorem \ref{thm:intro-lq} strengthens \cite[Theorem 1.4]{CLM-LogBMForBall} of Colesanti--Livshyts--Marsiglietti, where it was assumed that $K_0,K_1,B_2^n$ are ``co-linear", i.e. that $K_0 = (1-\eps) \cdot K_1 +_0 \eps \cdot B_2^n$ for some $\eps \in [0,1]$; in addition, the linear invariance under $GL_n$ was not noted there (strictly speaking, the result in \cite{CLM-LogBMForBall} does not yield a $C^2$-neighborhood of $B_2^n$ for which the latter statement holds, since the allowed $C^2$-distance of $K_1$ from $B_2^n$ depended on $K_1$, but the latter caveat was very recently remedied by Colesanti and Livshyts in \cite{ColesantiLivshyts-LocalpBMUniquenessForBall}, concurrently to our work). 

\medskip

On the other extreme lies the unit-cube $B_\infty^n$. In an appropriate sense, described in the next subsection, the cube turns out to be the conjectural extremal case for the log-Brunn--Minkowski inequality. 
The local extremality can be seen as follows (see Theorems \ref{thm:B-Cube} and \ref{thm:lambda1-cube} for a more streamlined formulation):

\begin{thm}[Extremal local log-Brunn--Minkowski for $B_\infty^n$] \label{thm:intro-cube}
For any $\set{K^i}_{i \geq 1} \subset \K^2_{+,e}$ which approximate $B_\infty^n$ in the Hausdorff metric, there exist $\set{p_i}_{i \geq 1}$ converging to $0$, with the following property: for all $i \geq 1$, there exists a $C^2$-neighborhood $N_{K^i}$ of $K^i$ in $\K^2_{+,e}$, so that for all $T \in GL_n$ and $K^i_0,K^i_1 \in T(N_{K^i})$, we have:
\[
V((1-\lambda) \cdot K^i_0 +_{p_i} \lambda \cdot K^i_1) \geq \brac{(1-\lambda) V(K^i_0)^{\frac{p_i}{n}} + \lambda V(K^i_1)^{\frac{p_i}{n}}}^{\frac{n}{p_i}} \;\;\; \forall \lambda \in [0,1] . 
\]
Moreover, if $K_i = B_{q_i}^n$ with $q_i \rightarrow \infty$, it is impossible to find $\set{p_i}$ with the above property, which instead of converging to zero, satisfy $\liminf p_i < 0$. 
\end{thm}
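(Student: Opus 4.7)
The plan is to deploy the spectral-gap reformulation of local $L^p$-Brunn--Minkowski developed earlier in this paper: for $K \in \K^2_{+,e}$, local $L^p$-BM at $K$ is equivalent to a lower bound $\lambda(K) \geq T(p,n)$, where $\lambda(K)$ denotes the first non-trivial eigenvalue of the Hilbert--Brunn--Minkowski operator $L_K$ acting on even functions (modulo the subspace spanned by $h_K$), and $T(\cdot,n)$ is a strictly decreasing function of $p$. The log-BM case $p=0$ corresponds to the value $T(0,n)$, which by the forthcoming Theorems~\ref{thm:B-Cube} and~\ref{thm:lambda1-cube} coincides exactly with the asymptotic spectral gap attached to the cube $B_\infty^n$.

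\textbf{First assertion.} Combining the cube computation with a continuity estimate showing that $\lambda(K^i) \to T(0,n)$ whenever $K^i \in \K^2_{+,e}$ approach $B_\infty^n$ in the Hausdorff metric, one defines $p_i$ by the equation $T(p_i,n) = \lambda(K^i)$. This yields $p_i \to 0$ together with local $L^{p_i}$-BM at each $K^i$ by the spectral equivalence. The $C^2$-continuity of $\lambda(\cdot)$ on $\K^2_{+,e}$ furnishes the $C^2$-neighborhood $N_{K^i}$, and the $GL_n$-invariance is automatic since both sides of the $L^p$-BM inequality transform covariantly under $GL_n$.

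\textbf{Second assertion.} Assume for contradiction that $K^i = B_{q_i}^n$ with $q_i \to \infty$ (or smooth origin-symmetric approximants thereof) and that the claimed $p_i$'s have $\liminf p_i = p^\star < 0$; pass to a subsequence so that $p_i \to p^\star$. Fix two non-proportional diagonal maps $T_0, T_1 \in GL_n$ sufficiently close to $I$ that $T_j(K^i) \in N_{K^i}$ for all large $i$. The local $L^{p_i}$-BM at $K^i$ applied to the pair $T_0(K^i), T_1(K^i)$ reads
\[
V((1-\lambda)\cdot T_0(K^i) +_{p_i} \lambda \cdot T_1(K^i)) \geq \brac{(1-\lambda) V(T_0(K^i))^{p_i/n} + \lambda V(T_1(K^i))^{p_i/n}}^{n/p_i}
\]
for every $\lambda \in [0,1]$. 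Passing to the Hausdorff limit $i \to \infty$ (using the continuity of support/Wulff functions and their associated Wulff shapes and volumes, together with the convergence $p_i \to p^\star$), the inequality becomes $L^{p^\star}$-BM for the two non-homothetic origin-symmetric cubes $T_0(B_\infty^n)$ and $T_1(B_\infty^n)$. This contradicts the elementary failure of $L^p$-BM for non-homothetic cubes whenever $p<0$, noted explicitly just after the statement of the main conjecture in this introduction.

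\textbf{Main obstacle.} The technical heart of the argument is the first assertion, specifically establishing the convergence $\lambda(K^i) \to T(0,n)$ as $K^i$ approaches the non-smooth cube. The operator $L_{K^i}$ degenerates in the limit as the curvature of $\partial K^i$ concentrates along the edges of $B_\infty^n$, so one must simultaneously verify that approximate eigenfunctions on the cube (essentially indicator-type functions of the facets, suitably smoothed) lift to low-Rayleigh-quotient test functions on the $K^i$, and that no spurious eigenvalue below $T(0,n)$ emerges in the limit. The coordinate-reflection and permutation symmetries, inherited by sufficiently symmetric approximants such as the $B_{q_i}^n$, are the natural tool to block-diagonalize $L_{K^i}$ on isotypic components and reduce the problem to a nearly one-dimensional Sturm--Liouville-type eigenvalue computation.
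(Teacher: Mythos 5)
Your high-level framing — the spectral reformulation via Corollary \ref{cor:LK-SG} (local $p$-BM at $K$ iff $\lambda_{1,e}(-L_K)\geq\frac{n-p}{n-1}$) and the reduction to showing the spectral parameter of cube-approximants tends to $\frac{n}{n-1}$ — is the same as the paper's, but the crucial analytic input is missing. For the first assertion you need $\liminf_i \lambda_{1,e}(-L_{K^i})\geq\frac{n}{n-1}$ for an \emph{arbitrary} sequence $K^i\in\K^2_{+,e}$ converging to $B_\infty^n$ only in the Hausdorff metric; $C^2$-continuity of the spectrum (Theorem \ref{thm:LK}(4)) is of no use since $B_\infty^n\notin\K^2_+$ and the operators $L_{K^i}$ degenerate. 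Your proposed mechanism — block-diagonalization by coordinate symmetries and a Sturm--Liouville reduction — is unavailable for general $K^i$, which carry no symmetry beyond origin-symmetry, and is not carried out even for $B_{q}^n$; asserting that ``no spurious eigenvalue below $T(0,n)$ emerges'' restates the claim rather than proving it. The paper's route is genuinely different: the Reilly-formula sufficient condition (Theorems \ref{thm:sufficient}, \ref{thm:main-tool}, \ref{thm:B2LK}) replaces the second-order spectral quantity by the boundary Poincar\'e constant $\B(K)$, which involves only first-order boundary data and is therefore continuous under Hausdorff convergence (Proposition \ref{prop:continuity}), and the exact value $\B(B_\infty^n)=1$ is computed (Theorem \ref{thm:B-Cube}); this gives $\lambda_{1,e}(-L_{K^i})\geq 1+\frac{1}{(n-1)\B(K^i)}\to\frac{n}{n-1}$, whence admissible $p_i\to 0$. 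Note also that your recipe $T(p_i,n)=\lambda(K^i)$ implicitly requires the unproved upper semicontinuity of $\lambda_{1,e}$ under mere Hausdorff convergence; only the one-sided bound is needed, after which one caps $p_i$ from below using monotonicity in $p$.

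The ``moreover'' argument has a concrete flaw: the hypothesized neighborhoods $N_{K^i}$ come with no uniformity in $i$, so you cannot fix non-proportional $T_0,T_1$ near the identity with $T_j(K^i)\in N_{K^i}$ for all large $i$; and if instead you let $T_0^i,T_1^i\to \mathrm{Id}$ so as to stay inside $N_{K^i}$, both Hausdorff limits are the cube itself and the contradiction evaporates. The correct use of the hypothesis is infinitesimal: by Lemma \ref{lem:global2local} and Corollary \ref{cor:LK-SG} it forces $\lambda_{1,e}(-L_{K'})\geq\frac{n-p_i}{n-1}$ near $K^i$, so one must prove $\limsup_i\lambda_{1,e}\leq\frac{n}{n-1}$ along (smoothings of) $B_{q_i}^n$, i.e.\ exhibit even test functions with Rayleigh quotient tending to $\frac{n}{n-1}$. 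This is exactly the content of the paper's Theorem \ref{thm:lambda1-cube}: the Cauchy--Schwarz loss $R_{B_q^n}(u^0)$ from Theorem \ref{thm:sufficient} is shown to vanish as $q\to\infty$ for the extremal even harmonic $u^0=\frac{x_1^2}{2}-\frac{x_2^2}{2}$, via an explicit computation of $\II_{\partial B_q^n}^{-1}$. Your box heuristic (equality for parallel boxes at $p=0$, strict failure for $p<0$) is the right intuition, and could be salvaged by passing to the infinitesimal inequality (\ref{eq:p-BM-mixed-vols}) at $K^i$ with test functions of the form $z_i=h_L/h_{K^i}$ for a fixed non-homothetic box $L$, using Hausdorff-continuity of mixed volumes together with a smoothing step; but as written the limiting argument does not go through.
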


We do not know how to handle the range $q \in [1,2)$ in Theorem \ref{thm:intro-lq}. In particular, it would be very interesting to establish analogues of Theorem \ref{thm:intro-lq} or Theorem \ref{thm:intro-cube} (without the ``moreover" part) for $B_1^n$. We can however obtain for any unconditional convex body, and in particular for $B_1^n$, a local \emph{strengthening} of Saraoglu's confirmation of the log-Brunn--Minkowski conjecture for unconditional convex bodies -- see Theorem \ref{thm:unc} and Corollary \ref{cor:lambda1-unc}. We can also show (see Theorem \ref{thm:ellq1}):

\begin{thm}[Local $L^c$-Brunn--Minkowski for $B_q^n$]\label{thm:intro-ellq1}
There exists a universal constant $c \in (0,1)$, so that for all $q \in [1,2)$, there exists a neighborhood $N^C_{B_q^n}$ of $B_q^n$ in the Hausdorff topology, so that for any $K \in N^C_{B_q^n} \cap \K^2_{+,e}$, there exists a $C^2$-neighborhood $N_K$ of $K$ in $\K^2_{+,e}$ so that for all $T \in GL_n$ and $K_1,K_0 \in T(N_{K})$, the $p$-Brunn--Minkowski conjecture (\ref{eq:p-BM}) for $K_0,K_1$ holds true with $p=c$:
\[
V((1-\lambda) \cdot K_0 +_c \lambda \cdot K_1) \geq \brac{(1-\lambda) V(K_0)^{\frac{c}{n}} + \lambda V(K_1)^{\frac{c}{n}}}^{\frac{n}{c}} \;\;\; \forall \lambda \in [0,1] . 
\]
\end{thm}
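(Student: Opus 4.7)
The plan is to follow the general strategy of the paper and reduce the local $L^p$-Brunn--Minkowski statement for $p = c$ to a spectral gap estimate for the Hilbert--Brunn--Minkowski operator, and then verify this estimate uniformly for bodies close to $B_q^n$ with $q \in [1,2)$.

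First, I would invoke the spectral reduction that underlies the paper: for any $K \in \K^2_{+,e}$, the local validity of the $L^p$-Brunn--Minkowski inequality on a $C^2$-neighborhood of $K$ is equivalent to a Poincar\'e-type inequality on even functions associated to the Hilbert--Brunn--Minkowski operator on $K$. This is the framework used for Theorems \ref{thm:intro-main}, \ref{thm:intro-lq}, and \ref{thm:intro-cube}. The $GL_n$-invariance in the statement is free: volume transforms homogeneously under invertible linear maps, so the $L^p$-Brunn--Minkowski inequality is $GL_n$-equivariant when both bodies are acted upon by the same $T$. Consequently, it suffices to verify the relevant spectral bound at a single $K \in N^C_{B_q^n} \cap \K^2_{+,e}$ for each $q \in [1,2)$, and to check that the bound survives a small $C^2$-perturbation.

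Second, I would exploit the unconditional structure of $B_q^n$. Although $B_q^n$ itself is not in $\K^2_{+,e}$ for $q \neq 2$, any $K$ Hausdorff-close to $B_q^n$ admits a nearby strictly unconditional smooth approximation. Saroglou's log-Brunn--Minkowski inequality for unconditional bodies, together with the paper's local strengthening (Theorem \ref{thm:unc}, promised in the main text), yields a spectral bound on such approximations that is strong enough to imply log-BM ($p = 0$) in the strictly unconditional case. Combining this with the continuous dependence of the relevant eigenvalue of the Hilbert--Brunn--Minkowski operator on $K$ in the Hausdorff topology, I would get a quantitative version of the bound on a Hausdorff neighborhood $N^C_{B_q^n}$, losing a small but controlled amount in the constant. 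This loss is what forces $p > 0$ rather than $p = 0$, and produces the universal constant $c \in (0,1)$.

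The main obstacle I anticipate is carrying out the second step uniformly in both $q \in [1,2)$ and $n$. The unconditional bound is tight only for \emph{strictly} unconditional bodies, so the passage to a Hausdorff neighborhood introduces degradations that must be quantified independently of how singular $B_q^n$ is, particularly for $q$ close to $1$, where $B_1^n$ is the cross-polytope with many degenerate flat facets. Overcoming this requires careful stability estimates for the Hilbert--Brunn--Minkowski spectrum under Hausdorff convergence, together with explicit control on the admissible size of $c$ that can be extracted from Theorem \ref{thm:unc} after such a perturbation, uniformly in $q \in [1,2)$ and in the dimension $n$.
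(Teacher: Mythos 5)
There is a genuine gap in your second step. The unconditional machinery you invoke (Saroglou's result together with Theorem \ref{thm:unc} and Corollary \ref{cor:lambda1-unc}) only controls the Rayleigh quotient over \emph{unconditional} test functions, i.e.\ it bounds $\BNH_{\unc}(K)$ and $\lambda_{1,\unc}$, whereas the local $p$-BM statement you need is equivalent (Corollary \ref{cor:LK-SG}) to a lower bound on the \emph{even} spectral gap $\lambda_{1,e}(-L_K)$, which requires testing all even perturbations. The bodies $K_0,K_1 \in T(N_K)$ with $T \in GL_n$ arbitrary are not unconditional, and the corresponding test functions $z = \frac{1}{p}(h_{K_1}^p - h_{K_0}^p)/h_{K_0}^p$ are merely even; nothing in the unconditional estimates reaches them. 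Indeed, the paper states explicitly that it does not know how to verify even the local log-BM conjecture near $B_1^n$, precisely because $\lambda_{1,\unc}(B_1^n) \geq \frac{n}{n-1}$ does not imply anything about $\lambda_{1,e}$; if your step were valid it would settle that open case. A second unsupported ingredient is the claimed continuity of the Hilbert--Brunn--Minkowski eigenvalue under Hausdorff convergence: Theorem \ref{thm:LK}~(4) gives continuity of the spectrum only under $C^2$ convergence, and since $B_q^n \notin \K^2_{+}$ for $q \neq 2$ the operator degenerates along any Hausdorff approximation; the quantities the paper can show are Hausdorff-continuous are the boundary Poincar\'e constants $\B$, $\BNH$, $\D$ (Proposition \ref{prop:continuity}), not $\lambda_{1,e}$.

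For comparison, the paper's proof of Theorem \ref{thm:ellq1} is cruder and bypasses unconditionality entirely: by Theorem \ref{thm:DK} combined with the Poincar\'e constant of $B_q^n$ (Theorem \ref{thm:Poincare}, of order $n^{-1/q}$) and the in/out radii of $B_q^n$, one gets $\B(B_q^n) \leq \D(B_q^n) \leq C$ with $C$ a universal constant for all $q \in [1,2)$ (Lemma \ref{lem:D-Bqn}); Proposition \ref{prop:continuity} then yields $\B(K) \leq 2C$ on a Hausdorff neighborhood $N^C_{B_q^n}$; the Reilly-based sufficient condition of Theorem \ref{thm:main-tool} (whose conclusion is $GL_n$-invariant even though the hypothesis is not) gives the local $p$-BM inequality with $p = 1 - \frac{1}{2C}$ for all $T(K)$, $K \in N^C_{B_q^n} \cap \K^2_{+,e}$; and Proposition \ref{prop:local-equiv} converts this into the $C^2$-neighborhood statement with $c = 1 - \frac{1}{3C}$. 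This is why the theorem only asserts some universal $c \in (0,1)$ rather than $p$ close to $0$: the argument never gets a bound near $\B \leq 1$ for $q \in [1,2)$, which is exactly the regime your unconditional route cannot supply either.
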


\smallskip
An application of the above results to local uniqueness in the even $L^p$-Minkowski problem is presented in Section \ref{sec:Mink}. 

\subsection{Spectral Interpretation via the Hilbert--Brunn--Minkowski operator}

An additional contribution of this work lies in revealing the connection between the local $L^p$-Brunn--Minkowski conjecture and a spectral-gap property of a certain second-order elliptic operator $L_K$ on $S^{n-1}$ associated to any $K \in \K^2_+$.
Modulo the different normalization we employ in our investigation, this operator was in fact considered by Hilbert in his proof of the classical Brunn--Minkowski inequality, 
and subsequently generalized by Aleksandrov in his second proof of the Aleksandrov--Fenchel inequality \cite[pp. 108--110]{BonnesenFenchelBook}. Consequently, we call $L_K$ the 
Hilbert--Brunn--Minkowski operator. Our normalization has several advantages over the one employed by Hilbert (see Remark \ref{rem:Hilbert}); for instance, it ensures an important equivariance property of the correspondence $K \mapsto L_K$ under linear transformations (see Theorem \ref{thm:LTK}), which to the best of our knowledge was previously unnoted.

\medskip

Let us denote by $\lambda_1(-L_K)$ the spectral-gap of $-L_K$ beyond the trivial $0$ eigenvalue, and by  $\lambda_{1,e}(-L_K)$ the \emph{even} spectral-gap beyond $0$ when restricting to \emph{even} functions. By the previous comments, both are linear invariants of $K$. It was shown by Hilbert that (with our normalization) the Brunn--Minkowski inequality (\ref{eq:BM}) is equivalent to the uniform spectral-gap estimate $\lambda_1(-L_K) \geq 1$ for all $K \in \K^2_+$; moreover, Hilbert showed that $\lambda_1(-L_K) = 1$, with the corresponding eigenspace being precisely the one spanned by (normalized) linear functions, generated by translations of $K$. A natural question is then:
\[
\text{is there a \emph{uniform} spectral-gap of $-L_K$ for all $K \in \K^2_+$ \emph{beyond} $1$?}
\]
It turns out that the local $L^p$-Brunn--Minkowski conjecture for $K \in \K^2_{+,e}$ and $p \in [0,1)$ is precisely equivalent to the conjecture that:
\begin{equation} \label{eq:intro-lambda1}
\lambda_{1,e}(-L_K) \geq \frac{n-p}{n-1} \; ( \; > 1 ). 
\end{equation}
In other words, it is a ``next eigenvalue" conjecture (similar in spirit to the B-conjecture for the Gaussian measure, established in \cite{CFM-BConjecture}). This elucidates the requirement that $K$ be origin-symmetric, since then even functions are automatically orthogonal to the odd (normalized) linear functions; moreover, this interpretation suggests a plausible extension of the conjecture to non-symmetric convex bodies (see Remark \ref{rem:extension-to-non-sym}). Theorem \ref{thm:intro-main} asserts that (\ref{eq:intro-lambda1}) holds with $p = 1 - \frac{c}{n^{3/2}}$, answering in the positive the question of whether there is a \emph{uniform even} spectral-gap for $-L_K$ beyond $1$.

\medskip

It is easy to calculate that $\lambda_{1,e}(-L_{B_2^n}) = \frac{2 n}{n-1}$ (corresponding to $p=-n$ in (\ref{eq:intro-lambda1})), yielding Theorem \ref{thm:intro-lq} for $q=2$ and $n_2 = 2$. A much greater challenge is to establish that:
\[
\liminf_{\K^2_{+,e} \ni K^i \rightarrow B_\infty^n \text{ in Hausdorff metric}} \lambda_{1,e}(-L_{K^i}) = \frac{n}{n-1} ,
\]
corresponding to $p=0$ in (\ref{eq:intro-lambda1}), and yielding Theorem \ref{thm:intro-cube}. In other words, the local log-Brunn--Minkowski conjecture is equivalent to the conjecture that the cube $B_\infty^n$ is a minimizer of the linearly invariant spectral parameter $\lambda_{1,e}(-L_K)$ over all origin-symmetric convex bodies $K\in \K^2_{+,e}$. In our opinion, this provides the most convincing and natural reason for believing the validity of the local log-Brunn--Minkowski conjecture, and emphasizes its importance. We also conjecture that the Euclidean ball $B_2^n$ is in fact a maximizer of $\lambda_{1,e}(-L_K)$. 

\subsection{Method of Proof}

Our main tool for obtaining estimates on $\lambda_{1,e}(-L_K)$ is the Reilly formula, which is a well-known formula in Riemannian geometry obtained by integrating the Bochner--Lichnerowicz--Weitzenb\"{o}ck identity. In our previous work \cite{KolesnikovEMilmanReillyPart1}, we obtained a convenient weighted version of the Reilly formula, incorporating a general density. By applying the generalized Reilly formula with an appropriate log-convex (not log-concave!) density, we obtain a sufficient condition for establishing the local log-Brunn--Minkowski conjecture -- see Theorem \ref{thm:dual-BL}. Curiously, this condition resembles a dual log-convex form of the classical Brascamp--Lieb inequality \cite{BrascampLiebPLandLambda1}. Using the known estimates on the Poincar\'e constant of $B_q^n$, we are able to verify this condition when $q \in (2,\infty)$ and $n$ is large enough, yielding Theorem \ref{thm:intro-lq}. 

\medskip

To obtain our other results, we derive a different sufficient condition for establishing the local $L^p$-Brunn--Minkowski conjecture, in terms of a boundary Poincar\'e-type inequality for harmonic functions on $K$ -- see Theorem \ref{thm:sufficient}. In Section \ref{sec:Steklov}, we interpret this sufficient condition in terms of a first order pseudo-differential operator on $\partial K$, which we call the \emph{second Steklov operator}, due to its analogy with the classical Steklov (or Dirichlet-to-Neumann) operator. We are able to precisely calculate the spectrum of this operator for $B_2^n$, and to calculate the sharp constant in the boundary Poincar\'e-type inequality for $B_\infty^n$. After establishing the continuity of this boundary Poincar\'e-type constant $\BNH(K)$ with respect to the Hausdorff metric, we are able to deduce Theorem \ref{thm:intro-cube}. 

\medskip

To handle arbitrary origin-symmetric convex $K$, we obtain a general (rough) estimate on $\BNH(K)$ in terms of the in and out radii of $K$, as well as the (usual) Poincar\'e constant of $K$. Contrary to the linear invariance of the $L^p$-Brunn--Minkowski conjecture, our sufficient condition is no longer linearly invariant, and so we are required to apply it to a suitable linear image of $K$. By using the isotropic position, and the recent estimates of Y.-T.~Lee and S.~Vempala \cite{LeeVempala-KLS} on the Poincar\'e constant of $K$ vis-\`a-vis the Kannan--Lov\'asz--Simonovits conjecture \cite{KLS}, we are able to deduce Theorem \ref{thm:intro-main}.

\subsection{Applications} \label{subsec:applications}

We conclude this work by describing a couple of additional applications of our methods. It is by now a standard argument (see \cite{Lutwak-Firey-Sums,BLYZ-logBMInPlane}) to translate our local $L^p$-Brunn--Minkowski and log-Brunn--Minkowski inequalities into local uniqueness statements for the even $L^p$-Minkowski and log-Minkowski problems, respectively. The classical Minkowski problem (see \cite{Schneider-Book,LYZ-LpMinkowskiProblem} and the references therein) asks for necessary and sufficient conditions on a finite Borel measure $\mu$ on $S^{n-1}$, to guarantee the existence and uniqueness (up to translation) of a convex body $K \in \K$ so that its surface-area measure $dS_K$ coincides with $\mu$; it was solved by Minkowski and Aleksandrov and has led to a rich theory pertaining to the regularity of the associated Monge-Amp\`ere equation. In \cite{Lutwak-Firey-Sums}, Lutwak proposed to study the analogous $L^p$-Minkowski problem, where the role of the surface-area measure $dS_K$ is replaced by the $L^p$-surface-area measure:
\[
 dS_{K,p} := h_K^{1-p} dS_K .
 \]
 The case of \emph{even} measures when $p >1$ is well understood \cite{Lutwak-Firey-Sums,LYZ-LpMinkowskiProblem}, but the case $p<1$ poses a much greater challenge for both existence and uniqueness questions, with the log-Minkowski problem (corresponding to the case $p=0$) drawing the most attention (see Section \ref{sec:Mink} for more details). For the latter problem and for \emph{even} measures $\mu$, a novel necessary and sufficient \emph{subspace concentration condition} ensuring the \emph{existence} question was obtained by B\"{o}r\"{o}czky--Lutwak--Yang--Zhang in \cite{BLYZ-logMinkowskiProblem}, and the \emph{uniqueness} question was settled in \cite{BLYZ-logBMInPlane} in dimension $n=2$; it remains open in full generality in dimension $n \geq 3$. In Theorem \ref{thm:Mink-Uniq}, we translate our results into new local uniqueness statements for the \emph{even} $L^p$-Minkowski problem for appropriate values of $p < 1$; in particular, we establish local uniqueness for $p \in [1 - \frac{c}{n^{3/2}} , 1)$ in a $C^2$-neighborhood of any $K \in \K^2_{+,e}$.
 
\smallskip
 
Our second application pertains to stability estimates in the Brunn--Minkowski and anisotropic isoperimetric inequalities. Recall that the sharp anisotropic isoperimetric inequality is the statement that:
\begin{equation} \label{eq:intro-isop}
 P_{L}(K) \geq n V(L)^{\frac{1}{n}} V(K)^{\frac{n-1}{n}} 
\end{equation}
with equality when $K = L$, where $P_L(K) := \int_{S^{n-1}} h_L dS_K$ denotes the anisotropic perimeter of $K$ with respect to the convex body $L$ (note that when $L = B_2^n$ this boils down to the usual surface area of $K$). It is well known that the anisotropic isoperimetric inequality may be obtained by differentiating the Brunn--Minkowski inequality (and in fact, the two families of inequalities for all convex bodies $K,L$ are equivalent). Since the local $L^p$-Brunn--Minkowski inequality for $p<1$ is a strengthening of the classical one (in the class of origin-symmetric convex bodies), it is natural to expect that some stability results for isoperimetric inequalities could be derived from it, where an additional deficit term is added to the right-hand-side of (\ref{eq:intro-isop}), depending on some parameter measuring the proximity of $K$ to $L$.  
 
\smallskip
In Section \ref{sec:stability}, we obtain an interesting interpretation of the Hilbert--Brunn--Minkowski operator $L_K$ as the operator controlling the deficit in Minkowski's second inequality. As a consequence, we deduce new stability estimates for the (global) Brunn--Minkowski and anisotropic isoperimetric inequalities for origin-symmetric convex bodies, which depend on the variance of $h_L / h_K$ with respect to the cone-measure $dV_K$. In addition, we derive a strengthening of the best known stability estimates in the Brunn--Minkowski and anisotropic isoperimetric inequalities for the class of all (not necessarily origin-symmetric) convex bodies with respect to the natural asymmetry parameter $\inf_{x_0 \in \Real^n} V(K \triangle (x_0 + r L))$ with $V(r L) = V(K)$, which were previously due to Figalli, Maggi and Pratelli \cite{FMP-Inventiones,FMP-StableBM} and Segal \cite{Segal-StableBM}.

\medskip

The rest of this work is organized as follows. In Section \ref{sec:notation}, we introduce some convenient notation. In Section \ref{sec:local-global}, we establish a couple of standard equivalent versions of the global $L^p$-Brunn--Minkowski conjecture, introduce the local $L^p$-Brunn--Minkowski conjecture, and discuss the relation between the global and local versions. In Section \ref{sec:equivalent} we derive various equivalent infinitesimal formulations of the local conjecture: in terms of a second $L^p$-Minkowski inequality involving mixed-volumes, in terms of a Poincar\'e-type inequality on  $S^{n-1}$ and finally in terms of an equivalent version on $\partial K$. In Section \ref{sec:LK} we obtain an equivalent formulation involving the spectral-gap of the Hilbert--Brunn--Minkowski operator $L_K$, and establish its linear equivariance. In Section \ref{sec:Reilly} we obtain a sufficient condition in terms of a boundary Poincar\'e-type inequality, establishing Theorem \ref{thm:intro-main}. In Section \ref{sec:Steklov} we introduce the second Steklov operator, use it to rewrite our sufficient condition, and calculate its spectrum for $B_2^n$. In Section \ref{sec:unc} we obtain sharp bounds on our boundary Poincar\'e-type inequality for unconditional convex bodies and calculate it precisely for the cube. In Section \ref{sec:Reilly-logconvex} we obtain another sufficient condition for establishing the log-Brunn--Minkowski conjecture, in preparation for Theorem \ref{thm:intro-lq}. In Section \ref{sec:continuity}, we establish the continuity of our boundary Poincar\'e-type inequality, and use this to deduce Theorems \ref{thm:intro-lq}, \ref{thm:intro-cube} and \ref{thm:intro-ellq1}. The application to local uniqueness in the even $L^p$-Minkowski problem is presented in Section \ref{sec:Mink}. The application pertaining to new and improved stability estimates for Minkowski's second inequality, the anisotropic isoperimetric inequality, and the Brunn--Minkowski inequality is presented in Section \ref{sec:stability}.

\bigskip

\section{Notation}  \label{sec:notation}

Given $t \in \Real$ and $p > 0$, we denote:
\[
t^p := \sgn(t) \abs{t}^p . 
\]
If $f,g : X \rightarrow \Real$, we employ the following notation:
\[
f +_p g := \brac{f^p + g^p}^{\frac{1}{p}} . 
\]
More generally, if $t , \alpha, \beta \in \Real$, we write:
\begin{equation} \label{eq:alpha-beta}
t \cdot g := t^{\frac{1}{p}} g ~ \text{ and } ~ \alpha \cdot f +_p \beta \cdot g := \brac{\alpha f^p + \beta g^{p}}^{\frac{1}{p}} , 
\end{equation}
suppressing the dependence of $\cdot$ on $p$. When $p < 0$, we will specify $\frac{1}{p} f^p : X \rightarrow \Real$, even though $f$ itself may not be defined. Given $\frac{1}{p} f^p, \frac{1}{p} g^p : X \rightarrow \Real$, we use (\ref{eq:alpha-beta}) to define $\alpha \cdot f +_p \beta \cdot g$ only for $\alpha,\beta \in \Real$ so that $\alpha f^p + \beta g^{p}$ is positive. The limiting case when $p=0$ only makes sense when $f,g : X \rightarrow (0,\infty)$, and unless otherwise stated, is interpreted throughout this work as:
\[
\frac{1}{p} f^p := \log f ~,~ \alpha \cdot f +_p \beta \cdot g :=  f^{\alpha} g^{\beta} \;\;
\text{ for $p=0$.} 
\]
For instance, note that $\lim_{p \rightarrow 0} \frac{1}{p} (f^p - 1) = \log f$, but instead of using $\frac{1}{p} (f^p-1)$ we use $\frac{1}{p} f^p$ in all statements pertaining to concavity or regularity, as the missing $-\frac{1}{p}$ makes no difference. In a few places where ambiguity may arise, we will use the full $\frac{1}{p} (f^p - 1)$ expression.

\medskip

A convex body in $\Real^n$ is a convex, compact set with non-empty interior. We denote by $\K$ the collection of convex bodies in $\Real^n$ having the origin in their interior.  
The support function $h_K : \Real^n \rightarrow (0,\infty)$ of $K \in \K$ is defined as: 
\[
h_K(y) := \max_{x \in K} \scalar{y,x} \; ~,~ y \in \Real^n . 
\]
It is easy to see that $h_K$ is continuous and convex. Clearly, it is $1$-homogeneous, so we will mostly consider its restriction to the Euclidean unit-sphere $S^{n-1}$. Conversely, a  convex $1$-homogeneous function $h : \Real^n \rightarrow (0,\infty)$ is necessarily a support function of some $K \in \K$ (which is obtained as the polar body to $\set{ h \leq 1}$). Given $f \in C(S^{n-1})$, we will denote:
\[
 K +_p \eps \cdot f := A[h_K +_p \eps \cdot f] .
\]
We will only consider $\eps \in \Real$ so that $h_K +_p \eps \cdot f > 0$, ensuring that $K +_p \eps \cdot f  \in \K$. 

\medskip

As usual, we denote by $C^k(M)$, $k = 0,1,\ldots,m$, the space of $k$-times continuously differentiable functions on a $C^m$-smooth differentiable manifold $M$, equipped with its natural $C^k$-norm. When $k=0$, we simply write $C(M)$. It is known \cite[Theorem 1.8.11]{Schneider-Book} that convergence of elements of $\K$ in the Hausdorff metric is equivalent to convergence of the corresponding support functions in the $C(S^{n-1})$ norm; we will refer to this as $C$-convergence for brevity. We denote by $C^k_{>0}(S^{n-1})$ the convex cone of positive functions in $C^k(S^{n-1})$.  

\medskip

It will be convenient to introduce the following notation for a function $h \in C^2(S^{n-1})$. Given a local orthonormal frame $e_1,\ldots,e_{n-1}$ on $S^{n-1}$, 
we use $h_i$ and $h_{i,j}$ to denote $(\nabla_{S^{n-1}})_{e_i} h$ and $(\nabla_{S^{n-1}})^2_{e_i,e_j} h$, respectively, where $\nabla_{S^{n-1}}$ is the covariant derivative on the sphere $S^{n-1}$ with its canonical Riemannian metric $\delta$. Extending $h$ to a $1$-homogeneous function on $\Real^n$ and denoting by $\nabla_{\Real^n}$ the covariant derivative on Euclidean space $\Real^n$, we define the symmetric $2$-tensor $D^2 h$ on $S^{n-1}$ as the restriction of $\nabla^2_{\Real^n} h$ onto $T S^{n-1}$; in our local orthonormal frame, this reads as:
\[
(D^2 h)_{i,j} = (\nabla_{\Real^n})^2_{e_i,e_j} h = h_{i,j} + h \delta_{i,j} ~,~ i,j = 1,\ldots,n-1 .
\]
The latter identity follows since the second fundamental form of $S^{n-1} \subset \Real^n$ satisfies $\II_{S^{n-1}} = \delta$, and since in general:
\[
\nabla^2_{S^{n-1}} f (\theta) = \nabla^2_{\Real^n} f(\theta)|_{T S^{n-1}} -  f_\theta \II_{S^{n-1}}(\theta),
\]
and $f_\theta = \scalar{\nabla_{\Real^n} f, \theta} = f$ for any $1$-homogeneous function $f$. Note that $h \in C^2_{>0}(S^{n-1})$ is a support-function of $K \in \K$ if and only if $D^2 h_K \geq 0$ as a ($n-1$ by $n-1$) positive semi-definite tensor. 

\medskip

We denote by $\K^m_+$ the subset of $\K$ of convex bodies with $C^m$ boundary and strictly positive curvature. By \cite[pp. 106-107, 111]{Schneider-Book}, for $m \geq 2$, $K \in \K^m_+$ if and only if $h_K \in C^m(S^{n-1})$ and $D^2 h_K > 0$ (as a $n-1$ by $n-1$ positive-definite tensor). 
It is well-known that $\K^2_+$ is dense in $\K$ in the $C$-topology \cite[p. 160]{Schneider-Book}.
We also denote by $C^2_h(S^{n-1}) := \set{ h_K \; ; \; K \in \K^2_+} = \set{ h \in C^2_{>0}(S^{n-1}) \; ; \; D^2 h > 0}$, the convex cone of strictly convex $C^2$ support functions. It is immediate to check that $C^2_h(S^{n-1})$ is open in $C^2(S^{n-1})$ (see e.g. \cite[pp. 38, 111]{Schneider-Book},\cite{ColesantiPoincareInequality}, or simply use that the condition $D^2 h_K > 0$ is open in $C^2(S^{n-1})$). This induces the natural $C^2$-topology on $\K^2_+$, where $K_i \rightarrow K$ in $C^2$ if and only if $h_{K_i} \rightarrow h_K$ in $C^2_h(S^{n-1})$.

\medskip

We will always use $S_e$ to denote the origin-symmetric (or even) members of a set $S$, e.g. $\K_e$ and $\K^2_{+,e}$ denote subset of origin-symmetric bodies in $\K$ and $\K^2_+$, respectively, and $C^2_{h,e}(S^{n-1})$ and $C^2_e(S^{n-1})$ denote the subset of even functions in $C^2(S^{n-1})$ and $C^2_h(S^{n-1})$, respectively. 

\medskip

Finally, we abbreviate throughout this work the phrase ``$L^p$-Brunn--Minkowski" by $p$-BM, and ``log-Brunn--Minkowski" by log-BM.

\bigskip

\section{Global vs. Local Formulations of the $L^p$-Brunn--Minkowski Conjecture}  \label{sec:local-global}

\subsection{Standard Equivalent Global Formulations}

\begin{lem}
The following are equivalent for a given dimension $n\geq 2$ and $p < 1$ (with the usual interpretation when $p=0$):
\begin{enumerate}
\item For all $K,L \in \K_e$, the global $p$-BM conjecture is valid:
\[
 V((1-\lambda) \cdot K +_p \lambda \cdot L) \geq \brac{(1-\lambda) V(K)^{\frac{p}{n}} + \lambda V(L)^{\frac{p}{n}}}^{\frac{n}{p}} \;\;\; \forall \lambda \in [0,1]  .
\]
\item  For all $K,L \in \K_e$, the following function is concave: \[
[0,1] \ni s \mapsto \frac{1}{p} V((1-s) \cdot K+_p s \cdot L)^{\frac{p}{n}} .
\]
\item For all $K,L \in \K_e$, the following function is concave: \[
\Real_+ \ni t \mapsto \frac{1}{p} V(K+_p t \cdot L)^{\frac{p}{n}} .
\]
\end{enumerate}
\end{lem}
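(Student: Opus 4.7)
The plan is to prove the two equivalences (1) $\iff$ (2) and (2) $\iff$ (3) separately. The first relies on a containment/monotonicity argument for Aleksandrov bodies, while the second is essentially a perspective-function computation exploiting the $\frac{1}{p}$-homogeneity of $A[\cdot]$; the case $p=0$ is handled throughout by the standard logarithmic limiting interpretation.

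For (1) $\iff$ (2), the direction (2) $\Rightarrow$ (1) is immediate---evaluating the concavity of $s \mapsto \frac{1}{p} V(K_s)^{p/n}$ at the endpoints $s=0,1$ with weights $1-\lambda,\lambda$, then multiplying by $p$ (reversing the inequality when $p<0$) and raising to the power $n/p$, yields (1). For the nontrivial direction (1) $\Rightarrow$ (2), fix $K,L \in \K_e$ and set $K_s := (1-s) \cdot K +_p s \cdot L = A[w_s]$ with $w_s := ((1-s) h_K^p + s h_L^p)^{1/p}$. I will establish the key containment
\[
 (1-\mu) \cdot K_{s_0} +_p \mu \cdot K_{s_1} \subseteq K_s, \qquad s := (1-\mu) s_0 + \mu s_1,
\]
by combining three facts: $h_{K_{s_j}} \leq w_{s_j}$ by definition of the Aleksandrov body; the $p$-mean is monotone in each argument (trivially for $p>0$, and for $p<0$ by the double reversal of $a \mapsto a^p$ and $x \mapsto x^{1/p}$); and the identity $(1-\mu) \cdot w_{s_0} +_p \mu \cdot w_{s_1} = w_s$ together with the monotonicity of $A[\cdot]$. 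Since Aleksandrov bodies of even Wulff functions are origin-symmetric, $K_{s_0},K_{s_1} \in \K_e$, so (1) applies to them; combining with the volume inequality $V((1-\mu) \cdot K_{s_0} +_p \mu \cdot K_{s_1}) \leq V(K_s)$ yields the required concavity (with appropriate sign bookkeeping when $p<0$).

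For (2) $\iff$ (3), I will factor $(h_K^p + t h_L^p)^{1/p} = (1+t)^{1/p} w_s$ with $s = t/(1+t)$ and use the homogeneity $A[\alpha w] = \alpha A[w]$ for $\alpha>0$ to obtain the volumetric identity $V(K +_p t \cdot L)^{p/n} = (1+t) V(K_s)^{p/n}$. Writing $g(s) := \frac{1}{p} V(K_s)^{p/n}$ and $h(t) := \frac{1}{p} V(K +_p t \cdot L)^{p/n}$, this reads $h(t) = (1+t)\, g(t/(1+t))$ and, inversely, $g(s) = (1-s)\, h(s/(1-s))$ for $s \in [0,1)$. Both identities exhibit $h$ (resp.\ $g$) as the restriction to an affine line of the perspective function $(u,v) \mapsto u\, g(v/u)$ (resp.\ $u\, h(v/u)$) on the cone $\{u>0,\, v \geq 0\}$; since the perspective transform preserves concavity and affine restrictions of concave functions remain concave, the equivalence (2) $\iff$ (3) follows, with the endpoint $s=1$ handled by continuity. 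At $p=0$ the perspective identity takes its logarithmic form, but an entirely analogous argument---alternatively, a direct repetition of the preceding containment argument applied to $L_j := K +_0 t_j \cdot L$ using the pointwise bound $h_{L_0}^{1-\mu} h_{L_1}^{\mu} \leq h_K h_L^{(1-\mu) t_0 + \mu t_1}$---delivers the same conclusion.

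The main technical content is the Aleksandrov-body containment established in the first part: it is the sole place where the Wulff function genuinely diverges from the support function of the interpolated body and where hypothesis (1) enters nontrivially. Everything else reduces to sign bookkeeping and standard perspective-function machinery.
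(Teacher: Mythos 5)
For $p \neq 0$ your argument is correct, and the way you handle statement (3) is genuinely different from the paper's. The paper proves $(1)\Rightarrow(3)$ by the same Aleksandrov-body containment used for $(1)\Rightarrow(2)$, and then gets $(3)\Rightarrow(1)$ by applying (3) to the rescaled bodies $(1-\lambda)\cdot \bar K$ and $\frac{\lambda}{\beta}\cdot \bar L$ at $t=0,\beta,1$ and letting $\beta\to 0$. You instead exploit the exact identity $(h_K^p+t h_L^p)^{1/p}=(1+t)^{1/p}w_s$, $s=t/(1+t)$, $w_s:=((1-s)h_K^p+sh_L^p)^{1/p}$, together with $A[\alpha w]=\alpha A[w]$, to get $V(K+_p t\cdot L)^{p/n}=(1+t)\,V((1-s)\cdot K+_p s\cdot L)^{p/n}$, and then $(2)\Leftrightarrow(3)$ follows from perspective-function concavity in both directions at once, with no limiting argument. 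That is a clean and legitimate alternative (and your $(2)\Rightarrow(1)$ and $(1)\Rightarrow(2)$ steps coincide with the paper's).

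The gap is at $p=0$. There the Wulff function of $K+_0 t\cdot L$ is $h_K h_L^t = w_s^{1+t}$ with $w_s=h_K^{1-s}h_L^{s}$, $s=t/(1+t)$: a \emph{power} of $w_s$, not a dilate, and $A[\cdot]$ does not commute with powers ($A[w^{1+t}]$ is not determined by $A[w]$ — equivalently, $(a,b)\mapsto \log V(A[h_K^a h_L^b])$ is not $1$-homogeneous). So the claimed ``logarithmic form'' of the perspective identity simply does not exist, and the perspective machinery for $(2)\Leftrightarrow(3)$ breaks down. Your fallback — the pointwise bound $h_{L_0}^{1-\mu}h_{L_1}^{\mu}\leq h_K h_L^{(1-\mu)t_0+\mu t_1}$ for $L_j=K+_0 t_j\cdot L$ — is a containment argument whose hypothesis is (1), so it only yields $(1)\Rightarrow(3)$ (hence $(2)\Rightarrow(3)$); it gives nothing in the direction $(3)\Rightarrow(1)$ or $(3)\Rightarrow(2)$. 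Thus at $p=0$ the implication $(3)\Rightarrow(1)$ is not established by your proposal, and it needs a separate argument — e.g.\ a suitably interpreted version of the paper's rescaling-and-limit argument (applying (3) to modified bodies and passing to a limit), since no exact volumetric identity between the one-parameter family $t\mapsto K+_0 t\cdot L$ and the segment $s\mapsto (1-s)\cdot K+_0 s\cdot L$ is available.
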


The derivation is completely standard when $p \geq 1$, since in that case the support and Wullf functions of $K  +_{p} t \cdot L$ coincide, and hence:
\[
(1-\lambda) \cdot (K +_p t_1 \cdot L) +_p \lambda \cdot  (K+_p t_2 \cdot L) = K +_p ((1-\lambda) t_1 + \lambda t_2) \cdot L .
\]
When $p < 1$, there is in general only a one-sided containment, which fortunately goes in the right direction for us.

\begin{proof}
Clearly (2) implies (1) by checking concavity at the three points $s=0,\lambda,1$. 
To show that (1) implies (2), let $s_0,s_1 \in [0,1]$, set $s_\lambda = (1-\lambda) s_0 + \lambda s_1$. When $p < 1$, we only have:
\[
(1-\lambda) \cdot ((1-s_0) \cdot K +_p s_0 \cdot L) +_p \lambda \cdot  ((1-s_1) \cdot K+_p s_1 \cdot L) \subset (1-s_\lambda) \cdot K +_p s_\lambda\cdot L  .
\]
Indeed, the support function of $(1-s_i) \cdot K +_p s_i \cdot L$ on the left is not larger than the Wulff function $((1-s_i) h_K^p + s_i h_L^p)^{1/p}$, and hence we have the above inequality for the corresponding Wulff functions, and hence for the associated Aleksandrov bodies. Applying $\frac{1}{p} V(\cdot)^{p/n}$ to both sides and invoking (1), the desired concavity (2) is established. An identical argument also shows that (1) implies (3). 

To show that (3) implies (1), apply (3) to the bodies $K = (1-\lambda) \cdot \bar K$ and $L = \frac{\lambda}{\beta} \cdot \bar L$ at the points $t = 0,\beta,1$:
\[
V((1-\lambda) \cdot \bar K +_p \lambda \cdot \bar L)\geq \brac{(1-\beta) V((1-\lambda) \cdot \bar K)^{\frac{p}{n}} + \beta V((1-\lambda) \cdot \bar K +_p \frac{\lambda}{\beta} \cdot \bar L)^{\frac{p}{n}}}^{\frac{n}{p}} .
\]
Then letting $\beta \rightarrow 0$, it is easy to see by monotonicity and homogeneity that $\beta V((1-\lambda) \cdot \bar K +_p \frac{\lambda}{\beta} \cdot \bar L)^{p/n} \rightarrow \lambda V(\bar L)^{p/n}$, 
and we obtain in the limit:
\[
V((1-\lambda) \cdot \bar K +_p \lambda \cdot \bar L) \geq  \brac{(1-\lambda) V(\bar K)^{\frac{p}{n}} + \lambda V(\bar L)^{\frac{p}{n}}}^{\frac{n}{p}} .
\]
\end{proof}

\begin{rem} \label{rem:secant}
It is worth mentioning an alternative argument for showing that (3) implies (1): the concavity implies that the derivative at $t=0$ is larger than the secant slope as $t \rightarrow \infty$, i.e.:
\[
\frac{1}{n} \left . V(K)^{\frac{p}{n}-1} \frac{d}{dt} \right |_{t=0+} V(K +_p t \cdot L) \geq \lim_{t\rightarrow \infty} \frac{\frac{1}{p} V(K +_p t \cdot L)^{\frac{p}{n}} - \frac{1}{p}V(K)^{\frac{p}{n}}}{t} = \frac{1}{p} V(L)^{\frac{p}{n}} .
\]
Equivalently, using the $L^p$-mixed-volume $V_p(K,L)$ introduced by Lutwak \cite{Lutwak-Firey-Sums}:
\begin{equation} \label{eq:Vp}
V_p(K,L) := \frac{p}{n} \lim_{t \rightarrow 0+} \frac{V(K +_p t \cdot L) - V(K)}{t}  ,
\end{equation}
we have:
\begin{equation} \label{eq:1stMink}
\frac{1}{p} \brac{V_p(K,L) - V(K)} \geq  V(K) \frac{1}{p} \brac{\brac{\frac{V(L)}{V(K)}}^{\frac{p}{n}} -1 },
\end{equation}
with the case $p=0$ interpreted in the limiting sense. 
The latter is precisely the first $p$-Minkowski inequality, which has been shown by B\"{o}r\"{o}czky--Lutwak--Yang--Zhang in \cite{BLYZ-logBMInPlane} when $p \geq 0$ to be equivalent to the global $p$-BM inequality (1). Their proof extends to $p < 0$ with appropriate modifications. 
\end{rem}

\subsection{Global vs. Local $L^p$-Brunn--Minkowski}

We now introduce the following local version of the global $p$-BM conjecture.

\begin{conj}[Local $L^p$-Brunn--Minkowski conjecture]
Let $n \geq 2$ and $p \in [0,1)$. For all $K \in \K^2_{+,e}$:
\begin{equation} \label{eq:local-p-BM}
\forall \; \frac{1}{p} f^p \in C^{2}_{e}(S^{n-1}) \;\;\; \left . \frac{d^2}{(d\eps)^2} \right |_{\eps=0} \frac{1}{p} V(K +_p \eps \cdot f)^{\frac{p}{n}} \leq 0 .
\end{equation}
Recall that when $p=0$, this is interpreted as:
\begin{equation} \label{eq:local-log-BM}
\forall \text{ positive } f \in C^{2}_{e}(S^{n-1}) \;\;\; \left . \frac{d^2}{(d\eps)^2} \right |_{\eps=0} \log V(K +_0 \eps \cdot f) \leq 0 . 
\end{equation}
\end{conj}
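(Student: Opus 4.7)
The plan is to reduce the second-variation inequality (\ref{eq:local-p-BM}) to a spectral-gap estimate for a second-order elliptic operator on $S^{n-1}$ attached to $K$, and then to verify this spectral estimate using integral identities of Bochner--Reilly type.

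\medskip

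First I would compute the second $\eps$-derivative of $\psi(\eps) := \frac{1}{p} V(K +_p \eps \cdot f)^{p/n}$ at $\eps = 0$ explicitly. For $K \in \K^2_{+,e}$ and $\eps$ small, the Wulff function $h_K +_p \eps \cdot f$ is itself a support function (since $C^2_h(S^{n-1})$ is open in $C^2$), so the Aleksandrov body operation is redundant and one may differentiate the volume functional $V(A[\cdot])$ directly in the smooth category. Aleksandrov's first-variation formula identifies $\frac{d}{d\eps}|_{\eps=0} V(K +_p \eps \cdot f)$ with an integral of $\frac{1}{p} f^p$ against the $L^p$-surface-area measure $dS_{K,p} = h_K^{1-p} dS_K$; a second differentiation produces a quadratic form in the variation which, after the natural substitution $u := f^p / h_K^p$ (subtracting its mean with respect to $h_K \, dS_K$), should rearrange to a Poincar\'e-type inequality of the form
\[
\int_{S^{n-1}} u \, (-L_K u) \, h_K \, dS_K \;\geq\; \frac{n-p}{n-1} \int_{S^{n-1}} u^2 \, h_K \, dS_K
\]
for every even $u \in C^2_e(S^{n-1})$ of zero mean. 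Here $L_K$ is the Hilbert--Brunn--Minkowski operator from the introduction. This converts the local conjecture into the uniform even spectral-gap bound $\lambda_{1,e}(-L_K) \geq (n-p)/(n-1)$ announced in (\ref{eq:intro-lambda1}).

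\medskip

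Next I would bound $\lambda_{1,e}(-L_K)$ from below. The trivial eigenvalue $\lambda_1(-L_K) = 1$ is generated by normalized linear functions coming from translations of $K$; for origin-symmetric $K$ these are odd, so restricting to even test functions formally removes them from the spectrum, leaving room to push the next eigenvalue strictly above $1$. The tool I would reach for is a weighted Reilly formula on $K$: given an even eigenfunction $\phi$ on $S^{n-1}$, transfer it to $\partial K$ via the Gauss map, extend it inward as a harmonic function against a well-chosen log-convex density, and balance the boundary term in the Reilly identity against the Hessian and Ricci terms inside, using that the second fundamental form $\II_{\partial K} \succeq 0$ by convexity of $K$. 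The outcome is to control $\lambda_{1,e}(-L_K)$ by a boundary Poincar\'e-type constant for harmonic functions on $K$, which can then be estimated by geometric methods.

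\medskip

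The hard part will be that the Reilly-type argument does not deliver a dimension-free constant uniformly in $K$: a uniform bound of the form $\lambda_{1,e}(-L_K) \geq n/(n-1)$ for all $K \in \K^2_{+,e}$, which is the $p = 0$ case of the conjecture, is equivalent (via the expected continuity of $\lambda_{1,e}$ under Hausdorff convergence) to the cube $B_\infty^n$ being a global minimizer of this linearly invariant spectral parameter --- a delicate variational claim with no obvious soft proof. I therefore expect the plan to succeed in full only in a restricted range of $p$, roughly $p \in [1 - c/n^{3/2}, 1)$, obtained by combining the Reilly reduction with the Lee--Vempala bound on the KLS constant of $K$ placed in isotropic position. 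For bodies with enough symmetry, such as $B_q^n$ with $q \in [2, \infty]$ where the Poincar\'e constant of $K$ is well understood, one may hope to push the argument all the way down to $p = 0$.
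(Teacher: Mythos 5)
The statement you are trying to prove is a \emph{conjecture}: the paper itself does not prove it for all $K \in \K^2_{+,e}$ and all $p \in [0,1)$, and neither does your plan --- as you yourself concede in the last paragraph. Your reduction is exactly the paper's: expanding $\eps \mapsto \frac{1}{p}V(K+_p\eps\cdot f)^{p/n}$ using that the Wulff function stays a support function for small $\eps$, substituting $z = \frac{1}{p}f^p/h_K^p$, and rewriting the second variation as the even spectral-gap bound $\lambda_{1,e}(-L_K)\geq\frac{n-p}{n-1}$ (Propositions \ref{prop:2nd-p-Minkowski}, \ref{prop:LK-SG}, Corollary \ref{cor:LK-SG}); likewise the Reilly-formula route through a boundary Poincar\'e constant for harmonic functions, estimated in isotropic position via Lee--Vempala, is precisely how the paper obtains Theorem \ref{thm:main}. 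So what you have is a correct proof \emph{of the partial results}, not of the statement: the genuine gap is the uniform bound $\lambda_{1,e}(-L_K)\geq\frac{n-p}{n-1}$ for $p$ below $1-\frac{c}{n^{3/2}}$, and nothing in your outline closes it. Concretely, the Cauchy--Schwarz step inside the Reilly reduction (Theorem \ref{thm:sufficient}) is provably lossy and breaks linear invariance --- for $B_2^n$ it yields only $\B(B_2^n)=\frac{2}{n+2}$, i.e.\ $p=-\frac n2$, versus the true $\lambda_{1,e}(-L_{B_2^n})=\frac{2n}{n-1}$, i.e.\ $p=-n$ --- and for a general body one only knows $\D(K)\lesssim n^{3/2}$ in isotropic position, far from the threshold $\B\leq\frac{1}{1-p}$ needed at $p=0$.

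Two smaller inaccuracies in the sketch are worth flagging. First, you conflate the two Reilly arguments: the main theorem uses plain harmonic extension (Lebesgue measure), while the log-convex density $e^{w(\norm{x}_K)}$ enters only for the $B_q^n$, $q>2$, result (Theorem \ref{thm:dual-BL}), and even there one needs $n\geq n_q$, not all dimensions. Second, the ``expected continuity of $\lambda_{1,e}$ under Hausdorff convergence'' you invoke for the cube is not available; the paper only has continuity of $\lambda_{1,e}(-L_K)$ in the $C^2$ topology and continuity of $\B,\BNH,\D$ in the $C$ topology, defines $\lambda_{1,e}$ at non-smooth bodies by a liminf, and its cube statement (Theorem \ref{thm:lambda1-cube}, hence Theorem \ref{thm:intro-cube}) is the limiting extremal identity $\lambda_{1,e}(B_\infty^n)=\frac{n}{n-1}$ with exponents $p_i\to 0$ --- it does not ``push the argument all the way down to $p=0$'' for any actual neighborhood of the cube. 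The reformulation of the full conjecture as the cube minimizing the linearly invariant parameter $\lambda_{1,e}$ is, in the paper as in your plan, motivation rather than proof.
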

\noindent
Whenever referring to (\ref{eq:local-p-BM}) with $p=0$, we will always interpret this as (\ref{eq:local-log-BM}). It follows from the results of this work (see Theorem \ref{thm:lambda1-cube}) that for a fixed $p < 0$, (\ref{eq:local-p-BM}) cannot hold for all $K \in \K^2_{+,e}$, but nevertheless for a particular $K$, (\ref{eq:local-p-BM}) may hold with $p = p_K < 0$. 

\medskip

The following is standard:
\begin{lem}[Global $p$-BM implies Local $p$-BM for given $K \in \K^2_{+,e}$] \label{lem:global2local}
Fix $p \in \Real$ and $K \in \K^2_{+,e}$. Assume that the global $p$-BM conjecture holds for $K$, namely:
\[
[0,1] \ni \lambda \mapsto \frac{1}{p} V((1-\lambda) \cdot K+_p \lambda \cdot L)^{\frac{p}{n}} \text{ is concave }  \;\; \forall L \in N_K ,
\]
for some $C^2$-neighborhood $N_K$ of $K$ in $\K^2_{+,e}$. Then the local $p$-BM conjecture (\ref{eq:local-p-BM}) holds for $K$. 
\end{lem}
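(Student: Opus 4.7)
The plan is to realize the one-parameter curve $\eps \mapsto K +_p \eps \cdot f$ appearing in (\ref{eq:local-p-BM}) as a genuine $L^p$-interpolation $\lambda \mapsto (1-\lambda) \cdot K +_p \lambda \cdot L$ between $K$ and a single fixed body $L \in N_K$, and then apply the assumed concavity to control the second variation at the origin. Given $f$ with $\frac{1}{p} f^p \in C^2_e(S^{n-1})$ (positive $f \in C^2_e(S^{n-1})$ in the limiting case $p=0$), the natural candidate is
\[
L_t \;:=\; K +_p t \cdot f \;=\; A[h_K +_p t \cdot f]
\]
for a small parameter $t > 0$, to be chosen below.

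First I would verify that $L_t \in N_K \cap \K^2_{+,e}$ for $t$ sufficiently small. Since $h_K \in C^2_h(S^{n-1})$, and $C^2_h(S^{n-1})$ is open in $C^2(S^{n-1})$ (Section \ref{sec:notation}), and the map $t \mapsto h_K +_p t \cdot f$ is smooth into $C^2(S^{n-1})$ near $t=0$, the Wulff function $h_K +_p t \cdot f$ remains a strictly convex, smooth, even support function for $|t|$ small. Hence $L_t \in \K^2_{+,e}$ with $h_{L_t} = h_K +_p t \cdot f$, and after possibly shrinking $t$ further, $L_t \in N_K$.

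The key algebraic identity is
\[
(1-\lambda) \cdot K +_p \lambda \cdot L_t \;=\; K +_p (\lambda t) \cdot f \qquad \forall \, \lambda \in [0,1] ,
\]
verified by computing the defining Wulff function of the left-hand side: using $h_{L_t}^p = h_K^p + t f^p$, we get
\[
\bigl((1-\lambda) h_K^p + \lambda h_{L_t}^p\bigr)^{1/p} = \bigl(h_K^p + \lambda t \, f^p\bigr)^{1/p} = h_K +_p (\lambda t) \cdot f
\]
(with the standard limiting interpretation $h_K f^{\lambda t}$ when $p=0$). For $t$ small and all $\lambda \in [0,1]$, this Wulff function again lies in $C^2_h(S^{n-1})$ and hence coincides with the support function of its Aleksandrov body, so no collapse occurs. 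Setting $\psi(s) := \frac{1}{p} V(K +_p s \cdot f)^{p/n}$ (respectively $\log V(K +_0 s \cdot f)$), the hypothesis applied to the admissible body $L_t \in N_K$ now gives concavity of $\lambda \mapsto \psi(\lambda t)$ on $[0,1]$, hence concavity of $\psi$ on $[0, t]$.

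Finally, since $s \mapsto h_K +_p s \cdot f$ is smooth into $C^2_h(S^{n-1})$ on a two-sided neighborhood of $s=0$, the standard smoothness of volume as a functional on $C^2_h(S^{n-1})$ shows $\psi \in C^2$ near $0$, and concavity on $[0, t]$ then forces $\psi''(s) \le 0$ on $(0,t)$ and hence $\psi''(0) \le 0$ by continuity. This is precisely (\ref{eq:local-p-BM}). The only genuine subtlety in the scheme is the identification $h_{L_t} = h_K +_p t \cdot f$ rather than a strict inequality; this is where the smoothness and strict positive curvature of $K$, together with the smoothness of $f$, are used, via the openness of $C^2_h(S^{n-1})$ in $C^2(S^{n-1})$. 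For non-smooth $K$ or rough $f$, the Aleksandrov body can strictly shrink the Wulff function and break this clean algebraic identity, which is exactly why the (conjecturally equivalent) converse passage from local to global is much more delicate, as emphasized in the introduction.
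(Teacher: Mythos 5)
Your proposal is correct and follows essentially the same route as the paper's proof: choose a small $t$ (the paper's $\eps_0$) so that $L_t = K +_p t\cdot f$ has Wulff function in $C^2_{h,e}(S^{n-1})$ and lies in $N_K$, use the identity $h_K +_p (\lambda t)\cdot f = (1-\lambda)\cdot h_K +_p \lambda\cdot h_{L_t}$ to recognize the curve as the $L^p$-interpolation with $L_t$, and then read off the non-positivity of the second derivative at $0$ from the assumed concavity together with the $C^2$-smoothness of $\eps \mapsto V(K +_p \eps\cdot f)$. No gaps.
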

\begin{proof}
We know that $h_K \in C^2_{h,e}(S^{n-1})$ since $K \in \K^2_{+,e}$. 
Let $\frac{1}{p} f^p \in C^2_{e}(S^{n-1})$, and consider the Wulff function $w_\eps := h_K +_p \eps \cdot f = (h_{K}^p + \eps f^p)^{1/p}$, which is in $C^2_e(S^{n-1})$ for small enough $\abs{\eps}$ since $h_K$ is strictly positive. As $w_\eps$ is a $C^2$ perturbation of $h_K$ and since $C^2_{h,e}(S^{n-1})$ is open in $C^2_e(S^{n-1})$ and locally convex, it follows
that  for small enough $\eps_0 > 0$,  $w_\eps$ is the support function of a convex body $K_\eps = K+_p  \eps \cdot f \in N_K$, for all $\eps \in [0,\eps_0]$.

Since $w_{\lambda \eps_0} = (1-\lambda) \cdot h_K +_p \lambda \cdot w_{\eps_0}$ for all $\lambda \in [0,1]$, it follows that $K_{\lambda \eps_0} = (1-\lambda) \cdot K +_p \lambda \cdot K_{\eps_0}$. Our global assumption (with $L = K_{\eps_0}$) therefore implies that $[0,1] \ni \lambda \mapsto \frac{1}{p} V(K_{\lambda \eps_0})^{p/n}$ is concave, and in particular, its second derivative at $\lambda=0$ is non-positive, yielding (\ref{eq:local-p-BM}). Note that the function $[-\eps_0,\eps_0] \ni \eps \mapsto V(K +_p \eps \cdot f)$ is indeed in $C^2$, as witnessed by writing an explicit differential formula for the volume in terms of the support function (as we shall do in the next section). 
\end{proof}

We also have the following \emph{conditional} converse:

\begin{lem}[Local $p$-BM implies Global $p$-BM assuming geodesic in $\K^2_{+,e}$] \label{lem:loc-to-global}
Fix $p \in \Real$ and $K_0,K_1 \in \K^2_{+,e}$. Assume that:
\begin{equation} \label{eq:smooth-geodesic}
\forall t \in [0,1] \;\;\; K_t := (1-t) \cdot K_0 +_p t \cdot K_1 \text{ is in } \K^2_{+,e} 
\end{equation}
If the local $p$-BM conjecture (\ref{eq:local-p-BM}) holds for $K_t$ for all $t \in [0,1]$, then the global $p$-BM conjecture holds between $K_{t_0}$ and $K_{t_1}$ for all $t_0,t_1 \in [0,1]$:
\begin{equation} \label{eq:global-concavity}
V((1-\lambda) \cdot K_{t_0} + \lambda \cdot K_{t_1}) \geq \brac{(1-\lambda) V(K_{t_0})^{\frac{p}{n}} + \lambda V(K_{t_1})^{\frac{p}{n}}}^{\frac{n}{p}} \;\;\; \forall \lambda \in [0,1] .
\end{equation}
\end{lem}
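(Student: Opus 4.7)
The plan is to reduce the global statement to a pointwise second-derivative check along the curve $\set{K_t}_{t \in [0,1]}$. Set
\[
\phi(t) := \frac{1}{p} V(K_t)^{\frac{p}{n}} , \quad t \in [0,1] .
\]
Since $h_{K_t}^p = (1-t) h_{K_0}^p + t h_{K_1}^p$ is affine in $t$, a direct computation yields $(1-\lambda) h_{K_{t_0}}^p + \lambda h_{K_{t_1}}^p = h_{K_{t_\lambda}}^p$, where $t_\lambda := (1-\lambda) t_0 + \lambda t_1$, and invoking the hypothesis (\ref{eq:smooth-geodesic}) that $K_{t_\lambda} \in \K^2_{+,e}$ identifies $(1-\lambda) \cdot K_{t_0} +_p \lambda \cdot K_{t_1} = K_{t_\lambda}$. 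Thus (\ref{eq:global-concavity}) amounts to the three-point concavity of $\phi$ at $t_0, t_\lambda, t_1$, and it suffices to show $\phi \in C^2([0,1])$ with $\phi''(t) \leq 0$ everywhere on $[0,1]$.

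For the regularity of $\phi$, I will use that $t \mapsto h_{K_t} = ((1-t) h_{K_0}^p + t h_{K_1}^p)^{1/p}$ is smooth as a map into $C^2(S^{n-1})$ (the base stays strictly positive because each $K_t \in \K^2_+$), together with the standard representation of $V(K)$ as an integral over $S^{n-1}$ involving $h_K$ and $\det(D^2 h_K)$, which is a smooth functional on $C^2_h(S^{n-1})$. Hence $V(K_t)$, and then $\phi$, is $C^2$ in $t$. For the pointwise second-derivative bound, fix $t_* \in [0,1]$ and define the perturbation by
\[
\frac{1}{p} f^p := \frac{1}{p}\brac{h_{K_1}^p - h_{K_0}^p} \in C^2_e(S^{n-1}) ,
\]
interpreted for $p=0$ as $\log h_{K_1} - \log h_{K_0}$. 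Then
\[
h_{K_{t_*}}^p + \eps f^p = (1 - t_* - \eps) h_{K_0}^p + (t_* + \eps) h_{K_1}^p = h_{K_{t_* + \eps}}^p ,
\]
so the Wulff function $(h_{K_{t_*}}^p + \eps f^p)^{1/p}$ coincides with the genuine support function of $K_{t_* + \eps} \in \K^2_{+,e}$, yielding $K_{t_*} +_p \eps \cdot f = K_{t_* + \eps}$ for all $\eps$ with $t_* + \eps \in [0,1]$. Consequently $\phi(t_* + \eps) = \frac{1}{p} V(K_{t_*} +_p \eps \cdot f)^{p/n}$, and differentiating twice at $\eps = 0$ and applying the local $p$-BM hypothesis (\ref{eq:local-p-BM}) at $K_{t_*}$ with test function $f$ gives $\phi''(t_*) \leq 0$.

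The only delicate bookkeeping is to verify that $\frac{1}{p} f^p$ is an admissible test function and that the $p$-sum perturbation $K_{t_*} +_p \eps \cdot f$ truly coincides with the curve point $K_{t_* + \eps}$ for small $\eps$; both are immediate consequences of the crucial hypothesis (\ref{eq:smooth-geodesic}) that the entire $p$-geodesic stays inside $\K^2_{+,e}$, which ensures that no Aleksandrov-body truncation occurs and that each relevant Wulff function is already a genuine support function. Once this local-to-global identification is in place, the lemma follows at once from the pointwise hypothesis and the standard fact that a $C^2$ function on $[0,1]$ with non-positive second derivative is concave. I do not foresee any substantive obstacle beyond this identification.
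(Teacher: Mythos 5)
Your route is the same as the paper's: identify $h_{K_t}$ with the interpolant $((1-t)h_{K_0}^p + t\, h_{K_1}^p)^{1/p}$, note that then $(1-\lambda)\cdot K_{t_0} +_p \lambda \cdot K_{t_1} = K_{t_\lambda}$, and obtain concavity of $\phi(t) = \frac{1}{p}V(K_t)^{p/n}$ by testing its second derivative at each $t_*$ via the local $p$-BM hypothesis at $K_{t_*}$ with the test function $\frac{1}{p}f^p = \frac{1}{p}(h_{K_1}^p - h_{K_0}^p)$. That part, including the regularity of $\phi$ and the endpoint bookkeeping, is fine and matches the paper.

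The gap is precisely the step you dismiss as ``immediate'': that hypothesis (\ref{eq:smooth-geodesic}) ensures ``no Aleksandrov-body truncation occurs,'' i.e.\ that the Wulff function $w_t := ((1-t)h_{K_0}^p + t\,h_{K_1}^p)^{1/p}$ equals $h_{K_t}$ (you in fact assert $h_{K_t}^p = (1-t)h_{K_0}^p + t h_{K_1}^p$ at the outset, before invoking the hypothesis at all). For $p<1$ the function $w_t$ need not be a support function, and $K_t = A[w_t]$ only satisfies $h_{K_t} \leq w_t$; there is no a priori reason why the Aleksandrov body of a Wulff function that is \emph{not} a support function could not still have a $C^2$ boundary with positive curvature, so smoothness of $K_t$ does not obviously rule out truncation. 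This is exactly the crux of the lemma — the paper stresses it ``is not as obvious as it may seem'' — and it is supplied by Aleksandrov's lemma (\cite[Lemma 6.5.1]{Schneider-Book}, Corollary \ref{cor:Aleksandrov} in the paper): at any direction $\theta$ with $h_{K_t}(\theta) < w_t(\theta)$, every contact point of $\partial K_t$ with the hyperplane $\set{x : \scalar{x,\theta} = h_{K_t}(\theta)}$ carries at least two outer normals, so $\partial K_t$ fails to be $C^1$ there; hence $K_t \in \K^2_{+,e}$ forces $w_t = h_{K_t}$. Without this ingredient your identification $K_{t_*} +_p \eps \cdot f = K_{t_*+\eps}$ — and with it the conclusion $\phi''(t_*) \leq 0$ — is unsupported (one only gets an inclusion, and in the unfavorable direction); with it, your argument coincides with the paper's proof.
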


This is not as obvious as it may seem. The proof crucially relies on a classical observation of Aleksandrov \cite[Lemma 6.5.1]{Schneider-Book}:
\begin{lem}[Aleksandrov]
If $K = A[w]$ for some $w \in C_{>0}(S^{n-1})$, then for any $\theta \in S^{n-1}$ so that $h_K(\theta) < w(\theta)$, $\partial K$ has at least two normals at any contact point $x_0 \in \partial K \cap \set{x \in \Real^n \; ; \;\scalar{x,\theta} = h_K(\theta)}$; in particular, $\partial K$ is not $C^1$ smooth at $x_0$. 
\end{lem}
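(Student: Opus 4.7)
The plan is to argue by contradiction: I would assume that $\theta$ is the \emph{unique} outer unit normal to $K$ at the contact point $x_0$, and then construct a point strictly beyond $x_0$ in the direction $\theta$ which nevertheless lies in $A[w]$. This will contradict $K = A[w]$, since $\theta$ being an outer normal to $K$ at $x_0$ forces $\langle x_0 + \delta \theta, \theta \rangle = h_K(\theta) + \delta > h_K(\theta) = \max_{x \in K} \langle x, \theta \rangle$, so that $x_0 + \delta \theta \notin K$ for every $\delta > 0$.

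The construction will proceed by splitting the sphere $S^{n-1}$ into two regions via a neighborhood $U$ of $\theta$. First, the hypothesis $h_K(\theta) < w(\theta)$ combined with continuity of $h_K$ and $w$ supplies a neighborhood $U \ni \theta$ and a number $\varepsilon > 0$ with $h_K(\theta') + \varepsilon \leq w(\theta')$ for all $\theta' \in U$. Second, on the complement $S^{n-1} \setminus U$, the uniqueness assumption translates (by the defining relation $\langle x_0, \nu \rangle = h_K(\nu)$ of an outer normal) into the pointwise strict inequality $\langle x_0, \theta' \rangle < h_K(\theta')$ for every $\theta' \neq \theta$. Since $S^{n-1} \setminus U$ is compact and the gap $h_K(\theta') - \langle x_0, \theta' \rangle$ is continuous, I would upgrade this pointwise inequality to a uniform bound $\langle x_0, \theta' \rangle \leq h_K(\theta') - \varepsilon$ on $S^{n-1} \setminus U$ (shrinking $\varepsilon$ if needed). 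A direct estimate of $\langle x_0 + \delta \theta, \theta' \rangle$ for $\delta \in (0,\varepsilon)$ then yields $\langle x_0 + \delta \theta, \theta' \rangle \leq w(\theta')$ on both pieces of the sphere, so $x_0 + \delta \theta \in A[w] = K$, which is the desired contradiction.

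The main subtlety I anticipate is the compactness upgrade of the pointwise inequality on $S^{n-1} \setminus \{\theta\}$ to a uniform gap away from a neighborhood of $\theta$; this is the step in which the assumed uniqueness of the outer normal at $x_0$ enters in an essential way, and it is also the only place where the contact structure of $x_0$ is used beyond its definition. Once the contradiction is reached, one concludes that at least two distinct outer unit normals must exist at $x_0$, and the final assertion that $\partial K$ is not $C^1$ at $x_0$ is immediate: at a $C^1$ boundary point the outward unit normal is uniquely determined by the tangent hyperplane, which precludes the existence of two distinct outer normals.
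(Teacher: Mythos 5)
Your argument is correct: assuming $\theta$ is the unique outer unit normal at $x_0$, the uniform gap $\langle x_0,\theta'\rangle \le h_K(\theta')-\varepsilon$ on $S^{n-1}\setminus U$ (by compactness) together with $h_K+\varepsilon\le w$ on $U$ and $h_K\le w$ everywhere indeed forces $x_0+\delta\theta\in A[w]=K$ for small $\delta>0$, contradicting $\langle x_0+\delta\theta,\theta\rangle>h_K(\theta)$. The paper does not prove this lemma but cites it from Schneider's book, and your proof is essentially the classical argument given there, so there is nothing to flag.
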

The usual application of this lemma is to deduce that the set of contact points corresponding to bad directions $\theta$ as above has zero $(n-1)$-dimensional Hausdorff measure $\H^{n-1}$, since $\partial K$ is (twice) differentiable $\H^{n-1}$ almost-everywhere. We will only require the following:

\begin{cor} \label{cor:Aleksandrov}
Let $K_0,K_1 \in \K^2_+$, $p \in \Real$ and $t \in [0,1]$. Denote $w_t := (1-t) \cdot h_{K_0} +_p t \cdot h_{K_1} \in C^2(S^{n-1})$, and set $K_t := A[w_t]$. Then:
\[
K_t \in \K^2_+  \;\; \Leftrightarrow \;\; w_t = h_{K_t} \;\; \Leftrightarrow \;\; \text{$w_t$ is a support function} .
\]
\end{cor}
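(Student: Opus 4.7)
The plan is a short cycle of three implications, with the middle equivalence being essentially formal and the outer ones requiring more work. First, the equivalence $w_t = h_{K_t} \Leftrightarrow w_t$ is a support function is immediate from the definition of the Aleksandrov body: if $w_t = h_L$ for some convex body $L$, uniqueness of the support function together with $A[h_L] = L$ forces $L = A[w_t] = K_t$, whence $w_t = h_{K_t}$; the converse direction is trivial.

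For the implication $K_t \in \K^2_+ \Rightarrow w_t = h_{K_t}$, I would apply the preceding Aleksandrov Lemma directly. If some $\theta_0 \in S^{n-1}$ satisfied $w_t(\theta_0) > h_{K_t}(\theta_0)$, any contact point $x_0 \in \partial K_t$ in the supporting hyperplane $\set{x \in \Real^n \; ; \; \scalar{x,\theta_0} = h_{K_t}(\theta_0)}$ would admit multiple outer normals to $K_t$, violating the $C^1$-smoothness of $\partial K_t$ guaranteed by $K_t \in \K^2_+$.

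For the remaining direction, ``$w_t$ is a support function implies $K_t \in \K^2_+$'', the plan is to compute $D^2 w_t$ explicitly and verify strict positivity. Twice differentiating the defining identity $w_t^p = (1-t) h_{K_0}^p + t h_{K_1}^p$ on $S^{n-1}$ (with the $p=0$ case handled analogously via $\log w_t = (1-t) \log h_{K_0} + t \log h_{K_1}$), and using $D^2 h = \nabla^2_{S^{n-1}} h + h \cdot I$, should yield the identity
\[
\frac{D^2 w_t}{w_t} \;=\; \alpha \, \frac{D^2 h_{K_0}}{h_{K_0}} \;+\; \beta \, \frac{D^2 h_{K_1}}{h_{K_1}} \;+\; (p-1)\, \alpha \beta \, (u_0 - u_1) \otimes (u_0 - u_1),
\]
where $\alpha := (1-t) h_{K_0}^p / w_t^p$, $\beta := t\, h_{K_1}^p / w_t^p$ (so $\alpha + \beta = 1$), and $u_i := \nabla h_{K_i} / h_{K_i}$. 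Since $K_0, K_1 \in \K^2_+$, the first two summands on the right are strictly positive-definite tensors carrying positive weights for $t \in (0,1)$. When $p \geq 1$, the rank-one correction is non-negative, so $D^2 w_t > 0$ holds unconditionally, making all three conditions of the corollary simultaneously true (recovering Firey's classical fact that $L^p$-sums preserve $\K^2_+$ in this range).

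The main obstacle will be the range $p < 1$, where the rank-one correction becomes non-positive. Here the hypothesis that $w_t$ is a support function yields only $D^2 w_t \geq 0$, which must be upgraded to strict positivity. Because the correction is rank-one along the direction $u_0 - u_1$, a zero eigenvalue of $D^2 w_t$ could only appear along this direction and would require an exact cancellation between the strictly positive first two summands and the rank-one subtraction; the delicate step will be to exclude this borderline equality, using the strict positivity of the $D^2 h_{K_i}$ and the compactness of $S^{n-1}$, thereby closing the chain of equivalences and yielding $K_t \in \K^2_+$.
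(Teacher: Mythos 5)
Your treatment of the middle equivalence and of the implication $K_t \in \K^2_+ \Rightarrow w_t = h_{K_t}$ coincides with the paper's proof: the latter is exactly Aleksandrov's lemma combined with $h_{K_t} \leq w_t$, and the former is the formal uniqueness argument. The divergence is in the remaining implication ``$w_t$ is a support function $\Rightarrow K_t \in \K^2_+$''. The paper's proof of this direction contains no curvature computation whatsoever: once $w_t$ is a support function it must coincide with $h_{K_t}$, hence $h_{K_t} \in C^2(S^{n-1})$, and that identification is the entire content being established and used. Your plan instead hinges on upgrading $D^2 w_t \geq 0$ to $D^2 w_t > 0$ for $p<1$ via the (correctly derived) rank-one identity, and you explicitly leave precisely that upgrade open, hoping that strict positivity of the $D^2 h_{K_i}$ plus compactness of $S^{n-1}$ excludes a degenerate point. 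That is the gap, and it cannot be closed.

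There is no pointwise obstruction to exact cancellation: for $A>0$ and $c>0$ the form $A - c\,\xi\otimes\xi$ can be positive semi-definite and singular (e.g. $\mathrm{Id} - e_1\otimes e_1$), with kernel along $A^{-1}\xi$ rather than $\xi$ — so even your localization of a possible kernel direction is off. Nor is there a global obstruction: for $p \in [0,1)$ there exist $K_0,K_1 \in \K^2_+$ and $t_1$ for which $w_{t_1}$ fails to be a support function (this is exactly why the Aleksandrov body enters the definition, and the paper emphasizes it after Theorem \ref{thm:intro-main}); since $t \mapsto w_t$ is continuous into $C^2(S^{n-1})$ and $D^2 w_0 > 0$, at the first $t^*$ where $\min_{\theta,\,e \perp \theta,\,|e|=1} \scalar{D^2 w_t(\theta) e, e}$ reaches zero one obtains a positive $C^2$ function with $D^2 w_{t^*} \geq 0$ everywhere but degenerate somewhere — a genuine support function for which strict definiteness simply fails. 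So strict positive curvature of $K_t$ is not something you can extract from the hypothesis by this route; your argument fully covers only $p \geq 1$, where the statement is Firey's classical fact. For this direction you should do what the paper does — prove only $w_t = h_{K_t}$, i.e. $h_{K_t} \in C^2(S^{n-1})$ — and note that where strict positivity of $D^2$ is actually needed later (as in the proof of Proposition \ref{prop:local-equiv}) it is supplied by $w_\lambda$ lying in the open convex cone $C^2_h(S^{n-1})$ coming from the neighborhood construction, not by Corollary \ref{cor:Aleksandrov} itself.
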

\begin{proof}
If $w_t$ is a support function then it must coincide with $h_{K_t}$, and therefore $h_{K_t} \in C^2(S^{n-1})$, i.e. $K_t \in \K^2_+$. Conversely, if $K_t \in \K^2_+$, Aleksandrov's lemma implies that $w_t = h_{K_t}$ and so $w_t$ is a support function. 
\end{proof}

\begin{proof}[Proof of Lemma \ref{lem:loc-to-global}]
By the previous corollary, our assumption (\ref{eq:smooth-geodesic}) is equivalent to:
\begin{equation} \label{eq:interpolation}
h_{K_t} = (1-t) \cdot h_{K_0} +_p t \cdot h_{K_1} \;\;\; \forall t \in [0,1].
\end{equation}
The global concavity of $[0,1] \ni \lambda \mapsto \frac{1}{p} V((1-\lambda) \cdot K_{0} + \lambda \cdot K_{1})^{\frac{p}{n}}$ now follows by testing its second derivative at a given $\lambda \in [0,1]$, which must be non-positive by the local $p$-BM assumption (\ref{eq:local-p-BM}) for the body $K = K_{\lambda}$ and the function $\frac{1}{p} f^p := \frac{1}{p} h_{K_{1}}^p - \frac{1}{p} h_{K_{0}}^p \in C^2_{e}(S^{n-1})$. The latter concavity is equivalent to the desired (\ref{eq:global-concavity}). 
\end{proof}

Note that when $p \geq 1$, (\ref{eq:interpolation}) holds automatically since the Wulff function $(1-t) \cdot h_{K_0} +_p t \cdot h_{K_1}$ is a support function (being an $L^p$-combination of two support functions). Consequently, assumption (\ref{eq:smooth-geodesic}) on the smoothness of the entire geodesic $t \mapsto K_t$ is satisfied when $p \geq 1$, 
and employing a standard approximation argument for general (non-smooth) end points $K_0,K_1$, it is immediate to deduce the global $p$-BM formulation (\ref{eq:p-BM}) from the local one (\ref{eq:local-p-BM}). 

However, this is definitely not the case in general when $p \in [0,1)$. First, the semi-group property (\ref{eq:interpolation}) will not hold in general when $p\in [0,1)$, and one can only ensure that the left-hand-side is a subset of the right-hand one. This time, the inclusion goes in the unfavorable direction for us: 
\[
K_{\lambda,\eps} := K_{\lambda} +_p \eps \cdot f \subset K_{\lambda + \eps} ,
\]
and so knowing that $\eps \mapsto \frac{1}{p} V(K_{\lambda,\eps})^{p/n}$ is concave at $\eps=0$ for every $\lambda \in [0,1]$ will not tell us much about the (local) concavity of $[0,1] \ni \lambda \mapsto \frac{1}{p} V(K_{\lambda})^{p/n}$. While it is possible to bypass this point, the main obstacle for deducing the global $p$-BM conjecture  (\ref{eq:p-BM}) from the local one (\ref{eq:local-p-BM}) when $p \in [0,1)$ is the violation of the smoothness assumption (\ref{eq:smooth-geodesic}). By Aleksandrov's lemma, $K_t$ will necessarily have a singular boundary whenever the Wulff function $(1-t) \cdot h_{K_0} +_p t \cdot h_{K_1}$ is no longer a support function. For such $t$'s, we will need to approximate $K_t$ by bodies $K_t^i \in \K^2_{+,e}$, and establish a relation between:
\[
 \lim_{i \rightarrow \infty} \left . \frac{d^2}{(d\eps)^2} \right |_{\eps=0} V(K^i_t +_p \eps \cdot f)  \; \text{  and } \left . \frac{d^2}{(d\eps)^2} \right |_{\eps=0} V(K_t +_p \eps \cdot f) .
\]
This turns out to be an extremely difficult and tantalizing question, which boils down to the study of the second variation of the volume of the Aleksandrov body for non-smooth $K \in \K$:
\[
 \left . \frac{d^2}{(d\eps)^2} \right |_{\eps=0} V(A[h_K + \eps f]) ;
\]
here $f$ may be assumed to be the difference of two support functions. While the first variation of volume was studied by Aleksandrov himself and is well understood (see e.g. \cite[Lemma 6.5.3]{Schneider-Book}), to the best of our knowledge, the second variation is \emph{terra incognita}. 

\medskip

Being unable to establish the equivalence between the global and local formulations, we state this as:
\begin{conj} \label{conj:local-global}
Given $p \in [0,1)$, the validity of the local $p$-BM conjecture (\ref{eq:local-p-BM}) for all $K \in \K^2_{+,e}$ is logically equivalent to the validity of the global $p$-BM conjecture (\ref{eq:p-BM}) for all $K_0,K_1 \in \K_e$. 
\end{conj}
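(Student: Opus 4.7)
The forward direction (global implies local) is essentially already available: if the global $p$-BM conjecture holds for all $K_0,K_1 \in \K_e$, then in particular it holds for all pairs in $\K^2_{+,e} \subset \K_e$, and Lemma \ref{lem:global2local} then immediately yields the local $p$-BM conjecture for every $K \in \K^2_{+,e}$. So the real content is the reverse implication, local implies global.

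For the reverse direction, my plan is to proceed by successive reductions. First, by a standard approximation using the density of $\K^2_{+,e}$ in $\K_e$ in the Hausdorff topology and the continuity of volume, it is enough to establish the global $p$-BM inequality for endpoints $K_0,K_1 \in \K^2_{+,e}$. Fix such a pair and consider the Wulff interpolation $w_t := (1-t) \cdot h_{K_0} +_p t \cdot h_{K_1} \in C^2_{e}(S^{n-1})$ and the corresponding geodesic $K_t := A[w_t]$. Define the ``good set''
\[
G := \set{ t \in [0,1] \; ; \; K_t \in \K^2_{+,e}} = \set{ t \in [0,1] \; ; \; w_t = h_{K_t}},
\]
using Corollary \ref{cor:Aleksandrov}. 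The set $G$ is relatively open in $[0,1]$ and contains $0$ and $1$, and on any open sub-interval $I \subset G$ the semigroup identity (\ref{eq:interpolation}) holds, so that Lemma \ref{lem:loc-to-global} combined with the local $p$-BM hypothesis yields that $t \mapsto \tfrac{1}{p} V(K_t)^{p/n}$ is concave on $I$. The remaining task is to upgrade these piecewise concavities into global concavity on $[0,1]$.

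The main obstacle, as explicitly flagged by the authors, is to control the behavior of the second variation of volume of an Aleksandrov body at ``bad'' times $t_0 \in [0,1] \setminus G$, where $w_{t_0}$ ceases to be a support function. My plan to attack this is two-pronged. First, show that the map $t \mapsto \tfrac{1}{p} V(K_t)^{p/n}$ is continuous on $[0,1]$ (immediate from continuity of volume and of $t \mapsto A[w_t]$ in the Hausdorff metric), and that it is concave on each connected component of $G$. Since concavity is preserved by continuous extension across isolated boundary points, it suffices to control the behavior on the complement of $G$. Second, I would approximate the singular $K_{t_0}$ by a sequence $\set{K^i_{t_0}}_{i \geq 1} \subset \K^2_{+,e}$ converging to $K_{t_0}$ in the Hausdorff topology, and try to establish the upper semi-continuity estimate
\[
\left . \frac{d^2}{(d\eps)^2} \right |_{\eps=0} V(A[h_{K_{t_0}} + \eps f]) \; \leq \; \liminf_{i\to\infty} \left . \frac{d^2}{(d\eps)^2} \right |_{\eps=0} V(K^i_{t_0} +_p \eps \cdot f)
\]
for test functions $f$ of the form $f = \tfrac{1}{p} h_{K_1}^p - \tfrac{1}{p} h_{K_0}^p$. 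Combined with the local hypothesis applied to each $K^i_{t_0}$, this would give a non-positive one-sided second derivative of $\tfrac{1}{p}V(K_t)^{p/n}$ at $t_0$ in a distributional sense, which together with continuity and piecewise concavity on $G$ would force global concavity by a standard convex-analysis argument.

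The hard part is precisely the second-variation estimate at non-smooth Aleksandrov bodies: unlike the first variation (which is classical via $V_p(K,L)$, see \cite[Lemma 6.5.3]{Schneider-Book}), essentially nothing is known about $\frac{d^2}{(d\eps)^2} V(A[h_K + \eps f])$ when $K$ is not in $\K^2_+$, even when $f$ is the difference of two support functions. Any viable proof of the conjecture would require a new ``second-variation formula'' for the volume of Aleksandrov bodies, or at least the semi-continuity estimate above, identifying the singular boundary contributions. This is why the authors leave the equivalence as a conjecture rather than a theorem, and why I also do not expect to resolve it in full generality with the tools available in this paper.
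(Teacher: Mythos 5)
The statement you are addressing is stated in the paper as Conjecture \ref{conj:local-global}: the authors give no proof of it, precisely because the ingredient you flag at the end --- understanding the second variation $\frac{d^2}{(d\eps)^2}\big|_{\eps=0} V(A[h_K+\eps f])$ when $K$ is a non-smooth Aleksandrov body --- is, to their knowledge, completely open. Your forward direction is fine (the global conjecture for all pairs in $\K_e$ gives, via the equivalence at the start of Section \ref{sec:local-global}, the concavity hypothesis of Lemma \ref{lem:global2local}), and your reduction of the reverse direction to the ``good set'' $G=\set{t\,;\, w_t=h_{K_t}}$ via Corollary \ref{cor:Aleksandrov}, the semigroup identity (\ref{eq:interpolation}) on subintervals of $G$, and Lemma \ref{lem:loc-to-global}, mirrors the paper's own discussion in Section \ref{sec:local-global}. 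So what you have written is an accurate road map of the difficulty, but it is not a proof, and you correctly say so.

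Beyond the gap you name (the upper semicontinuity of the second variation along smooth approximations of the singular $K_{t_0}$, for which no argument is offered and none is known), one intermediate step would fail as stated: ``concavity is preserved by continuous extension across isolated boundary points'' is both unjustified and insufficient. The bad set $[0,1]\setminus G$ is merely a closed subset of $(0,1)$; nothing prevents it a priori from being uncountable or even of positive measure, so you cannot reduce to isolated bad points. Moreover, even at a single isolated bad time $t_0$, continuity of $t\mapsto \frac{1}{p}V(K_t)^{p/n}$ together with concavity on each side of $t_0$ does not force concavity across $t_0$: one must exclude an upward kink, i.e. show the left derivative at $t_0^-$ dominates the right derivative at $t_0^+$, and this again requires quantitative first/second-variation information exactly at the singular body $K_{t_0}$. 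One would also want to verify that the distributional second-derivative bound you aim for, combined with one-sided derivative matching on the (possibly fat) bad set, really yields global concavity --- the ``standard convex-analysis argument'' needs the bad-set structure to be controlled first. In short, the proposal reproduces the paper's reduction and then stops at the same wall; the equivalence of (\ref{eq:local-p-BM}) and (\ref{eq:p-BM}) remains a conjecture.
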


For future reference, we record the following: 
\begin{prop} \label{prop:local-equiv}
Let $p \in \Real$ and $K \in \K^2_{+,e}$. Then the following statements are equivalent:
\begin{enumerate}
\item There exists a $C^2$-neighborhood $N_K$ of $K$ in $\K^2_{+,e}$ so that the local $p$-BM conjecture (\ref{eq:local-p-BM}) holds for all $K' \in N_K$. 
\item There exists a $C^2$-neighborhood $N'_K$ of $K$ in $\K^2_{+,e}$ so that for all $K_0,K_1 \in N'_K$ and $t \in [0,1]$, $K_t := (1-t) \cdot K_0 +_p t \cdot K_1 \in N'_K$, and the global $p$-BM conjecture (\ref{eq:p-BM}) holds between $K_{t_0},K_{t_1}$ for all $t_0,t_1 \in [0,1]$.
\end{enumerate}
Furthermore, given $p_0 \in \Real$, the following statements are equivalent:
\begin{enumerate}
\item[(1')] For every $p > p_0$, (1) or (2) above hold. 
\item[(2')] The local $p_0$-BM conjecture (\ref{eq:local-p-BM}) holds for $K$. 
 \end{enumerate}
\end{prop}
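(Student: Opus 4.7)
The first equivalence is a formal consequence of Lemma~\ref{lem:global2local} and Lemma~\ref{lem:loc-to-global}, once we exploit openness in the $C^2$-topology. For (2) $\Rightarrow$ (1), I would set $N_K := N'_K$ and, for each $K' \in N'_K$, choose a smaller $C^2$-neighborhood $N_{K'} \subset N'_K$ of $K'$; applying (2) with $K_0 = K'$ and $K_1 = L$ ranging over $N_{K'}$ gives global $p$-BM between $K'$ and every $L$ near it, and Lemma~\ref{lem:global2local} then yields local $p$-BM at $K'$. For (1) $\Rightarrow$ (2), I would use that $\K^2_{+,e}$ is open in the $C^2$-topology together with compactness of $[0,1]$: by joint continuity of the $p$-interpolation $(h_0, h_1, t) \mapsto w_t := (1-t) \cdot h_0 +_p t \cdot h_1$, there is a $C^2$-neighborhood $N'_K \subset N_K$ such that whenever $K_0, K_1 \in N'_K$ and $t \in [0,1]$, $w_t$ is a strictly convex positive $C^2$ function, so by Corollary~\ref{cor:Aleksandrov} it coincides with $h_{K_t}$ where $K_t := A[w_t] \in \K^2_{+,e}$. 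Shrinking $N'_K$ further so that $K_t \in N'_K$ for all $t$ (again by continuity), assumption (\ref{eq:smooth-geodesic}) is met along the entire geodesic, so Lemma~\ref{lem:loc-to-global} delivers global $p$-BM between any $K_{t_0}$ and $K_{t_1}$.

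For (1') $\Rightarrow$ (2'), my plan is a continuity argument in $p$ at the fixed body $K$. Fix any $\phi \in C^2_e(S^{n-1})$ and set
\[
w_{p, \eps} := (h_K^p + p \eps \phi)^{1/p},
\]
interpreted at $p = 0$ as $h_K \, e^{\eps \phi}$. Since $h_K > 0$, the map $(p, \eps) \mapsto w_{p, \eps}$ is jointly smooth on a neighborhood of $\{(\bar p, 0) : \bar p \in \Real\}$, and for $|\eps|$ small $w_{p, \eps}$ remains a strictly convex $C^2$ support function; via the Monge--Amp\`ere-type formula $V(A[w]) = \frac{1}{n} \int_{S^{n-1}} w \det(D^2 w) d\theta$ valid for such $w$, one reads off that
\[
\left.\frac{d^2}{d\eps^2}\right|_{\eps=0} \frac{1}{p}\!\left(V(A[w_{p, \eps}])^{p/n} - 1\right)
\]
(with the obvious interpretation as $\frac{d^2}{d\eps^2}|_{\eps=0} \frac{1}{n}\log V(A[w_{0, \eps}])$ at $p = 0$) is continuous in $p$. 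By (1') this quantity is $\leq 0$ for every $p > p_0$, so passing to the limit $p \to p_0^+$ yields the local $p_0$-BM inequality at $K$ in direction $\phi$, i.e., (2').

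For (2') $\Rightarrow$ (1'), I would invoke the spectral reformulation of the local $p$-BM conjecture developed in Section~\ref{sec:LK}: local $p$-BM at $K \in \K^2_{+,e}$ is equivalent to $\lambda_{1,e}(-L_K) \geq (n-p)/(n-1)$, where $L_K$ is the Hilbert--Brunn--Minkowski operator. Assumption (2') then reads $\lambda_{1,e}(-L_K) \geq (n - p_0)/(n-1)$. For any fixed $p > p_0$ this is strict, $\lambda_{1,e}(-L_K) > (n-p)/(n-1)$. Since $L_{K'}$ is a second-order elliptic operator on $S^{n-1}$ whose coefficients depend continuously on $h_{K'}$ and $D^2 h_{K'}$, its even spectral gap $\lambda_{1,e}(-L_{K'})$ depends continuously on $K'$ in the $C^2$-topology, so the strict inequality persists on some $C^2$-neighborhood $N_K^{(p)}$ of $K$. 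This yields local $p$-BM for all $K' \in N_K^{(p)}$, i.e., (1) at this $p$; since $p > p_0$ was arbitrary, (1') follows.

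The main obstacle is the (2') $\Rightarrow$ (1') direction. Directly, one would have to show that the quadratic-form condition defining local $p$-BM is stable under small $C^2$-perturbations, which is delicate because $C^2_e(S^{n-1})$ is infinite-dimensional and the condition is tight on the $1$-dimensional ``scaling'' subspace $\mathrm{span}(h_K)$ (where equality always holds). Passing to the spectral picture of $L_K$ cleanly circumvents these issues by reducing the stability question to continuity of eigenvalues of a compact-resolvent elliptic operator on the compact manifold $S^{n-1}$, which is a standard consequence of the min--max principle; this is precisely what is set up in Section~\ref{sec:LK}.
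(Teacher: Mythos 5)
Your proofs of (2) $\Rightarrow$ (1), of (1') $\Rightarrow$ (2') (pass to the limit $p \to p_0^+$ in an equivalent infinitesimal formulation at the fixed body $K$), and of (2') $\Rightarrow$ (1') (Corollary \ref{cor:LK-SG} combined with the eigenvalue continuity of Theorem \ref{thm:LK} (4)) follow the paper's proof essentially verbatim. The gap is in (1) $\Rightarrow$ (2). Statement (2) requires the neighborhood $N'_K$ to be \emph{invariant} under $p$-interpolation: $K_0,K_1 \in N'_K$ must force $K_t \in N'_K$ for every $t \in [0,1]$. Your justification of this step -- ``shrinking $N'_K$ further so that $K_t \in N'_K$ for all $t$ (again by continuity)'' -- does not work. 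Joint continuity of $(h_0,h_1,t) \mapsto w_t$ together with compactness of $[0,1]$ does give, for any prescribed target neighborhood $U \subset N_K$, a smaller $W$ such that all interpolants of bodies in $W$ are strictly convex positive $C^2$ functions lying in $U$ (so Corollary \ref{cor:Aleksandrov} applies and (\ref{eq:smooth-geodesic}) holds); but it cannot produce a neighborhood mapped into \emph{itself}, and shrinking only makes invariance harder. A jointly continuous interpolation fixing the diagonal need not admit arbitrarily small invariant neighborhoods: on $\Real$, $f(x,y,t)=(1-t)x+ty+t(1-t)\sqrt{|x-y|}$ fixes the diagonal, yet for every sufficiently small neighborhood $V$ of $0$ one has $f(V\times V\times[0,1]) \not\subset V$. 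The invariance is not cosmetic here: it (or at least $K_t \in \K^2_{+,e}$ together with $K_t \in N_K$) is what guarantees via Corollary \ref{cor:Aleksandrov} that $h_{K_t}$ coincides with the Wulff function, so that $t \mapsto K_t$ is a genuine geodesic along which the local hypothesis of Lemma \ref{lem:loc-to-global} is available at every $K_t$.

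The missing ingredient is structural rather than topological, and it is exactly the paper's device: in the coordinates $h \mapsto h^p$ (resp.\ $h \mapsto \log h$ when $p=0$) the operation $+_p$ becomes \emph{linear}, so by local convexity of $C^2_e(S^{n-1})$ one may choose a convex $C^2$-neighborhood of $h_K^p$ inside $\{ h^p \; ; \; h \in N_{h_K}\}$; its preimage $N'_{h_K}$ is then automatically closed under $(1-t)\cdot(\cdot) +_p t\cdot(\cdot)$ and consists of functions in $C^2_{h,e}(S^{n-1})$, whence Corollary \ref{cor:Aleksandrov} identifies the interpolants as $h_{K_t}$ with $K_t \in N'_K \subset N_K$, and Lemma \ref{lem:loc-to-global} finishes the argument as you indicate. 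With this replacement your (1) $\Rightarrow$ (2) is complete, and the remainder of your proposal coincides with the paper's proof.
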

The equivalence between (1') and (2') follows from the results of the next sections, but at the risk of forward-referencing, we include this result in the present section as it fits more naturally here; the reader may wish to skip its proof in the first reading. 
\begin{proof}
Lemma \ref{lem:global2local} verifies that (2) implies (1) with $N_K = N'_K$. To show that (1) implies (2), assume for simplicity that $p \neq 0$; the case $p=0$ follows analogously. Denoting by $N_{h_K} = \set{h_{K'} \;  ; \; K' \in N_K}$ the corresponding neighborhood of $h_K$ in $C^2_{h,e}(S^{n-1})$, set $N_{h_K}^p := \set{ h^p \; ; \; h \in N_{h_K}}$, which is an open subset of $C^2_e(S^{n-1})$. As $C^2_e(S^{n-1})$ is locally convex, we may find a convex neighborhood $N'^{p}_{h_K}$ of $h_K^p$ in $N_{h_K}^p$. Setting $N'_{h_K} := \{h^{1/p} \; ; \; h \in N'^{p}_{h_K}\}$, the latter is an open subset of $N_{h_K} \subset C^2_{h,e}(S^{n-1})$ containing $h_K$ which is convex with respect to the $+_p$ operation:  if $h_{K_0},h_{K_1} \in N'_{h_K}$ then $w_\lambda := (1-\lambda) \cdot h_{K_0} +_p \lambda \cdot h_{K_1} \in N'_{h_K} \subset C^2_{h,e}(S^{n-1})$ for all $\lambda \in [0,1]$. By Corollary \ref{cor:Aleksandrov}, since $w_\lambda$ is a support function, it must be that of $K_\lambda := (1-\lambda) \cdot K_0 +_p \lambda \cdot K_1$, and therefore $K_\lambda \in \K^2_{+,e}$. Consequently, defining $N'_K = \{ K' \in \K^2_{+,e} \; ; \; h_{K'} \in N'_{h_K} \}$, the latter is an open subset of $N_K \subset \K^2_{+,e}$ containing $K$  which is convex with respect to the $L^p$-Minkowski operation. The smoothness assumption (\ref{eq:smooth-geodesic}) is therefore satisfied between any $K_0,K_1 \in N'_K$, and so the assertion follows from Lemma \ref{lem:loc-to-global}.

To show that (1') implies (2'), apply the local $p$-BM conjecture (\ref{eq:local-p-BM}) to $K$ in any of the equivalent forms given in the next sections (e.g. Propositions \ref{prop:2nd-p-Minkowski}, \ref{prop:p-BM-Sphere}, \ref{prop:p-BM-K} or \ref{prop:LK-SG}), and take the limit as $p \rightarrow p_0$. The implication that (2') implies (1') follows from the equivalent spectral characterization of the local $p$-BM conjecture given in Corollary \ref{cor:LK-SG}, and the continuity of the spectrum of the Hilbert--Brunn--Minkowski operator $-L_K$ under $C^2$ perturbations asserted in Theorem \ref{thm:LK} (4).
\end{proof}

\bigskip

\section{Local $L^p$-Brunn--Minkowski Conjecture -- Infinitesimal Formulation} \label{sec:equivalent}

In this section we fix $K \in \K^2_{+,e}$, and derive equivalent infinitesimal formulations to the local $p$-Brunn--Minkowski conjecture for $K$.

\subsection{Mixed Surface Area and Volume of $C^2$ functions}

It was shown by Minkowski (e.g. \cite{Schneider-Book,BonnesenFenchelBook}) that when $\set{K_i}_{i=1}^m$ are convex bodies in $\Real^n$, then the volume of their Minkowski sum is a polynomial in the scaling coefficients:
\[
V(\sum_{i=1}^m t_i K_i) = \sum_{1 \leq i_1,\ldots,i_n \leq m} t_{i_1} \cdot \ldots \cdot t_{i_n} V(K_{i_1},\ldots,K_{i_n}) \;\;\; \forall t_i \geq 0 .
\]
The coefficient $V(K_{i_1},\ldots,K_{i_n})$ is called the mixed volume of the $n$-tuple $(K_{i_1},\ldots,K_{i_n})$; it is uniquely defined by requiring in addition that it be invariant under permutation of its arguments. In this subsection, we extend the definition of mixed volume $V(h_1,\ldots,h_n)$ to a $n$-tuple of functions in $C^2(S^{n-1})$, in a manner ensuring that:
\[
V(h_{K_1},\ldots,h_{K_n}) = V(K_1,\ldots,K_n) \;\;\; \forall \set{K_i}_{i=1}^n \subset \K^2_+ .
\]

\medskip

Recall our notation given a local orthonormal frame $e_1,\ldots,e_{n-1}$ on $S^{n-1}$:
\[
(D^2 h)_{i,j} = (\nabla_{\Real^n})^2_{e_i,e_j} h = h_{i,j} + h \delta_{i,j} ~,~ i,j = 1,\ldots,n-1 .
\]
Let $D_m(A^1,\ldots,A^m)$ denote the mixed discriminant (or mixed determinant) of an $m$-tuple $(A^1,\ldots,A^m)$ with $A^i \in \M_m$, the set of $m$ by $m$ matrices (over $\Real$), namely:
\[
D_m(A^1,\ldots,A^m) := \frac{1}{m!} \sum_{\sigma, \tau \in S_m} (-1)^{\sgn(\sigma) + \sgn(\tau)} A^1_{\sigma(1),\tau(1)} \cdot \ldots \cdot A^m_{\sigma(m),\tau(m)} ,
\]
where $S_m$ denotes the permutation group on $\set{1,\ldots,m}$.
Recall that the mixed discriminant is simply the multi-linear polarization of the usual determinant functional $\det$ on $\M_m$, and so in particular is invariant under permutation of its arguments and satisfies $D_m(A,\ldots,A) = \det A$. 

\begin{defn*} Given a tuple $(h_1,\ldots,h_{n-1})$ of functions in $C^2(S^{n-1})$, define their ``mixed surface area function" $\SS(h_1,\ldots,h_{n-1}) \in C(S^{n-1})$ by:
\begin{align*}
\SS(h_1,\ldots,h_{n-1})(\theta) & := D_{n-1}(D^2 h_1(\theta) , \ldots , D^2 h_{n-1}(\theta)) \\
&= D_{n}(\nabla^2_{\Real^n} h_1(\theta),\ldots,\nabla^2_{\Real^n} h_{n-1}(\theta),\theta \otimes \theta) \\
& =D_{n}(\nabla^2_{\Real^n} h_1(\theta),\ldots,\nabla^2_{\Real^n} h_{n-1}(\theta),\text{Id})  
\end{align*}
\end{defn*}
It is easy to see that the former expression does not depend on the choice of local orthonormal frame. 
The latter two equalities follow easily by expanding $D_n$ according to the last entry, since for any $1$ homogeneous function $h$, $\theta$ is an eigenvector of the symmetric $\nabla^2_{\Real^n} h$ with eigenvalue zero, and hence $\nabla^2_{\Real^n} h = P_{\theta^{\perp}} \nabla^2_{\Real^n} h P_{\theta^{\perp}}$ where $P_{\theta^{\perp}}$ denotes orthogonal projection perpendicular to $\theta$. 

\medskip
The surface-area measure of $K$, denoted $dS_K$, is the Borel measure on $S^{n-1}$ obtained by pushing forward the $(n-1)$-dimensional Hausdorff measure $\H^{n-1}$ on $\partial K$ via the Gauss map $\nu_{\partial K} : \partial K \rightarrow S^{n-1}$ \cite[p. 115,207]{Schneider-Book}. Here $\nu_{\partial K}(x)$ is the unit outer-normal to $\partial K$ at $x$, which by convexity is well-defined and unique for $\H^{n-1}$-a.e. $x \in \partial K$. 

Note that when $K \in \K^2_+$, $\nu_{\partial K} : \partial K \rightarrow S^{n-1}$ is in fact a $C^1$ diffeomorphism. 
Identifying between the tangent spaces $T_x \partial K$ and $T_{\nu_{\partial K}(x)} S^{n-1}$, we have \cite[p. 107]{Schneider-Book}:
\[
d \nu_{\partial K}(x) = \II_{\partial K}(x) \;\;\; \forall x \in \partial K,
\]
where $\II_{\partial K}(x)$ denotes the second fundamental form of $\partial K$ at $x$. The inverse of the Gauss map is the Weingarten map $\nabla_{\Real^n} h_K : S^{n-1} \rightarrow \partial K$, and therefore \cite[p. 108]{Schneider-Book}:
\[
D^2 h_K(\nu_{\partial K}(x)) = \II_{\partial K}^{-1}(x) \;\;\; \forall x \in \partial K .
\]
If follows by the change-of-variables formula that:
\begin{equation} \label{eq:dSK}
dS_K(\theta) = \text{det}(D^2 h_K)(\theta) d\theta  = \SS(h_K,\ldots,h_K)(\theta) d\theta ;
\end{equation}
this explains the name ``mixed surface-area function" for $\SS(h_1,\ldots,h_n)$.

\medskip

Recall that when $K_1,\ldots,K_{n} \in \K^2_+$ then $h_{K_1},\ldots,h_{K_n} \in C^2(S^{n-1})$, and so their mixed volume may be expressed as \cite[p. 64]{BonnesenFenchelBook}, \cite[p. 115,275]{Schneider-Book}:
\begin{equation} \label{eq:mixed-vol-bodies}
V(K_1,\ldots,K_n) = \frac{1}{n} \int_{S^{n-1}} h_{K_n} \SS(h_{K_1},\ldots,h_{K_{n-1}}) d\theta ;
\end{equation}
this is just a multi-linear polarization of the usual formula:
\[
V(K,\ldots,K) = V(K) = \frac{1}{n} \int_{S^{n-1}} h_K dS_K = \frac{1}{n} \int_{S^{n-1}} h_K \SS(h_K,\ldots,h_K)(\theta) d\theta .
\]
The fact that the expression on the right-hand-side of (\ref{eq:mixed-vol-bodies}) is invariant under permutation of $K_1,\ldots,K_n$ is a nice exercise in integration by parts, which we shall reproduce below. Consequently, it is natural to give the following:
\begin{defn*} Given $h_1,\ldots,h_n \in C^2(S^{n-1})$, define their ``mixed-volume" as:
\[
V(h_1,\ldots,h_n) :=  \frac{1}{n} \int_{S^{n-1}} h_{n} \SS(h_{1},\ldots,h_{n-1}) d\theta .
\]
\end{defn*}
\noindent Note that the mixed-volume is indeed multi-linear in its arguments.

\subsection{Properties of Mixed Surface Area and Volume}

Next, if $A^1,\ldots,A^{n-1} \in \M_{n-1}$, a direct computation verifies:
 \begin{align}
\label{eq:DviaQ} & D_{n-1}(A^1,\ldots,A^{n-1}) =  \sum_{i,j} A^1_{i,j} Q^{i,j}(A^2,\ldots,A^{n-1}) ~,~ \\
\nonumber &  Q^{i,j}(A^2,\ldots,A^{n-1}) := \frac{(-1)^{i+j}}{n-1} D_{n-2}(M^{i,j}(A^2),\dots,M^{i,j}(A^{n-1})) ,
 \end{align}
where $M^{i,j}(A)$ is the minor resulting after removing the $i$-th row and the $j$-th column from $A \in \M_{n-1}$. Consequently, when $A^2 = \ldots = A^{n-1} = A \in GL_{n-1}$, we see that:
\begin{align}
\nonumber Q^{i,j}(A) & := Q^{i,j}(A,\ldots,A) = \frac{(-1)^{i+j}}{n-1} \text{det}(M^{i,j}(A)) \\
\label{eq:Q-formula}  & = \frac{1}{n-1} \text{adj}(A)^{i,j} = \frac{1}{n-1} \text{det}(A) (A^{-1})^{i,j} . 
\end{align}
 Clearly $Q^{i,j} = Q^{j,i}$ is symmetric and multi-linear in its arguments. Furthermore, the first of the following properties \cite[Lemma 2-12]{Andrews-EntropyOfFlows} will be constantly used (the second is mentioned for completeness):
\begin{itemize}
 \item For any $h_1,\ldots,h_{n-1} \in C^3(S^{n-1})$ and $i=1,\ldots,n-1$, the (local) $C^1$ vector field on $S^{n-1}$
\[
\sum_{j=1}^{n-1} Q^{i,j}(D^2 h_{1},\ldots,D^2 h_{n-1}) e_j 
\]
is divergence free. 
\item For any $K_1,\ldots,K_{n-1} \in \K^2_+$, $\sum_{i,j} Q^{i,j}(D^2 h_{K_1},\ldots,D^2 h_{K_{n-1}}) e_i \otimes e_j$ is a positive definite (local) $2$-tensor on $S^{n-1}$.
\end{itemize}
 
It is easy to see that the above two properties do not depend on the choice of local orthonormal frame. 
We will henceforth freely employ Einstein summation convention, summing over repeated indices. Our choice of using local orthonormal frames instead of local coordinates is in order to simplify notation, dispensing with the need to keep track of the covariance / contravariance of our various tensors. In local coordinates, we would apply the mixed discriminant $D_m$ to 1-covariant 1-contravariant tensors $A^i_j$, 
and as suggested by the present notation, $Q^{i,j}$ would be a 2-contravariant tensor. 
 
\begin{lem} \label{lem:mixed-vol-symmetric}
For any $h_1,\ldots,h_n \in C^2(S^{n-1})$, the mixed volume $V(h_1,\ldots,h_n)$ is invariant under permutation of its arguments. 
\end{lem}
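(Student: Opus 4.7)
The plan is to reduce the claim to the single transposition swapping the last two arguments $h_{n-1}$ and $h_n$, since the mixed surface area function $\SS(h_1, \ldots, h_{n-1}) = D_{n-1}(D^2 h_1, \ldots, D^2 h_{n-1})$ inherits the full permutation symmetry of the mixed discriminant in its first $n-1$ arguments. The only content of the lemma is therefore the integration-by-parts identity exchanging the ``outside'' function $h_n$ with the last argument $h_{n-1}$ of $\SS$.

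For the key step I would first work under the temporary hypothesis that $h_1, \ldots, h_{n-2} \in C^3(S^{n-1})$, so that the coefficients $Q^{i,j} := Q^{i,j}(D^2 h_1, \ldots, D^2 h_{n-2})$ are $C^1$ and the divergence-free property stated just above the lemma applies. Expanding via (\ref{eq:DviaQ}) gives
\[
\SS(h_1, \ldots, h_{n-1}) = Q^{i,j}\,(D^2 h_{n-1})_{i,j} = Q^{i,j}\bigl[\nabla_i \nabla_j h_{n-1} + h_{n-1}\,\delta_{i,j}\bigr],
\]
with Einstein summation understood. Integrating by parts on the closed manifold $S^{n-1}$ and discarding the terms involving derivatives of $Q^{i,j}$ via the divergence-free property yields
\[
\int_{S^{n-1}} h_n\,\SS(h_1, \ldots, h_{n-1})\, d\theta = \int_{S^{n-1}} \bigl[h_n h_{n-1}\,\delta_{i,j} - (\nabla_i h_n)(\nabla_j h_{n-1})\bigr] Q^{i,j}\, d\theta,
\]
which is manifestly symmetric in $(h_{n-1}, h_n)$ thanks to $Q^{i,j} = Q^{j,i}$.

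Finally, to remove the auxiliary $C^3$-regularity hypothesis, I would invoke a standard density argument: approximate each $h_i \in C^2(S^{n-1})$ by a sequence of $C^\infty$ functions converging in the $C^2$-norm, for instance via convolution with the heat kernel on $S^{n-1}$. The functional $V(h_1, \ldots, h_n)$ is visibly continuous in the $C^2$-topology on each argument, being a fixed multilinear expression in the $h_i$ and their first two covariant derivatives, so the symmetry established for smooth approximants passes to the limit. The only obstacle is notational rather than conceptual: one must be attentive in carrying out the integration by parts intrinsically on $S^{n-1}$ using a local frame, but the divergence-free identity recalled just above the lemma is precisely what is needed to close the argument cleanly.
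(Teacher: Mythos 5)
Your proposal is correct and follows essentially the same route as the paper: since $\SS$ inherits the symmetry of the mixed discriminant in its $n-1$ arguments, one only needs the single exchange of the outside function with one $\SS$-argument, which both you and the paper obtain by assuming $C^3$ regularity of the remaining $n-2$ functions, expanding via $Q^{i,j}$, integrating by parts with the divergence-free property to reach the manifestly symmetric expression $\int \brac{Q^i_i\, h_{n-1} h_n - Q^{i,j}\, (h_{n-1})_i (h_n)_j}\, d\theta$, and then removing the extra regularity by a $C^2$-density argument. No gaps.
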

\begin{proof}
By approximation, we may assume $h_3,\ldots,h_n \in C^3(S^{n-1})$; it is then enough to show that:
\[
V(f,g,h_3\ldots,h_n) = V(g,f,h_3\ldots,h_n) \;\;\; \forall f,g \in C^2(S^{n-1}),
\]
since the last $n-1$ arguments are invariant under permutation by definition of $V$ and $D$. 
Abbreviating $Q^{i,j} = Q^{i,j}(D^2 h_3,\ldots,D^2 h_n)$, integrating by parts, and using the divergence free property of $Q^{i,\cdot}$, we have:
\begin{align*}
& V(f,g,h_3\ldots,h_n) =  \frac{1}{n} \int_{S^{n-1}} f \SS(g,h_3,\ldots,h_n) d\theta = \frac{1}{n} \int_{S^{n-1}} f (g_{i,j} + g \delta_{i,j}) Q^{i,j} d\theta \\
&=  \frac{1}{n} \int_{S^{n-1}} \brac{Q^{i}_{i} f g  - (f Q^{i,j})_j g_i }  d\theta = \frac{1}{n} \int_{S^{n-1}}  \brac{Q^{i}_{i} f g - Q^{i,j} f_j g_i } d\theta .
\end{align*}
The latter expression is symmetric in $f,g$ (by symmetry of $Q^{i,j}$), and so the assertion is established. 
\end{proof}

\medskip

\subsection{Second $L^p$-Minkowski Inequality} \label{subsec:2nd-p-Minkowski}

We now fix $K \in \K^2_+$ and $p < 1$. Given $\frac{1}{p} f^p \in C^2(S^{n-1})$, the strict convexity of $K$ and the positivity of $h_K$ imply that for small enough $\abs{\eps}$, the Wulff function $h_K +_p \eps \cdot f$ is a $C^2$-smooth support function, and consequently:
\[
h_{K +_p \eps \cdot f} = h_{A[h_K +_p \eps \cdot f]} = h_K +_p \eps \cdot f  \;\;\; \forall \abs{\eps} \ll 1 . 
\]
Denoting:
\begin{equation} \label{eq:z}
z := \begin{cases} \frac{1}{h_K^p} \frac{f^p}{p}  & p \neq 0 \\ \log f & p=0 \end{cases} \in C^2(S^{n-1}) \; , 
\end{equation}
a second-order Taylor expansion immediately yields:
\begin{equation} \label{eq:2ndorder}
h_K +_p \eps \cdot f = h_K + \eps z h_K + \frac{\eps^2}{2} (1-p) z^2 h_K + R(\eps) ~,~ \lim_{\eps \rightarrow 0} \frac{R_h(\eps)}{\eps^2} = 0 \text{ in $C^2(S^{n-1})$.}
\end{equation}

Now denote:
\[
J_p(\eps) := V(K +_p \eps \cdot f) = V(K+_p \eps \cdot f, \ldots, K+_p \eps \cdot f) ,
\]
apply the differential formula for mixed-volume (\ref{eq:mixed-vol-bodies}), and expand using the above Taylor expansion. Inspecting the coefficients of $\eps$ and $\frac{\eps^2}{2}$ in the expansion of $J_p(\eps)$, and noting that the remainder term $R_{J_p}(\eps)$ satisfies:
\[
\abs{R_{J_p}(\eps)} \leq C_{h_K,z,p,n} \norm{R_h(\eps)}_{C^2(S^{n-1})} \text{ and hence } \lim_{\eps \rightarrow 0} \frac{R_{J_p}(\eps)}{\eps^2} = 0 ,
\]
it follows by multi-linearity of mixed-volume and invariance under permutation of its arguments that:
\begin{equation} \label{eq:J-derivs}
J_p'(0) = n V(z h_K ; 1) ~,~ J_p''(0) = n V((1-p) z^2 h_K ; 1) + {n \choose 2} 2 V(z h_K; 2) ,
\end{equation}
where we employ the abbreviation:
\[
V(f; m) = V(\underbrace{f,\ldots,f}_{\text{$m$ times}},\underbrace{h_K,\ldots,h_K}_{\text{$n-m$ times}}). 
\]

\begin{prop}[Second $L^p$-Minkowski Inequality] \label{prop:2nd-p-Minkowski}
Given $K \in \K^{2}_{+,e}$ and $p <1 $, the local $p$-BM conjecture (\ref{eq:local-p-BM}) for $K$  is equivalent to the assertion that:
\begin{equation} \label{eq:p-BM-mixed-vols}
\forall z \in C^2_e(S^{n-1}) \;\;\; \frac{1}{V(K)} V(z h_K;1)^2 \geq \frac{n-1}{n-p} V(z h_K ; 2) + \frac{1-p}{n-p} V(z^2 h_K;1) .
\end{equation}
\end{prop}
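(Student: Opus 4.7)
The plan is to directly compute the second derivative appearing in the local $p$-BM conjecture using the Taylor expansion (\ref{eq:2ndorder}) and the derivative formulas (\ref{eq:J-derivs}) that have already been established, then rearrange to recognize the inequality (\ref{eq:p-BM-mixed-vols}). Since all the hard infinitesimal analysis has already been done (the formulas for $J_p'(0)$ and $J_p''(0)$ in terms of mixed volumes), the proof reduces to calculus on the scalar function $\eps \mapsto J_p(\eps)^{p/n}$ and to verifying that the change of variables from $\frac{1}{p} f^p$ to $z$ is a bijection on $C^2_e(S^{n-1})$.

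First I would set $F(\eps) := \frac{1}{p} J_p(\eps)^{p/n}$ and compute by the chain rule
\[
F''(0) = \frac{1}{n} V(K)^{p/n-2} \Bigl[ V(K) J_p''(0) - \tfrac{n-p}{n} (J_p'(0))^2 \Bigr] .
\]
Substituting the expressions from (\ref{eq:J-derivs}), the local $p$-BM assertion $F''(0) \leq 0$ becomes (after dividing by the positive factor $V(K)^{p/n - 2}/n$ and rearranging):
\[
V(K) \bigl[ (1-p) V(z^2 h_K ; 1) + (n-1) V(z h_K ; 2) \bigr] \leq (n-p) \, V(z h_K ; 1)^2 ,
\]
which is exactly (\ref{eq:p-BM-mixed-vols}) after dividing by $(n-p) V(K)$. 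The case $p = 0$ is handled identically, with the interpretation $F(\eps) = \log V(K +_0 \eps \cdot f)$ and $z = \log f$, where the factor $(1-p)$ simply becomes $1$.

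To complete the equivalence one must check that as $\frac{1}{p} f^p$ ranges over all of $C^2_e(S^{n-1})$, the function $z = \frac{1}{p h_K^p} f^p$ (resp.\ $z = \log f$ when $p=0$) ranges over all of $C^2_e(S^{n-1})$ as well. This is immediate: since $h_K \in C^2_{h,e}(S^{n-1})$ is strictly positive and even, the map $\frac{1}{p} f^p \mapsto z = \frac{1}{p} f^p / h_K^p$ is a linear bijection of $C^2_e(S^{n-1})$ onto itself (and for $p=0$, the exponential map is a bijection between positive even $C^2$ functions and $C^2_e(S^{n-1})$).

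There is really no hard step: everything is an application of the identities already derived in the preceding two subsections, together with elementary chain-rule differentiation. The only place where one must be careful is the sign handling when $p < 0$, but since $J_p(0) = V(K) > 0$ and the prefactor $V(K)^{p/n-2}/n$ is positive, the direction of the inequality is preserved throughout. Thus the equivalence in Proposition \ref{prop:2nd-p-Minkowski} follows.
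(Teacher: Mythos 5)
Your proposal is correct and follows essentially the same route as the paper: both reduce the local $p$-BM statement to the inequality $\frac{n-p}{n} J_p'(0)^2 \geq J_p(0) J_p''(0)$ via the chain rule, substitute the mixed-volume expressions from (\ref{eq:J-derivs}), and invoke the bijection (\ref{eq:z}) between $\frac{1}{p} f^p$ and $z$ in $C^2_e(S^{n-1})$. The only difference is that you spell out the chain-rule computation and the sign bookkeeping (including the $p\le 0$ and $p=0$ cases) more explicitly than the paper does.
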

\begin{proof}
The local $p$-BM conjecture is the assertion that $(\frac{d}{d\eps})^2|_{\eps=0} \frac{1}{p} J_p(\eps)^{p/n} \leq 0$ for all $\frac{1}{p} f^p \in C^{2}_{e}(S^{n-1})$, or equivalently, that:
\[
\frac{n-p}{n} J_p'(0)^2 \geq J_p(0) J_p''(0) .
\]
Plugging in the expressions for $J_p^{(m)}(0)$ obtained in (\ref{eq:J-derivs}), the equivalence with (\ref{eq:p-BM-mixed-vols}) immediately follows after noting that (\ref{eq:z}) gives a bijection between $\frac{1}{p} f^p \in C^2_e(S^{n-1})$ and $z \in C^2_e(S^{n-1})$. \end{proof}

\begin{rem} \label{rem:scale-invariance}
Note that the validity of (\ref{eq:p-BM-mixed-vols}) remains invariant under $K \mapsto \lambda K$ (as all terms are $n$-homogeneous in $K$), $z \mapsto \lambda z$ (as all terms are quadratic in $z$) and $z \mapsto z + \lambda$ (this requires a quick check of both the linear term in $\lambda$ and the quadratic one). Consequently, (\ref{eq:p-BM-mixed-vols}) is equivalent to:
\[
\forall z \in C^2_e(S^{n-1})  \;\; V(z h_K;1) = 0 \;\; \Rightarrow \;\; -V(z h_K ; 2) \geq \frac{1-p}{n-1} V(z^2 h_K;1) . 
\]
\end{rem}

\subsection{Comparison with classical $p=1$ case}

Before proceeding, let us compare (\ref{eq:p-BM-mixed-vols}) with the classical case $p=1$. Plugging in $p=1$ in (\ref{eq:p-BM-mixed-vols}), the quadratic term in $z^2$ disappears, and denoting $w = z h_K$, we obtain:
\begin{equation} \label{eq:Minkowski-2nd}
\forall w \in C^2_e(S^{n-1}) \;\;\; V(w;1)^2 \geq V(w ; 2) V(w;0) ;
\end{equation}
this is the classical Minkowski's second inequality, valid without any evenness assumption on $K$ or $w$, which indeed is well-known to be equivalent to the local concavity of $\eps \mapsto V(K + \eps w)^{\frac{1}{n}}$, and hence to the global Brunn--Minkowski inequality. 

\medskip
In addition, let us check that the conjectured (\ref{eq:p-BM-mixed-vols}) for $p \in [0,1)$ is indeed stronger than the classical case $p=1$. To see this, first note that by (\ref{eq:dSK}), for all $w \in C^2(S^{n-1})$ (in fact, $C(S^{n-1})$ is enough for the first mixed volume):
\begin{equation} \label{eq:1st-mixed-vol}
V(w;1) = V(w,h_K,\ldots,h_K) = \frac{1}{n} \int_{S^{n-1}} w \SS(h_K,\ldots,h_K) d\theta = \frac{1}{n} \int_{S^{n-1}} w dS_K . 
\end{equation}
In particular $V(K) = \frac{1}{n}\int_{S^{n-1}} h_K dS_K$. 
Applying Cauchy--Schwarz, it follows that:
\begin{equation} \label{eq:CS}
V(z^2 h_K;1) = \frac{1}{n} \int_{S^{n-1}} z^2 h_K dS_K \geq \frac{\brac{\frac{1}{n} \int_{S^{n-1}} z h_K dS_K}^2}{\frac{1}{n} \int_{S^{n-1}} h_K dS_K} =
\frac{V(z h_K;1)^2}{V(K)} .
\end{equation}
This means that when $p \in [0,1)$, since $\frac{1-p}{n-p} > 0$, we can always make the inequality (\ref{eq:p-BM-mixed-vols}) weaker by replacing the last term by the one on the left-hand-side, and after rearranging terms and setting $w = z h_K$, we obtain (\ref{eq:Minkowski-2nd}) corresponding to the classical case $p=1$.

\subsection{Infinitesimal Formulation On $S^{n-1}$}

Let us now obtain an explicit expression for the second mixed volume appearing in (\ref{eq:p-BM-mixed-vols}). 
Recall by (\ref{eq:Q-formula}) that:
\begin{equation} \label{eq:Q-formula2}
 Q_K^{i,j} := Q^{i,j}(D^2 h_K,\ldots,D^2 h_K) = \frac{1}{n-1} \text{det}(D^2 h_K) ((D^2 h_K)^{-1})^{i,j} .
\end{equation}
Plugging this below assuming $K \in \K^3_+$, after integrating by parts in $j$ and using the divergence-free property of $Q_K^{i,\cdot}$, and finally recalling (\ref{eq:dSK}), we obtain for any $w \in C^2(S^{n-1})$:
\begin{align}
\nonumber & V(w;2) = V(w,w,h_K,\ldots,h_K) \\
\nonumber & = \frac{1}{n} \int_{S^{n-1}} w \SS(w,h_K,\ldots,h_K) d\theta = \frac{1}{n} \int_{S^{n-1}} w (w_{i,j} + w \delta_{i,j}) Q_K^{i,j} d\theta \\
\nonumber & = \frac{1}{n} \brac{\int_{S^{n-1}} (Q_K)^{i}_{i} w^2  d\theta - \int_{S^{n-1}}  (Q_K^{i,j} w)_j  w_i  d\theta} \\
\nonumber  & = \frac{1}{n} \brac{\int_{S^{n-1}} (Q_K)^{i}_{i} w^2  d\theta - \int_{S^{n-1}}  Q_K^{i,j} w_j w_i  d\theta} \\
\label{eq:2nd-mixed-vol} & = \frac{1}{n (n-1)} \brac{\int_{S^{n-1}} ((D^2 h_K)^{-1})_i^i \; w^2 dS_K -  \int_{S^{n-1}} ((D^2 h_K)^{-1})^{i,j} w_i w_j dS_K } .
\end{align}

\begin{prop}[Infinitesimal $p$-BM on $S^{n-1}$] \label{prop:p-BM-Sphere}
Given $K \in \K^{2}_{+,e}$ and $p < 1$, the local $p$-BM conjecture (\ref{eq:local-p-BM}) for $K$ is equivalent to the assertion that:
\begin{align}
\label{eq:p-BM-Sphere} & \forall w \in C^1_e(S^{n-1}) \;\;\;\; \int_{S^{n-1}} \scalar{(D^2 h_K)^{-1} \nabla_{S^{n-1}} w, \nabla_{S^{n-1}} w} dS_K \geq \\
\nonumber & \int_{S^{n-1}} \text{tr}((D^2 h_K)^{-1}) w^2 dS_K + (1-p) \int_{S^{n-1}} \frac{w^2}{h_K} dS_K  - \frac{n-p}{n} \frac{1}{V(K)} \brac{\int_{S^{n-1}} w dS_K}^2 .
\end{align}
\end{prop}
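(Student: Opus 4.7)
The plan is to derive this infinitesimal formulation on $S^{n-1}$ directly from the second $L^p$-Minkowski inequality of Proposition \ref{prop:2nd-p-Minkowski} by making the substitution $w := z h_K$ and applying the explicit integral representations of mixed volumes recorded in the preceding subsection. Since $h_K \in C^2_{h,e}(S^{n-1})$ is strictly positive, the map $z \mapsto z h_K$ is a linear bijection of $C^2_e(S^{n-1})$ onto itself, so the quantifier over $z$ in (\ref{eq:p-BM-mixed-vols}) can equivalently be turned into a quantifier over $w$.

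Under this substitution, (\ref{eq:1st-mixed-vol}) gives $V(z h_K;1) = \frac{1}{n}\int_{S^{n-1}} w \, dS_K$, while from $z = w/h_K$ one obtains $V(z^2 h_K;1) = \frac{1}{n}\int_{S^{n-1}} z^2 h_K \, dS_K = \frac{1}{n}\int_{S^{n-1}} \frac{w^2}{h_K} \, dS_K$. Formula (\ref{eq:2nd-mixed-vol}) expresses
\[
(n-1) V(w;2) = \frac{1}{n}\int_{S^{n-1}} \text{tr}((D^2 h_K)^{-1}) \, w^2 \, dS_K - \frac{1}{n}\int_{S^{n-1}} \scalar{(D^2 h_K)^{-1} \nabla_{S^{n-1}} w,\nabla_{S^{n-1}} w} \, dS_K.
\]
Substituting these three identities into (\ref{eq:p-BM-mixed-vols}), multiplying through by $n(n-p)$, and rearranging the terms produces exactly the inequality (\ref{eq:p-BM-Sphere}). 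Every step is reversible, so this yields the claimed equivalence.

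The main technical wrinkle lies in the regularity assumptions: the derivation of formula (\ref{eq:2nd-mixed-vol}) recorded in the excerpt required $K \in \K^3_+$, in order to invoke pointwise the divergence-free property of $Q_K^{i,\cdot}$ during the integration by parts. To handle an arbitrary $K \in \K^2_{+,e}$, I would approximate $h_K$ in the $C^2$-topology by a sequence $\tilde h_m \in C^3_h(S^{n-1})$ (for instance by convolving $h_K$ with a smooth mollifier on $S^{n-1}$, noting that both strict positivity of $h$ and the open condition $D^2 h > 0$ are preserved for large $m$), apply the formula for the smooth approximants, and pass to the limit using continuity of $(D^2 h)^{-1}$, $\det(D^2 h)$, and hence of $dS_K$, in the $C^2$-topology on the open cone $C^2_h(S^{n-1})$. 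Finally, to upgrade the class of test functions from $C^2_e$ in (\ref{eq:p-BM-mixed-vols}) to $C^1_e$ in the stated (\ref{eq:p-BM-Sphere}), I would use density of $C^2_e(S^{n-1})$ in $C^1_e(S^{n-1})$ together with the observation that, for $K$ fixed in $\K^2_{+,e}$, all four integrals appearing in (\ref{eq:p-BM-Sphere}) are continuous functionals of $w$ with respect to the $C^1$-norm.
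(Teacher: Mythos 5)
Your proposal is correct and follows essentially the same route as the paper: the paper's proof also sets $w = z h_K$ in (\ref{eq:p-BM-mixed-vols}), invokes the identities (\ref{eq:1st-mixed-vol}) and (\ref{eq:2nd-mixed-vol}), and then removes the extra regularity ($K \in \K^3_+$, $w \in C^2_e$) by the same standard approximation argument, noting that only first derivatives of $w$ and second derivatives of $h_K$ appear. Your treatment of the approximation step is, if anything, slightly more detailed than the paper's.
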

\begin{proof}
The equivalence for $w \in C^2_e(S^{n-1})$ and $K \in \K^3_+$ is immediate by setting $w = z h_K$ in (\ref{eq:p-BM-mixed-vols}) and using the expressions for the first and second mixed-volume derived in (\ref{eq:1st-mixed-vol}) and (\ref{eq:2nd-mixed-vol}). When $w \in C^1_e(S^{n-1})$ and $K \in \K^2_+$, the equivalence follows by a standard approximation argument, utlizing the fact that only first derivatives of $w$ and second derivatives of $h_K$ appear in (\ref{eq:2nd-mixed-vol}).
\end{proof}

\begin{rem}
The case $p=0$ (local log-BM) of Proposition \ref{prop:p-BM-Sphere} was previously derived by Colesanti--Livshyts--Marsiglietti in \cite{CLM-LogBMForBall}. 
\end{rem}

\subsection{Infinitesimal Formulation On $\partial K$} \label{subsec:FromSToPartialK}

By using the Weingarten map:
\[
\nu_{\partial K}^{-1} : S^{n-1} \ni \theta \mapsto \nabla_{\Real^n} h_K(\theta) \in\partial K ,
\]
we may transfer the infinitesimal formulation of the local $p$-BM conjecture obtained in the previous subsection from $S^{n-1}$ to $\partial K$. Indeed, recall that $d\nu_{\partial K}(x) = \II_{\partial K}(x)$, $d \nu_{\partial K}^{-1}(\theta) = D^2 h_K (\theta)$ and
$D^2 h_K(\nu_{\partial K}(x)) = \II_{\partial K}^{-1}(x)$ (with the usual identification of tangent spaces), and that $\text{Jac}( \nu_{\partial K}^{-1})(\theta) = \text{det}(D^2 h_K)(\theta) = \frac{dS_K(\theta)}{d \theta}$. 

Denoting $\Psi(x) = w(\nu_{\partial K}(x))$, we see that:
\[
\int_{S^{n-1}} w(\theta) dS_K(\theta) = \int_{\partial K} \Psi(x) dx ,
\]
\[
\int_{S^{n-1}} \text{tr}((D^2 h_K)^{-1}) w^2(\theta) dS_K(\theta)  = \int_{\partial K} \text{tr}(\II_{\partial K}) \Psi^2(x) dx ,
\]
and as $h_K(\nu_{\partial K}(x)) = \scalar{x,\nu_{\partial K}(x)}$, 
\[
\int_{S^{n-1}} \frac{w^2(\theta)}{h_K(\theta)} dS_K(\theta) = \int_{\partial K} \frac{\Psi^2(x)}{\scalar{x,\nu_{\partial K}(x)}} dx .
\]
Lastly, as $w(\theta) = \Psi(\nu_{\partial K}^{-1}(\theta))$ we have, setting $x = \nu_{\partial K}^{-1}(\theta)$: \[
\nabla_{S^{n-1}} w(\theta) = d\nu_{\partial K}^{-1}(\theta) \nabla_{\partial K} \Psi(x) = \II_{\partial K}^{-1}(x) \nabla_{\partial K}\Psi(x) ,
\]
and therefore:
\begin{align*}
\int_{S^{n-1}} \scalar{(D^2 h_K)^{-1} \nabla_{S^{n-1}} w , \nabla_{S^{n-1}} w} dS_K & = \int_{\partial K} \scalar{\II_{\partial K} \II_{\partial K}^{-1} \nabla_{\partial K} \Psi, \II_{\partial K}^{-1} \nabla_{\partial K} \Psi} dx \\
& = \int_{\partial K}\ \scalar{\II_{\partial K}^{-1} \nabla_{\partial K} \Psi, \nabla_{\partial K} \Psi} dx .
\end{align*}

Plugging the above identities into Proposition \ref{prop:p-BM-Sphere}, and using the fact that the Weingarten map is a $C^1$ diffeomorphishm when $K \in \K^2_+$, we immediately obtain:

\begin{prop}[Infinitesimal $p$-BM on $\partial K$] \label{prop:p-BM-K}
Given $K \in \K^{2}_{+,e}$ and $p < 1$, the local $p$-BM conjecture (\ref{eq:local-p-BM}) for $K$ is equivalent to the assertion that:
\begin{align}
\label{eq:pBMPsi} & \forall \Psi \in C^1_e(\partial K) \;\;\; \int_{\partial K}\ \scalar{\II_{\partial K}^{-1} \nabla_{\partial K} \Psi, \nabla_{\partial K} \Psi} dx  \geq\\
\nonumber &  \int_{\partial K} H_{\partial K}(x) \Psi^2(x) dx + (1-p) \int_{\partial K} \frac{\Psi^2(x)}{\scalar{x,\nu_{\partial K}(x)}} dx - \frac{n-p}{n} \frac{1}{V(K)} \brac{\int_{\partial K} \Psi(x) dx}^2 ,
\end{align}
where $H_{\partial K}(x) = \text{tr}(\II_{\partial K})(x)$ denotes the mean-curvature of $\partial K$ at $x \in \partial K$.  
\end{prop}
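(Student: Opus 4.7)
The plan is to reduce Proposition~\ref{prop:p-BM-K} directly to Proposition~\ref{prop:p-BM-Sphere} by pulling the latter back under the Weingarten map. Since $K \in \K^2_{+,e}$, the map $\nu_{\partial K}^{-1} : S^{n-1} \to \partial K$, $\theta \mapsto \nabla_{\Real^n} h_K(\theta)$, is a $C^1$-diffeomorphism, and by origin-symmetry it satisfies $\nu_{\partial K}^{-1}(-\theta) = -\nu_{\partial K}^{-1}(\theta)$. Hence the correspondence $\Psi(x) := w(\nu_{\partial K}(x))$ gives a bijection between $C^1_e(S^{n-1})$ and $C^1_e(\partial K)$. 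It suffices, then, to check that each of the four terms in (\ref{eq:p-BM-Sphere}) transforms into the corresponding term in (\ref{eq:pBMPsi}) under this substitution.

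For the zeroth-order terms this is routine using three standard identities: the Jacobian relation $dS_K = \text{det}(D^2 h_K) \, d\theta = \text{Jac}(\nu_{\partial K}^{-1}) \, d\theta$, which gives the overall change of measure and in particular $\int_{S^{n-1}} w \, dS_K = \int_{\partial K} \Psi \, dx$; the relation $D^2 h_K(\nu_{\partial K}(x)) = \II_{\partial K}^{-1}(x)$, which turns $\text{tr}((D^2 h_K)^{-1})$ into the mean curvature $H_{\partial K} = \text{tr}(\II_{\partial K})$; and $h_K(\nu_{\partial K}(x)) = \scalar{x, \nu_{\partial K}(x)}$, producing the support-function denominator in the $(1-p)$-term. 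The first-order term requires the chain rule: differentiating $w = \Psi \circ \nu_{\partial K}^{-1}$ and using $d\nu_{\partial K}^{-1}(\theta) = D^2 h_K(\theta) = \II_{\partial K}^{-1}(x)$ gives $\nabla_{S^{n-1}} w(\theta) = \II_{\partial K}^{-1}(x) \nabla_{\partial K}\Psi(x)$, so that
\[
\scalar{(D^2 h_K)^{-1} \nabla_{S^{n-1}} w, \nabla_{S^{n-1}} w} = \scalar{\II_{\partial K} \II_{\partial K}^{-1} \nabla_{\partial K} \Psi , \II_{\partial K}^{-1} \nabla_{\partial K} \Psi} = \scalar{\II_{\partial K}^{-1} \nabla_{\partial K} \Psi , \nabla_{\partial K} \Psi}.
\]
Integrating against $dS_K$ on the left and $dx$ on the right (using the Jacobian identity), substituting all four transformed terms into (\ref{eq:p-BM-Sphere}), yields exactly (\ref{eq:pBMPsi}).

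I do not anticipate any real obstacle here: the heavy lifting (the integration by parts that produced the second mixed volume in gradient form, and the approximation argument passing from $C^2$ to $C^1$ test functions) has already been performed in proving Proposition~\ref{prop:p-BM-Sphere}. The only point worth a moment's care is verifying that the $C^1_e$ regularity class is preserved under $w \leftrightarrow \Psi$, which is immediate since $\nu_{\partial K}$ and its inverse are $C^1$ diffeomorphisms respecting the antipodal symmetry of an origin-symmetric body.
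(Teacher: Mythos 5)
Your proposal is correct and follows essentially the same route as the paper: the paper's proof of Proposition \ref{prop:p-BM-K} is precisely the pullback of Proposition \ref{prop:p-BM-Sphere} under the Weingarten map, using the same identities $D^2 h_K(\nu_{\partial K}(x)) = \II_{\partial K}^{-1}(x)$, $h_K(\nu_{\partial K}(x)) = \scalar{x,\nu_{\partial K}(x)}$, the Jacobian relation $dS_K = \det(D^2 h_K)\,d\theta$, and the chain-rule identity $\nabla_{S^{n-1}} w = \II_{\partial K}^{-1}\nabla_{\partial K}\Psi$ for the gradient term.
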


\begin{rem} \label{rem:p-BM-K}
Remark \ref{rem:scale-invariance} translates into the fact that the validity of (\ref{eq:p-BM-Sphere}) is invariant under $w \mapsto w + \lambda h_K$, or equivalently, that
(\ref{eq:pBMPsi}) is invariant under $\Psi \mapsto \Psi + \lambda \scalar{x , \nu_{\partial K}(x)}$. Consequently, (\ref{eq:pBMPsi}) is equivalent to:
\begin{align}
\label{eq:pBMPsiNor} & \forall \Psi \in C^1_e(\partial K) \;\;\; \int_{\partial K} \Psi(x) dx = 0 \;\; \Rightarrow \\
\nonumber &  \int_{\partial K}\ \scalar{\II_{\partial K}^{-1} \nabla_{\partial K} \Psi, \nabla_{\partial K} \Psi} dx  \geq \int_{\partial K} H_{\partial K}(x) \Psi^2(x) dx + (1-p) \int_{\partial K} \frac{\Psi^2(x)}{\scalar{x,\nu_{\partial K}(x)}} dx .
\end{align}
\end{rem}

\begin{rem}
The classical case $p=1$ of Proposition \ref{prop:p-BM-K} was previously derived by Colesanti in \cite{ColesantiPoincareInequality}. See also \cite{KolesnikovEMilmanReillyPart2} for extensions of the case $p=1$ to the setting of weighted Riemannian manifolds satisfying the Curvature-Dimension condition $\text{CD}(0,N)$, \cite{KolesnikovEMilman-GaussianPoincare} for an analogous statement involving Ehrhard's inequality for the Gaussian measure, and \cite{ColesantiEugenia-PoincareFromAF} for a version involving other intrinsic volumes. 
\end{rem}

\bigskip

\section{Relation to Hilbert--Brunn--Minkowski Operator and Linear Equivariance} \label{sec:LK}

It is easy to show that the validity of the local $p$-BM conjecture (\ref{eq:local-p-BM}) as a function of $K \in \K^2_{+,e}$ is invariant under linear transformations. More precisely, 
given $T \in GL_n$ and $\frac{1}{p} f^p \in C^2_e(S^{n-1})$, define $f_T : S^{n-1} \rightarrow \Real$ by extending $f$ as a $1$-homogeneous function to $\Real^n$ and pushing it forward via $T^{-t}$, namely:
\[
f_T(\theta) = T^{-t}_* f (\theta) = f(T^t \theta) = f \brac{\frac{T^t \theta}{\abs{T^t \theta}}} \abs{T^t \theta} .
\]
It is easy to see that $p f_T^p \in C^2_e(S^{n-1})$ and that:
\begin{equation} \label{eq:Lin0}
T(K) +_p \eps \cdot f_T = T(K +_p \eps \cdot f) .
\end{equation}
Indeed, by definition, we have:
\begin{equation} \label{eq:h-contra}
h_{T(K)}(\theta) = h_K(T^t \theta) ,
\end{equation}
ans so for small enough $\abs{\eps}\ll 1$ and all $\theta \in S^{n-1}$: \begin{align*}
&h_{T(K) +_p \eps \cdot f_T}(\theta) = \brac{h^p_{T(K)}(\theta) + \eps f^p_T(\theta)}^{\frac{1}{p}} = \brac{h^p_K\brac{\frac{T^t \theta}{\abs{T^t \theta}}} + \eps f^p \brac{\frac{T^t \theta}{\abs{T^t \theta}}}}^{\frac{1}{p}} \abs{T^t \theta} \\
& = \brac{h_{K}^p + \eps f^p}^{\frac{1}{p}} \brac{\frac{T^t \theta}{\abs{T^t \theta}}}  \abs{T^t \theta} = h_{K +_p \eps \cdot f}  \brac{\frac{T^t \theta}{\abs{T^t \theta}}}  \abs{T^t \theta} = h_{K +_p \eps \cdot f} (T^t \theta) = h_{T(K +_p \eps \cdot f)}(\theta) .
\end{align*}
In fact, it is equally easy to check that (\ref{eq:Lin0}) remains valid for all $\eps \in \Real$ as equality between Aleksandrov bodies, even when the support function of $K+_p \eps \cdot f$ does not coincide with the corresponding defining Wulff function. 

In any case, since $\frac{1}{p} f^p \mapsto \frac{1}{p} f^p_T$ is clearly a bijection on $C^2_e(S^{n-1})$, the invariance under linear transformations $K \mapsto T(K)$ of the validity of (\ref{eq:local-p-BM}) immediately follows. Consequently, the invariance under linear transformations of the validity of the equivalent infinitesimal versions (\ref{eq:p-BM-Sphere}) on $S^{n-1}$ and (\ref{eq:pBMPsi}) on $\partial K$ follows as well. This is not surprising, since this is just an infinitesimal manifestation of the (easily verifiable) fact that:
\[
T(K +_p L) = T(K) +_p T(L) \;\;\; \forall T \in GL_n ,
\]
which implies that the validity of the $p$-BM conjecture (\ref{eq:p-BM}) for $K,L$ is equivalent to that for $T(K),T(L)$.

\medskip

However, one of our goals in this section is to establish a somewhat deeper linear equivariance of a certain second-order linear differential operator $L_K$ associated to every $K \in \K^2_+$, which extends the above elementary observation. Modulo our different normalization, this operator was already been considered by Hilbert in his proof of the Brunn--Minkowski inequality, and generalized by Aleksandrov in his second proof of the Aleksandrov--Fenchel inequality (see \cite[pp. 108--110]{BonnesenFenchelBook}). Consequently, we call $L_K$ the Hilbert--Brunn--Minkowski operator.

\subsection{Hilbert--Brunn--Minkowski operator} \label{subsec:LK}

\begin{defn*}[Hilbert--Brunn--Minkowski operator]
Given $K \in \K^2_+$, the associated Hilbert--Brunn--Minkowski operator, denoted $L_K$, is the second-order linear differential operator on $C^2(S^{n-1})$ defined by:
\[
L_K := \tilde{L}_K - Id ~,~ \tilde{L}_K z := \frac{\SS(z h_K , \overbrace{h_K, \ldots,h_K}^{\text{$n-2$ times}})}{\SS(\underbrace{h_K,\ldots,h_K}_{\text{$n-1$ times}})} .
\]
\end{defn*}
\begin{rem}
Abbreviating $Q^{i,j} = Q^{i,j}(D^2 h_K)$ and $h = h_K$, observe that:
\begin{align} 
\nonumber & \SS(z h,h,\ldots,h) - z \SS(h,h,\ldots,h) \\
\nonumber & = Q^{i,j} \brac{((z h)_{i,j} + z h \delta_{i,j})  - z (h_{i,j} + h \delta_{i,j})} \\
\label{eq:Dzh-zDh} & = Q^{i,j} (z_i h_j + h_i z_j + h z_{i,j}) = Q^{i,j} \frac{1}{h} (h^2 z_i)_j  ,
\end{align}
where the last transition follows by the symmetry of $Q^{i,j}$. 
Recalling (\ref{eq:Q-formula2}), we obtain:
\begin{equation} \label{eq:LK-explicit}
L_K z = \frac{1}{n-1} ((D^2 h)^{-1})^{i,j} (z_i h_j + h_i z_j + h z_{i,j}) = \frac{1}{n-1}  \frac{((D^2 h)^{-1})^{i,j}}{h} (h^2 z_i)_j  .
\end{equation}
In particular, we see that $L_K$ has no zeroth order term. 
\end{rem}

\begin{defn*}[Cone Measure]
The cone measure of $K$, denoted $dV_K$, is the Borel measure on $S^{n-1}$ defined by:
\[
dV_K := \frac{1}{n} h_K dS_{K} .
\]
\end{defn*}

It is easy to check that for any Borel set $A \subset S^{n-1}$, $V_K(A)$ equals the volume of the cone in $K$ generated by $\nu_K^{-1}(A)$ with vertex at the origin. In particular, $V_K(S^{n-1}) = V(K)$. When $K \in \K^2_+$, we have by (\ref{eq:dSK}):
\[
dV_K = \frac{1}{n} h_K \SS(h_K,\ldots,h_K) d\theta . 
\]

Fixing $K \in \K^2_+$, and recalling the terms appearing in the second $p$-Minkowski inequality (Proposition \ref{prop:2nd-p-Minkowski}), we rewrite:
\begin{align}
\nonumber V(z h_K ; 1) &= \frac{1}{n} \int_{S^{n-1}} z h_K dS_K = \int_{S^{n-1}} z dV_K \\
\nonumber V(z^2 h_K ; 1) & = \frac{1}{n} \int_{S^{n-1}} z^2 h_K dS_K = \int_{S^{n-1}} z^2 dV_K \\  
\label{eq:tilde-LK-as-mixedvol}
V(z h_K ; 2) & = \frac{1}{n} \int_{S^{n-1}} z h_K \SS(z h_K, h_K,\ldots,h_K) d\theta = \int_{S^{n-1}} (\tilde{L}_K z) z dV_K . 
\end{align}

Plugging these expressions into Proposition \ref{prop:2nd-p-Minkowski}, and applying Remark \ref{rem:scale-invariance}, we obtain:
\begin{prop} \label{prop:LK-SG}
Given $K \in \K^{2}_{+,e}$ and $p < 1$, the local $p$-BM conjecture (\ref{eq:local-p-BM}) for $K$ is equivalent to the assertion that:
\[
\forall z \in C^2_e(S^{n-1}) \;\;\; \int_{S^{n-1}} z dV_K = 0 \;\; \Rightarrow \;\; \int_{S^{n-1}} (-L_K z) z dV_K \geq \frac{n-p}{n-1} \int_{S^{n-1}} z^2 dV_K . 
\]
\end{prop}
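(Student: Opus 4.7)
The plan is to derive this as a direct algebraic reformulation of the second $L^p$-Minkowski inequality obtained in Proposition \ref{prop:2nd-p-Minkowski}, using the mixed-volume identities collected in (\ref{eq:tilde-LK-as-mixedvol}) and the scale/shift reduction from Remark \ref{rem:scale-invariance}.

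First, I would substitute the three identities
\[
V(zh_K;1) = \int_{S^{n-1}} z \, dV_K, \quad V(z^2 h_K;1) = \int_{S^{n-1}} z^2 \, dV_K, \quad V(zh_K;2) = \int_{S^{n-1}} (\tilde{L}_K z) z \, dV_K,
\]
together with $V(K) = V_K(S^{n-1})$, into the inequality (\ref{eq:p-BM-mixed-vols}) characterizing the local $p$-BM conjecture for $K$. This recasts (\ref{eq:p-BM-mixed-vols}) as
\[
\frac{1}{V(K)}\Bigl(\int_{S^{n-1}} z \, dV_K\Bigr)^{\!2} \geq \frac{n-1}{n-p} \int_{S^{n-1}} (\tilde{L}_K z) z \, dV_K + \frac{1-p}{n-p} \int_{S^{n-1}} z^2 \, dV_K .
\]

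Next, I would invoke Remark \ref{rem:scale-invariance} to restrict attention to test functions $z \in C^2_e(S^{n-1})$ satisfying the mean-zero constraint $\int_{S^{n-1}} z \, dV_K = 0$; under this normalization the left-hand side vanishes, and after multiplying through by $-(n-p)/(n-1)$ the displayed inequality reduces to
\[
\int_{S^{n-1}} (-\tilde{L}_K z) z \, dV_K \geq \frac{1-p}{n-1} \int_{S^{n-1}} z^2 \, dV_K .
\]

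Finally, I would use the definition $L_K = \tilde{L}_K - \mathrm{Id}$ to rewrite $\int (-\tilde{L}_K z) z\, dV_K = \int (-L_K z) z\, dV_K - \int z^2\, dV_K$, so the inequality above becomes
\[
\int_{S^{n-1}} (-L_K z) z \, dV_K \geq \Bigl(1 + \frac{1-p}{n-1}\Bigr)\int_{S^{n-1}} z^2 \, dV_K = \frac{n-p}{n-1}\int_{S^{n-1}} z^2 \, dV_K ,
\]
which is exactly the claimed spectral inequality. Each step is reversible, so the equivalence is two-sided. There is no real obstacle here: the heavy lifting has already been done in establishing Proposition \ref{prop:2nd-p-Minkowski}, in identifying the quadratic form $z \mapsto V(zh_K;2)$ with $\int (\tilde{L}_K z) z\, dV_K$ via the divergence-free $Q^{i,j}$ computation of Lemma \ref{lem:mixed-vol-symmetric}, and in the scale-invariance observation of Remark \ref{rem:scale-invariance}; the only thing to verify is the bookkeeping of the constants when passing from $\tilde{L}_K$ to $L_K$.
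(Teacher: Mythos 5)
Your proposal is correct and follows essentially the same route as the paper: plug the identities $V(zh_K;1)=\int z\,dV_K$, $V(z^2h_K;1)=\int z^2\,dV_K$, $V(zh_K;2)=\int(\tilde L_K z)z\,dV_K$ into Proposition \ref{prop:2nd-p-Minkowski}, reduce to mean-zero $z$ via Remark \ref{rem:scale-invariance}, and pass from $\tilde L_K$ to $L_K=\tilde L_K-\mathrm{Id}$ to get the constant $\frac{n-p}{n-1}$. The only cosmetic quibble is that the identity $V(zh_K;2)=\int(\tilde L_K z)z\,dV_K$ is immediate from the definitions of $\tilde L_K$ and $dV_K$ rather than from the divergence-free computation of Lemma \ref{lem:mixed-vol-symmetric}.
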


The latter formulation has a clear spectral flavor. Let us make  this more precise. 

\begin{thm} \label{thm:LK}
 Let $K \in \K^2_+$. 
\begin{enumerate}
\item The operator $L_K : C^2(S^{n-1}) \rightarrow C(S^{n-1})$ is symmetric on $L^2(dV_K)$. 
\item The operator $L_K$ is elliptic and hence admits a unique self-adjoint extension in $L^2(dV_K)$ with domain $\text{Dom}(L_K) = H^2(S^{n-1})$, which we continue to denote by $L_K$. Its spectrum  $\sigma(L_K) \subset \Real$ is discrete, consisting of a countable sequence of eigenvalues of finite multiplicity tending (in absolute value) to $\infty$. 
\item The Dirichlet form associated to $L_K$ is given by:
\begin{equation} \label{eq:Dirichlet}
\int_{S^{n-1}} (-L_K z) z dV_K = \frac{1}{n-1} \int_{S^{n-1}} h_K ((D^2 h_K)^{-1})^{i,j} z_i z_j dV_K .
\end{equation}
In particular,  $-L_K$ is positive semi-definite and therefore $\sigma(-L_K) \subset \Real_+$. We denote its eigenvalues (arranged in non-decreasing order and repeated according to multiplicity) by $\set{\lambda_m}_{m \geq 0}$. 
\item If $\set{K_i} \subset \K^2_+$ and $K_i \rightarrow K$ in $C^2$ then $\lim_{i \rightarrow \infty} \lambda_m(-L_{K_i}) = \lambda_m(-L_K)$ for all $m \geq 0$. 
\item $0$ is an eigenvalue of $-L_K$ with multiplicity one corresponding to the one-dimensional subspace of constant functions $E_0 := \text{span}(\mathbf{1})$. 
\item $-L_K|_{\mathbf{1}^{\perp}} \geq Id|_{\mathbf{1}^{\perp}}$ (as positive semi-definite operators on $L^2(dV_K)$). \item $1$ is an eigenvalue of $-L_K$ with multiplicity precisely $n$ corresponding to the $n$-dimensional subspace $E_1^K$ spanned by the (renormalized) linear functions:
\[
 \ell^K_v(\theta) = \frac{\scalar{\theta,v}}{h_K(\theta)} ~,~ v \in \Real^n .
 \]
\end{enumerate}
\end{thm}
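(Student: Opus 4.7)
My plan is to derive everything from the key identity~(\ref{eq:Dzh-zDh}) and the divergence-free property of $Q_K^{i,\cdot}$. Specifically, combining (\ref{eq:Dzh-zDh}) with the explicit form (\ref{eq:LK-explicit}), I would begin by computing the Dirichlet bilinear form. Starting from $\int_{S^{n-1}} (-L_K z) w \, dV_K$, rewriting $dV_K = \frac{1}{n} h_K \det(D^2 h_K) d\theta$ and using $Q_K^{i,j} = \frac{1}{n-1}\det(D^2 h_K)(D^2 h_K)^{-1,i,j}$, one integration by parts in the $j$-index — with the boundary term absent because $S^{n-1}$ is closed, and the $(Q_K^{i,j})_{;j}$ term vanishing by the divergence-free property cited after~(\ref{eq:DviaQ}) — reduces the expression to $\tfrac{1}{n-1}\int_{S^{n-1}} h_K (D^2 h_K)^{-1,i,j} z_i w_j \, dV_K$. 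Polarizing and appealing to the symmetry of $(D^2 h_K)^{-1}$, this proves (1) and simultaneously establishes formula~(\ref{eq:Dirichlet}), i.e.\ (3). Since $D^2 h_K>0$ and $h_K>0$, the quadratic form is non-negative, giving $-L_K \geq 0$.

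For (2), I would observe that (\ref{eq:LK-explicit}) exhibits $L_K$ as a linear second-order differential operator on $S^{n-1}$ with smooth coefficients and principal symbol $\tfrac{h_K}{n-1}(D^2 h_K)^{-1}$, which is positive-definite by $K\in\K^2_+$, so $L_K$ is uniformly elliptic. Standard elliptic theory on the compact manifold $S^{n-1}$, with the smooth positive weight $dV_K$, then produces a unique self-adjoint (Friedrichs) extension with domain $H^2(S^{n-1})$, compact resolvent, and the stated discrete spectrum. For (4), I would invoke the min-max characterization $\lambda_m(-L_K) = \min_{\dim V = m+1}\max_{z\in V\setminus\{0\}}\mathcal{E}_K(z,z)/\|z\|^2_{L^2(dV_K)}$ with $\mathcal{E}_K$ the Dirichlet form; $C^2$-convergence $K_i\to K$ yields $h_{K_i}\to h_K$ in $C^2$, hence uniform convergence of $(D^2 h_{K_i})^{-1}$ (the limit being uniformly positive-definite), and consequently uniform convergence of the coefficients of $\mathcal{E}_{K_i}$ and of the density of $dV_{K_i}$, from which the min-max formula delivers the eigenvalue continuity.

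For (5), note that $\SS(h_K,\ldots,h_K)/\SS(h_K,\ldots,h_K)=1$, so $\tilde L_K\mathbf 1 = \mathbf 1$ and $L_K\mathbf 1 = 0$; conversely, if $-L_K z = 0$, the Dirichlet form vanishes, and positive-definiteness of $(D^2 h_K)^{-1}$ forces $\nabla_{S^{n-1}} z \equiv 0$, so $z$ is constant. For (6), I would apply Proposition~\ref{prop:2nd-p-Minkowski} (equivalently, the infinitesimal content of the classical Brunn--Minkowski inequality, i.e.\ Minkowski's second inequality $V(w;2)V(K)\le V(w;1)^2$ valid with no evenness assumption) with $w=zh_K$: using $V(zh_K;1)=\int z\,dV_K$ from (\ref{eq:tilde-LK-as-mixedvol}) and $V(zh_K;2)=\int z^2 dV_K - \int(-L_K z)z\,dV_K$, the inequality directly reads $\int(-L_K z)z\,dV_K \geq \int z^2\,dV_K$ whenever $\int z\,dV_K=0$, i.e.\ on $\mathbf 1^\perp$.

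For (7), I would verify directly that $\ell^K_v$ is an eigenfunction with eigenvalue $1$: extending $\ell^K_v \cdot h_K = \langle\theta,v\rangle$ as a $1$-homogeneous function on $\Real^n$ gives the linear map $x\mapsto\langle x,v\rangle$, whose Euclidean Hessian is identically zero; consequently $D^2(\ell^K_v h_K)\equiv 0$ on $S^{n-1}$, so $\SS(\ell^K_v h_K, h_K,\ldots,h_K)$ vanishes by multilinearity of $D_{n-1}$, yielding $\tilde L_K \ell^K_v = 0$ and hence $-L_K \ell^K_v = \ell^K_v$. The map $v\mapsto\ell^K_v$ is injective because $h_K>0$ and $\theta\mapsto\langle\theta,v\rangle$ separates $v$, so $E_1^K$ is $n$-dimensional. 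The difficult part — and the main obstacle — is showing the multiplicity is not larger than $n$. Here I would argue as follows: if $-L_K z = z$, then by the symmetry in (1) applied to $\mathbf 1$, one has $\int z\,dV_K = \int(-L_K z)\mathbf 1\,dV_K = \int z(-L_K\mathbf 1)dV_K = 0$, i.e.\ $z\in\mathbf 1^\perp$; equality then holds throughout the chain used for (6), which is precisely equality in Minkowski's second inequality $V(w;2)V(K)=V(w;1)^2$ for $w=zh_K$. The classical equality case (due to Aleksandrov, a special case of Aleksandrov--Fenchel equality) forces $w = \alpha h_K + \ell_v$ for some linear function $\ell_v(\theta)=\langle\theta,v\rangle$, i.e.\ $z = \alpha + \ell^K_v$; the constraint $\int z\,dV_K=0$ combined with $\int \ell^K_v\,dV_K = \tfrac{1}{n}\int_{S^{n-1}}\langle\theta,v\rangle dS_K = 0$ (the closedness of $\partial K$) gives $\alpha = 0$, so $z\in E_1^K$. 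Invoking the Aleksandrov equality case is the only non-self-contained ingredient; alternatively one can derive it by combining the strict sub-additivity in Brunn--Minkowski with a rank analysis of $D^2(zh_K)$.
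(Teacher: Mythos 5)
Your proposal is correct in substance and, for items (1)--(6), follows essentially the paper's route with minor reroutings: you obtain symmetry (1) by integrating by parts directly against $Q_K^{i,\cdot}$ rather than, as the paper does, identifying $\int (L_K z_1) z_2\, dV_K$ with the mixed volume $V(z_1 h_K, z_2 h_K, h_K,\ldots,h_K)$ and invoking Lemma \ref{lem:mixed-vol-symmetric} (the two are the same computation, since that lemma is itself proved by this integration by parts); you prove (4) via the min--max characterization of the Dirichlet forms instead of the paper's resolvent-convergence argument, and (5) via vanishing of the Dirichlet form instead of the maximum principle --- all equally valid. For (6) you and the paper do the same thing: read the classical Minkowski second inequality (\ref{eq:Minkowski-2nd}) through Proposition \ref{prop:LK-SG}/(\ref{eq:tilde-LK-as-mixedvol}). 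The one genuinely different-looking step is (7): the paper simply cites Hilbert for the fact that the multiplicity of the eigenvalue $1$ is exactly $n$, whereas you derive it from the equality case of Minkowski's quadratic inequality for $K \in \K^2_+$. Be aware that this buys less independence than it may appear: for smooth strictly convex $K$ the equality characterization ``$V(w;1)=0$, $V(w;2)=0$ $\Rightarrow$ $w$ linear'' is classically \emph{proved} by exactly the spectral analysis you are trying to establish (it is the crux of Hilbert's and Aleksandrov's arguments in \cite{BonnesenFenchelBook}), so your argument is a legitimate citation of an equivalent classical fact --- on par with the paper's citation --- rather than a new proof.

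Two small technical points you should patch. First, the integration by parts in (1)/(3) uses the divergence-free property of $Q^{i,\cdot}(D^2 h_K,\ldots,D^2 h_K)$, which requires $h_K \in C^3(S^{n-1})$; as in the paper, do this first for $K \in \K^3_+$ and pass to $K \in \K^2_+$ by approximation (only first derivatives of $z$ and second derivatives of $h_K$ appear in the resulting bilinear form). Second, in (7) the eigenfunction $z$ is a priori only in $H^2(S^{n-1})$ (the coefficients of $L_K$ are merely continuous), while the equality case you invoke is stated for $C^2$ differences of support functions; you need either an approximation/regularity remark or to phrase the equality case at the level of the extended quadratic form (\ref{eq:2nd-mixed-vol}), which involves only first derivatives of $w=z h_K$.
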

\begin{proof}
\begin{enumerate}
\item If $z_1,z_2 \in C^2(S^{n-1})$ then:
\begin{align*} \int_{S^{n-1}} (L_K z_1) z_2 dV_K & = \frac{1}{n} \int_{S^{n-1}} z_1 h_K \SS(z_2 h_K,h_K,\ldots,h_K) d\theta \\
& = V(z_1 h_K, z_2 h_K, h_K,\ldots,h_K) .
\end{align*}
By Lemma \ref{lem:mixed-vol-symmetric}, the mixed volume is invariant under permutations, so the right-hand-side is symmetric in $z_1,z_2$, and hence so is the left-hand-side, as asserted. 
\item The ellipticity of $L_K$ follows since by (\ref{eq:LK-explicit}), 
its (leading) second-order term is given by:
\[
\frac{h_K}{n-1} ((D^2 h_K)^{-1})^{i,j} z_{i,j} .
\]
Since $K \in \K^2_+$, there exist $a,b > 0$ so that $a \delta \leq D^2 h_K \leq b \delta$ on $S^{n-1}$, where $\delta$ denotes the standard metric on $S^{n-1}$, and the (uniform) ellipticity follows. 

As for essential self-adjointness, it is well known using elliptic regularity theory \cite[Section 8.2]{Taylor-PDE-II-Book}, \cite{StrichartzLaplacianOnManifold} that a symmetric second-order elliptic operator with continuous coefficients on a compact closed manifold $M$ has a unique self-adjoint extension from $C^2(M)$ to the Sobolev space $H^2(M)$. Its resolvent is necessarily compact, and hence its spectrum is discrete.

\item By density, it is enough to verify the representation (\ref{eq:Dirichlet}) for $z \in C^2(S^{n-1})$. Abbreviating as usual $Q^{i,j} = Q^{i,j}(D^2 h_K)$ and $h = h_K$, it follows by (\ref{eq:Dzh-zDh}) and the divergence free property of $Q^{i,\cdot}$ when $K \in \K^3_+$ that:
\[
\int_{S^{n-1}} (-L_K z) z dV_K = - \frac{1}{n} \int_{S^{n-1}} z Q^{i,j} (h^2 z_i)_j d\theta = \frac{1}{n} \int_{S^{n-1}} Q^{i,j} h^2 z_i z_j d\theta. 
\]
The case of a general $K \in \K^2_+$ follows by approximation.
Recalling (\ref{eq:Q-formula2}), the representation (\ref{eq:Dirichlet}) immediately follows. 
The positive semi-definiteness of $-L_K$ follows since $D^2 h$ is positive definite; note that this also follows by (5) and (6) as well.  

\item The continuity of the eigenvalues as a function of the coefficients of a family of uniformly elliptic operators on a compact manifold is classical (e.g. \cite[Theorem 2.3.3]{HenrotBook}). 
Indeed, the $C^2$ convergence of $K_i$ to $K$ ensures that the coefficients of $-L_{K_i}$ converge in $C(S^{n-1})$ to those of $-L_K$; using the uniform ellipticity (as $D^2 h_{K_i} \geq \frac{1}{2} D^2 h_K \geq c \delta$ for some $c > 0$ and large enough $i$), one shows pointwise convergence of the corresponding compact resolvent operators, from whence norm convergence of the resolvent operators is deduced, yielding the convergence of eigenvalues.

\item Clearly $L_K \mathbf{1} = 0$ as it has no zeroth order term (or since clearly $\tilde{L}_K \mathbf{1} = \mathbf{1}$). The standard fact that the multiplicity of the $0$ eigenvalue is precisely one follows since $L_K$ is second-order elliptic with no zeroth order term and since the sphere is a connected compact manifold. Note that this also follows from (6). 

\item The spectral-gap estimate $-L_K|_{\mathbf{1}^{\perp}} \geq Id|_{\mathbf{1}^{\perp}}$ is a deep fact which is equivalent to the (local, and hence global) Brunn--Minkowski inequality. Under a different normalization, this equivalence was first noted by Hilbert (see \cite[pp. 108--109]{BonnesenFenchelBook} and Remark \ref{rem:Hilbert}), who obtained a direct proof of the former spectral-gap estimate by employing the method of continuity, thereby obtaining a novel proof of the Brunn--Minkowski inequality. To see the equivalence, simply apply Proposition \ref{prop:LK-SG} in the classical case $p=1$, and note that the Brunn--Minkowski inequality holds without any symmetry assumptions on $K,L$, so that the evenness assumption on the test function $z$ is unnecessary in this case.

\item It is immediate to check that $\ell^K_v$ is indeed an eigenfunction of $-L_K$ with eigenvalue $1$, since:
\[
\nabla^2_{\Real^n} \scalar{\theta,v} = 0 \;\; \text{and hence} \;\; \tilde{L}_K \ell^K_v = \frac{\SS(\scalar{\theta,v},h_K,\ldots,h_K)}{\SS(h_K,h_K,\ldots,h_K)} = 0 .
\]
Consequently, the multiplicity of the eigenvalue $1$ is at least $n$ (the dimension of linear functionals on $\Real^n$). The fact that there are no other eigenfunctions with eigenvalue $1$, and hence that the corresponding multiplicity is \emph{precisely} $n$, was established by Hilbert (see \cite[p. 110]{BonnesenFenchelBook} for an alternative argument) in his proof of the spectral-gap estimate (6), and in fact constitutes the crux of Hilbert's argument. 
\end{enumerate}
\end{proof}

Theorem \ref{thm:LK}, which modulo our different normalization is essentially due to Hilbert (see Remark \ref{rem:Hilbert} below), interprets the Brunn--Minkowski inequality as a uniform spectral-gap statement (beyond the trivial $\lambda_0 = 0$ eigenvalue corresponding to $E_0 = \text{span}(\mathbf{1})$):
\[
\lambda_1(-L_K) := \min \sigma(-L_K |_{\mathbf{1}^{\perp}}) \geq 1 \;\;\; \forall K \in \K^2_+ . 
\]
Moreover, it provides the \emph{additional} information that $\lambda_1(-L_K) = 1$ with corresponding eigenspace $E^K_1$ of dimension precisely $n$. Consequently, the next eigenvalue $\lambda_{n+1}(-L_K)$, which is obtained by restricting $-L_K$ to the (invariant) subspace perpendicular to $E^K_1 + E_0$, satisfies:
\[
\lambda_{n+1}(-L_K) := \min \sigma(-L_K |_{(E^K_1)^{\perp} \cap \mathbf{1}^{\perp} }) > 1 \;\;\; \forall K \in \K^2_+ . 
\]

\medskip
A naturally arising question, which perhaps could have been asked by Hilbert himself (had he been using our normalization), is whether the above \emph{next} eigenvalue gap \emph{beyond} $1$ is actually \emph{uniform} over all $K \in \K^2_+$. The most convenient way to obtain a necessary condition for this to hold, is to assume that $K$ is origin-symmetric, and so the $\ell^K_v$ eigenfunctions will all be odd, as the ratio of linear (odd) functions and an even one. If we only consider test-functions $z \in H^2(S^{n-1})$ which arise from perturbations of $K$ by another origin-symmetric body $L$, they will always be even, and hence constitute an invariant subspace $E_{\text{even}}$ for $-L_K$, which is in addition automatically perpendicular to $E^K_1$, and hence:
\[
\lambda_{1,e}(-L_K) := \min \sigma(-L_K |_{E_{\text{even}} \cap \mathbf{1}^{\perp}}) \geq \min \sigma(-L_K |_{(E^K_1)^{\perp} \cap \mathbf{1}^{\perp}}) = \lambda_{n+1}(-L_K) .
\]
Proposition \ref{prop:LK-SG} thus translates into an interpretation of the local $p$-BM conjecture as a question on the \emph{even} spectral-gap of $-L_K$ beyond $1$:
\begin{cor} \label{cor:LK-SG}
Given $K \in \K^{2}_{+,e}$ and $p < 1$, the local $p$-BM conjecture (\ref{eq:local-p-BM}) for $K$ is equivalent to the following even spectral-gap estimate for $-L_K$ beyond $1$:
\[
\lambda_{1,e}(-L_K) \geq \frac{n-p}{n-1} .
\]
\end{cor}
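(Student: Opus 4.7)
The plan is to directly translate the Poincar\'e-type inequality in Proposition \ref{prop:LK-SG} into the spectral-gap statement using the structural properties of $L_K$ collected in Theorem \ref{thm:LK}.

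First, I would verify that for $K \in \K^2_{+,e}$, the Hilbert--Brunn--Minkowski operator $L_K$ preserves parity. Since $h_K$ is even on $S^{n-1}$, so is $D^2 h_K$, and hence by the explicit formula (\ref{eq:LK-explicit}) every coefficient of $L_K$ is even. Consequently $L_K$ maps even functions to even functions and odd functions to odd functions, so $L^2(dV_K) = E_{\text{even}} \oplus E_{\text{odd}}$ is an invariant orthogonal decomposition for the self-adjoint operator $-L_K$ (where orthogonality uses that $dV_K$ is itself an even measure on $S^{n-1}$). In particular, the spectrum of $-L_K$ splits as the union of the spectra of its restrictions to the even and odd subspaces.

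Next, the constant function $\mathbf{1}$ is even and spans the zero eigenspace $E_0$ by Theorem \ref{thm:LK}(5), so $E_{\text{even}} \cap \mathbf{1}^{\perp}$ is an invariant closed subspace of the domain of $-L_K$ on which $-L_K$ is self-adjoint with compact resolvent (by Theorem \ref{thm:LK}(2)). By the min-max variational characterization of the bottom of the spectrum,
\[
\lambda_{1,e}(-L_K) = \inf\set{ \frac{\int_{S^{n-1}} (-L_K z) z \, dV_K}{\int_{S^{n-1}} z^2 \, dV_K} \; ; \; z \in \text{Dom}(L_K) \cap E_{\text{even}} \cap \mathbf{1}^{\perp},\ z \not\equiv 0 } .
\]
Using elliptic regularity and the density of $C^2_e(S^{n-1})$ in $H^2_e(S^{n-1}) = \text{Dom}(L_K) \cap E_{\text{even}}$ (this is where the essential self-adjointness from Theorem \ref{thm:LK}(2) enters), one may restrict the infimum above to test functions $z \in C^2_e(S^{n-1})$ with $\int z \, dV_K = 0$ without changing its value.

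Finally, the condition $\lambda_{1,e}(-L_K) \geq \frac{n-p}{n-1}$ is then literally the inequality $\int (-L_K z) z \, dV_K \geq \frac{n-p}{n-1} \int z^2 \, dV_K$ holding for all even $z \in C^2_e(S^{n-1})$ satisfying $\int z \, dV_K = 0$, which is precisely the reformulation of the local $p$-BM conjecture given by Proposition \ref{prop:LK-SG}. There is no genuine obstacle here: the entire statement is a spectral repackaging, and the only point requiring attention is the density/approximation argument that lets one pass between $C^2_e$ test functions and the full Sobolev domain of the self-adjoint extension, which is standard for second-order uniformly elliptic operators on compact manifolds.
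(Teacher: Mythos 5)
Your proposal is correct and follows essentially the same route as the paper: the paper also obtains the corollary as an immediate translation of Proposition \ref{prop:LK-SG}, using (as you do) that origin-symmetry of $K$ makes $E_{\text{even}}$ an invariant subspace of the self-adjoint $-L_K$ on $L^2(dV_K)$, so that the spectral-gap bound on $E_{\text{even}} \cap \mathbf{1}^{\perp}$ is equivalent, via the variational characterization and density of $C^2_e(S^{n-1})$ in the Sobolev domain, to the even mean-zero Poincar\'e-type inequality of that proposition. Your write-up merely makes explicit the standard parity-invariance and approximation details which the paper leaves implicit.
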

\noindent
This gives a concrete spectral reason for the restriction to origin-symmetric convex bodies in the $p$-BM conjecture when $p \in [0,1)$, and explains why the conjecture fails for general convex bodies $K$ -- without being perpendicular to $E^K_{1}$, the spectral-gap beyond $0$ is precisely $1$ and never better (as seen by the test functions $\ell^K_v$, corresponding to translations of $K$).

\begin{rem} \label{rem:extension-to-non-sym}
The above discussion suggests a plausible extension of the local $p$-BM conjecture which does not require that $K \in \K^2_+$ be origin-symmetric. In spectral terms it naturally reads as:
\[
\lambda_{n+1}(-L_K) \geq \frac{n-p}{n-1} ,
\]
or equivalently:
\begin{align}
\label{eq:non-symmetric-z} \forall z \in C^2(S^{n-1})  \;\; & \;\; \int_{S^{n-1}} z h_K dS_K = 0 \; \text{ and } \int_{S^{n-1}} \vec{\theta} \; z(\theta) dS_K(\theta) = \vec{0}  \;\; \Rightarrow \\
\nonumber & \int_{S^{n-1}} (-L_K z) z dV_K \geq \frac{n-p}{n-1} \int_{S^{n-1}} z^2 dV_K .
\end{align}
In terms of local concavity as in (\ref{eq:local-p-BM}), recalling the derivation in Subsection \ref{subsec:2nd-p-Minkowski}, this reads as:
\[
\left . \frac{d^2}{(d\eps)^2} \right |_{\eps=0} \frac{1}{p} V(A[h_K (1 + \eps z)^{\frac{1}{p}}])^{\frac{p}{n}} \leq 0 \;\;\; \text{for all $z \in C^2(S^{n-1})$ satisfying (\ref{eq:non-symmetric-z})}.
\]
However, we do not know how to translate the local condition (\ref{eq:non-symmetric-z}) on $z = \frac{1}{p} \frac{h_L^p - h_K^p}{h_K^p}$ into a global requirement on $K,L$ which would guarantee the local condition along the $p$-Minkowski interpolation between $K$ and $L$. In this regard, we note that it was shown in \cite{XiLeng-DarAndLogBMInPlane} that for all convex bodies $K,L$ in the plane, there exist translations of $K,L$ (``dilation positions") for which the log-BM conjectured inequality holds true. 
\end{rem}

\begin{rem} \label{rem:Hilbert}
Hilbert originally considered \cite[pp. 108-109]{BonnesenFenchelBook} the operator:
\[
H_K w := \SS(w , h_K,\ldots,h_K) ,
\]
which is elliptic and essentially self adjoint on $L^2(d\theta)$ with respect to the Lebesgue measure $d\theta$ on $S^{n-1}$. However, this operator is not well suited for our investigation. Indeed, setting as usual $w = z h_K$ in Remark \ref{rem:scale-invariance}, the local $p$-BM conjecture for $K \in \K^2_{+,e}$ is then equivalent to:
\[
\forall w \in C^2_e(S^{n-1}) \;\;\; \int w dS_K = 0 \;\; \Rightarrow \int_{S^{n-1}} (-H_K w) w d\theta \geq \frac{1-p}{n-1} \int_{S^{n-1}}\frac{w^2}{h_K} dS_K ,
\]
which does not have a nice spectral interpretation when $p \neq 1$. Furthermore, the correspondence $K \mapsto H_{K}$ does not posses the useful equivariance property under linear transformations we shall establish for $L_{K}$ in the next subsection, and it is not even invariant under homothety, so there is no chance of obtaining uniform estimates for $H_K$ valid for all convex bodies $K$. 

The normalization we employ in our definition of $L_K$ may be uniquely characterized (up to scaling) as defining a second-order differential operator with no zeroth order term, which is essentially self-adjoint  on $L^2(\mu_K)$, and so that the conjectured second $p$-Minkowski inequality  (\ref{eq:p-BM-mixed-vols}) may be equivalently rewritten as a spectral-gap inequality on the subspace of $L^2(\mu_K)$ of even functions perpendicular to the constant ones. 
The unique (up to scaling) measure $\mu_K$ satisfying these requirements turns out to be the cone measure $dV_K$ (for all values of $p$, not just $p=0$!), amounting further evidence to the intimate relation between the $p$-BM conjecture and the cone measure.
\end{rem}

\subsection{Linear equivariance of the Hilbert--Brunn--Minkowski operator}

In this subsection, we establish an important equivariance property of the Hilbert-Brunn--Minkowski operators $L_{T(K)}$ under linear transformations $T \in GL_n$. 

Denote by $T^{(0)}$ the following ``$0$-homogeneous" linear change of variables:
\[
T^{(0)} : S^{n-1} \ni \theta \mapsto \frac{T^{-t} \theta}{\abs{T^{-t} \theta}} \in S^{n-1}.
\]
We denote by $T^{(0)}_*z$ the push-forward of a (Lebesgue) measurable function $z \in \mathcal{L}(S^{n-1})$ via $T^{(0)}$, i.e. an application of a linear change of variables when treating $z$ as a $0$-homogeneous function on $\Real^n$:
\[
T^{(0)}_* : \mathcal{L}(S^{n-1}) \ni z(\theta) \mapsto z((T^{(0)})^{-1} \theta) = z(T^t \theta) \in \mathcal{L}(S^{n-1})  .
\]

We now state the two main results of this subsection:

\begin{lem} \label{lem:ST-Iso}
$T^{(0)}$ pushes forward $dV_K$ onto $\frac{1}{\abs{\det(T)}} dV_{T(K)}$. In particular, $T^{(0)}_*$ is an isometry from $L^2(d\tilde{V}_K)$ to $L^2(d\tilde{V}_{T(K)})$, where $d\tilde{V}_Q := dV_Q / V(Q)$ is the normalized cone probability measure. 
\end{lem}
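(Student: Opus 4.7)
The plan is to exploit the geometric description of the cone measure recorded in the paper just before the lemma: for any Borel $A \subset S^{n-1}$,
\[
V_K(A) = V(C_K(A)), \qquad C_K(A) := \set{t x \; ; \; x \in \nu_{\partial K}^{-1}(A),\, t \in [0,1]} \subset K.
\]
Combined with the Jacobian identity $V(T(E)) = \abs{\det T}\, V(E)$ for any Borel $E \subset \Real^n$, the lemma reduces entirely to identifying how the Gauss map $\nu_{\partial K}$ transforms under $T \in GL_n$.

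The key covariance to record first is
\[
\nu_{\partial(TK)}(Tx) = \frac{T^{-t} \nu_{\partial K}(x)}{\abs{T^{-t} \nu_{\partial K}(x)}} = T^{(0)}(\nu_{\partial K}(x)),
\]
valid for $\H^{n-1}$-a.e.\ $x \in \partial K$. This follows immediately from $T_{Tx}\partial(TK) = T(T_x \partial K)$, which forces the unit outer normal at $Tx$ to be collinear with $T^{-t}\nu_{\partial K}(x)$; since $\nu_{\partial K}$ itself is only defined $\H^{n-1}$-a.e., working $\H^{n-1}$-almost everywhere is sufficient. Equivalently, $T(\nu_{\partial K}^{-1}(A)) = \nu_{\partial(TK)}^{-1}(T^{(0)}(A))$ for any Borel $A \subset S^{n-1}$, and since $T$ is linear (hence $1$-homogeneous) it maps the corresponding cones bijectively: $T(C_K(A)) = C_{T(K)}(T^{(0)}(A))$. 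Taking volumes,
\[
V_{T(K)}(T^{(0)}(A)) = V(C_{T(K)}(T^{(0)}(A))) = V(T(C_K(A))) = \abs{\det T}\, V_K(A),
\]
which is precisely the statement $T^{(0)}_* V_K = \tfrac{1}{\abs{\det T}}\, V_{T(K)}$.

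For the isometry, dividing both sides by $V(K)$ and invoking $V(T(K)) = \abs{\det T}\, V(K)$ yields $T^{(0)}_* \tilde V_K = \tilde V_{T(K)}$ as probability measures on $S^{n-1}$. The $L^2$-isometry
\[
\int_{S^{n-1}} \abs{T^{(0)}_* z}^2 d\tilde V_{T(K)} = \int_{S^{n-1}} \abs{z}^2 d\tilde V_K
\]
is then a one-line change of variables, using the defining identity $(T^{(0)}_* z)(\theta) = z((T^{(0)})^{-1}\theta)$ together with the pushforward relation just obtained. I do not anticipate any serious obstacle; the only point requiring mild care is the validity of the Gauss map covariance at non-smooth points of $\partial K$, which is bypassed by working $\H^{n-1}$-almost everywhere.
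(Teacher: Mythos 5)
Your proof is correct, but it takes a genuinely different route from the paper's. The paper argues computationally: it first computes the Jacobian of $T^{(0)}$ (Lemma \ref{lem:JacST}) and the transformation rule $\SS(h_1 \circ T^t,\ldots,h_{n-1}\circ T^t)(T^{(0)}\theta) = \det(T)^2 \abs{T^{-t}\theta}^{n+1}\,\SS(h_1,\ldots,h_{n-1})(\theta)$ for mixed surface-area functions of $1$-homogeneous $C^2$ functions (Lemma \ref{lem:DST}), and then combines these with $dV_K = \frac{1}{n} h_K \SS(h_K,\ldots,h_K)\,d\theta$ to obtain the pointwise density identity $dV_{T(K)}(T^{(0)}\theta) = \abs{\det T}\,\abs{\text{Jac}\,T^{(0)}(\theta)}^{-1} dV_K(\theta)$. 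You instead use the geometric description of the cone measure as the volume of the cone over $\nu_{\partial K}^{-1}(A)$, the covariance $\nu_{\partial(TK)}(Tx) = T^{(0)}(\nu_{\partial K}(x))$ of the Gauss map under linear maps, and $V(T(E)) = \abs{\det T}\,V(E)$; the direction-of-pushforward bookkeeping and the normalization step are handled correctly, and the a.e.\ caveat is harmless since the cone over an $\H^{n-1}$-null subset of $\partial K$ has zero Lebesgue measure. Your argument is more elementary and in fact more general: it requires no smoothness, so it establishes the pushforward identity for arbitrary $K \in \K$, not just $K \in \K^2_+$. What the paper's computation buys is the pointwise transformation rule of Lemma \ref{lem:DST} itself, which is reused immediately afterwards in the proof of Theorem \ref{thm:LTK} (the conjugation identity for $L_{T(K)}$); with your route that lemma, or an equivalent, would still be needed there, so the paper's choice amortizes the computation rather than avoiding it.
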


\begin{thm} \label{thm:LTK}
For any $K \in \K^2_+$ and $T \in GL_n$, the following diagram commutes:
\begin{align*}
L_K:   L^2(dV_K) \supset & H^2(S^{n-1}) \rightarrow  L^2(dV_K) \\
 & T^{(0)}_* \downarrow \hspace{40pt}  T^{(0)}_*  \downarrow \\
 L_{T(K)} :  L^2(dV_{T(K)}) \supset &  H^2(S^{n-1}) \rightarrow L^2(dV_{T(K)}) .
\end{align*}
In particular, it follows by the previous lemma that $L_K$ and $L_{T(K)}$ are conjugates via an isometry of Hilbert spaces, and therefore have the same spectrum: 
\[
\sigma(-L_K) = \sigma(-L_{T(K)}). \]
\end{thm}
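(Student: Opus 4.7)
The strategy is to characterize $\tilde L_K$ via the bilinear pairing it induces on $L^2(dV_K)$ in terms of mixed volumes, verify that this pairing transforms in the correct way under $T$, and then conclude by polarization. Concretely, for $\phi, z \in C^2(S^{n-1})$, using the symmetry of mixed volume (Lemma \ref{lem:mixed-vol-symmetric}) and the definition of $\tilde L_K$ and $dV_K$, one has the identity
\begin{equation} \label{eq:pairing}
\int_{S^{n-1}} \phi \, (\tilde L_K z) \, dV_K \; = \; V(\phi h_K, z h_K, h_K, \ldots, h_K) ,
\end{equation}
which determines the operator $\tilde L_K$ uniquely on smooth functions by letting $\phi$ range over $C^2(S^{n-1})$.

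The key algebraic observation is that the natural $GL_n$-action on functions in $C^2(S^{n-1})$ depends on the degree of homogeneity chosen in the extension to $\Real^n\setminus\{0\}$. Denote by $T^{(1)}_* h(\theta) := h(T^t\theta)$ the 1-homogeneous pushforward (so that $T^{(1)}_* h_K = h_{T(K)}$ by (\ref{eq:h-contra})). Since $zh_K$ is $1$-homogeneous and $z$ is $0$-homogeneous, one verifies directly that $T^{(1)}_*(z h_K) = (T^{(0)}_* z)\, h_{T(K)}$, and similarly for $\phi$ in place of $z$. Now the classical linear covariance $V(T(K_1),\ldots, T(K_n)) = |\det T|\, V(K_1,\ldots,K_n)$ extends by multilinearity to tuples in $C^2(S^{n-1})$: any $h \in C^2(S^{n-1})$ may be written as $h_{K_1} - h_{K_2}$ with $K_1, K_2 \in \K^2_+$ (since $h + C$ is in $C^2_h(S^{n-1})$ for $C$ large enough), so by multilinearity of both sides of the identity in each argument,
\[
V(T^{(1)}_* h_1,\ldots, T^{(1)}_* h_n) \; = \; |\det T| \, V(h_1,\ldots,h_n) \quad \forall h_1,\ldots,h_n \in C^2(S^{n-1}).
\]
Applying this with $h_1 = \phi h_K$, $h_2 = z h_K$, $h_3 = \cdots = h_n = h_K$ and using the observation above, the right-hand side of (\ref{eq:pairing}) transforms as
\[
V((T^{(0)}_*\phi)h_{T(K)}, (T^{(0)}_* z) h_{T(K)}, h_{T(K)}, \ldots, h_{T(K)}) = |\det T| \cdot V(\phi h_K, z h_K, h_K,\ldots,h_K).
\]

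Combining (\ref{eq:pairing}) (applied to $T(K)$ with arguments $T^{(0)}_*\phi$ and $T^{(0)}_*z$) with Lemma \ref{lem:ST-Iso} (which, reading the pushforward of measures as $\int g\, dV_{T(K)} = |\det T| \int (g\circ T^{(0)})\, dV_K$) yields
\[
\int_{S^{n-1}} (T^{(0)}_*\phi)\, \tilde L_{T(K)}(T^{(0)}_* z)\, dV_{T(K)} \; = \; |\det T| \int_{S^{n-1}} \phi\, \bigl[\tilde L_{T(K)}(T^{(0)}_* z)\bigr]\circ T^{(0)}\, dV_K ,
\]
while the previous paragraph identifies the left-hand side with $|\det T| \int \phi\,(\tilde L_K z)\, dV_K$. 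Since $\phi \in C^2(S^{n-1})$ was arbitrary, we conclude
\[
\bigl[\tilde L_{T(K)}(T^{(0)}_* z)\bigr]\circ T^{(0)} \; = \; \tilde L_K z ,
\]
i.e.\ $\tilde L_{T(K)} \circ T^{(0)}_* = T^{(0)}_* \circ \tilde L_K$ on $C^2(S^{n-1})$. Since $T^{(0)}_*$ trivially commutes with the identity and with the Sobolev space $H^2(S^{n-1})$ (being an isometry that respects differentiability), subtracting the identity gives $L_{T(K)} \circ T^{(0)}_* = T^{(0)}_* \circ L_K$ on $H^2(S^{n-1})$, which is the desired commutativity of the diagram. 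Together with the isometry assertion of Lemma \ref{lem:ST-Iso} (extended from $L^2(d\tilde V_K)$ to $L^2(dV_K)$ up to a harmless scaling factor $V(K)/V(T(K))$, which does not affect conjugacy), this shows that $L_K$ and $L_{T(K)}$ are conjugate via a Hilbert space isometry, whence $\sigma(-L_K) = \sigma(-L_{T(K)})$.

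The only place that requires any genuine care is verifying that the mixed-volume identity $V(T^{(1)}_* h_1,\ldots,T^{(1)}_* h_n) = |\det T| V(h_1,\ldots,h_n)$ persists for arbitrary $h_i \in C^2(S^{n-1})$ rather than for support functions alone; but this is essentially forced by multilinearity and the density argument indicated above, so the proof is elementary in nature.
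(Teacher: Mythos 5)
Your argument is correct, but it follows a genuinely different route from the paper's. The paper proves the intertwining $\tilde{L}_{T(K)} \circ T^{(0)}_* = T^{(0)}_* \circ \tilde{L}_K$ by a direct pointwise computation: it writes $\tilde{L}_K z$ as the quotient $\SS(z h_K, h_K,\ldots,h_K)/\SS(h_K,\ldots,h_K)$ and applies Lemma \ref{lem:DST} (the transformation rule for the mixed surface-area function under $T$, proved via the Hessian/mixed-discriminant identity $\nabla^2(g\circ T^t) = T(\nabla^2 g)(T^t\cdot)T^t$) to numerator and denominator, so that the common factor $\det(T)^2\abs{T^{-t}\theta}^{n+1}$ cancels. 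You instead characterize $\tilde{L}_K$ weakly through the pairing $\int \phi\,(\tilde{L}_K z)\,dV_K = V(\phi h_K, z h_K, h_K,\ldots,h_K)$ (which rests on Lemma \ref{lem:mixed-vol-symmetric}), transport this pairing using the classical affine covariance $V(TK_1,\ldots,TK_n)=\abs{\det T}\,V(K_1,\ldots,K_n)$ extended to arbitrary $C^2$ arguments by multilinearity and the decomposition $h = (h+C) - C$ into differences of smooth support functions, and then change variables via Lemma \ref{lem:ST-Iso}; equality against all test functions $\phi$, together with the positive continuous density of $dV_K$, yields the intertwining on $C^2$, and density gives it on $H^2$. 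What your route buys is conceptual transparency: the equivariance is exhibited as a direct reflection of the $GL_n$-covariance of mixed volumes, and the Hessian computation of Lemma \ref{lem:DST} is not needed in the proof of the theorem itself (though it still underlies Lemma \ref{lem:ST-Iso} as proved in the paper, so the independence is only partial). The paper's route, by contrast, produces the stronger pointwise identity directly, without appealing to density of test functions or to properties of the cone measure. Two minor imprecisions, neither fatal: $T^{(0)}_*$ is not an isometry of $H^2(S^{n-1})$ but only a bounded isomorphism (composition with a fixed smooth diffeomorphism), which is all the density argument requires; and on the unnormalized spaces $T^{(0)}_*:L^2(dV_K)\to L^2(dV_{T(K)})$ is a scalar multiple $\abs{\det T}^{1/2}$ of an isometry, which, as you note, does not affect conjugacy or the spectrum.
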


The proof involves several calculations. We will constantly use the linear contravariance of the support function (\ref{eq:h-contra}).

\begin{lem} \label{lem:JacST}
\[
\abs{\text{Jac} \; T^{(0)}(\theta)} = \frac{1}{\abs{\det(T)} \abs{T^{-t} \theta}^n} .
\]
\end{lem}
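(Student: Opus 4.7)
The plan is to prove this Jacobian formula by a polar-coordinates change-of-variables argument on $\mathbb{R}^n$, exploiting the fact that $T^{(0)}$ is simply the radial projection of the linear map $T^{-t}$ back onto $S^{n-1}$. Concretely, I would write points of $\mathbb{R}^n \setminus \{0\}$ in polar form $x = r\omega$, $r > 0$, $\omega \in S^{n-1}$, so that Lebesgue measure reads $dx = r^{n-1} dr\, d\omega$. Under $T^{-t}$, the point $x = r\omega$ is sent to $y = r\,T^{-t}\omega$, which I rewrite in polar form as $y = s\omega'$ with
\[
s = r\,|T^{-t}\omega|, \qquad \omega' = T^{(0)}(\omega).
\]
Thus $(s,\omega')$ provides an alternative polar coordinate system on $\mathbb{R}^n \setminus \{0\}$ (defined globally since $T^{-t}$ is invertible).

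Next I would compute the Jacobian of the change of coordinates $(r,\omega) \mapsto (s,\omega')$. The key observation is that, because $T^{(0)}$ is $0$-homogeneous in $\omega$ (hence $\omega'$ does not depend on $r$), the Jacobian matrix is block upper-triangular with diagonal blocks $|T^{-t}\omega|$ (the $r$-derivative of $s$) and $d(T^{(0)})_\omega$ (the tangential differential on $S^{n-1}$). Its absolute determinant is therefore $|T^{-t}\omega| \cdot |\text{Jac}\, T^{(0)}(\omega)|$. Equating the two expressions for $dy$, namely $s^{n-1}\,ds\,d\omega'$ on one side and $|\det T^{-t}|\,r^{n-1}\,dr\,d\omega = |\det T|^{-1}\,r^{n-1}\,dr\,d\omega$ on the other, and substituting $s = r\,|T^{-t}\omega|$, all $r^{n-1}$ factors cancel and one is left precisely with
\[
|T^{-t}\omega|^n \cdot |\text{Jac}\, T^{(0)}(\omega)| = \frac{1}{|\det T|},
\]
which is the claimed identity.

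There is no genuine obstacle: everything reduces to a routine change-of-variables computation. The only point requiring a little care is verifying the block-triangular structure of the Jacobian matrix, which, as noted, is an immediate consequence of the $0$-homogeneity of $T^{(0)}$. A purely geometric alternative (should one wish to avoid polar coordinates) would be to apply $T^{-t}$ to the cone $C(A) := \{s\theta : \theta \in A,\, s \in [0,1]\}$ over a Borel set $A \subset S^{n-1}$, and compute $\text{Vol}(T^{-t}(C(A)))$ in two ways: once via the linear scaling as $|\det T|^{-1} \cdot |A|_{S^{n-1}}/n$, and once by recognizing $T^{-t}(C(A))$ as the cone from the origin over $T^{(0)}(A)$ with height $1/|T^t\eta|$ in direction $\eta$, so that its volume equals $\frac{1}{n}\int_{T^{(0)}(A)} |T^t\eta|^{-n}\,d\eta$. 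Equating the two expressions and applying the change of variables $\eta = T^{(0)}(\theta)$ yields the same formula.
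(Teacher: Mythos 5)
Your argument is correct, and it takes a genuinely different route from the paper. The paper proves the lemma pointwise by pure linear algebra: it differentiates $T^{(0)}$ at $\theta$, notes that the image of an orthonormal tangent frame $e_1,\ldots,e_{n-1}$ is $\bigl\{P_{(T^{-t}\theta)^{\perp}} T^{-t} e_i / \abs{T^{-t}\theta}\bigr\}$, so the Jacobian is the volume of the parallelepiped these vectors span, and then recovers that volume from the base-times-height expansion $\abs{\det T^{-t}} = V\bigl(\{P_{(T^{-t}\theta)^{\perp}} T^{-t} e_i\}\bigr)\,\abs{T^{-t}\theta}$. You instead integrate: in your main argument you compare the two polar decompositions of Lebesgue measure under $y = T^{-t}x$, using the $0$-homogeneity of $T^{(0)}$ to get the block-triangular Jacobian of $(r,\omega)\mapsto(s,\omega')$ (the key point, $\partial \omega'/\partial r = 0$, is exactly right), and in your alternative you compare cone volumes over a Borel set $A \subset S^{n-1}$ — note the cone over $T^{(0)}(A)$ has radius $\abs{T^{-t}\theta} = 1/\abs{T^t\eta}$ in direction $\eta = T^{(0)}(\theta)$, which you use correctly. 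The paper's computation is shorter and entirely local, and its parallelepiped identity is reused in spirit elsewhere in that section; your change-of-variables/cone argument is softer and arguably more robust (it identifies the Jacobian as a Radon--Nikodym density, needing only an a.e.\ identity upgraded by continuity), and it connects naturally to the cone-measure computations of Lemma 5.3, at the cost of invoking the coarea/polar-coordinates machinery for a statement the paper settles with one line of linear algebra.
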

\begin{proof}
Complete $\theta$ to an orthonormal basis $\theta,e_1,\ldots,e_{n-1}$ of $\Real^n$. Denote by $V(\set{v_i}_{i=1}^{n-1})$ the volume of the $n-1$-dimensional parallelepiped spanned by $v_1,\ldots,v_{n-1}$. Now calculate:
\[
\abs{\text{Jac} \; T^{(0)}(\theta)} =  V\brac{ \set{\frac{P_{(T^{-t} \theta)^{\perp}} T^{-t} e_i}{\abs{T^{-t} \theta}}}_{i=1}^{n-1}} = \frac{1}{\abs{T^{-t} \theta}^{n-1}} V\brac{\set{P_{(T^{-t} \theta)^{\perp}} T^{-t} e_i}_{i=1}^{n-1} }.
 \]
 On the other hand, by expanding the determinant of $T^{-t}$ (volume of the parallelepiped spanned by the vectors $T^{-t} \theta,T^{-t} e_1,\ldots,T^{-t} e_{n-1}$):
 \[
 \abs{\det(T^{-t})} = V\brac{ \set{P_{(T^{-t} \theta)^{\perp}} T^{-t} e_i}_{i=1}^{n-1} } \abs{T^{-t} \theta} ,
 \]
 and so the assertion follows. 
\end{proof}

\begin{lem} \label{lem:DST}
Let $h_1,\ldots,h_{n-1}$ denote $1$-homogeneous $C^2$ functions on $\Real^n$. Then:
\[
\SS(h_1 \circ T^t,\ldots, h_{n-1} \circ T^t)(T^{(0)} \theta) = \det(T)^2 \abs{T^{-t} \theta}^{n+1} \SS(h_1, \ldots,h_{n-1})(\theta) .
\]
\end{lem}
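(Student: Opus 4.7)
The plan is to use the intrinsic definition $\SS(h_1,\ldots,h_{n-1})(\theta) = D_{n-1}(D^2 h_1(\theta),\ldots, D^2 h_{n-1}(\theta))$ and reduce to a polarization identity for mixed discriminants under congruence. Setting $g_i := h_i\circ T^t$ and $\eta := T^{(0)}\theta = T^{-t}\theta/\abs{T^{-t}\theta}$, the chain rule gives $\nabla^2_{\Real^n} g_i(\eta) = T\,\nabla^2_{\Real^n} h_i(T^t\eta)\,T^t$. Two elementary observations then simplify this: first, $T^t\eta = \theta/\abs{T^{-t}\theta}$ by direct computation; second, since $h_i$ is $1$-homogeneous on $\Real^n$, $\nabla^2_{\Real^n} h_i$ is $(-1)$-homogeneous, so $\nabla^2 h_i(T^t\eta) = \abs{T^{-t}\theta}\,\nabla^2 h_i(\theta)$. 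Combining,
\[
\nabla^2 g_i(\eta) \;=\; \abs{T^{-t}\theta}\; T\,\nabla^2 h_i(\theta)\, T^t.
\]

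To pass from $\nabla^2$ on $\Real^n$ to $D^2$ on $S^{n-1}$, I would introduce the linear map $L \colon \eta^\perp \to \theta^\perp$ defined by $Lv := P_{\theta^\perp} T^t v$. Since $\nabla^2 h_i(\theta)\theta = 0$ (again by $1$-homogeneity), only the $\theta^\perp$-component of $T^t v$ contributes when evaluating $\nabla^2 h_i(\theta)$ on $T^t v$, and the previous identity restricts to the bilinear form identity
\[
D^2 g_i(\eta) \;=\; \abs{T^{-t}\theta}\; L^t\, D^2 h_i(\theta)\, L \qquad \text{on $\eta^\perp$.}
\]
Now the polarization identity $D_{n-1}(L^t B_1 L,\ldots, L^t B_{n-1} L) = (\det L)^2\, D_{n-1}(B_1,\ldots, B_{n-1})$ applies: the right-hand side is the unique symmetric multilinear extension of the diagonal value $B\mapsto \det(L^t B L) = (\det L)^2 \det B$. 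Pulling the $n-1$ factors of $\abs{T^{-t}\theta}$ out yields
\[
\SS(g_1,\ldots,g_{n-1})(\eta) \;=\; \abs{T^{-t}\theta}^{n-1}\,(\det L)^2\, \SS(h_1,\ldots,h_{n-1})(\theta).
\]

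The remaining, and most delicate, step is to identify $(\det L)^2$. I would compute it geometrically: start with the unit cube in $\Real^n$ with one edge $\eta$ and the others an orthonormal basis of $\eta^\perp$; applying $T^t$ produces a parallelepiped of $n$-volume $\abs{\det T}$. Since $T^t\eta = \theta/\abs{T^{-t}\theta}$ lies entirely along $\theta$, the image is a \emph{right} parallelepiped over the base $\{Le_i\}_{i=1}^{n-1}\subset\theta^\perp$ of $(n-1)$-volume $\abs{\det L}$, with height $1/\abs{T^{-t}\theta}$. Equating the two expressions for the $n$-volume gives $\abs{\det L} = \abs{T^{-t}\theta}\,\abs{\det T}$, whence $(\det L)^2 = \abs{T^{-t}\theta}^2 \det(T)^2$. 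Substituting produces the claimed factor $\abs{T^{-t}\theta}^{n+1}\det(T)^2$.

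The principal obstacle is the careful treatment of $L$ as a linear map between two distinct codimension-one subspaces, and the correct reading of $\det L$ as the associated Jacobian; the geometric observation that $T^t\eta$ is parallel to $\theta$ is what makes both the reduction to the congruence $B\mapsto L^t B L$ (via orthogonal projection) and the volume identification go through cleanly.
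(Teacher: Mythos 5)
Your proof is correct, and it reaches the identity by a mildly different route than the paper's. Both arguments rest on the same three ingredients: the chain rule $\nabla^2(h\circ T^t)(x)=T\,\nabla^2 h(T^t x)\,T^t$, the $(-1)$-homogeneity of $\nabla^2 h$ for $1$-homogeneous $h$ (giving the factor $\abs{T^{-t}\theta}$ per slot, since $T^t T^{(0)}\theta=\theta/\abs{T^{-t}\theta}$), and the polarization principle that a symmetric multilinear function of matrix arguments is determined by its diagonal (determinant) values. The paper, however, never leaves the extrinsic representation $\SS(h_1,\ldots,h_{n-1})(\theta)=D_n\brac{\nabla^2 h_1(\theta),\ldots,\nabla^2 h_{n-1}(\theta),\theta\otimes\theta}$: it applies congruence invariance of $D_n$ with the last slot rewritten in the form $T(\,\cdot\,)T^t$, and then expands $D_n$ in that last entry, so all tangent-space geometry collapses into the single scalar $\scalar{T^{-1}T^{-t}\theta,\theta}^2=\abs{T^{-t}\theta}^4$; no map between tangent spaces is ever introduced. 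You instead work intrinsically with $D_{n-1}$ on $T_\eta S^{n-1}$ and $T_\theta S^{n-1}$, which forces you to introduce $L=P_{\theta^\perp}T^t|_{\eta^\perp}$, prove $D^2(h_i\circ T^t)(\eta)=\abs{T^{-t}\theta}\,L^t\,D^2 h_i(\theta)\,L$ (correct, using $\nabla^2 h_i(\theta)\theta=0$), and compute $\abs{\det L}=\abs{T^{-t}\theta}\,\abs{\det T}$; that Jacobian computation is precisely the kind of volume-splitting argument the paper runs separately in Lemma \ref{lem:JacST} for $T^{(0)}$. Your route buys a transparent geometric meaning for the factor $\det(T)^2\abs{T^{-t}\theta}^2$ (it is $(\det L)^2$), at the cost of an extra lemma; the paper's route buys brevity by keeping everything $n$-dimensional.

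One phrase to tighten: the $T^t$-image of your unit cube is not literally a right parallelepiped over the base $\set{Le_i}_{i=1}^{n-1}$, since the vectors $T^t e_i$ need not lie in $\theta^\perp$. The correct statement is that, because $T^t\eta$ is parallel to $\theta$, subtracting suitable multiples of $T^t\eta$ from the $T^t e_i$ is a volume-preserving shear that produces that right parallelepiped; this is what justifies $\abs{\det T}=\abs{\det L}\cdot\abs{T^t\eta}=\abs{\det L}/\abs{T^{-t}\theta}$. With that repair of wording, the congruence identity for $D_{n-1}$ and the final bookkeeping $\abs{T^{-t}\theta}^{n-1}(\det L)^2=\det(T)^2\abs{T^{-t}\theta}^{n+1}$ give exactly the claimed formula.
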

\begin{proof}
Let us abbreviate $\nabla^2 = \nabla^2_{\Real^n}$. 
Since $\nabla^2 (g \circ T^t) (x) = T (\nabla^2 g)(T^t x) T^t$, we calculate, using the fact that $\nabla^2 g$ is $-1$-homogeneous if $g$ is $1$-homogeneous and the multi-linearity of the mixed discriminant $D_n$:
\begin{align*}
& \SS(h_1 \circ T^t,\ldots, h_{n-1} \circ T^t)(T^{(0)} \theta) \\
= & D_{n}(\nabla^2_{\Real^n} (h_1 \circ T^t)(T^{(0)} \theta), \ldots, \nabla^2_{\Real^n} (h_{n-1} \circ T^t)(T^{(0)} \theta),T^{(0)} \theta \otimes T^{(0)} \theta) \\
= & \abs{T^{-t} \theta}^{n-3} D_{n}(T (\nabla^2 h_1)(\theta) T^t ,  \ldots, T (\nabla^2 h_{n-1})(\theta) T^t, T^{-t} \theta \otimes T^{-t} \theta) \\
= & \det(T)^2 \abs{T^{-t} \theta}^{n-3} D_{n}(\nabla^2 h_1(\theta),  \ldots, \nabla^2 h_{n-1}(\theta), T^{-2t} \theta \otimes T^{-2t} \theta) .
\end{align*}
Recalling that $\nabla^2 h_1(\theta) = P_{\theta^{\perp}} \nabla^2 h_1(\theta) P_{\theta^{\perp}}$, we proceed by expanding the mixed discriminant in the last entry:
\begin{align*}
= & \det(T)^2 \abs{T^{-t} \theta}^{n-3} \scalar{T^{-2t} \theta, \theta}^2 D_{n-1}(P_{\theta^{\perp}} \nabla^2 h_1(\theta) P_{\theta^{\perp}}, \ldots, P_{\theta^{\perp}} \nabla^2 h_{n-1}(\theta) P_{\theta^{\perp}}) \\
= & \det(T)^2 \abs{T^{-t} \theta}^{n+1} \SS(h_1,\ldots,h_{n-1})(\theta) .
\end{align*}
\end{proof}

\begin{proof}[Proof of Lemma \ref{lem:ST-Iso}]
Recall that the surface area measure $dS_K$ and the cone measure $dV_K$ are defined as the following measures on $S^{n-1}$:
\[
dS_K = \SS(h_K,\ldots,h_K) d\theta ~,~ dV_K = \frac{1}{n} h_K dS_K .
\]
Lemma \ref{lem:DST} implies that:
\begin{align*} dS_{T(K)}(T^{(0)} \theta) & = \SS(h_K \circ T^t,\ldots, h_K \circ T^t)(T^{(0)} \theta) d\theta \\
& = \det(T)^2 \abs{T^{-t} \theta}^{n+1}  \SS(h_K,\ldots,h_K)(\theta) d\theta ,
\end{align*} 
and so together with Lemma \ref{lem:JacST}:
\begin{align*} & dV_{T(K)}(T^{(0)} \theta) = \frac{1}{n} h_{T(K)}(T^{(0)} \theta) dS_{T(K)}(T^{(0)} \theta) \\
& = \det(T)^2 \abs{T^{-t} \theta}^{n+1} \frac{1}{n}  h_K(T^{t} T^{(0)} \theta) \SS(h_K,\ldots,h_K)(\theta) d\theta  \\
& = \det(T)^2 \abs{T^{-t} \theta}^{n} \frac{1}{n}  h_K(\theta) \SS(h_K,\ldots,h_K)(\theta) d\theta = \frac{1}{\abs{\text{Jac} \; T^{(0)}(\theta)}} \abs{\det(T)} dV_K(\theta)  ,
\end{align*}
confirming that $T^{(0)}$ pushes forward $dV_K$ onto $\frac{1}{\abs{\det (T)}} dV_{T(K)}$. 
\end{proof}

\begin{proof}[Proof of Theorem \ref{thm:LTK}]
We would like to prove that:
\[
(T^{(0)}_*)^{-1} \circ L_{T(K)} \circ T^{(0)}_* = L_K   ,
\]
or equivalently, that for all $z \in  H^2(S^{n-1})$:
\begin{equation} \label{eq:diagram-commutes0}
L_{T(K)}(T^{(0)}_* z)(T^{(0)} \theta) = (L_K z)(\theta) .
\end{equation}
By density, it is enough to establish this for $z \in C^2(S^{n-1})$. 
Recall that in this case:
\[
\tilde{L}_K(z) = \frac{\SS(z h_K, h_K, \ldots,h_K)}{\SS(h_K,\ldots,h_K)} ,
\]
so that:
\[
\tilde{L}_{T(K)}(T^{(0)}_* z)(T^{(0)} \theta) = \frac{\SS(T^{(0)}_* z \cdot h_{T(K)}, h_{T(K)}, \ldots,h_{T(K)})(T^{(0)} \theta)}{\SS(h_{T(K)},\ldots,h_{T(K)})(T^{(0)} \theta)} .
\]
We think of $z$ as $0$-homogeneous ($T^{(0)}_* z (x) = z(T^{t} x)$) and of course $h_K$ is $1$-homogeneous and satisfies $h_{T(K)}(x) = h_K(T^t x)$, and so $w := z h_K$ is $1$-homogeneous and satisfies:
\[
T^{(0)}_* z \cdot h_{T(K)}(x) = z(T^t x) h(T^t x) = w(T^t x) . 
\]
By Lemma \ref{lem:DST}:
\begin{align*}
& \SS(T^{(0)}_* z \cdot h_{T(K)} , h_{T(K)} , \ldots, h_{T(K)})(T^{(0)} \theta) \\
& = \SS(w \circ T^t,h_K \circ T^t,\ldots, h_K \circ T^t)(T^{(0)} \theta) \\
& = \det(T)^2 \abs{T^{-t} \theta}^{n+1}  \SS(w,h_K,\ldots,h_K)(\theta)  ,
\end{align*}
and in particular:
\[
\SS(h_{T(K)} , \ldots, h_{T(K)})(T^{(0)}\theta) = \det(T)^2 \abs{T^{-t} \theta}^{n+1}  \SS(h_K,h_K,\ldots,h_K)(\theta).
\]
Taking the quotient of the latter two expressions, we verify:
\[
\tilde{L}_{T(K)}(T^{(0)}_* z)(T^{(0)} \theta) = (\tilde{L}_K z)(\theta) .
\]
Since $L = \tilde{L} - \text{Id}$, (\ref{eq:diagram-commutes0}) is established. 
\end{proof}

\subsection{Spectral Minimization Problem and Potential Extremizers}  \label{subsec:extremizers}

We now restrict our discussion to origin-symmetric $K \in \K^2_{+,e}$. Recall that:
\[
\lambda_{1,e}(-L_K) := \min \sigma(-L_K|_{E_{\text{even}} \cap \mathbf{1}^{\perp,L^2(dV_K)}}) ,
\]
and that Theorem \ref{thm:LK} implies that:
\begin{equation} \label{eq:individual}
\lambda_{1,e}(-L_K) > 1 \;\;\; \forall K \in \K^2_{+,e}.
\end{equation}
In addition, since the isometry $T^{(0)}_* : L^2(d\tilde{V}_K) \rightarrow L^2(d\tilde{V}_{T(K)})$ clearly maps $E_{\text{even}} \cap \mathbf{1}^{\perp,L^2(dV_K)}$ onto $E_{\text{even}} \cap \mathbf{1}^{\perp,L^2(dV_{T(K)})}$, it follows by Theorem \ref{thm:LTK} that:
\[
\lambda_{1,e}(-L_{T(K)}) = \lambda_{1,e}(-L_K) \;\;\; \forall K \in K^2_{+,e} \;\; \forall T \in GL_n .
\]
Corollary \ref{cor:LK-SG} therefore translates into:
\begin{cor} \label{cor:LK-SG2}
Given $p < 1$, the validity of the local $p$-BM conjecture (\ref{eq:local-p-BM}) for all $K \in \K^2_{+,e}$ is equivalent to the validity of the following lower bound for the minimization problem over the linearly invariant spectral parameter $\lambda_{1,e}(-L_K)$:
\begin{equation} \label{eq:LK-SG2}
\inf_{K \in \K^{2}_{+,e} / GL_n } \lambda_{1,e}(-L_K) \geq \frac{n-p}{n-1} . 
\end{equation}
\end{cor}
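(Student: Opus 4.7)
The statement is essentially a direct packaging of two previously established results, so the plan is short and the ``hardest'' part is really just a bookkeeping observation about infima. The plan is to deduce the corollary from Corollary \ref{cor:LK-SG}, which handles each individual body, combined with the linear equivariance of the spectrum of $-L_K$ proved via Theorem \ref{thm:LTK}.

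First I would apply Corollary \ref{cor:LK-SG} pointwise: for each fixed $K \in \K^2_{+,e}$, the local $p$-BM conjecture \eqref{eq:local-p-BM} is equivalent to the single spectral inequality $\lambda_{1,e}(-L_K) \geq \tfrac{n-p}{n-1}$. Thus the local $p$-BM conjecture for \emph{all} $K \in \K^2_{+,e}$ is equivalent to the uniform lower bound
\[
\inf_{K \in \K^2_{+,e}} \lambda_{1,e}(-L_K) \;\geq\; \frac{n-p}{n-1}.
\]
So the only remaining point is to justify that this infimum over $\K^2_{+,e}$ can be replaced by an infimum over the quotient $\K^2_{+,e}/GL_n$.

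Second I would invoke the linear equivariance derived just above the corollary. By Lemma \ref{lem:ST-Iso}, the map $T^{(0)}_*$ is a Hilbert-space isometry from $L^2(d\tilde V_K)$ onto $L^2(d\tilde V_{T(K)})$, and it clearly preserves parity and orthogonality to constants, hence sends $E_{\text{even}} \cap \mathbf{1}^{\perp,L^2(dV_K)}$ bijectively onto $E_{\text{even}} \cap \mathbf{1}^{\perp,L^2(dV_{T(K)})}$. By Theorem \ref{thm:LTK}, the operators $-L_K$ and $-L_{T(K)}$ are conjugate through this isometry. Consequently their restrictions to the respective even-mean-zero subspaces are unitarily equivalent, so
\[
\lambda_{1,e}(-L_{T(K)}) \;=\; \lambda_{1,e}(-L_K) \qquad \forall\, T \in GL_n, \; \forall\, K \in \K^2_{+,e}.
\]
Thus $\lambda_{1,e}(-L_K)$ descends to a well-defined functional on the quotient $\K^2_{+,e}/GL_n$, and the infima over $\K^2_{+,e}$ and over $\K^2_{+,e}/GL_n$ coincide.

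Combining the two steps yields the desired equivalence between the local $p$-BM conjecture (for all origin-symmetric smooth bodies) and the inequality \eqref{eq:LK-SG2}. There is no genuine obstacle here: the nontrivial content sits in Corollary \ref{cor:LK-SG} (the spectral reformulation of the infinitesimal inequality) and Theorem \ref{thm:LTK} (the $GL_n$-equivariance), both of which are established earlier. The only thing to be careful about is the compatibility between the evenness condition, the zero-mean condition with respect to $dV_K$, and the pushforward under $T^{(0)}_*$, but since $T^{(0)}$ is an even diffeomorphism of $S^{n-1}$ and $T^{(0)}_*$ sends the normalized cone measure to the normalized cone measure, both conditions are preserved and the argument goes through without further issue.
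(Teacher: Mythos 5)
Your proposal is correct and follows exactly the paper's route: Corollary \ref{cor:LK-SG} applied pointwise to each $K \in \K^2_{+,e}$, combined with the $GL_n$-invariance $\lambda_{1,e}(-L_{T(K)}) = \lambda_{1,e}(-L_K)$ obtained from Theorem \ref{thm:LTK} and Lemma \ref{lem:ST-Iso}, which lets the infimum descend to the quotient $\K^2_{+,e}/GL_n$. Nothing further is needed.
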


Observe that by F.~John's Theorem \cite[Theorem 4.2.12]{GardnerGeometricTomography2ndEd}, $\K_e / GL_n$ is a compact set of equivalence classes of origin-symmetric convex bodies (with respect to the natural Hausdorff topology $C$), the so-called Banach-Mazur compactum. Also note that by Theorem \ref{thm:LK} (4), $\K^2_{+,e} \ni K \mapsto \lambda_{1,e}(-L_K)$ is continuous in the $C^2$ topology. Unfortunately, $\K^{2}_{+,e} / GL_n$ isn't closed in either of these topologies, and is only 
 a dense subset of the Banach-Mazur compactum. In particular, we do not know how to show from general functional-analytic arguments that the infimum in (\ref{eq:LK-SG2}) is strictly greater than $1$, even though we have the individual estimate (\ref{eq:individual}). However, we will see in the next section that we can verify the validity of (\ref{eq:pBMPsi}) without resorting to compactness arguments for a concrete range of $p < 1$, which translates into the following:
\begin{thm} \label{thm:main-spectral}
There exists a constant $c > 0$ so that:
\[
\inf_{K \in \K^{2}_{+,e} / GL_n } \lambda_{1,e}(-L_K) \geq \frac{n-p_n}{n-1}  > 1 \;\; \text{ where } \;\; p_n := 1 - \frac{c}{n^{3/2}} .
\]
\end{thm}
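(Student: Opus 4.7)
By Corollary \ref{cor:LK-SG2}, it suffices to exhibit $c>0$ such that for every $K\in\K^2_{+,e}$,
\[
\lambda_{1,e}(-L_K) - 1 \;\geq\; \frac{1-p_n}{n-1} \;=\; \frac{c}{(n-1)\,n^{3/2}}.
\]
By the linear equivariance of the spectrum (Theorem \ref{thm:LTK}), I am free to replace $K$ by $T(K)$ for any $T\in GL_n$; the natural choice will be \emph{isotropic position}. The strategy is to translate the spectral-gap requirement into a boundary inequality, bound the boundary constant in terms of the usual Poincar\'e constant of $K$, and then appeal to the Lee--Vempala estimate on the Kannan--Lov\'asz--Simonovits conjecture.

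\textbf{Step 1: Reduce to a boundary inequality for harmonic extensions.} By Proposition \ref{prop:p-BM-K} / Remark \ref{rem:p-BM-K}, it is equivalent to prove that for every $\Psi\in C^1_e(\partial K)$ with $\int_{\partial K}\Psi\,dx=0$,
\[
\int_{\partial K}\langle \II_{\partial K}^{-1}\nabla_{\partial K}\Psi,\nabla_{\partial K}\Psi\rangle\,dx \;\geq\; \int_{\partial K} H_{\partial K}\,\Psi^2\,dx + (1-p)\int_{\partial K}\frac{\Psi^2}{\langle x,\nu_{\partial K}\rangle}\,dx.
\]
Extend $\Psi$ to its harmonic extension $u$ on $K$ (so $\Delta u=0$, $u|_{\partial K}=\Psi$), and apply the generalized Reilly formula of \cite{KolesnikovEMilmanReillyPart1} (Bochner--Lichnerowicz--Weitzenb\"ock integrated over $K$). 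Since $\Delta u=0$, the interior curvature and Hessian terms rearrange to give precisely
\[
\int_{\partial K}\langle \II_{\partial K}^{-1}\nabla\Psi,\nabla\Psi\rangle\,dx - \int_{\partial K}H_{\partial K}\Psi^2\,dx \;=\; \int_K |\nabla u|^2\,dx + \text{(nonneg.\ Hessian terms)},
\]
reducing the target inequality to a \emph{single-sided} trace-type bound
\[
(1-p)\int_{\partial K}\frac{\Psi^2}{\langle x,\nu_{\partial K}\rangle}\,dx \;\leq\; \int_K |\nabla u|^2\,dx.
\]
This is the sufficient condition of Theorem \ref{thm:sufficient} and defines the boundary Poincar\'e-type constant $\BNH(K)$.

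\textbf{Step 2: Control $\BNH(K)$ by the Poincar\'e constant and in/out radii.} Bounding $\langle x,\nu_{\partial K}\rangle\geq r_{\mathrm{in}}(K)$ from below, the task becomes
\[
\int_{\partial K}\Psi^2\,dx \;\lesssim\; r_{\mathrm{in}}(K)\cdot B(K)\cdot \int_K |\nabla u|^2\,dx
\]
for \emph{mean-zero} harmonic $u$. I split $u=u-\bar u_K+\bar u_K$, apply the standard Poincar\'e inequality $\|u-\bar u_K\|_{L^2(K)}^2 \leq C_P(K)^2\|\nabla u\|_{L^2(K)}^2$, use an elementary trace inequality $\|u\|_{L^2(\partial K)}^2\lesssim \tfrac{1}{r_{\mathrm{in}}}\|u\|_{L^2(K)}^2+R_{\mathrm{out}}(K)\|\nabla u\|_{L^2(K)}^2$ valid for harmonic $u$ via the Rellich-type identity applied to the radial vector field, and invoke the mean-zero hypothesis $\int_{\partial K}\Psi\,dx=0$ together with the ratio $\mathrm{vol}(\partial K)/\mathrm{vol}(K)$ to dispose of the constant mode $\bar u_K$. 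The outcome is a rough bound of the form
\[
\BNH(K) \;\lesssim\; \frac{R_{\mathrm{out}}(K)+ C_P(K)^2/r_{\mathrm{in}}(K)}{r_{\mathrm{in}}(K)}.
\]

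\textbf{Step 3: Choose the isotropic position and plug in Lee--Vempala.} Place $K$ so that $V(K)=1$ and its covariance matrix is scalar. Then $r_{\mathrm{in}}(K)\gtrsim 1/\sqrt{n}$ (in fact $\gtrsim 1$ up to the isotropic constant), $R_{\mathrm{out}}(K)\lesssim n$, and by \cite{LeeVempala-KLS},
\[
C_P(K) \;\lesssim\; n^{1/4}.
\]
Substituting these into the bound of Step 2 yields $\BNH(K)\lesssim n^{3/2}$, hence
\[
1-p \;\leq\; \frac{1}{\BNH(K)} \;\gtrsim\; \frac{1}{n^{3/2}},
\]
which after unwinding the relation with the spectral gap produces $p_n = 1-c/n^{3/2}$ as claimed.

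\textbf{Main obstacle.} The delicate point is Step 2: the na\"ive trace inequality loses powers of $n$, and the Poincar\'e step only applies after subtracting the mean $\bar u_K$, which is \emph{not} the boundary mean. Handling the constant mode correctly — using the mean-zero condition on $\Psi$ together with the relation between $\bar u_K$ and the boundary average of $u$, and doing so in a way that is compatible with the isotropic scaling — is what determines the final exponent $3/2$. Any inefficiency here (e.g.\ a crude trace estimate picking up an extra $R_{\mathrm{out}}/r_{\mathrm{in}}\sim n^{3/2}$) would immediately worsen the final $p_n$, so bookkeeping the $n$-dependence through each inequality is the technical heart of the argument.
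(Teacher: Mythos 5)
Your overall strategy --- reduce the even spectral-gap bound via the Reilly formula to a boundary Poincar\'e-type constant, estimate that constant through the in/out radii and the Poincar\'e constant in isotropic position, and invoke Lee--Vempala to land at $n^{3/2}$ --- is exactly the paper's route (Theorem \ref{thm:sufficient}, Theorem \ref{thm:DK}, Corollary \ref{cor:DK}, Theorem \ref{thm:B2LK}). But your Step 1 is wrong as written, and the error propagates. The paper does not use the harmonic \emph{Dirichlet} extension $u|_{\partial K}=\Psi$: it solves the \emph{Neumann} problem $\Delta u=0$ in $\text{int}(K)$, $u_\nu=\Psi$ on $\partial K$, for which $\int_{\partial K}\Psi\,dx=0$ is precisely the solvability condition, so that $u_\nu^2=\Psi^2$ matches the $H_{\partial K}\Psi^2$ and $\Psi^2/\scalar{x,\nu}$ terms of Proposition \ref{prop:p-BM-K}. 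Moreover the Euclidean Reilly formula contains no $\int_K\abs{\nabla u}^2$ term, and the $\II_{\partial K}^{-1}$ term is not produced by an identity: it enters only after Cauchy--Schwarz on the cross term $-2\int_{\partial K}\scalar{\nabla_{\partial K}u_\nu,\nabla_{\partial K}u}\,dx$. This is exactly how Theorem \ref{thm:sufficient} yields the sufficient condition $(1-p)\int_{\partial K}u_\nu^2/\scalar{x,\nu}\,dx\le\int_K\norm{\nabla^2u}^2\,dx$ --- a first-order boundary quantity against the \emph{Hessian} --- not $\int_{\partial K}\Psi^2$ against $\int_K\abs{\nabla u}^2$ as you claim.

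Because of this, your Step 2 attacks the wrong inequality and its mechanics do not transfer. The correct estimate (Theorem \ref{thm:DK}) comes from $\text{div}(\abs{\nabla u}^2x)=n\abs{\nabla u}^2+2\scalar{\nabla^2u\,\nabla u,x}$ together with the Poincar\'e inequality applied to each partial derivative $u_i$ --- which has zero mean on $K$ because $u$ is even (this is where origin-symmetry is used, via $\B(K)\le\BNH(K)\le\D(K)$), giving $\int_K\abs{\nabla u}^2\le C_{Poin}^2(K)\int_K\norm{\nabla^2u}^2$ --- and yields $\D(K)\le\frac{1}{r^2}\brac{n\,C_{Poin}^2(K)+2C_{Poin}(K)R}$. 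In isotropic position $r\gtrsim 1$, $R\lesssim n$ and $C_{Poin}(K)\lesssim n^{1/4}$, so the dominant term is $n\,C_{Poin}^2\sim n^{3/2}$. Your bound $(R_{\mathrm{out}}+C_{Poin}^2/r_{\mathrm{in}})/r_{\mathrm{in}}$ would give only $\sim n$ with $r_{\mathrm{in}}\gtrsim1$, and your arithmetic reaches $n^{3/2}$ only by switching to $r_{\mathrm{in}}\sim n^{-1/2}$; the exponent thus comes out of an inconsistent normalization rather than a correct estimate. There is also no subtraction of $\bar u_K$ anywhere in the correct argument --- the ``constant mode'' difficulty you flag is an artifact of the flawed reduction. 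To repair the proof: use the Neumann extension, derive the sufficient condition of Theorem \ref{thm:sufficient} with the Hessian on the right, prove the $\D(K)$ bound as above, and conclude via Theorem \ref{thm:B2LK} and Corollary \ref{cor:LK-SG2}.
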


This provides a positive answer to the qualitative question of whether there is a uniform even spectral-gap for $-L_K$ beyond $1$, and so the only remaining question is the quantitative one -- how big is it? By Corollary \ref{cor:LK-SG2}, the (local) log-BM conjecture ($p=0$ case) predicts it should be $\frac{n}{n-1}$, an a-priori mysterious quantity. Better insight is gained by inspecting several natural candidates $K$ for being a minimizer in (\ref{eq:LK-SG2}). As with essentially all minimization problems over linearly invariant parameters in Convexity Theory, there are three immediate suspects:
\begin{itemize}
\item $K = B_2^n$, the Euclidean unit-ball. Recalling (\ref{eq:LK-explicit}), we immediately see that $L_{B_2^n} = \frac{1}{n-1} \Delta_{S^{n-1}}$, where $\Delta_{S^{n-1}}$ is the Laplace-Beltrami operator on $S^{n-1}$. The spectral decomposition of $\Delta_{S^{n-1}}$ is classical \cite{VilenkinClassicBook,ChavelEigenvalues}, with $k$-th distinct eigenvalue ($k \geq 0$) equal to $k (k+n-2)$, corresponding to the 
eigenspace of spherical harmonics of degree $k$. As expected, spherical harmonics of degree $0$ are constant functions, of degree $1$ are linear functions $\ell_v$, and of degree $2$ are homogeneous quadratic harmonic polynomials (which are in particular even). It follows that for $-L_{B_2^n}$:
\[
\lambda_0 = 0 ~,~ \lambda_1 = \ldots = \lambda_n = \frac{n-1}{n-1} = 1 ~,~ \lambda_{1,e} =\lambda_{n+1} = \frac{2n}{n-1} ,
\]
and we see that we get a much better even spectral-gap (corresponding to $p=-n$ in (\ref{eq:LK-SG2})) than the conjectured lower bound $\frac{n}{n-1}$. So $B_2^n$ is not a minimizer for (\ref{eq:LK-SG2}). 

Applying Proposition \ref{prop:local-equiv} (with $p_0=-n$ and $p=0$) and the invariance under linear transformations, we obtain: 
\begin{thm} \label{thm:Ball}
$\lambda_{1,e}(-L_{B_2^n}) = \frac{2n}{n-1}$; equivalently, the local $(-n)$-BM inequality (\ref{eq:local-p-BM}) holds for $B_2^n$. In particular, there exists a $C^2$-neighborhood $N_{B_2^n}$ of $B_2^n$ in $\K^2_{+,e}$ so that for all $T \in GL_n$,  for all $K_1,K_0 \in T(N_{B_2^n})$, the local log-BM conjecture (\ref{eq:local-log-BM}) holds for $K_0$ and 
\[
V((1-\lambda) \cdot K_0 +_0 \lambda \cdot K_1) \geq V(K_0)^{1-\lambda} V(K_1)^{\lambda} \;\;\; \forall \lambda \in [0,1] . 
\]
\end{thm}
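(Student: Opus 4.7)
\medskip

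The plan is to compute the operator $L_{B_2^n}$ explicitly, diagonalize it using classical spherical-harmonic analysis, read off $\lambda_{1,e}$, and then promote this spectral identity to the local-to-global statement via the machinery already set up in Proposition~\ref{prop:local-equiv} and the linear equivariance established in Theorem~\ref{thm:LTK}.

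\medskip

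First, since $h_{B_2^n} \equiv 1$ on $S^{n-1}$, the formula (\ref{eq:LK-explicit}) collapses: $(D^2 h_K)^{i,j} = \delta^{i,j}$, the $h_j$ vanish, and one finds $L_{B_2^n} = \frac{1}{n-1}\Delta_{S^{n-1}}$. The spectrum of the Laplace--Beltrami operator on $S^{n-1}$ is classical: its $k$-th distinct eigenvalue equals $k(k+n-2)$, with eigenspace $\mathcal{H}_k$ the spherical harmonics of degree $k$. Thus $\mathcal{H}_0 = \mathrm{span}(\mathbf{1})$ yields $\lambda_0 = 0$; $\mathcal{H}_1$ consists of linear functions $\ell_v$ (which are odd), yielding $\lambda_1 = \ldots = \lambda_n = \tfrac{n-1}{n-1} = 1$ as required by the structural part of Theorem~\ref{thm:LK}(7); and $\mathcal{H}_2$ consists of degree-two homogeneous harmonic polynomials, all of which are \emph{even}. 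Since every even nontrivial spherical harmonic has degree $\geq 2$, we conclude
\[
\lambda_{1,e}(-L_{B_2^n}) = \frac{2(n)}{n-1} = \frac{2n}{n-1},
\]
with the minimum attained precisely on $\mathcal{H}_2$. This equals $\tfrac{n-p}{n-1}$ at $p = -n$, and Corollary~\ref{cor:LK-SG} immediately translates this into the local $(-n)$-BM inequality (\ref{eq:local-p-BM}) for $K = B_2^n$.

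\medskip

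To reach the neighborhood statement, I apply the equivalence between (1') and (2') in Proposition~\ref{prop:local-equiv} with $K = B_2^n$ and $p_0 = -n$: since local $(-n)$-BM holds at $B_2^n$, for every $p > -n$ there is a $C^2$-neighborhood $N_{B_2^n}^{(p)}$ of $B_2^n$ in $\K^2_{+,e}$ on which the (neighborhood) local $p$-BM conjecture --- hence, via statement (2) of the same proposition, the global $p$-BM inequality between any $K_0,K_1$ in a smaller $C^2$-convex (with respect to $+_p$) neighborhood $N'_{B_2^n}$ --- is valid. Specializing to $p = 0$ yields the desired log-BM inequality for all pairs $K_0,K_1$ in a $C^2$-neighborhood $N_{B_2^n}$ of $B_2^n$.

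\medskip

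Finally, to obtain the $GL_n$-invariance, I use Theorem~\ref{thm:LTK}, which gives $\sigma(-L_{T(B_2^n)}) = \sigma(-L_{B_2^n})$ and, together with Lemma~\ref{lem:ST-Iso}, identifies even eigenfunctions with even eigenfunctions; equivalently, one can simply quote the elementary invariance $T(K_0) +_0 T(K_1) = T(K_0 +_0 K_1)$ and the identity $V(T(\cdot)) = |\det T|\, V(\cdot)$, which make the log-BM inequality manifestly $GL_n$-invariant. Either way, for each $T \in GL_n$ the neighborhood $T(N_{B_2^n})$ inherits the property, completing the proof. The only step that requires any real care is the promotion from a single-body local statement at $B_2^n$ to a $C^2$-neighborhood statement; this is precisely the content of Proposition~\ref{prop:local-equiv}, so no new obstacle arises here --- the genuine work lies entirely in the spherical-harmonic computation, which is standard.
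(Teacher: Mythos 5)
Your proposal is correct and follows essentially the same route as the paper: compute $L_{B_2^n} = \frac{1}{n-1}\Delta_{S^{n-1}}$ from (\ref{eq:LK-explicit}), read off $\lambda_{1,e} = \frac{2n}{n-1}$ from the classical spherical-harmonic decomposition (the lowest even nonconstant harmonics being of degree $2$), identify this with $p=-n$ via Corollary \ref{cor:LK-SG}, and then invoke Proposition \ref{prop:local-equiv} (with $p_0=-n$, $p=0$) together with the linear invariance to get the neighborhood log-BM statement. No gaps; this matches the paper's argument in Subsection \ref{subsec:extremizers}.
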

This confirms Theorem \ref{thm:intro-lq} for $q=2$ for all $n \geq 2$. 

\item $K = B_\infty^n = [-1,1]^n$, the unit cube. Note that $B_\infty^n$ is not smooth, so $L_{B_\infty^n}$ is not well-defined. However, defining for any $K \in \K_e$:
\begin{equation} \label{eq:lambda-nonsmooth}
\lambda_{1,e}(K) := \liminf_{\K^2_{+,e} \ni K_i \rightarrow K \text{ in $C$}}  \lambda_{1,e}(-L_{K_i}) ,
\end{equation}
we obtain a lower semi-continuous function on the Banach--Mazur compactum $\K_e / GL_n$, which must attain a minimum. By Theorem \ref{thm:main-spectral}, this minimum is strictly greater than $1$. We will verify in Theorem \ref{thm:lambda1-cube} that:
\[
\lambda_{1,e}(B_\infty^n) = \frac{n}{n-1} .
\]
Consequently, we have the following natural interpretation:
\begin{prop} The validity of the local log-BM conjecture  (\ref{eq:local-log-BM}) for all $K \in \K^2_{+,e}$ is equivalent to the validity of the conjecture that the cube $K = B_\infty^n$ is a minimizer of the linearly invariant even spectral-gap $\lambda_{1,e}(K)$:
\[
\min_{K \in \K_e / GL_n } \lambda_{1,e}(K) = \lambda_{1,e}(B_\infty^n) .
\]
\end{prop}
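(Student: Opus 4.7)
The plan is to combine three ingredients: the pointwise spectral reformulation of the local log--BM conjecture from Corollary \ref{cor:LK-SG}, the identification $\lambda_{1,e}(B_\infty^n) = \tfrac{n}{n-1}$ established in Theorem \ref{thm:lambda1-cube}, and the fact that for smooth bodies the constant sequence is admissible in the $\liminf$ defining $\lambda_{1,e}(K)$ in (\ref{eq:lambda-nonsmooth}). Both sides of the claimed equivalence amount to the uniform lower bound $\lambda_{1,e} \geq \tfrac{n}{n-1}$, read once on $\K^2_{+,e}$ through $-L_K$ and once on the whole of $\K_e$ through the lower-semicontinuous extension; the main task is to dovetail the two viewpoints correctly.

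For the forward direction, I would assume the local log--BM conjecture (\ref{eq:local-log-BM}) holds for every $K \in \K^2_{+,e}$. By Corollary \ref{cor:LK-SG} with $p=0$ this is exactly the bound $\lambda_{1,e}(-L_K) \geq \tfrac{n}{n-1}$ for every such $K$. Now given an arbitrary $K \in \K_e$, I would use that $\K^2_{+,e}$ is $C$-dense in $\K_e$ to select any approximating sequence $K_i \in \K^2_{+,e}$ with $K_i \to K$ in the Hausdorff metric; since each $\lambda_{1,e}(-L_{K_i})$ already lies above $\tfrac{n}{n-1}$, the $\liminf$ in (\ref{eq:lambda-nonsmooth}) yields $\lambda_{1,e}(K) \geq \tfrac{n}{n-1}$. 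Invoking $\lambda_{1,e}(B_\infty^n) = \tfrac{n}{n-1}$ from Theorem \ref{thm:lambda1-cube}, the cube indeed attains the minimum of $\lambda_{1,e}$ on $\K_e / GL_n$.

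For the reverse direction, I would assume $B_\infty^n$ realizes the minimum, so that $\lambda_{1,e}(K) \geq \lambda_{1,e}(B_\infty^n) = \tfrac{n}{n-1}$ for all $K \in \K_e$. Specializing to $K \in \K^2_{+,e}$ and feeding the constant sequence $K_i = K$ into the definition (\ref{eq:lambda-nonsmooth}) gives $\lambda_{1,e}(K) \leq \lambda_{1,e}(-L_K)$; combining with the hypothesis yields $\lambda_{1,e}(-L_K) \geq \tfrac{n}{n-1}$, and Corollary \ref{cor:LK-SG} with $p=0$ then produces the local log--BM conjecture for $K$. Since $K \in \K^2_{+,e}$ was arbitrary, this closes the loop.

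The substantive content of the proposition does not lie in this bookkeeping argument, which is essentially tautological once the definitions are unpacked, but rather in the non-trivial numerical input $\lambda_{1,e}(B_\infty^n) = \tfrac{n}{n-1}$ supplied by Theorem \ref{thm:lambda1-cube}. It is this computation that simultaneously pins down the correct conjectural constant $\tfrac{n}{n-1}$ and singles out a concrete candidate extremizer, converting a strengthened Brunn--Minkowski inequality into an extremal problem for a linearly invariant spectral parameter over the Banach--Mazur compactum. The genuine obstacle, therefore, is not this equivalence but the separate verification that the cube achieves the value $\tfrac{n}{n-1}$, which must be done through the boundary Poincar\'e-type/Steklov analysis developed elsewhere in the paper.
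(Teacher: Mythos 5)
Your argument is correct and coincides with what the paper intends: the proposition is stated as an immediate consequence of Corollary \ref{cor:LK-SG} (equivalently Corollary \ref{cor:LK-SG2}) with $p=0$, the definition (\ref{eq:lambda-nonsmooth}) of $\lambda_{1,e}(K)$ as a $\liminf$ over $C$-approximating smooth bodies, and the value $\lambda_{1,e}(B_\infty^n)=\frac{n}{n-1}$ from Theorem \ref{thm:lambda1-cube}, exactly the three ingredients you combine. Your use of the constant sequence to get $\lambda_{1,e}(K)\leq\lambda_{1,e}(-L_K)$ for smooth $K$ in the reverse direction is the right way to sidestep the fact that only $C^2$-continuity of the spectrum is known, so the bookkeeping is sound.
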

In our opinion, the latter conjecture is extremely natural, and constitutes the best justification for believing that the local log-BM conjecture is true. 
\item $K = B_1^n$, the unit-ball of $\ell_1^n$. This might be the only natural potential counter-example to the log-BM conjecture, and we presently do not know how to verify the conjecture for it. As before, $B_1^n$ is not smooth so $L_{B_1^n}$ is not well-defined. However, we will verify in Corollary \ref{cor:lambda1-unc} that:
\[
\lambda_{1,\unc}(K) \geq \frac{n}{n-1} \;\;\; \forall K \in \K_{\unc} .
\]
Here $\K_{\unc}$, $\K^2_{+,\unc}$ and $E_{\unc}$ denote the unconditional elements of $\K$, $\K^2_{+}$ and $H^2(S^{n-1})$, respectively, meaning that they are invariant under reflections with respect to the coordinate hyperplanes; for $K \in \K^2_{+,\unc}$ we define:
\[
\lambda_{1,\unc}(-L_K) := \min \sigma(-L_{K}|_{E_{\unc} \cap \mathbf{1}^{\perp}}) ,
\]
and for $K \in \K_{\unc}$ we set:
\begin{equation} \label{eq:lambda-unc}
\lambda_{1,\unc}(K) := \liminf_{\K^2_{+,\unc} \ni K_i \rightarrow K \text{ in $C$}} \lambda_{1,\unc}(-L_{K_i}) .
\end{equation}
In particular, we have $\lambda_{1,\unc}(B_1^n) \geq \frac{n}{n-1}$, which is a good sign. 
\end{itemize}

While the $p$-BM conjecture pertains to the minimization problem (\ref{eq:LK-SG2}), it also makes sense to consider the corresponding \emph{maximization} problem. In view of the above examples, we make the following:

\begin{conj}
\[
\max_{K \in \K^{2}_{+,e} / GL_n } \lambda_{1,e}(-L_K) = \frac{2n}{n-1} ,
\]
with equality for origin-symmetric ellipsoids $K = T(B_2^n)$. 
\end{conj}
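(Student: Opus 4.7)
The plan is to establish the upper bound $\lambda_{1,e}(-L_K) \leq \frac{2n}{n-1}$ for every $K \in \K^2_{+,e}$ via the variational characterisation recorded in Corollary \ref{cor:LK-SG2}: it suffices to produce, for each such $K$, a nonzero test function $z \in E_{\text{even}} \cap \mathbf{1}^{\perp}$ with Rayleigh quotient
\[
R_K(z) := \frac{\tfrac{1}{n-1}\int_{S^{n-1}} h_K \, ((D^2 h_K)^{-1})^{i,j} z_i z_j \, dV_K}{\int_{S^{n-1}} z^2 \, dV_K} \leq \frac{2n}{n-1}.
\]
Modelled on the extremal degree-$2$ spherical harmonics that realise this value on $B_2^n$, the natural candidates are
\[
\psi_A := \sum_{i,j} A_{i,j}\, \ell^K_{e_i}\, \ell^K_{e_j} - c_A, \qquad A \text{ symmetric $n\times n$,}
\]
with $c_A$ chosen so that $\int \psi_A \, dV_K = 0$. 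Since $K$ is origin-symmetric each $\ell^K_v$ is odd, so every $\psi_A$ is even; and for $K = B_2^n$ the family $\{\psi_A\}$ spans precisely the eigenspace of $-L_{B_2^n}$ attached to $\frac{2n}{n-1}$.

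The operator $L_K$ has no zeroth-order term (see (\ref{eq:LK-explicit})), so the Leibniz rule combined with Theorem \ref{thm:LK}(7) yields
\[
-L_K(\ell^K_v \ell^K_w) = 2\, \ell^K_v \ell^K_w - 2\,\Gamma_K(\ell^K_v, \ell^K_w),
\]
where $\Gamma_K(f,g) := \tfrac{h_K}{n-1}((D^2 h_K)^{-1})^{i,j} f_i g_j$ is the carré du champ of $-L_K$ with respect to $dV_K$. Setting $B(A,A') := \int (-L_K \psi_A)\psi_{A'}\, dV_K$ and $G(A,A') := \int \psi_A \psi_{A'}\, dV_K$ as symmetric bilinear forms on the finite-dimensional space $W := \mathrm{span}\{\psi_A\} \subset E_{\text{even}} \cap \mathbf{1}^{\perp}$, the problem reduces to showing that the smallest generalised eigenvalue of the pencil $(B,G)$ is at most $\frac{2n}{n-1}$. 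A sufficient condition would be the trace inequality
\[
\operatorname{tr}(G^{-1} B) \leq \tfrac{2n}{n-1}\, \dim(W),
\]
which guarantees that at least one Rayleigh quotient $R_K(\psi_A)$ reaches the target bound. By the linear equivariance of Theorem \ref{thm:LTK}, we may first normalise $K$ so that $\int \ell^K_v \ell^K_w \, dV_K$ is a scalar multiple of $\scalar{v,w}$ (``isotropic position'' for $dV_K$), which collapses several multi-index sums in the trace and should reduce the task to a scalar comparison involving the fourth moments of $dV_K$ with respect to the linear eigenfunctions $\ell^K_{e_i}$. For the equality analysis, saturation of $R_K(\psi_A) = \frac{2n}{n-1}$ for all non-trivial $A$ would force every $\psi_A$ to be a genuine eigenfunction of $-L_K$; feeding this rigidity back through the product-rule identity then forces $h_K^2$ itself to be a quadratic form on $\Real^n$, identifying $K$ as an ellipsoid.

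The principal obstacle is the trace inequality. Unlike the local $L^p$-Brunn--Minkowski direction treated in the rest of the paper, where Reilly-type integration-by-parts produce the required positivity, an upper bound on $\lambda_{1,e}$ must genuinely detect the ball as extremiser, and we expect the key input to take the form of an Aleksandrov--Fenchel-type inequality applied to a family of ``virtual'' bodies parametrised by the matrices $A$. As a fallback strategy, the lower-semicontinuous extension $K \mapsto \lambda_{1,e}(K)$ from (\ref{eq:lambda-nonsmooth}) attains its supremum on the Banach--Mazur compactum $\K_e / GL_n$, so a Hadamard-type shape-derivative formula for the perturbation of an eigenvalue of $-L_K$ (necessarily with multiplicity $\tfrac{(n-1)(n+2)}{2}$ at $B_2^n$) should in principle allow one to verify that any maximiser satisfies the Euler--Lagrange condition characterising ellipsoids, at the cost of delicate multi-valued perturbation theory for operators whose coefficients depend on two derivatives of $h_K$.
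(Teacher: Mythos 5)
The statement you are trying to prove is left \emph{open} in the paper: it is stated as a conjecture, and the only part the authors actually establish is the attainment claim, namely $\lambda_{1,e}(-L_{B_2^n})=\frac{2n}{n-1}$ (computed via the spherical-harmonic decomposition of $L_{B_2^n}=\frac{1}{n-1}\Delta_{S^{n-1}}$ in Subsection \ref{subsec:extremizers}) together with the $GL_n$-equivariance of Theorem \ref{thm:LTK}, which extends equality to all origin-symmetric ellipsoids. The inequality $\lambda_{1,e}(-L_K)\leq\frac{2n}{n-1}$ for \emph{every} $K\in\K^2_{+,e}$ is precisely the unproven content, and your proposal does not close it. Your setup (the product rule $-L_K(\ell^K_v\ell^K_w)=2\ell^K_v\ell^K_w-2\Gamma_K(\ell^K_v,\ell^K_w)$, the evenness of the $\psi_A$, and the normalization making $\int \ell^K_v\ell^K_w\,dV_K$ proportional to $\scalar{v,w}$) is correct, but it only reduces the conjecture to the trace inequality $\mathrm{tr}(G^{-1}B)\leq\frac{2n}{n-1}\dim W$, which you explicitly do not prove. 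Worse, that sufficient condition is strictly stronger than what is needed and is dubious as stated: for general $K$ the space $W$ has dimension $\frac{n(n+1)}{2}$ and contains the direction $\psi_{\mathrm{Id}}=h_K^{-2}-c$, which vanishes exactly at ellipsoids and whose Rayleigh quotient has no a priori upper bound for bodies far from the ball; a single such direction can push the trace above $\frac{2n}{n-1}\dim W$ even when the minimal generalized eigenvalue is below the target, so the averaging device cannot be expected to detect the ball as extremizer without a genuinely new input (your anticipated Aleksandrov--Fenchel-type inequality, which is exactly the missing idea).

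The fallback argument also fails as written. The extension $K\mapsto\lambda_{1,e}(K)$ of (\ref{eq:lambda-nonsmooth}) is \emph{lower} semicontinuous on the Banach--Mazur compactum, which guarantees attainment of the \emph{minimum} (this is how the paper uses it for the $p$-BM minimization problem); it gives no existence of a maximizer, and since the liminf-definition can only drop at non-smooth limits, the supremum over $\K^2_{+,e}/GL_n$ need not be attained at all. Consequently the shape-derivative/Euler--Lagrange analysis at a putative maximizer (with the eigenvalue of multiplicity $\frac{(n-1)(n+2)}{2}$ at $B_2^n$) never gets off the ground. Finally, the rigidity discussion is aimed at the wrong statement: the conjecture's equality clause only asserts that ellipsoids attain the value (already known), so the saturation analysis is not needed, while the actual inequality remains unproved. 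In short, the proposal is a reasonable research plan, but both of its routes have an unfilled gap at exactly the step where the conjecture's difficulty lies.
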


\bigskip

\section{Obtaining Estimates via the Reilly Formula} \label{sec:Reilly}

We are finally ready to prove our main results in this work. These are based on an integral formula obtained by twice integrating-by-parts the Bochner--Lichnerowicz--Weitzenb\"{o}ck identity, which in the Riemannian setting is due to Reilly \cite{ReillyOriginalFormula}.
In the description below, we specialize the Reilly formula to our Euclidean setting (see \cite[Theorem 1.1]{KolesnikovEMilmanReillyPart1} for a proof of a more general version, which holds on weighted Riemannian manifolds, and involves an additional curvature term).

We denote by $\nabla$ the Euclidean connection, and by $\Delta$ the Euclidean Laplacian. We denote by $\norm{\nabla^2 u}$ the Hilbert-Schmidt norm of the Euclidean Hessian $\nabla^2 u$. If $\Omega \subset \Real^n$ is a compact set with Lipschitz boundary, we denote by $\S_0(\Omega)$ the class of functions $u$ on $\Omega$ which are in $C^2(\text{int}(\Omega)) \cap C^1(\Omega)$. We shall henceforth assume that $\partial \Omega$ is $C^2$ smooth with outer normal $\nu = \nu_{\partial \Omega}$, and denote by $\S_N(\Omega)$ the elements in $\S_0(\Omega)$ which satisfy $u_\nu  := \scalar{\nabla u,\nu}\in C^1(\partial \Omega)$. Let $\mu = \exp(-V(x)) dx$ denote a measure on $\Omega$ with $V \in C^2(\Omega)$, and denote $\mu_{\partial \Omega} = \exp(-V(x)) d\H^{n-1}|_{\partial \Omega}(x)$. Introduce the following weighted Laplacian, defined by:
\[
L_\mu u := \Delta u - \scalar{\nabla V , \nabla u} \;\;\ \forall u \in C^2(\text{int}(\Omega)) ;
\]
it satisfies the following weighted integration-by-parts property (see \cite[Remark 2.2]{KolesnikovEMilmanReillyPart1}):
\begin{equation} \label{eq:parts}
\int_{\Omega} L_\mu u \; d\mu = \int_{\partial \Omega} u_\nu d\mu_{\partial \Omega} \;\; \forall u \in S_0(\Omega) . 
\end{equation}
As usual, we denote by $\II_{\partial \Omega}$ the second fundamental form of $\partial \Omega \subset \Real^n$, and define its generalized mean curvature at $x \in \partial \Omega$ as:
\[
H_{\partial \Omega,\mu} := tr(\II_{\partial \Omega}) - \scalar{\nabla V , \nu} . 
\]
Finally, $\nabla_{\partial \Omega}$ denotes the induced connection on $\partial \Omega$.

\begin{thm*}[Generalized Reilly Formula] 
For any function $u \in \S_N(\Omega)$: 
\begin{multline}
\label{eq:Reilly}
\int_\Omega (L_\mu u)^2 d\mu = \int_\Omega \norm{\nabla^2 u}^2 d\mu + \int_\Omega \scalar{ \nabla^2 V \; \nabla u, \nabla u} d\mu + \\
\int_{\partial \Omega} H_{\partial \Omega,\mu} u_\nu^2 d\mu_{\partial \Omega} + \int_{\partial \Omega} \scalar{\II_{\partial \Omega}  \;\nabla_{\partial \Omega} u,\nabla_{\partial \Omega} u} d\mu_{\partial \Omega} - 2 \int_{\partial \Omega} \scalar{\nabla_{\partial \Omega} u_\nu, \nabla_{\partial \Omega} u} d\mu_{\partial \Omega} ~.
\end{multline}
\end{thm*}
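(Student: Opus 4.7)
The plan is to derive the identity by integrating the weighted Bochner--Lichnerowicz--Weitzenb\"{o}ck formula on $\Real^n$ (which is flat, so the usual Ricci term drops out, leaving only a $\nabla^2 V$ contribution from reweighting by $e^{-V}$). Specifically, for $u \in C^3$, the pointwise Bochner identity in this weighted Euclidean setting is
\[
\tfrac{1}{2} L_\mu(\abs{\nabla u}^2) = \norm{\nabla^2 u}^2 + \scalar{\nabla u, \nabla L_\mu u} + \scalar{\nabla^2 V \; \nabla u, \nabla u},
\]
which I would verify by a direct calculation: expand $\frac{1}{2}\Delta \abs{\nabla u}^2 = \norm{\nabla^2 u}^2 + \scalar{\nabla u, \nabla \Delta u}$ and track the extra terms produced when one replaces $\Delta$ by $L_\mu = \Delta - \scalar{\nabla V, \nabla \cdot}$ on both sides.

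Next I would integrate both sides against $d\mu$ and apply the weighted integration-by-parts formula (\ref{eq:parts}) in two places. On the left, $\int_\Omega L_\mu(\abs{\nabla u}^2/2) d\mu = \int_{\partial \Omega} \scalar{\nabla^2 u \; \nabla u, \nu} d\mu_{\partial \Omega}$. For the middle term on the right, the polarized version of (\ref{eq:parts}) (Green's identity for $L_\mu$) gives
\[
\int_\Omega \scalar{\nabla L_\mu u, \nabla u} d\mu = -\int_\Omega (L_\mu u)^2 d\mu + \int_{\partial \Omega} (L_\mu u) u_\nu \, d\mu_{\partial \Omega}.
\]
Rearranging, I get $\int_\Omega (L_\mu u)^2 d\mu$ equal to the bulk terms $\int \norm{\nabla^2 u}^2 + \int \scalar{\nabla^2 V \nabla u, \nabla u}$ plus the boundary combination $\mathcal{B} := \int_{\partial \Omega} (L_\mu u) u_\nu d\mu_{\partial \Omega} - \int_{\partial \Omega} \scalar{\nabla^2 u \; \nabla u, \nu} d\mu_{\partial \Omega}$.

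The main work then is to massage $\mathcal{B}$ into the stated form, and this is where I expect the bookkeeping to be delicate. I would decompose $\nabla u = \nabla_{\partial \Omega} u + u_\nu \nu$ and use the Gauss--Weingarten formula $\scalar{\nabla^2 u \; X, \nu} = X(u_\nu) - \II_{\partial \Omega}(X, \nabla_{\partial \Omega} u)$ for tangent $X$, together with the standard decomposition
\[
\Delta u\big|_{\partial \Omega} = \Delta_{\partial \Omega} u + H_{\partial \Omega} u_\nu + \scalar{\nabla^2 u \, \nu, \nu},
\]
to rewrite $L_\mu u$ on $\partial \Omega$ as $\Delta_{\partial \Omega} u - \scalar{\nabla_{\partial \Omega} V, \nabla_{\partial \Omega} u} + H_{\partial \Omega,\mu} u_\nu + \scalar{\nabla^2 u \, \nu, \nu}$. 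The $\scalar{\nabla^2 u \, \nu,\nu} u_\nu$ terms will cancel between the two pieces of $\mathcal{B}$, producing $\II_{\partial \Omega}(\nabla_{\partial \Omega} u, \nabla_{\partial \Omega} u) + H_{\partial \Omega, \mu} u_\nu^2$ plus the ``extra'' term $\int_{\partial \Omega} (\Delta_{\partial \Omega} u - \scalar{\nabla_{\partial \Omega} V, \nabla_{\partial \Omega} u}) u_\nu \, d\mu_{\partial \Omega} - \int_{\partial \Omega} \scalar{\nabla_{\partial \Omega} u, \nabla_{\partial \Omega} u_\nu} d\mu_{\partial \Omega}$.

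Finally, since $\partial \Omega$ is a closed manifold (no boundary), I would apply divergence theorem on $\partial \Omega$ with respect to the weighted measure $\mu_{\partial \Omega}$ to the term $\int_{\partial \Omega} (\Delta_{\partial \Omega} u - \scalar{\nabla_{\partial \Omega} V, \nabla_{\partial \Omega} u}) u_\nu \, d\mu_{\partial \Omega}$, converting it to $-\int_{\partial \Omega} \scalar{\nabla_{\partial \Omega} u, \nabla_{\partial \Omega} u_\nu} d\mu_{\partial \Omega}$, which combines with the existing identical term to yield the factor $-2$ in front of $\int \scalar{\nabla_{\partial \Omega} u_\nu, \nabla_{\partial \Omega} u} d\mu_{\partial \Omega}$. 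Assembling everything gives the stated Reilly identity. The hard part is really just the careful algebraic cancellation of the $\scalar{\nabla^2 u \, \nu,\nu}$ and $\scalar{\nabla_{\partial \Omega} V, \nabla_{\partial \Omega} u} u_\nu$ contributions when combining the bulk integration-by-parts step with the intrinsic integration-by-parts on $\partial \Omega$; a density argument handles the regularity assumption $u \in \S_N(\Omega)$ rather than $C^3$.
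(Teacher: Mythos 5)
Your derivation is correct and follows exactly the route the paper indicates (and which is carried out in the cited reference \cite{KolesnikovEMilmanReillyPart1}): integrate the weighted Bochner--Lichnerowicz--Weitzenb\"ock identity against $d\mu$, apply the weighted integration-by-parts formula (\ref{eq:parts}) and its polarized Green form, and then decompose the boundary terms via the Gauss--Weingarten relations plus an intrinsic integration by parts on the closed hypersurface $\partial\Omega$, which produces the factor $-2$. The only point left implicit, the approximation of $u \in \S_N(\Omega)$ by smoother functions in a norm strong enough to pass to the limit in all bulk and boundary terms, is standard and is handled the same way in the cited proof.
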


We will also use the following classical existence and regularity results for linear elliptic PDEs (e.g. \cite[Chapter 8]{GilbargTrudinger}, \cite[Chapter 5]{LiebermanObliqueBook}, \cite[Chapter 3]{LadyEllipticBook}):

\begin{thm*}
Let $f \in C^{\alpha}(\text{int}(\Omega))$ for some $\alpha \in (0,1)$, let $\Psi \in C^{1}(\partial \Omega)$, and assume that:
\begin{equation} \label{eq:compat}
 \int_{\Omega} f d\mu = \int_{\partial \Omega} \Psi d\mu_{\partial \Omega} ~.
\end{equation}
Then there exists a function $u \in C^{2,\alpha}_{loc}(\text{int}(\Omega)) \cap C^{1,\beta}(\Omega)$ for all $\beta \in (0,1)$, which solves the following Poisson equation with Neumann boundary conditions: \begin{equation} \label{eq:Poisson} 
L_\mu u = f ~ \text{in $\text{int}(\Omega)$} ~,~ u_\nu = \Psi ~ \text{on } \partial \Omega . 
\end{equation}
Moreover, $u$ is unique up to an additive constant. 
\end{thm*}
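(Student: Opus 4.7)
The statement is a classical existence-uniqueness-regularity theorem for the Neumann problem associated with a uniformly elliptic operator with smooth coefficients on a domain with $C^2$ boundary, and I would attack it in the standard three-step sequence: uniqueness by integration by parts, existence by a weighted variational argument, and regularity by bootstrapping with Schauder-type estimates.

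\emph{Uniqueness.} If $u_1,u_2\in\S_N(\Omega)$ both solve (\ref{eq:Poisson}), set $w=u_1-u_2$, so that $L_\mu w=0$ in $\mathrm{int}(\Omega)$ and $w_\nu=0$ on $\partial\Omega$. Applying (\ref{eq:parts}) to $L_\mu(\tfrac12 w^2)=w\,L_\mu w+|\nabla w|^2$ and using the vanishing Neumann data yields
\[
0=\int_\Omega w\,L_\mu w\,d\mu=-\int_\Omega|\nabla w|^2\,d\mu+\int_{\partial\Omega} w\,w_\nu\,d\mu_{\partial\Omega}=-\int_\Omega|\nabla w|^2\,d\mu.
\]
Hence $\nabla w\equiv 0$, so $w$ is constant by connectedness of $\Omega$.

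\emph{Existence.} I would pass to the weak formulation. A function $u$ solves (\ref{eq:Poisson}) iff
\[
a(u,\varphi):=\int_\Omega \nabla u\cdot\nabla\varphi\,d\mu=-\int_\Omega f\varphi\,d\mu+\int_{\partial\Omega}\Psi\varphi\,d\mu_{\partial\Omega}\qquad\forall\varphi\in H^1(\Omega,d\mu).
\]
The compatibility condition (\ref{eq:compat}) is precisely what is needed so that both sides vanish when $\varphi\equiv 1$; equivalently, the associated functional $J(u)=\tfrac12 a(u,u)+\int f u\,d\mu-\int \Psi u\,d\mu_{\partial\Omega}$ is invariant under adding constants and thus descends to the quotient space $H^1(\Omega,d\mu)/\mathbb{R}$. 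Since $V\in C^2(\Omega)$ and $\Omega$ is compact, $e^{-V}$ is bounded above and below by positive constants, so the weighted and unweighted $H^1$-norms are equivalent, and the usual Poincar\'e inequality on $H^1/\mathbb{R}$ together with the trace inequality makes $a$ coercive on the quotient. Lax--Milgram (or equivalently the direct method of the calculus of variations, using that $J$ is strictly convex and coercive) then produces a unique weak solution $u\in H^1(\Omega,d\mu)$ normalized by $\int u\,d\mu=0$.

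\emph{Regularity.} With the weak solution in hand, I would invoke standard elliptic regularity. Because the coefficients of $L_\mu$ lie in $C^1(\Omega)$ (indeed, $\nabla V\in C^1$ since $V\in C^2$), interior Schauder theory (e.g.\ Gilbarg--Trudinger, Theorem~6.13) applied to $L_\mu u=f$ with $f\in C^\alpha(\mathrm{int}(\Omega))$ yields $u\in C^{2,\alpha}_{\mathrm{loc}}(\mathrm{int}(\Omega))$. For boundary regularity, I would apply the Schauder theory for the oblique-derivative (in particular Neumann) problem on a $C^2$ domain with $C^1$ boundary datum $\Psi$, as developed e.g.\ in Lieberman's monograph on oblique-derivative boundary value problems and in Ladyzhenskaya--Ural'tseva: this delivers $u\in C^{1,\beta}(\Omega)$ for every $\beta\in(0,1)$. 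The main non-mechanical point is the verification that Lieberman's boundary Schauder hypotheses are met in the required generality (in particular with $C^1$ rather than $C^{1,\alpha}$ boundary data, forcing the slight loss from $\alpha$ to arbitrary $\beta<1$ near $\partial\Omega$); once this is confirmed the theorem follows by assembling the three steps.
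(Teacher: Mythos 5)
The paper does not actually prove this statement---it is quoted as a classical result with pointers to Gilbarg--Trudinger, Lieberman and Ladyzhenskaya--Ural'tseva---and your outline (energy-method uniqueness, variational/Lax--Milgram existence on $H^1(\Omega,d\mu)/\mathbb{R}$ with the compatibility condition ensuring the functional is well defined on the quotient, then interior Schauder estimates plus Lieberman's boundary theory for the Neumann problem giving $C^{2,\alpha}_{loc}\cap C^{1,\beta}$) is precisely the standard argument contained in those references. So your proposal is correct and takes essentially the same route the paper relies on.
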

Note that in particular, the function $u$ above is in $\S_N(\Omega)$. By (\ref{eq:parts}), the compatibility condition (\ref{eq:compat}) is also a necessary condition for solving (\ref{eq:Poisson}). 

\subsection{A sufficient condition for confirming the local $p$-BM inequality}

We now derive a sufficient condition for confirming the local $p$-BM inequality (\ref{eq:local-p-BM}) in its equivalent infinitesimal form (\ref{eq:pBMPsiNor}) on $\partial K$. 
Our motivation comes from our previous work \cite{KolesnikovEMilmanReillyPart2}, where we obtained a new proof of the (local, and hence global) Brunn--Minkowski inequality ($p=1$ case), by verifying  (\ref{eq:pBMPsiNor}) directly (for all test functions $\Psi \in C^1(\partial K)$, without any evenness assumption). In fact, our proof in \cite{KolesnikovEMilmanReillyPart2} applies to a general weighted Riemannian manifold satisfying the Curvature-Dimension condition $\text{CD}(0,N)$, yielding a novel interpretation of Minkowski addition in the Riemannian setting. 

Given $K \in \K^2_{+,e}$, let $\mu$ denote the Lebesgue measure $dx$ on $\Omega = K$ (corresponding to $V=0$, $L_\mu = \Delta$ and $H_{\partial K,\mu} = H_{\partial K}$ above). Given $\Psi \in C^1_e(\partial K)$ with $\int_{\partial K} \Psi dx = 0$, the classical $L^2$-method consists of solving for $u \in \S_N(K)$ the Laplace equation:
\[
\Delta u = 0 ~ \text{in $\text{int}(K)$} ~,~ u_\nu = \Psi ~ \text{on } \partial K  ,
\]
(which clearly satisfies the necessary and sufficient compatibility condition (\ref{eq:compat})). The origin-symmetry of $K$ and evenness of $\Psi$ guarantee that $u(-x)$ is also a solution, and so by uniqueness of the solution it follows that $u$ is necessarily even; we denote by $\S_{N,e}(K)$ the even elements of $\S_N(K)$ (and similarly for $\S_{0,e}(K)$). We see that the above procedure yields a bijection between $\Psi \in C^1_e(\partial K)$ and harmonic $u \in \S_{N,e}(K)$, characterized by the property that $u_\nu = \Psi$. 

Now, applying the Reilly formula (\ref{eq:Reilly}) to $u$, using that $\Delta u = 0$, and plugging in the resulting expression for $\int_{\partial K} H_{\partial K} u_\nu^2 dx$ into (\ref{eq:pBMPsiNor}), we obtain:

\begin{thm} \label{thm:sufficient}
Given $K \in \K^2_{+,e}$ and $p < 1$, the local $p$-BM conjecture (\ref{eq:local-p-BM}) for $K$ is equivalent to the assertion that:
\begin{align*} \forall u \in \S_{N,e}(K) \;\;\; &  \Delta u = 0 \text{ in int($K$)} \;\; \Rightarrow \;\; \\
& \int_K \norm{\nabla^2 u}^2 dx \geq (1-p) \int_{\partial K} \frac{u_\nu^2(x)}{\scalar{x,\nu_{\partial K}(x)}} dx - R_K(u) ,
\end{align*}
where:
\begin{align*} R_K(u)  & := \int_{\partial K} \scalar{\II_{\partial K} \nabla_{\partial K} u,  \nabla_{\partial K} u} dx \\
& + \int_{\partial K}\ \scalar{\II_{\partial K}^{-1} \nabla_{\partial K} u_\nu, \nabla_{\partial K} u_\nu} dx  - 2 \int_{\partial K} \scalar{\nabla_{\partial K} u_\nu, \nabla_{\partial K} u} dx  .
\end{align*}
In particular, as $R_K(u) \geq 0$ by Cauchy--Schwarz (since $\II_{\partial K} > 0$), a sufficient condition for the local $p$-BM conjecture to hold for $K$ is that:
\[
\forall u \in \S_{0,e}(K) \;\;\; \Delta u = 0 \text{ in int($K$)} \;\; \Rightarrow \;\; \int_K \norm{\nabla^2 u}^2 dx \geq (1-p) \int_{\partial K} \frac{u_\nu^2(x)}{\scalar{x,\nu_{\partial K}(x)}} dx .
\]
\end{thm}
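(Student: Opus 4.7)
The plan is to reformulate the boundary Poincar\'e-type infinitesimal version of local $p$-BM (Remark \ref{rem:p-BM-K}) as an interior statement via the generalized Reilly formula, by passing through a harmonic extension of the test function. Fix $K \in \K^2_{+,e}$, and let $\Psi \in C^1_e(\partial K)$ satisfy $\int_{\partial K} \Psi \, dx = 0$.

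First, I would invoke the quoted existence/regularity theorem with $\mu = dx$ (so $V = 0$, $L_\mu = \Delta$, $H_{\partial K, \mu} = H_{\partial K}$), $f \equiv 0$, and boundary data $\Psi$; the compatibility condition (\ref{eq:compat}) is automatic. This produces a harmonic $u \in \S_N(K)$ with $u_\nu = \Psi$, unique modulo constants. Since $K$ is origin-symmetric and $\Psi$ even, $v(x) := u(-x)$ is also a harmonic solution with $v_\nu(x) = u_\nu(-x) = \Psi(x)$ (using $\nu_{\partial K}(-x) = -\nu_{\partial K}(x)$); by uniqueness $v = u + c$, and evaluating at $\pm x$ forces $c = 0$, so $u \in \S_{N,e}(K)$. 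Conversely, every harmonic $u \in \S_{N,e}(K)$ gives back such a mean-zero even $\Psi := u_\nu$ (the zero-mean follows from (\ref{eq:parts})), yielding a bijection between these two classes modulo constants.

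Next, I would apply the Reilly formula (\ref{eq:Reilly}) to this $u$: the left-hand side vanishes since $\Delta u = 0$, yielding
\begin{equation*}
\int_{\partial K} H_{\partial K} u_\nu^2 \, dx = -\int_K \norm{\nabla^2 u}^2 dx - \int_{\partial K} \scalar{\II_{\partial K} \nabla_{\partial K} u, \nabla_{\partial K} u} dx + 2 \int_{\partial K} \scalar{\nabla_{\partial K} u_\nu, \nabla_{\partial K} u} dx .
\end{equation*}
Inserting this identity into the mean-zero infinitesimal form (\ref{eq:pBMPsiNor}) with $\Psi = u_\nu$ eliminates the $\int H_{\partial K} \Psi^2 \, dx$ term. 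Rearranging to isolate $\int_K \norm{\nabla^2 u}^2 dx$ on the left produces exactly the stated right-hand side $(1-p)\int_{\partial K} u_\nu^2 / \scalar{x,\nu} \, dx - R_K(u)$. Since every admissible $\Psi$ in (\ref{eq:pBMPsiNor}) arises from a unique harmonic $u \in \S_{N,e}(K)$ modulo constants (and all terms involved are invariant under $u \mapsto u + c$), the transformation is reversible, establishing the claimed equivalence.

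Finally, for the sufficient form, the pointwise Cauchy--Schwarz bound $2 \scalar{\xi, \eta} \leq \scalar{\II_{\partial K}^{-1} \xi, \xi} + \scalar{\II_{\partial K} \eta, \eta}$, valid because $\II_{\partial K} > 0$ on $K \in \K^2_+$, integrates with $\xi = \nabla_{\partial K} u_\nu$ and $\eta = \nabla_{\partial K} u$ to give $R_K(u) \geq 0$. Since $\S_{N,e}(K) \subset \S_{0,e}(K)$, any harmonic $u \in \S_{0,e}(K)$ satisfying the $-R_K$-free inequality in particular satisfies it on the smaller class $\S_{N,e}(K)$, and there dropping the non-negative quantity $R_K$ from the right-hand side only strengthens the hypothesis, so local $p$-BM follows by the already-proved equivalence. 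The main delicate point is the regularity bookkeeping in Step~1 -- guaranteeing that $u$ lies in $\S_{N,e}(K)$ so that both Reilly and the $\partial K$-formulation apply, and that every admissible $\Psi$ is reached -- but this is taken care of by the quoted Neumann existence/regularity result applied to the trivial source $f \equiv 0$; the rest is algebraic rearrangement.
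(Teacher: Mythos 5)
Your proposal is correct and follows essentially the same route as the paper: solve the Neumann problem $\Delta u = 0$, $u_\nu = \Psi$ (with evenness of $u$ forced by origin-symmetry and uniqueness up to constants), substitute the Reilly-formula expression for $\int_{\partial K} H_{\partial K} u_\nu^2\,dx$ into the formulation (\ref{eq:pBMPsiNor}), and observe $R_K(u) \geq 0$ by Cauchy--Schwarz since $\II_{\partial K} > 0$. The bookkeeping of the bijection between mean-zero even $\Psi \in C^1_e(\partial K)$ and harmonic $u \in \S_{N,e}(K)$ modulo constants is exactly how the paper secures the equivalence, so nothing is missing.
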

\begin{rem} \label{rem:RK}
For future reference, we mention the following alternative expression for $R_K(u)$ when $u \in C^2(K)$:
\[
R_K(u) = \int_{\partial K} \scalar{ \II^{-1}_{\partial K} P_{T_{\partial K}} \bigl[ \nabla^2 u \cdot \nu \bigr], P_{T_{\partial K}} \bigl[\nabla^2 u \cdot \nu \bigr]}  dx ,
\]
where $P_{T_{\partial K}}$ denotes projection to the tangent space to $\partial K$. Indeed, this follows by plugging above: 
\[
\nabla_{\partial K} u_\nu = \nabla_{\partial K}  \scalar{\nabla u,\nu} = \II_{\partial K} \nabla_{\partial K} u  +  P_{T_{\partial K}} \bigl[ \nabla^2 u \cdot \nu \bigr] .
\]
\end{rem}

The sufficient condition of Theorem \ref{thm:sufficient} naturally leads us to the following:
\begin{defn*}[$\B(K)$ and $\BNH(K)$]
Given $K \in \K_e$, let $\B(K)$ denote the best constant $B$ in the following \emph{boundary Poincar\'e-type inequality for harmonic functions}:
\[
\forall u \in \S_{0,e}(K) \;\;\; \Delta u = 0 \text{ in int($K$)} \;\; \Rightarrow \int_{\partial K} \frac{u_\nu^2(x)}{\scalar{x,\nu_{\partial K}(x)}} dx  \leq B \int_K \norm{\nabla^2 u}^2 dx .
\]
Without the requirement that $\Delta u = 0$, the above inequality is called a \emph{boundary Poincar\'e-type inequality}, and the best constant $B$ above is denoted by $\BNH(K)$. 
\end{defn*}
\noindent
Note that all expressions above are well-defined without any smoothness or strict convexity assumptions on $\partial K$, since $\nu_{\partial K}(x)$ exists for $\H^{n-1}$-a.e. $x \in \partial K$. We also take this opportunity to introduce:
\begin{defn*}[$\D(K)$]
Given $K \in \K$, let $\D(K)$ denote the best constant in the following \emph{absolute boundary Poincar\'e-type inequality}:
\[
\forall u \in \S_{0}(K) \;\;\; \int_K \vec{\nabla} u\; dx = \vec{0} \;\; \Rightarrow \;\; \int_{\partial K} \frac{\abs{\nabla u(x)}^2}{\scalar{x,\nu_{\partial K}(x)}} dx  \leq \D(K) \int_K \norm{\nabla^2 u}^2 dx .
\]
\end{defn*}
\noindent
Note that the evenness assumption on $u$ from the former definitions has been replaced by a balancing condition in the latter one, and that the $u_\nu^2$ term has been replaced by (the possibly larger) $\abs{\nabla u}^2$ one.
Since $\nabla u$ is odd for any even function $u$, and hence integrates to zero on any origin-symmetric $K$, it immediately follows that:
\[
\B(K) \leq \BNH(K) \leq \D(K) \;\;\; \forall K \in \K_e . 
\]

We will soon see that $\D(K) < \infty$ for all $K \in \K$, so the above definitions are non-trivial.  
It is easy to see that the constants $\B(K)$, $\BNH(K)$ and $\D(K)$ are invariant under homothety $K \mapsto \lambda K$. However, they are no longer invariant under general linear transformations as in Section \ref{sec:LK}. This can be seen from the following:
\begin{example}
For a $2$-dimensional cube $K = [-a,a] \times [-b,b]$, $\B(K) \geq \frac{1}{6} \brac{\frac{b^2}{a^2} + \frac{a^2}{b^2}}$, as witnessed by the even harmonic function $u = x_1 x_2$. Consequently, $\B(K),\BNH(K),\D(K) \rightarrow \infty$ as the aspect-ratio of $K$ grows to infinity, demonstrating the absence of invariance under $GL_n$. 
\end{example}

Consequently, while the validity of the first condition of Theorem \ref{thm:sufficient} is invariant under $GL_n$ (being equivalent to the local $p$-BM conjecture), the validity of the second sufficient condition is not, and requires putting $K$ in a ``good position", i.e. a suitable linear image. This is due to application of the Cauchy--Schwarz inequality when transitioning from the first formulation to the second, which identifies between tangent and cotangent spaces, and thus destroys the natural covariance--contravariance enjoyed by the first formulation. We summarize all of the relevant information we have obtained thus far in the following:

\begin{thm} \label{thm:main-tool}
Given $K \in \K^2_{+,e}$, assume that $\B(T_0(K)) \leq \frac{1}{1-p}$ for some $p < 1$ and $T_0 \in GL_n$. Then the local $p$-BM conjecture (\ref{eq:local-p-BM}) holds for $T(K)$ for all $T \in GL_n$. 
\end{thm}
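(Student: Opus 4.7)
The plan is to recognize that this statement is essentially a clean summary of the chain of reductions already established in the preceding sections, so the proof reduces to assembling three ingredients in the correct order. First I would invoke the $GL_n$-invariance of the local $p$-BM conjecture: as noted at the start of Section \ref{sec:LK} via the map $f \mapsto f_T$ satisfying $T(K)+_p \eps \cdot f_T = T(K +_p \eps \cdot f)$ (and reconfirmed spectrally via Theorem \ref{thm:LTK} and Corollary \ref{cor:LK-SG2}), the validity of (\ref{eq:local-p-BM}) is invariant under $K' \mapsto T'(K')$ for any $T' \in GL_n$. Applying this with $K' = T_0(K)$ and $T' = T T_0^{-1}$, it suffices to establish the local $p$-BM conjecture for the single body $T_0(K)$, which is again in $\K^2_{+,e}$ since $GL_n$ preserves $\K^2_+$ and origin-symmetry.

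Next I would apply the second (sufficient) condition provided by Theorem \ref{thm:sufficient} to the body $K' := T_0(K)$: the local $p$-BM conjecture for $K'$ is implied by
\[
\forall u \in \S_{0,e}(K') \;\; \Delta u = 0 \text{ in int}(K') \;\Rightarrow\; \int_{K'} \norm{\nabla^2 u}^2 dx \geq (1-p) \int_{\partial K'} \frac{u_\nu^2(x)}{\scalar{x,\nu_{\partial K'}(x)}} dx .
\]
By the definition of $\B(K')$, the hypothesis $\B(T_0(K)) \leq \frac{1}{1-p}$ is exactly the assertion that for every even $u \in \S_0(K')$ with $\Delta u = 0$,
\[
\int_{\partial K'} \frac{u_\nu^2}{\scalar{x,\nu_{\partial K'}(x)}} dx \leq \B(K') \int_{K'} \norm{\nabla^2 u}^2 dx \leq \frac{1}{1-p} \int_{K'} \norm{\nabla^2 u}^2 dx,
\]
which is equivalent (when $p < 1$) to the displayed sufficient condition. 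Thus the local $p$-BM conjecture holds for $T_0(K)$, and by the first step it holds for $T(K)$ for every $T \in GL_n$.

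There is no genuine obstacle here: the only small point to check is that the direction of the inequality lines up with the sign of $1-p$ (which is positive by assumption, so multiplying through by $1-p$ is order-preserving), and that the class of test functions entering the definition of $\B(K')$ matches the one appearing in the sufficient condition of Theorem \ref{thm:sufficient} — both are even functions in $\S_{0,e}(K')$ satisfying $\Delta u = 0$, so the identification is immediate. Consequently the theorem follows by stringing together the $GL_n$-invariance, Theorem \ref{thm:sufficient}, and the definition of $\B$.
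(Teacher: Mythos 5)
Your proof is correct and is essentially the paper's own argument: the theorem is stated precisely as the combination of the second sufficient condition in Theorem \ref{thm:sufficient} applied to $T_0(K)$ (which, by the definition of $\B$ and $1-p>0$, is exactly the hypothesis $\B(T_0(K))\leq\frac{1}{1-p}$) together with the $GL_n$-invariance of the local $p$-BM conjecture established at the start of Section \ref{sec:LK}. Nothing further is needed.
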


In the notation of Section \ref{sec:LK}, we equivalently have:
\begin{thm} \label{thm:B2LK}
For all $K \in \K^2_{+,e}$, $\lambda_{1,e}(-L_K) \geq 1 + \sup_{T \in GL_n} \frac{1}{(n-1) \B(T(K))}$. 
\end{thm}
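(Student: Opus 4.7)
The plan is to combine Theorem \ref{thm:main-tool} with Corollary \ref{cor:LK-SG} via a direct parameter-matching argument, and then take the supremum over $T \in GL_n$. The linear equivariance established in Theorem \ref{thm:LTK} is already absorbed into the statement of Theorem \ref{thm:main-tool} (which concludes local $p$-BM for every linear image of $K$ as soon as $\B$ is controlled for a single one), so no additional work is needed to handle the $GL_n$ quantifier.

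First I would fix $K \in \K^2_{+,e}$ and dispose of the trivial case: if $\B(T(K)) = +\infty$ for every $T \in GL_n$, then the right-hand side of the claimed inequality reduces to $1$, and the bound $\lambda_{1,e}(-L_K) \geq 1$ follows from item (6) of Theorem \ref{thm:LK}. Assume therefore that $T \in GL_n$ is chosen with $B := \B(T(K)) < \infty$. Define
\[
p \;:=\; 1 - \frac{1}{B} \;\in\; (-\infty,1),
\]
so that $\B(T(K)) = \frac{1}{1-p}$, which is exactly the hypothesis of Theorem \ref{thm:main-tool} (with $T_0 = T$ and with its $p$ equal to ours). The conclusion of that theorem is that the local $p$-BM conjecture (\ref{eq:local-p-BM}) holds for $T'(K)$ for \emph{every} $T' \in GL_n$; applying this with $T' = \mathrm{Id}$ gives that the local $p$-BM inequality holds for $K$ itself.

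Next I would translate this into the spectral language. By Corollary \ref{cor:LK-SG}, validity of the local $p$-BM conjecture for $K$ is equivalent to
\[
\lambda_{1,e}(-L_K) \;\geq\; \frac{n-p}{n-1} \;=\; \frac{(n-1) + (1-p)}{n-1} \;=\; 1 + \frac{1-p}{n-1} \;=\; 1 + \frac{1}{(n-1)\B(T(K))} .
\]
Since $T \in GL_n$ was arbitrary (among those with $\B(T(K)) < \infty$, and the others contribute nothing to the supremum), taking $\sup_{T \in GL_n}$ on the right-hand side yields the asserted inequality
\[
\lambda_{1,e}(-L_K) \;\geq\; 1 + \sup_{T \in GL_n}\frac{1}{(n-1)\,\B(T(K))}.
\]

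There is essentially no obstacle in this proof: the substantive content has already been packaged into Theorem \ref{thm:main-tool} (which itself rests on the Reilly-formula derivation of Theorem \ref{thm:sufficient}) and Corollary \ref{cor:LK-SG}, and all that remains is the bookkeeping identity $\frac{n-p}{n-1} = 1 + \frac{1}{(n-1)B}$ together with the choice $p = 1 - 1/B$ that turns the Poincar\'e-type bound into a spectral-gap bound. The only point meriting a line of care is the degenerate case $\B(T(K)) = \infty$, handled above by the universal estimate $\lambda_{1,e}(-L_K) \geq 1$.
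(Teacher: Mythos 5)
Your proposal is correct and follows essentially the same route as the paper, whose proof is precisely the combination of Corollary \ref{cor:LK-SG} with Theorem \ref{thm:main-tool}; your explicit choice $p = 1 - 1/\B(T(K))$ and the identity $\frac{n-p}{n-1} = 1 + \frac{1}{(n-1)\B(T(K))}$ is exactly the parameter matching the paper leaves implicit. The degenerate case $\B(T(K)) = \infty$ you guard against never actually arises, since $\B(K) \leq \D(K) < \infty$ for every $K \in \K$ by Theorem \ref{thm:DK}, but including it does no harm.
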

\begin{proof}
This follows immediate from Corollary \ref{cor:LK-SG}, which asserts that $\lambda_{1,e}(-L_K) \geq \frac{n-p}{n-1}$ if and only if the local $p$-BM conjecture (\ref{eq:local-p-BM}) holds for $K$, and the sufficient condition for its validity for $T(K)$ given by Theorem \ref{thm:main-tool}. 
\end{proof}

\subsection{General Estimate on $\D(K)$} \label{subsec:general-DK}

Unfortunately, getting a handle on $\B(K)$ or $\BNH(K)$ directly is quite a formidable task, and it is easier to upper bound the larger $\D(K)$ constant. 
Recall that the Poincar\'e constant $C_{Poin}(K)$ is defined as the best constant in the following inequality:
\[
\forall f \in C^1(\text{int}(K)) \;\;\; \int_K f(x) dx = 0 \;\; \Rightarrow \;\; \int_K f^2(x) dx \leq C_{Poin}^2(K) \int_K \abs{\nabla f}^2 dx . 
\]
Equivalently, $1 / C_{Poin}^2(K)$ is the first positive eigenvalue of the Neumann Laplacian on $K$. 

\begin{thm} \label{thm:DK}
Let $K \in \K$ and assume that $r B_2^n \subset K \subset R B_2^n$. Then:
\[
\D(K) \leq \frac{1}{r^2} \brac{C_{Poin}^2(K)  n + 2 C_{Poin}(K) R} .
\]
\end{thm}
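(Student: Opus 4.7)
The plan is to reduce the boundary integral $\int_{\partial K} |\nabla u|^2 / \scalar{x,\nu} \, dx$ to a bulk integral via the divergence theorem, and then apply the Poincar\'e inequality coordinate-wise, using that each partial derivative $u_i := \partial_i u$ has zero mean on $K$ (which is precisely the content of the balancing condition $\int_K \vec{\nabla} u\, dx = \vec{0}$).

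First I would exploit the inclusion $rB_2^n \subset K \subset R B_2^n$. By the supporting hyperplane property, for a.e.\ $x \in \partial K$ the outer normal $\nu = \nu_{\partial K}(x)$ satisfies $\scalar{x,\nu} = h_K(\nu) \ge r$, so $\frac{1}{\scalar{x,\nu}} \le \frac{\scalar{x,\nu}}{r^2}$. Hence it suffices to bound
\[
 I := \int_{\partial K} |\nabla u|^2 \scalar{x,\nu_{\partial K}(x)} \, d\mathcal{H}^{n-1}(x)
\]
by $(n C_{Poin}^2(K) + 2 R C_{Poin}(K)) \int_K \|\nabla^2 u\|^2 dx$.

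Next I would rewrite $I$ using the divergence identity $\mathrm{div}(g\, x) = n g + \scalar{x,\nabla g}$ applied to $g = |\nabla u|^2$: after the divergence theorem,
\[
 I = n \int_K |\nabla u|^2 \, dx + 2 \int_K \scalar{\nabla u , \nabla^2 u \cdot x} \, dx .
\]
Cauchy--Schwarz and $|x| \le R$ bound the second (``cross'') term by $2 R \brac{\int_K |\nabla u|^2}^{1/2}\brac{\int_K \|\nabla^2 u\|^2}^{1/2}$.

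Now for the first term, here is the key use of the hypothesis: the balancing condition $\int_K \nabla u \, dx = \vec 0$ means each coordinate $u_i$ has zero integral on $K$, so by the scalar Poincar\'e inequality $\int_K u_i^2 \le C_{Poin}^2(K) \int_K |\nabla u_i|^2$. Summing over $i$ gives $\int_K |\nabla u|^2 dx \le C_{Poin}^2(K) \int_K \|\nabla^2 u\|^2 dx$, since $\|\nabla^2 u\|^2 = \sum_i |\nabla u_i|^2$. Plugging this bound into both terms above yields $I \le (n C_{Poin}^2(K) + 2 R C_{Poin}(K))\int_K \|\nabla^2 u\|^2 dx$, and dividing by $r^2$ completes the proof.

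I do not foresee a serious obstacle; the computation is routine once one identifies the right weighted identity. The only minor point to watch is regularity, namely that $u \in \S_0(K)$ is only $C^2$ in the interior and $C^1$ up to the boundary, so the divergence theorem is applied to the $C^1$ field $|\nabla u|^2 x$, and the coordinate Poincar\'e inequality is applied to $u_i \in C^0(K)\cap C^1(\mathrm{int}(K))$ (by approximation, or using that the Poincar\'e constant is characterized on $H^1(K)$, which contains these functions since $K$ is convex and hence Lipschitz).
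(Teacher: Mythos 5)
Your proposal is correct and follows essentially the same route as the paper: bound $\frac{1}{\scalar{x,\nu}} \le \frac{\scalar{x,\nu}}{r^2}$, integrate $\mathrm{div}(|\nabla u|^2 x)$ by parts, and apply the coordinate-wise Poincar\'e inequality to the zero-mean partials $u_i$. The only cosmetic difference is that you handle the cross term by integral Cauchy--Schwarz, whereas the paper uses a pointwise splitting with a parameter $\lambda$ optimized to $\lambda = C_{Poin}(K)R$; both give the identical constant $\frac{1}{r^2}\brac{n\,C_{Poin}^2(K) + 2R\,C_{Poin}(K)}$.
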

\begin{proof}
Let $u \in \S_0(K)$ be such that $\int_K \vec{\nabla} u\; dx = \vec{0}$. 
Applying Cauchy-Schwarz, we have for any $\lambda > 0$:
\[
\text{div}(\abs{\nabla u}^2 x) = n \abs{\nabla u}^2 + 2 \scalar{\nabla^2 u \cdot \nabla u , x} \leq n \abs{\nabla u}^2 + \lambda \norm{\nabla^2 u}^2 + \frac{1}{\lambda} \abs{\nabla u}^2 \abs{x}^2 . 
\]
Using the assumption that $\scalar{x,\nu_{\partial K}(x)} = h_K(\nu_{\partial K}(x)) \geq r$, integrating by parts, and finally that $K \subset R B_2^n$, we obtain:
\begin{align*}
\int_{\partial K} \frac{\abs{\nabla u}^2}{\scalar{x,\nu_{\partial K}(x)}} dx  & \leq \frac{1}{r^2}  \int_{\partial K} \abs{\nabla u}^2 \scalar{x,\nu_{\partial K}(x)} dx = \frac{1}{r^2} \int_K \text{div}(\abs{\nabla u}^2 x) dx \\
& \leq \brac{\frac{n}{r^2} + \frac{R^2}{r^2} \frac{1}{\lambda}} \int_K \abs{\nabla u}^2 dx + \frac{\lambda}{r^2} \int_K \norm{\nabla^2 u}^2 dx . 
\end{align*}
Since $\int_K u_i dx = 0$ for all $i=1,\ldots,n$, by applying the Poincar\'e inequality to $u_i$ and summing the resulting inequalities, we obtain:
\[
\int_K \abs{\nabla u}^2 dx \leq C_{Poin}^2(K) \int_K \norm{\nabla^2 u}^2 dx .
\]
 It follows that for all $\lambda > 0$:
  \[
\int_{\partial K} \frac{\abs{\nabla u}^2}{\scalar{x,\nu_{\partial K}(x)}} dx \leq  \brac{\brac{\frac{n}{r^2} + \frac{R^2}{r^2} \frac{1}{\lambda}} C^2_{Poin}(K) + \frac{\lambda}{r^2}} \int_K \norm{\nabla^2 u}^2_{HS} dx .
\]
Using the optimal $\lambda = C_{Poin}(K) R$ and recalling the definition of $\D(K)$, the assertion is established. 
\end{proof}

\begin{cor}
For all $K \in \K$, $\D(K) < \infty$. 
\end{cor}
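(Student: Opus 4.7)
The plan is to simply invoke the preceding Theorem \ref{thm:DK}, which has already done all the substantive work; the corollary is really just a matter of checking that every $K \in \K$ can be sandwiched between two concentric Euclidean balls and has a finite Poincar\'e constant. Since $K \in \K$ means $K$ is a convex compact set with the origin in its interior, there exist $0 < r \leq R < \infty$ so that $r B_2^n \subset K \subset R B_2^n$; indeed, one may take $r$ to be any radius for which a ball around $0$ fits in the interior of $K$, and $R$ to be the circumradius.

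Next I would recall that the Poincar\'e constant $C_{Poin}(K)$ is finite for every bounded convex domain. This is classical: the Payne--Weinberger inequality gives the sharp bound $C_{Poin}(K) \leq \mathrm{diam}(K)/\pi$ for convex $K$, but even without sharpness, finiteness is immediate since $K$ is a bounded Lipschitz (in fact, convex) domain, so the Neumann Laplacian on $K$ has compact resolvent and a strictly positive first nontrivial eigenvalue $1/C_{Poin}^2(K)$. In particular $C_{Poin}(K) \leq \mathrm{diam}(K)/\pi \leq 2R/\pi$.

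Plugging these two facts into the estimate of Theorem \ref{thm:DK}, we obtain
\[
\D(K) \leq \frac{1}{r^2}\brac{C_{Poin}^2(K)\, n + 2 C_{Poin}(K)\, R} < \infty,
\]
which proves the corollary. There is no real obstacle: the content is entirely in Theorem \ref{thm:DK}, and the corollary exists only to record that its hypothesis is always satisfied for bodies in $\K$. The only minor point worth flagging is that the resulting bound depends on the position of $K$ through the ratio $R/r$ and through $C_{Poin}(K)$, and so is not a linear invariant — consistent with the earlier observation that $\D(K)$ itself is not invariant under $GL_n$.
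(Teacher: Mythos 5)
Your proposal is correct and follows essentially the same route as the paper: sandwich $K$ between $rB_2^n$ and $RB_2^n$, bound $C_{Poin}(K)$ via Payne--Weinberger (finiteness being the only thing needed), and plug into Theorem \ref{thm:DK}. Nothing to add.
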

\begin{proof}
Any $K \in \K$ satisfies $r B_2 \subset K \subset R B_2^n$ with some $r,R>0$. By a well-known theorem of Payne--Weinberger \cite{PayneWeinberger}, for any $K \in \K$ we have $C_{Poin}(K) \leq \frac{D}{\pi}$ where $D$ is the diameter of $K$. As $D \leq 2R$, the assertion follows by Theorem \ref{thm:DK}. 
\end{proof}

In order to apply Theorem \ref{thm:main-tool}, we would like to apply our estimate for $\D(K)$ in a good position of $K \in K$. Recall that the isotropic position is defined as an affine image of $K \in \K$ having barycenter at the origin and for which:
\[
\int_K \scalar{x,\theta}^2 dx = \abs{\theta}^2 \;\;\; \forall \theta \in S^{n-1} . 
\]
It is well known \cite{Milman-Pajor-LK} that such a position always exists and is unique up to orthogonal transformations. In this position, we have the following (sharp) estimates on the in and out radii of $K$ \cite{KLS}:
\[
\sqrt{\frac{n+2}{n}} B_2^n \subset K \subset \sqrt{(n+2)n} B_2^n . 
\]
As for the Poincar\'e constant in isotropic position, a bold conjecture of Kannan, Lov\'asz and Simonovits \cite{KLS} predicts that $C_{Poin}(K) \leq C$ for some universal numeric constant $C>1$, independent of the dimension $n$. The current best known estimate on the KLS conjecture has recently been improved by Lee and Vempala \cite{LeeVempala-KLS}, who showed that:
\begin{equation} \label{eq:LV}
C_{Poin}(K) \leq C' \sqrt[4]{n} 
\end{equation}
for all isotropic $K \in \K$. We immediately deduce from Theorem \ref{thm:DK} the following:
\begin{cor} \label{cor:DK}
There exists a universal numeric constant $C > 1$, independent of the dimension $n$, so that for all $K \in \K$ in isotropic position, $\D(K) \leq C n^{3/2}$. Assuming a positive answer to the KLS conjecture, the latter estimate may be improved to $\D(K) \leq C n$. 
\end{cor}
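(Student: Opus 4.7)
The plan is to prove Corollary \ref{cor:DK} by direct substitution of the known quantitative bounds into Theorem \ref{thm:DK}. The statement asserts two estimates for $\D(K)$ when $K \in \K$ is in isotropic position: an unconditional bound $\D(K) \leq C n^{3/2}$, and a conditional improvement $\D(K) \leq C n$ under the KLS conjecture. Both follow from the formula
\[
\D(K) \leq \frac{1}{r^2}\bigl(C_{Poin}^2(K)\, n + 2 C_{Poin}(K)\, R\bigr)
\]
established in Theorem \ref{thm:DK}, once we insert the sharp in/out radii for isotropic bodies together with the best available estimate on the Poincar\'e constant.

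First, I would recall that when $K$ is in isotropic position, the inclusions $\sqrt{(n+2)/n}\, B_2^n \subset K \subset \sqrt{(n+2)n}\, B_2^n$ are valid, which gives admissible values $r^2 = (n+2)/n$ and $R = \sqrt{(n+2)n}$. In particular $1/r^2 = n/(n+2) \leq 1$ and $R \leq \sqrt{2}\, n$ for $n \geq 2$, so the prefactor $1/r^2$ is harmless and $R$ contributes a linear factor in $n$.

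Next, for the unconditional estimate I would invoke the Lee--Vempala bound \eqref{eq:LV}, namely $C_{Poin}(K) \leq C' n^{1/4}$ for every isotropic $K$. Substituting into Theorem \ref{thm:DK} yields
\[
\D(K) \leq \frac{n}{n+2}\Bigl( (C')^2 \sqrt{n}\cdot n + 2 C' n^{1/4}\cdot \sqrt{(n+2)n}\Bigr) \leq C \bigl(n^{3/2} + n^{5/4}\bigr) \leq C'' n^{3/2},
\]
which is the first claim. For the conditional statement, assume the KLS conjecture, so that $C_{Poin}(K) \leq C$ is bounded by a universal constant in isotropic position. Plugging this into the same formula gives
\[
\D(K) \leq \frac{n}{n+2}\bigl( C^2\, n + 2 C \sqrt{(n+2)n}\bigr) \leq C'''\, n,
\]
establishing the improved bound.

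There is essentially no obstacle: the corollary is a mechanical consequence of Theorem \ref{thm:DK}, the classical isotropic inclusions, and the (recent) Lee--Vempala estimate. The only thing to be careful about is keeping track of which factor dominates --- the $C_{Poin}^2(K)\, n$ term gives $n^{3/2}$ via Lee--Vempala and $n$ under KLS, while the $C_{Poin}(K)\, R$ term contributes $n^{5/4}$ and $n^{1/2}\cdot n = n^{3/2}$ respectively, in both cases absorbed by the leading contribution. Thus I would simply verify that the first term dominates in both regimes and conclude.
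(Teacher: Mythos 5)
Your proposal is correct and follows exactly the paper's route: the paper likewise deduces Corollary \ref{cor:DK} immediately from Theorem \ref{thm:DK} by inserting the isotropic inclusions $\sqrt{(n+2)/n}\,B_2^n \subset K \subset \sqrt{(n+2)n}\,B_2^n$ together with the Lee--Vempala bound $C_{Poin}(K) \leq C' n^{1/4}$ (respectively $C_{Poin}(K) \leq C$ under KLS). The only blemish is the closing remark that under KLS the term $2C_{Poin}(K)R$ is of order $n^{3/2}$; since $R \leq \sqrt{(n+2)n} \leq \sqrt{2}\,n$ and $C_{Poin}(K)$ is then bounded by a constant, this term is of order $n$, exactly as your displayed computation shows, so the conclusion $\D(K) \leq C n$ stands.
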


Since any $K \in \K_e$ has a linear isotropic image and satisfies $\B(K) \leq \D(K)$, Theorem \ref{thm:main-tool} in conjunction with Corollary \ref{cor:DK} and Lemma \ref{lem:loc-to-global} immediately yield Theorem \ref{thm:intro-main}, which we re-state as follows:
\begin{thm} \label{thm:main}
The local $p$-BM conjecture (\ref{eq:local-p-BM}) holds for all $K \in \K^2_{+,e}$ and $p \in [1 - \frac{1}{C n^{3/2}},1)$. Equivalently, for all $p$ in this range, 
for all $K_0,K_1 \in \K^2_{+,e}$ so that $(1-\lambda) \cdot K_0 +_p \lambda \cdot K_1 \in \K^2_{+,e}$ for all $\lambda \in [0,1]$, we have:
\[
V((1-\lambda) \cdot K_0 +_p \lambda \cdot K_1) \geq \brac{(1-\lambda) V(K_0)^{\frac{p}{n}} + \lambda V(K_1)^{\frac{p}{n}}}^{\frac{n}{p}} \;\;\; \forall \lambda \in [0,1] . 
\]
\end{thm}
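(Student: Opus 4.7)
The plan is to assemble the already-established machinery: Theorem \ref{thm:main-tool} reduces the local $p$-BM conjecture for $K \in \K^2_{+,e}$ to exhibiting a single linear image $T_0(K)$ satisfying the scalar estimate $\B(T_0(K)) \leq \frac{1}{1-p}$. Since the left-hand-side is invariant under homothety but not under general linear maps, I have the freedom (and the need) to choose a convenient position. The natural candidate, for which both of the ingredients entering the bound on $\D$ in Theorem \ref{thm:DK} can be controlled dimensionally, is the isotropic position.

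First I would pick $T_0 \in GL_n$ so that $T_0(K)$ is isotropic. As $K$ is origin-symmetric, its barycenter is at the origin, and since isotropic position is unique up to orthogonal transformations, $T_0(K)$ remains origin-symmetric. The sharp inclusions $\sqrt{(n+2)/n}\, B_2^n \subset T_0(K) \subset \sqrt{n(n+2)}\, B_2^n$ for isotropic convex bodies, combined with the Lee--Vempala bound $C_{Poin}(T_0(K)) \leq C' n^{1/4}$ plugged into Theorem \ref{thm:DK}, yield $\D(T_0(K)) \leq C n^{3/2}$ as recorded in Corollary \ref{cor:DK}. Because $T_0(K) \in \K_e$, every even test function $u$ has odd gradient and hence $\int_{T_0(K)} \vec{\nabla} u \, dx = \vec{0}$, so the defining inequality for $\D$ is strictly more restrictive than that for $\B$; this gives the free bound $\B(T_0(K)) \leq \D(T_0(K)) \leq C n^{3/2}$.

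Now, any $p \in [1 - \frac{1}{C n^{3/2}}, 1)$ satisfies $\frac{1}{1-p} \geq C n^{3/2} \geq \B(T_0(K))$, so Theorem \ref{thm:main-tool} delivers the local $p$-BM conjecture (\ref{eq:local-p-BM}) for $K = T_0^{-1}(T_0(K))$ (and in fact for every $T(K)$, $T \in GL_n$). For the ``equivalently'' clause, given $K_0, K_1 \in \K^2_{+,e}$ whose $p$-interpolation $K_t := (1-t) \cdot K_0 +_p t \cdot K_1$ stays in $\K^2_{+,e}$ throughout $t \in [0,1]$, the local conjecture just established holds for each $K_t$ individually; Lemma \ref{lem:loc-to-global} then upgrades this family of local concavities to the desired global $p$-BM inequality between $K_0$ and $K_1$.

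The heavy lifting has already been done before this theorem: the Reilly-formula derivation of the sufficient condition on $\B$ in Theorem \ref{thm:sufficient} / Theorem \ref{thm:main-tool}, the integration-by-parts argument on $\mathrm{div}(|\nabla u|^2 x)$ together with the Payne--Weinberger-style use of Poincaré to bound $\D$ in Theorem \ref{thm:DK}, and the deep Lee--Vempala estimate on the isotropic Poincaré constant. The present assembly has no real obstacle; the only small point worth checking is the preservation of origin-symmetry under isotropic normalization, which is automatic. If one accepts the full KLS conjecture, $C_{Poin}(K) \leq C$ in isotropic position, the same assembly improves the exponent from $n^{3/2}$ to $n$, giving local $p$-BM for $p \in [1 - \frac{c}{n}, 1)$.
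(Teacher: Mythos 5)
Your proposal is correct and is essentially the paper's own argument: the paper proves Theorem \ref{thm:main} precisely by passing to an isotropic (hence still origin-symmetric) linear image, invoking $\B(K)\leq\D(K)$ together with Corollary \ref{cor:DK} (i.e.\ Theorem \ref{thm:DK} plus the Lee--Vempala bound), and then applying Theorem \ref{thm:main-tool} and Lemma \ref{lem:loc-to-global} for the ``equivalently'' clause. Nothing is missing; even your remark on the conditional improvement under KLS matches the paper's Corollary \ref{cor:DK}.
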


\subsection{Examples}

Of course, using the known estimates on $C_{Poin}(K)$ which improve over the general (\ref{eq:LV}) for various classes of convex bodies $K$ (see e.g. \cite{KlartagUnconditionalVariance,BartheCorderoVariance,HuetSphericallySymmetric,KolesnikovEMilman-HardyKLS, KolesnikovEMilman-GeneralizedOrlicz}), one may obtain an improved estimate for $p$ above. It will be instructive in this work to concentrate on the unit-balls of $\ell_q^n$, denoted $B_q^n$.

\begin{thm}[Sodin, Lata{\l}a--Wojtaszczyk] \label{thm:Poincare}
For all $q \in [1,\infty]$, $C_{Poin}(B_q^n)$ is of the order of $n^{-\frac{1}{q}}$. 
\end{thm}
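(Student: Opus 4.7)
The strategy is to reduce to isotropic position, where a universal (dimension-free) Poincar\'e inequality on $B_q^n$ is known from the literature, and then rescale.

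First, because $B_q^n$ is invariant under permutations of the coordinates and under sign flips, its second-moment tensor is a scalar multiple of the identity, $\int_{B_q^n} x_i x_j\,dx = \sigma_q^2\,|B_q^n|\,\delta_{ij}$, where $\sigma_q^2 := |B_q^n|^{-1}\int_{B_q^n} x_1^2\,dx$. Consequently $\widetilde{B}_q^n := \sigma_q^{-1} B_q^n$ is in isotropic position (with the convention $\int x_i^2\,dx = |\widetilde{B}_q^n|$). Using the Schechtman--Zinn probabilistic representation of the uniform measure on $B_q^n$ (namely, if $g_1,\ldots,g_n$ are i.i.d.\ with density $\propto \exp(-|t|^q)$, $E \sim \mathrm{Unif}[0,1]$ independent, and $S = (\sum |g_j|^q)^{1/q}$, then $E^{1/n}(g_1,\ldots,g_n)/S$ is uniform on $B_q^n$), or equivalently by direct integration in $\ell_q$-polar coordinates, a standard computation with the concentration $S^q \approx n\,\E|g_1|^q$ yields the asymptotic $\sigma_q \asymp n^{-1/q}$.

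Second, invoke the uniform Poincar\'e inequality on the isotropic $\ell_q^n$-balls: there is a universal constant $C$, independent of $n$ and $q$, so that $C_{Poin}(\widetilde{B}_q^n) \leq C$. For $q \in [1,2]$ this follows from Sodin's Cheeger-type isoperimetric inequality on $B_q^n$, and for $q \in [2,\infty]$ from the log-Sobolev inequality of Lata{\l}a--Wojtaszczyk (which implies the Poincar\'e inequality); in both cases the isotropic normalization is precisely what makes the constant dimension-free. Combining this with the homogeneity $C_{Poin}(\lambda K) = \lambda\,C_{Poin}(K)$ gives the upper bound
\[
C_{Poin}(B_q^n) \;=\; \sigma_q \cdot C_{Poin}(\widetilde{B}_q^n) \;\lesssim\; n^{-1/q}.
\]

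The matching lower bound is elementary: testing the Poincar\'e inequality on the mean-zero function $f(x) = x_1$ yields
\[
\sigma_q^2\,|B_q^n| \;=\; \int_{B_q^n} f^2\,dx \;\leq\; C_{Poin}^2(B_q^n) \int_{B_q^n} |\nabla f|^2\,dx \;=\; C_{Poin}^2(B_q^n)\,|B_q^n|,
\]
so that $C_{Poin}(B_q^n) \geq \sigma_q \gtrsim n^{-1/q}$. The main obstacle is the isotropic Poincar\'e estimate in the second step, which is the deep input taken from Sodin and Lata{\l}a--Wojtaszczyk; the moment computation and the rescaling are routine.
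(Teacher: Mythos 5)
Your proposal is correct and follows essentially the same route as the paper: cite Sodin (for $q\in[1,2]$) and Lata{\l}a--Wojtaszczyk (for $q\in[2,\infty]$) for a dimension-free Poincar\'e constant of a suitably normalized $B_q^n$, check that the normalization factor is of order $n^{1/q}$, and conclude by homogeneity of $C_{Poin}$. The only cosmetic differences are that the paper rescales to volume one rather than to isotropic position (the two differ by a universally bounded factor), and that you make the elementary lower bound via $f(x)=x_1$ explicit, which the paper subsumes in the cited ``order of $1$'' statement.
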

\begin{proof}
It was shown by S.~Sodin \cite{SodinLpIsoperimetry} for $q \in [1,2]$ and by R.~Lata{\l}a and J.~Wojtaszczyk \cite{LatalaJacobInfConvolution} for $q \in [2,\infty]$ that  if $\lambda_q^n B_q^n$ has volume $1$ then  $\lambda_q^n C_{Poin}(B_q^n) = C_{Poin}(\lambda_q^n B_q^n)$ is of the order of $1$. An easy and well-known computation verifies that $\lambda_q^n$ is of the order of $n^{1/q}$, yielding the claim. 
\end{proof}

\begin{lem} \label{lem:B-lower}
For any $K \in \K_e$, $\D(K) \geq \BNH(K) \geq 1$. 
\end{lem}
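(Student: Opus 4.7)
The plan is to handle the two inequalities separately, each by a direct elementary argument; neither should require machinery beyond the divergence theorem and the definitions.

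For the inequality $\BNH(K) \leq \D(K)$: any even $u \in \S_0(K)$ automatically satisfies the balancing condition $\int_K \vec{\nabla} u\,dx = \vec{0}$, because $\nabla u$ is odd and $K$ is origin-symmetric. Moreover, on $\partial K$ one has the pointwise inequality $u_\nu^2 = \scalar{\nabla u,\nu}^2 \leq \abs{\nabla u}^2$, which yields
\[
\int_{\partial K} \frac{u_\nu^2(x)}{\scalar{x,\nu_{\partial K}(x)}}\,dx \;\leq\; \int_{\partial K} \frac{\abs{\nabla u(x)}^2}{\scalar{x,\nu_{\partial K}(x)}}\,dx \;\leq\; \D(K) \int_K \norm{\nabla^2 u}^2\,dx.
\]
Taking the supremum of the left-hand ratio over admissible even $u$ gives $\BNH(K) \leq \D(K)$.

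For the inequality $\BNH(K) \geq 1$, I would exhibit the natural extremizer $u(x) := \tfrac{1}{2}\abs{x}^2$, which is smooth, even, hence admissible. Compute $\nabla u = x$, so that $u_\nu(x) = \scalar{x,\nu_{\partial K}(x)}$, and $\nabla^2 u = \mathrm{Id}$, so that $\norm{\nabla^2 u}^2 \equiv n$. Then
\[
\int_{\partial K} \frac{u_\nu^2(x)}{\scalar{x,\nu_{\partial K}(x)}}\,dx = \int_{\partial K} \scalar{x,\nu_{\partial K}(x)}\,dx = \int_K \mathrm{div}(x)\,dx = n\,V(K),
\]
where the middle equality is the divergence theorem, while
\[
\int_K \norm{\nabla^2 u}^2\,dx = n\,V(K).
\]
The ratio equals $1$, which forces $\BNH(K) \geq 1$ by definition.

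Neither step presents any real obstacle; the only subtlety is making sure that $u(x) = \tfrac{1}{2}\abs{x}^2$ is indeed an admissible test function in the definition of $\BNH(K)$ (it is even and $C^{\infty}$, so in particular in $\S_{0,e}(K) \cap C(\partial K)$, and no harmonicity is required for $\BNH$). This also incidentally shows that the constant $\BNH(K) \geq 1$ is sharp in the sense that the quadratic $\tfrac{1}{2}\abs{x}^2$ is always an extremizing direction, independent of the geometry of $K$.
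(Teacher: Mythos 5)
Your proposal is correct and matches the paper's argument: the inequality $\BNH(K)\leq \D(K)$ is obtained exactly as in the text preceding the lemma (evenness of $u$ makes $\nabla u$ odd, so the balancing condition holds, and $u_\nu^2\leq\abs{\nabla u}^2$), and the lower bound $\BNH(K)\geq 1$ is the paper's own computation with the test function $u(x)=\tfrac{1}{2}\abs{x}^2$, where both sides equal $nV(K)$.
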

\begin{proof}
Testing the even function $u(x) = \frac{\abs{x}^2}{2}$, note that:
\[
\int_{\partial K} \frac{u_\nu^2}{\scalar{x,\nu_{\partial K}(x)}} dx   = \int_{\partial K} \scalar{x,\nu_{\partial K}(x)} dx = \int_K \text{div}(x) dx = n V(K) = \int_K \norm{\nabla^2 u}^2 dx .
\]
It follows by definition that $\D(K) \geq \BNH(K) \geq 1$.
\end{proof}

\begin{lem} \label{lem:D-Bqn}
For any $q \in [1,2]$, $1 \leq \D(B_q^n) \leq C$. For any $q \in [2,\infty]$, $\D(B_q^n) \leq C n^{1-\frac{2}{q}}$. 
\end{lem}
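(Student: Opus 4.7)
The plan is to apply Theorem \ref{thm:DK} directly, after computing the in/out radii of $B_q^n$ with respect to the Euclidean ball and inserting the Poincar\'e constant estimate of Theorem \ref{thm:Poincare}. The lower bound $D(B_q^n) \geq 1$ is immediate from Lemma \ref{lem:B-lower}, since $D(K) \geq B(K) \geq 1$ for every $K \in \K_e$. So everything reduces to the upper bound, which splits into two regimes according to the inclusion between $B_q^n$ and $B_2^n$.

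First, I would record the elementary fact that for $q \in [1,2]$ one has $n^{1/2 - 1/q} B_2^n \subset B_q^n \subset B_2^n$ (the outer containment is just $\|x\|_2 \leq \|x\|_q$, the inner one is $\|x\|_q \leq n^{1/q - 1/2} \|x\|_2$ from H\"older), while for $q \in [2,\infty]$ the reverse inclusions hold: $B_2^n \subset B_q^n \subset n^{1/2 - 1/q} B_2^n$. Combined with Theorem \ref{thm:Poincare}, which says $C_{Poin}(B_q^n) \leq C n^{-1/q}$ for all $q \in [1,\infty]$, we have all the ingredients for Theorem \ref{thm:DK}.

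For $q \in [1,2]$, using $r = n^{1/2 - 1/q}$ and $R = 1$, Theorem \ref{thm:DK} gives
\[
D(B_q^n) \;\leq\; \frac{1}{n^{1 - 2/q}} \bigl( C^2 n^{-2/q} \cdot n + 2 C n^{-1/q} \bigr) \;=\; C^2 + 2 C\, n^{1/q - 1} \;\leq\; C',
\]
since $1/q - 1 \leq 0$ in this range. For $q \in [2,\infty]$, using $r = 1$ and $R = n^{1/2 - 1/q}$,
\[
D(B_q^n) \;\leq\; C^2 n^{-2/q} \cdot n + 2 C n^{-1/q} \cdot n^{1/2 - 1/q} \;=\; C^2 n^{1 - 2/q} + 2 C n^{1/2 - 2/q} \;\leq\; C' n^{1 - 2/q},
\]
since $1/2 - 2/q \leq 1 - 2/q$.

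There is no real obstacle here beyond a careful accounting of the exponents; the whole point is that the two ingredients (Poincar\'e bound and in/out radii) combine cleanly for the $\ell_q^n$-balls. The only minor subtlety worth flagging is that $B_q^n$ is not in $\K$ when $q = \infty$ or $q = 1$ in the sense of having $C^2$ boundary, but both $D$ and $B$ are defined for arbitrary $K \in \K_e$ (no smoothness is needed in their definitions, as pointed out immediately after the definition of $B(K)$), so the estimate extends to the full range $q \in [1,\infty]$ by inserting the values of $r$, $R$, and $C_{Poin}(B_q^n)$ directly.
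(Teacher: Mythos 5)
Your proof is correct and follows exactly the paper's route: lower bound from Lemma \ref{lem:B-lower}, upper bound by plugging the in/out radii $r = n^{\min(0,\frac12-\frac1q)}$, $R = n^{\max(0,\frac12-\frac1q)}$ and the Poincar\'e estimate of Theorem \ref{thm:Poincare} into Theorem \ref{thm:DK}; the exponent bookkeeping checks out in both regimes. Your remark that no smoothness is needed (since $\D$ and $\BNH$ are defined for all of $\K$, resp. $\K_e$) is also the right observation for $q=1,\infty$, though note $B_1^n, B_\infty^n$ do belong to $\K$ — they merely fail to be in $\K^2_+$.
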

\begin{proof}
The lower estimate is given by the previous lemma. The upper bound follows from the general estimate of Theorem \ref{thm:DK} in combination with Theorem \ref{thm:Poincare} and the obvious estimates $r B_2^n \subset B_q^n \subset R B_2^n$ with $r = n^{\min(0,\frac{1}{2} - \frac{1}{q})}$ and $R = n^{\max(0,\frac{1}{2} - \frac{1}{q})}$.
\end{proof}

For the cube $B_\infty^n$, we can obtain rather tightly matching lower and upper estimates.

\begin{lem} \label{lem:D-Cube}
$\frac{1}{3} n \leq \D(B_\infty^n) \leq \frac{4}{\pi^2} n + \frac{4}{\pi} \sqrt{n}$. 
\end{lem}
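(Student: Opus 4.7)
Two observations simplify matters. First, on $\partial B_\infty^n$ the support-function value $\scalar{x,\nu_{\partial B_\infty^n}(x)}$ equals $1$ at every regular boundary point (each face is tangent to the unit sphere centered at the origin), so the defining inequality of $\D(B_\infty^n)$ reduces to
\[
\int_{\partial B_\infty^n} \abs{\nabla u}^2\, dx \;\leq\; \D(B_\infty^n) \int_{B_\infty^n} \norm{\nabla^2 u}^2\, dx
\]
over all $u \in \S_0(B_\infty^n)$ satisfying $\int_{B_\infty^n} \vec{\nabla} u\, dx = \vec{0}$. Second, by the central symmetry of $B_\infty^n$, any quadratic $u$ has an affine (hence odd-after-averaging) gradient, and so automatically satisfies this balancing condition.

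For the lower bound, the plan is to simply test against $u(x) = \tfrac{1}{2}\abs{x}^2$, which gives $\nabla u = x$ and $\nabla^2 u = \text{Id}$. The right-hand side equals $\int_{B_\infty^n} \norm{\nabla^2 u}^2 dx = n \cdot 2^n$. For the boundary integral, on each face $\set{x_i = \pm 1}$ we have $\abs{\nabla u}^2 = \abs{x}^2 = 1 + \sum_{j \neq i} x_j^2$, and integrating over the remaining $n-1$ coordinates (using $\int_{-1}^1 t^2\, dt = 2/3$) yields $\tfrac{2^{n-1}(n+2)}{3}$ per face. Summing over all $2n$ faces,
\[
\int_{\partial B_\infty^n} \abs{\nabla u}^2\, dx \;=\; \frac{n(n+2)\, 2^n}{3},
\]
and the ratio yields $\D(B_\infty^n) \geq \tfrac{n+2}{3} \geq \tfrac{n}{3}$.

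For the upper bound, the plan is to apply Theorem \ref{thm:DK} directly, noting that $B_\infty^n$ satisfies $r B_2^n \subset B_\infty^n \subset R B_2^n$ with $r = 1$ and $R = \sqrt{n}$, and invoking $C_{Poin}(B_\infty^n) = 2/\pi$. The latter value follows from a one-dimensional computation: the first positive Neumann eigenvalue of $-\partial_x^2$ on $[-1,1]$ is $(\pi/2)^2$, realized by the eigenfunction $\sin(\pi x/2)$, so $C_{Poin}([-1,1]) = 2/\pi$; by the standard tensorization of the Poincar\'e inequality with respect to product (Lebesgue) measures, this constant is inherited without change by the cube $[-1,1]^n$. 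Plugging these values into Theorem \ref{thm:DK} gives
\[
\D(B_\infty^n) \;\leq\; \frac{4}{\pi^2} n + 2 \cdot \frac{2}{\pi} \sqrt{n} \;=\; \frac{4}{\pi^2} n + \frac{4}{\pi} \sqrt{n},
\]
as required. There is no real obstacle in either direction: the lower bound comes from a single well-chosen test function exploiting the flat faces of the cube, and the upper bound is an immediate consequence of Theorem \ref{thm:DK} once one records the classical value of the cube's Poincar\'e constant.
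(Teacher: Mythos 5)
Your proposal is correct and follows essentially the same route as the paper: the upper bound is the identical application of Theorem \ref{thm:DK} with $r=1$, $R=\sqrt{n}$ and $C_{Poin}(B_\infty^n)=2/\pi$, and your lower bound via the test function $u=\abs{x}^2/2$ gives exactly the same value $(n+2)/3$ as the paper's choice $u=x_1^2/2$ (both exploiting that $\scalar{x,\nu}=1$ on the faces). The only caveat is your side remark that ``any quadratic'' satisfies the balancing condition, which is false when a linear term is present, but it is irrelevant since your actual test function has $\int_{B_\infty^n}\nabla u\,dx=0$ by symmetry.
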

\begin{proof}
The upper bound follows from the general estimate of Theorem \ref{thm:DK}, using $B_2^n \subset B_\infty^n \subset \sqrt{n} B_2^n$ and $C_{Poin}^2(B_\infty^n) = \frac{4}{\pi^2}$ \cite{Ledoux-Book}. For the lower bound, consider the function $u(x) = x_1^2/2 \in \S_{0,e}(B_\infty^n)$, for which $u_1(x) = x_1$ and $u_i(x) = 0$ for all $i=2,\ldots,n$. Calculating the contribution on each boundary facet, we have:
\[
\int_{\partial B_\infty^n} \frac{\abs{\nabla u}^2}{\scalar{x,\nu}} dx = 2 \cdot 2^{n-1} + 2 (n-1) 2^{n-2} \int_{-1}^1 x_1^2 dx_1 ,
\]
and clearly:
\[
\int_{B_\infty^n} \norm{\nabla^2 u}^2 dx = 2^n . 
\]
Taking the quotient of these two expressions, we see that $\D(B_\infty^n) \geq 1 + (n-1) /3$. 

\end{proof}

\begin{rem}
Is not hard to improve the constant $\frac{1}{3}$ in Lemma \ref{lem:D-Cube} to $\frac{3}{8}$ by using a function $u(x) = u(x_1)$ so that $u_1(x_1) = x_1 / \eps \vee -1 \wedge +1$ for an appropriate $\eps > 0$. In addition, we see that the conjectural estimate $\D(K) \leq C n$ for isotropic $K$, which by Corollary \ref{cor:DK} would follow from a positive answer to the KLS conjecture, is best possible (up to the value of the constant $C$).
\end{rem}

The above examples demonstrate that the general estimate given by Theorem \ref{thm:DK} is in fact fairly accurate in a variety of situations, and so in order to make further progress on the local $p$-BM conjecture, it is best to work with the $\B(K)$ or $\BNH(K)$ constants. Indeed, we will see in the next sections that $\B(B_2^n) = \frac{2}{n+2} < 1$ and $\B(B_\infty^n) = 1$, which are better by an order of $n$ from the corresponding values of $\D(B_2^n), \D(B_\infty^n)$ estimated above. Furthermore, we will see that when $q \in (2,\infty)$, for $n \geq n_q$ large enough, $\B(B_q^n) < 1$. In view of these results and examples, we make the following:
\begin{conj}
For all $K \in \K_e$, there exists $T_0 \in GL_n$ so that $\B(T_0(K)) \leq 1$. 
\end{conj}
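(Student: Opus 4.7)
The plan is to formulate the conjecture as a minimization problem over positions. Fix $K \in \K_e$ and define $F_K : SL_n \to [1,\infty)$ by $F_K(T) := \B(T(K))$; the lower bound $F_K \geq 1$ is Lemma \ref{lem:B-lower}, and since $\B$ is homothety invariant we lose nothing by fixing $\det T = 1$. Moreover $\B(U T(K)) = \B(T(K))$ for $U \in O(n)$ (the defining integrals are isometry-invariant), so $F_K$ descends to $O(n)\backslash SL_n$. The first task is to establish continuity of $F_K$ in $T$ and a coercivity estimate: adapting the anisotropic cube example following Lemma \ref{lem:B-lower}, a test function of the form $x_ix_j$ shows $F_K(T)\to\infty$ whenever the ratio $\lambda_{\max}(TT^t)/\lambda_{\min}(TT^t)$ diverges along directions exposed by $K$. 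Continuity and coercivity together would yield attainment of a minimizer $T_0$.

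With $T_0$ in hand, I would derive first-order optimality conditions. Let $u_*$ be an extremal even harmonic function realizing $\B(T_0(K))$, and differentiate the Rayleigh-quotient defining $\B$ along one-parameter subgroups $T_0 e^{sA}$ for trace-free $A$. The derivative would couple the variation of the boundary integral $\int_{\partial(T_0K)} u_\nu^2/\scalar{x,\nu} \, dx$ with that of $\int_{T_0 K} \norm{\nabla^2 u}^2 dx$, producing a traceless matrix identity of the form $\int_{\partial(T_0K)} \phi_{ij}(u_*, x)\, dx = 0$ for explicit quadratic $\phi_{ij}$, interpretable as $T_0(K)$ being in a ``$u_*$-adapted isotropic position.'' Combined with the linear equivariance of the Hilbert--Brunn--Minkowski operator (Theorem \ref{thm:LTK}), one would hope to exhibit $T_0$ concretely in terms of cone-measure moments weighted by the extremal eigenfunction, and then verify that the resulting position forces $\B \leq 1$, perhaps by comparison with the cube -- the conjectural extremizer -- via a rearrangement or symmetrization argument that pushes an arbitrary $K$ towards $B_\infty^n$ without decreasing $\B$.

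The hard part, in my view, is that the conjecture is strictly stronger than the local log-BM conjecture itself: Theorem \ref{thm:B2LK} only says $\lambda_{1,e}(-L_K) \geq 1 + \sup_T 1/((n-1)\B(T(K)))$, a \emph{sufficient} condition, so even granting the spectral bound $\lambda_{1,e}(-L_K) \geq n/(n-1)$ in all linear positions, one cannot deduce $\B(T_0(K)) \leq 1$ without ruling out a strict gap between the two sides of Theorem \ref{thm:sufficient}. Closing this gap amounts to showing that at the extremizing position $T_0$, the nonnegative remainder $R_K(u_*)$ (Remark \ref{rem:RK}) is small compared with the main terms, a rigidity statement for which no general tool in the paper is available. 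Absent such a principle, a more pedestrian route would be to verify the conjecture constructively on a dense subfamily -- $B_q^n$ for $q\geq 2$, unconditional bodies (Section \ref{sec:unc}), small $C^2$-perturbations thereof -- and then invoke the continuity results of Section \ref{sec:continuity} to propagate the bound across the Banach--Mazur compactum $\K_e/GL_n$, with the sharper constant for $B_\infty^n$ (namely $\B(B_\infty^n)=1$) providing the delicate boundary case that governs the whole conjecture.
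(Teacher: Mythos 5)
What you are trying to prove is not proved in the paper at all: it is stated there as an open conjecture (motivated by the computations $\B(B_2^n)=\frac{2}{n+2}$, $\B(B_\infty^n)=1$ and the estimates for $B_q^n$), whose validity would imply the local log-BM conjecture via Theorem \ref{thm:main-tool}. So there is no proof to compare against, and your proposal has to stand on its own; as written it does not, and indeed you concede as much in your last paragraph. The central gap is that nothing in your scheme forces the \emph{value} bound $\B(T_0(K))\leq 1$. Existence of a minimizing position $T_0$ (itself only sketched: continuity in $T$ follows from Proposition \ref{prop:continuity}, but the coercivity claim for a general $K\in\K_e$ is asserted by analogy with the cube example, not proved) plus first-order stationarity along $T_0e^{sA}$ only identifies a critical position; it gives no comparison of the critical value with $1$. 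The step that would close this -- a symmetrization or rearrangement moving an arbitrary $K$ toward $B_\infty^n$ while not decreasing $\B$ -- is pure speculation: no monotonicity of $\B$ under any symmetrization is known or suggested by the paper, and $\B$ involves Hessians of harmonic extensions, which behave badly under the standard symmetrizations. Note also that, as you correctly observe, the conjecture does not follow from the spectral bound $\lambda_{1,e}(-L_K)\geq\frac{n}{n-1}$ even if the latter were known, because of the Cauchy--Schwarz loss in Theorem \ref{thm:sufficient} (Remark \ref{rem:RK}); so there is no ``soft'' route through Theorem \ref{thm:B2LK} either.

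Your fallback route (verify on a dense subfamily, then propagate by continuity across the Banach--Mazur compactum) also fails structurally, for two reasons. First, the family on which the paper actually verifies $\B\leq 1$ is tiny: $B_2^n$, the cube, and $B_q^n$ for $q>2$ only when $n\geq n_q$; for unconditional bodies Theorem \ref{thm:unc} only gives $\BNH_{\unc}(K)=1$, i.e.\ the inequality tested against unconditional functions, which is weaker than $\B(K)\leq 1$. This is nowhere near dense in $\K_e/GL_n$. Second, even granting a dense family, the quantity you would need to pass to limits is $K\mapsto\inf_{T\in GL_n}\B(T(K))$, an infimum of continuous functionals over the \emph{non-compact} group $GL_n$; such an infimum is only upper semi-continuous, which is the wrong direction for transporting the bound $\leq 1$ to a limit body, and Proposition \ref{prop:continuity} (continuity of $\B$ for a fixed body under Hausdorff convergence) does not supply the missing lower semi-continuity. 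This is the same obstruction the paper itself points out when explaining why compactness arguments do not even yield a uniform bound over $\K^2_{+,e}/GL_n$. In short, your text is a reasonable research program, but both of its branches stop exactly where the open problem begins.
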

\noindent
By Theorem \ref{thm:main-tool}, a positive answer to the latter conjecture will imply a positive answer to the local log-BM conjecture.

\bigskip

\section{The second Steklov operator and $\B(B_2^n)$} \label{sec:Steklov}

In this section, we obtain an operator-theoretic interpretation of the inequality:
\begin{equation} \label{eq:B-Steklov}
\forall u \in \S_{0,e}(K) \;\;\; \Delta u = 0 \text{ in int($K$)} \;\; \Rightarrow  \int_{\partial K} \frac{u_\nu^2}{\scalar{x,\nu_{\partial K}(x)}} dx \leq \B(K) \int_K \norm{\nabla^2 u}^2 dx ,
\end{equation}
which we will use for calculating $\B(B_2^n)$. It is related to the classical Steklov (or Dirichlet-to-Neumann) 1st order elliptic pseudo-differential operator $S$ \cite{SteklovInequalities,SteklovSurvey}.

\subsection{Second Steklov operator}

Let us assume for simplicity that $\partial K$ is $C^{\infty}$ smooth, and denote by $C^{\infty}_0(\partial K)$ the subspace of smooth functions integrating to zero on $\partial K$. 
The Neumann-to-Dirichlet operator $D$, which is the inverse of $S$ on $C^{\infty}_0(\partial K)$, is the linear operator defined by:
\[
D : C^\infty_0(\partial K) \ni \Psi \mapsto u|_{\partial K} \in C^\infty(\partial K) ,
\]
where $u = u_{\Psi}\in C^{\infty}(K)$ solves:
\[
\Delta u = 0 ~ \text{in $K$} ~,~ u_\nu = \Psi ~ \text{on } \partial K  .
\]
In fact, $D$ may be extended to a compact operator  \cite{SteklovInequalities} on:
\[
L^2_0(\partial K) := \set{ \Psi \in L^2(dx|_{\partial K}) ; \int_{\partial K} \Psi dx = 0 } ,
\]
(and moreover to the Sobolev space $H^{-1/2}_0(\partial K)$, but we will not require this here). 

Note that $D$ is self-adjoint and positive semi-definite on $L^2_0(\partial K)$, since for all $\Psi,\Phi \in C^\infty_0(\partial K)$, denoting $v = u_{\Psi}$ and $w = u_{\Phi}$,  we have (integrating by parts and using that $\Delta w =0$):
\[
\int_{\partial K} (D \Psi) \Phi dx = \int_{\partial K} v w_\nu dx = \int_K \text{div}(v \nabla w) dx = \int_K \scalar{\nabla v,\nabla w} dx .
\]
By analogy, we introduce the second Steklov operator $S_2$, by requiring that:
\begin{equation} \label{eq:D2-require}
\int_{\partial K} (S_2 \Psi) \Phi dx = \int_K \scalar{\nabla^2 v,\nabla^2 w} dx .
\end{equation}
Indeed, on $C^\infty_0(\partial K)$, $S_2 \Psi$ has the following explicit description:
\[
S_2 \Psi := -\Delta_{\partial K}(D \Psi) - D (\Delta_{\partial K} \Psi) - H_{\partial K} \Psi + D \nabla_{\partial K} \cdot \II_{\partial K} \nabla_{\partial K} D \Psi ,
\]
where of course $\nabla_{\partial K} \cdot$ denotes the divergence operator on $\partial K$. To see this, 
denote $v = u_{\Psi}$ (so that $v_\nu = \Psi$), integrate by parts on $\partial K$, use the self-adjointness of $D$, and finally apply the Reilly formula (\ref{eq:Reilly}), to obtain:
\begin{align*} & \int_{\partial K} (S_2 \Psi) \Psi dx  =  - \int_{\partial K} \scalar{\Delta_{\partial K} v , v_\nu} dx - \int_{\partial K} \scalar{D \Delta_{\partial K} v_\nu, v_\nu} 
- \int_{\partial K} H_{\partial K} v_\nu^2 dx \\
& + \int_{\partial K} (D \nabla_{\partial K} \cdot \II_{\partial K} \nabla_{\partial K} v) v_\nu dx = 2 \int_{\partial K}  \scalar{\nabla_{\partial K} v, \nabla_{\partial K} v_\nu} - \int_{\partial K} H_{\partial K} v_\nu^2 dx \\
& - \int_{\partial K} \scalar{\II_{\partial K} \nabla_{\partial K} v, \nabla_{\partial K} v} dx =  \int_K \norm{\nabla^2 v}^2 dx ,
\end{align*}
and so (\ref{eq:D2-require}) follows by polarization. In particular, (\ref{eq:D2-require}) implies that $S_2$ is symmetric and positive semi-definite on $L^2_0(\partial K)$, and hence admits a Friedrichs self-adjoint extension. Note that as $S$ is of order 1, $D$ is of order $-1$, and hence $S_2$ is also of order $1$, like $S$, explaining our nomenclature. 

\medskip

Recalling (\ref{eq:B-Steklov}), we see that $\B(K)$ for $K \in \K^\infty_e$ is the best constant in the following inequality for the second Steklov operator:
\[
\forall \Psi \in C^{\infty}_{0,e}(\partial K) \;\;\; \int_{\partial K} (S_2 \Psi) \Psi dx \geq \frac{1}{\B(K)} \int_{\partial K} \frac{\Psi^2}{\scalar{x,\nu_{\partial K}(x)}} dx .
\]
In this sense, we can think of the sufficient condition of Theorem \ref{thm:sufficient} as a 1st order relaxation (via the second Steklov operator) of the original 2nd order spectral problem (for the Hilbert--Brunn--Minkowski operator). 

\subsection{Computing $\B(B_2^n)$}

When $K = B_2^n$, $\B(B_2^n)$ is the best constant in:
\[
\forall \Psi \in C^{\infty}_{0,e}(S^{n-1}) \;\;\;  \int_{S^{n-1}} (S_2 \Psi) \Psi d\theta \geq \frac{1}{\B(B_2^n)} \int_{S^{n-1}} \Psi^2 d\theta ,
\]
and so $\B(B_2^n)$ is the reciprocal of the first eigenvalue of $S_2$ corresponding to an even eigenfunction in $C_0^\infty(S^{n-1})$. As $H_{S^{n-1}} \equiv n-1$ and $\II_{S^{n-1}} = \delta_{S^{n-1}}$, we see that:
\begin{equation} \label{eq:S2-Ball-rep}
S_2 = -\Delta_{S^{n-1}} D - D \Delta_{S^{n-1}} - (n-1) \text{Id} + D \Delta_{S^{n-1}} D .
\end{equation}
As both operators $\Delta_{S^{n-1}}$ and $D$ clearly intertwine the natural action of $SO(n)$ on $L^2_0(\partial K)$, so does $S_2$. It follows by Schur's lemma \cite{VilenkinClassicBook} that the eigenspaces of $S_2$ are given by $H_k$, the subspace of degree $k$ spherical harmonics on $S^{n-1}$, for $k \geq 1$ ($k=0$ is excluded since we are in $C_0^\infty(S^{n-1})$). For $h \in H_k$ it is well known \cite{VilenkinClassicBook,ChavelEigenvalues} that $-\Delta_{S^{n-1}} h = k (k + n -2) h$. In addition, $h$ is already the restriction to $S^{n-1}$ of a harmonic homogeneous polynomial of degree $k$ on $B_2^n$, which we continue to denote by $h$; it follows by Euler's identity that $h_\nu = k h$, and so by definition $D h = \frac{1}{k} h$. Consequently, (\ref{eq:S2-Ball-rep}) yields a complete description of the spectral decomposition of $S_2$:
\[
S_2|_{H_k} = \brac{2 \frac{k (k+n-2)}{k} - (n-1) - \frac{k (k+n-2)}{k^2}} \text{Id}|_{H_k} .
\]
It follows that the first even eigenfunction of $S_2$ lies in $H_2$ (quadratic harmonic polynomials), with corresponding eigenvalue $1 + \frac{n}{2}$. We thus obtain:
\begin{thm}
$\B(B_2^n) = \frac{2}{n+2} < 1$. 
\end{thm}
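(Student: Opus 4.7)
The proof proceeds exactly along the lines sketched in the paragraph preceding the statement; the plan is to execute the spectral calculation carefully and check that the minimizing spherical harmonic is of even degree.

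First I would note that by the equivalence established via the Reilly formula and the definition of the second Steklov operator $S_2$, one has
\[
\B(B_2^n)^{-1} \;=\; \inf \set{ \int_{S^{n-1}} (S_2 \Psi) \Psi d\theta \; / \; \int_{S^{n-1}} \Psi^2 d\theta \; : \; \Psi \in C^{\infty}_{0,e}(S^{n-1}) \setminus \set{0} } .
\]
Since $B_2^n$ has $H_{\partial B_2^n} \equiv n-1$ and $\II_{\partial B_2^n} = \delta_{S^{n-1}}$, the formula (\ref{eq:S2-Ball-rep}) applies, expressing $S_2$ purely in terms of $\Delta_{S^{n-1}}$ and the Neumann-to-Dirichlet operator $D$ on the ball.

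Next I would diagonalize $S_2$. Both $\Delta_{S^{n-1}}$ and $D$ commute with the natural $SO(n)$-action on $L^2(S^{n-1})$ (for $D$ this follows because rotations preserve $B_2^n$ and harmonicity), hence so does $S_2$. Schur's lemma, together with the classical decomposition $L^2_0(S^{n-1}) = \bigoplus_{k\geq 1} H_k$ into pairwise non-isomorphic irreducible $SO(n)$-representations, forces each $H_k$ to be an eigenspace of $S_2$. On $H_k$ one has $-\Delta_{S^{n-1}}|_{H_k} = k(k+n-2) \text{Id}$. To compute $D|_{H_k}$, extend $h \in H_k$ to its unique harmonic $k$-homogeneous polynomial extension to $B_2^n$; Euler's identity gives $h_\nu = \scalar{\nabla h, \theta} = k h$ on $S^{n-1}$, and since $\Delta h = 0$ the extension solves the Neumann problem with boundary data $k h$. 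By linearity $D h = \frac{1}{k} h$, so $D|_{H_k} = \frac{1}{k} \text{Id}$.

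Substituting into (\ref{eq:S2-Ball-rep}) yields the eigenvalue of $S_2$ on $H_k$:
\[
\mu_k \;=\; 2\frac{k(k+n-2)}{k} - (n-1) - \frac{k(k+n-2)}{k^2} \;=\; 2k + n - 4 - \frac{n-2}{k} .
\]
One checks $\mu_1 = 0$ (as required, since $H_1$ is spanned by linear functions, corresponding to translations of $B_2^n$, and these are odd anyway so are excluded from the even admissible class). The function $k \mapsto 2k + n - 4 - (n-2)/k$ is strictly increasing on $[1,\infty)$ for $n \geq 2$ (derivative $2 + (n-2)/k^2 > 0$). The elements of $H_k$ are even if and only if $k$ is even, so the infimum over even admissible modes is attained at $k=2$, giving
\[
\mu_2 \;=\; 4 + n - 4 - \frac{n-2}{2} \;=\; \frac{n+2}{2} .
\]
Therefore $\B(B_2^n) = 2/(n+2)$, as claimed. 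The only mildly subtle point — and the single place that requires care rather than calculation — is justifying that the spectral decomposition of $S_2$ really coincides with $\bigoplus_k H_k$ (as opposed to giving a coarser decomposition); this is handled by the $SO(n)$-equivariance plus Schur, as above.
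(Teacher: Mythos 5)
Your proof is correct and follows essentially the same route as the paper: the formula (\ref{eq:S2-Ball-rep}) for $S_2$ on the ball, $SO(n)$-equivariance plus Schur's lemma to identify the eigenspaces with the spherical harmonic spaces $H_k$, the computation $D|_{H_k}=\frac{1}{k}\mathrm{Id}$ via the harmonic homogeneous extension and Euler's identity, and parity of $H_k$ to single out $k=2$ with eigenvalue $\frac{n+2}{2}$. Your explicit monotonicity check of $k\mapsto 2k+n-4-\frac{n-2}{k}$ is a slightly more careful justification of the paper's assertion that the first even eigenfunction lies in $H_2$.
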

By Theorem \ref{thm:main-tool}, this corresponds to sufficient condition for confirming the local $p$-BM conjecture with $p=-\frac{n}{2}$. Note that this is worse by a factor of $2$ than the equivalent characterization from Subsection \ref{subsec:extremizers} using $\lambda_{1,e}(-L_{B_2^n}) = \frac{2n}{n-1}$, which corresponds to $p=-n$. This means that the Cauchy-Schwarz inequality we have employed in Theorem \ref{thm:sufficient} is indeed wasteful for $B_2^n$, but still we obtain a good enough condition to reaffirm the local log-BM conjecture (case $p=0$) for $B_2^n$ (and its $C^2$-perturbations), as $\B(B_2^n) < 1$.

\bigskip

\section{Unconditional Convex Bodies and the Cube} \label{sec:unc}

It is also a challenging task to compute $\BNH(K)$ even for some concrete convex bodies. In this section, we precisely compute the variant $\BNH_{\text{uncond}}(K)$, when only testing unconditional functions on an unconditional convex body $K$. In the case of the cube $B_\infty^n$, we also manage to precisely compute $\BNH(B_\infty^n)$ and consequently $\B(B_\infty^n)$, in precise agreement with the worst-possible predicted value by the local log-BM conjecture. 

\subsection{Unconditional Convex Bodies}

Let $\K_{\unc}$ denote the class of unconditional convex bodies, namely convex bodies which are invariant under reflections with respect to the coordinate hyperplanes $\set{x_i=0}$, $i=1,\ldots,n$. We denote by $u \in \S_{0,\unc}(K)$ the elements of $\S_0(K)$ which are invariant under the aforementioned reflections. 
\begin{defn*}[$\BNH_{\unc}(K)$]
Given $K \in \K_{\unc}$, let $\BNH_{\unc}(K)$ denote the best constant in the following \emph{boundary Poincar\'e-type inequality for unconditional functions}:
\[
\forall u \in \S_{0,\unc}(K) \;\;\; \int_{\partial K} \frac{u_\nu^2(x)}{\scalar{x,\nu_{\partial K}(x)}} dx  \leq \BNH_{\unc}(K) \int_K \norm{\nabla^2 u}^2 dx .
\]
\end{defn*}

Observe that $\BNH_{\unc}(K) \geq 1$, by testing the unconditional function $u(x) = \frac{\abs{x}^2}{2}$ as in Lemma \ref{lem:B-lower}. 
Note that when $K \in \K_{\unc}$, it is easy to see that $\nu_i x_i \geq 0$ for all $i=1,\ldots,n$ and $\H^{n-1}$-a.e. $x \in \partial K$. The Cauchy--Schwarz inequality immediately yields:

\begin{lem}
For all $K \in \K_{\unc}$ and $u \in C^1(K)$, we have for $\H^{n-1}$-a.e. $x \in \partial K$:
\[
\frac{u_\nu^2}{\scalar{x,\nu}} = \frac{(\sum_{i=1}^n u_i \nu_i)^2}{\sum_{i=1}^n x_i \nu_i} \leq \sum_{i=1}^n \frac{u_i^2}{x_i} \nu_i  .
\]
\end{lem}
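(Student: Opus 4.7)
The statement is essentially a weighted Cauchy--Schwarz inequality, made available by the symmetry hypothesis. The plan is to carry out the Cauchy--Schwarz after replacing the ambient sum by the sum over the ``active'' indices
\[
I(x) := \{\, i \in \{1,\ldots,n\} : \nu_i(x) \neq 0 \,\}.
\]
The claim is that for $\mathcal H^{n-1}$-a.e.\ $x \in \partial K$ one has not merely $x_i \nu_i \ge 0$ (as already stated in the preceding paragraph of the paper), but in fact the stronger dichotomy
\[
\text{either } \nu_i = 0, \quad \text{or } x_i \nu_i > 0.
\]
Once this is in hand, for $i\in I(x)$ one may factor $u_i \nu_i = (u_i\sqrt{\nu_i/x_i})\cdot \sqrt{x_i\nu_i}$, both radicands being strictly positive, and then Cauchy--Schwarz on $\ell^2(I(x))$ gives
\[
\Bigl(\sum_{i\in I(x)} u_i \nu_i\Bigr)^{\!2}
\;\le\; \Bigl(\sum_{i \in I(x)} \frac{u_i^2 \nu_i}{x_i}\Bigr)\Bigl(\sum_{i \in I(x)} x_i \nu_i\Bigr).
\]
Indices $i \notin I(x)$ contribute nothing on either side, since $\nu_i = 0$ kills the term $u_i \nu_i$, the term $x_i \nu_i$, and the term $\frac{u_i^2}{x_i}\nu_i$ as well. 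Replacing $\sum_{i\in I(x)}$ by $\sum_{i=1}^n$ and dividing by $\sum_i x_i \nu_i = h_K(\nu) > 0$ (positive because $0 \in \mathrm{int}\,K$) yields the claimed inequality.

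It therefore suffices to justify the dichotomy at $\mathcal H^{n-1}$-a.e.\ boundary point. Since $\partial K$ is Lipschitz, the set of points where the outer normal fails to be unique has $\mathcal H^{n-1}$-measure zero, so it is enough to work at points where $\nu(x)$ is the unique outer normal. At such a point, unconditionality gives: for the reflection $\sigma_i$ across $\{x_i = 0\}$, one has $\sigma_i(K) = K$, hence $\sigma_i \nu$ is also an outer normal to $K$ at $\sigma_i x$. If $x_i = 0$ then $\sigma_i x = x$, so by uniqueness $\sigma_i \nu = \nu$, forcing $\nu_i = 0$. This proves $\nu_i \neq 0 \Rightarrow x_i \neq 0$. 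Combined with the inequality $x_i \nu_i \ge 0$ already established (which follows by a one-line support-function argument: if $\nu_i > 0$ and $x_i < 0$, then $\sigma_i x \in K$ gives $\langle \sigma_i x,\nu\rangle - \langle x,\nu\rangle = -2 x_i \nu_i > 0$, contradicting $x \in \mathrm{argmax}_K\langle\cdot,\nu\rangle$), this gives $x_i\nu_i > 0$ on $I(x)$.

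The only non-routine step is this almost-everywhere dichotomy, so my main effort goes into phrasing it cleanly; the remainder is a one-line Cauchy--Schwarz. A cosmetic remark: the statement also requires $u_i$ to be well-defined on $\partial K$, which is automatic since $u \in C^1(K)$ and the inequality is claimed only $\mathcal H^{n-1}$-a.e.
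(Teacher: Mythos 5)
Your proposal is correct and follows essentially the same route as the paper, which likewise notes $x_i\nu_i \geq 0$ for $\H^{n-1}$-a.e.\ $x \in \partial K$ and then applies Cauchy--Schwarz. The a.e.\ dichotomy you prove (either $\nu_i = 0$ or $x_i\nu_i > 0$, via uniqueness of the normal and the reflection symmetry) is a useful clarification that the paper leaves implicit in its ``it is easy to see'' remark, since it rules out the degenerate case $x_i = 0$, $\nu_i \neq 0$ where the right-hand side would need a convention.
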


We also have the following lemma, inspired by the method in our previous work \cite{KolesnikovEMilman-HardyKLS}:
\begin{lem} \label{lem:on-diagonal}
For any $K \in \K$, $u \in \S_{0}(K)$ and $i=1,\ldots,n$ so that $u_i \equiv 0$ on $K \cap \set{x_i = 0}$, we have:
\[
\int_{\partial K} \frac{u_i^2}{x_i} \nu_i  = \int_{K} \brac{2 \frac{u_i}{x_i} u_{ii} - \frac{u_i^2}{x_i^2}}  dx \leq \int_{K} u_{ii}^2 dx .
\]
\end{lem}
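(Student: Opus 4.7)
The plan is to derive the equality by applying the divergence theorem to the one-dimensional vector field $F := \frac{u_i^2}{x_i} e_i$, and then obtain the inequality by a pointwise ``completing-the-square'' argument. A direct computation gives
\[
\partial_i \brac{\frac{u_i^2}{x_i}} = \frac{2 u_i u_{ii}}{x_i} - \frac{u_i^2}{x_i^2},
\]
so $\text{div}(F)$ is precisely the middle integrand in the claimed identity. Naively invoking the divergence theorem on $K$ would give the equality directly, but $F$ is singular on the hyperplane $\set{x_i = 0}$, which generically cuts through the interior of $K$ (recall $0 \in \text{int}(K)$). To handle this, I split $K$ along this hyperplane: for $\eps > 0$ let $K_\eps^{\pm} := K \cap \set{\pm x_i \geq \eps}$, and apply the divergence theorem on each piece to obtain, for instance on $K_\eps^{+}$,
\[
\int_{K_\eps^{+}} \brac{\frac{2 u_i u_{ii}}{x_i} - \frac{u_i^2}{x_i^2}} dx = \int_{\partial K \cap \set{x_i \geq \eps}} \frac{u_i^2}{x_i} \nu_i \, d\mathcal{H}^{n-1} - \frac{1}{\eps} \int_{K \cap \set{x_i = \eps}} u_i^2 \, d\mathcal{H}^{n-1},
\]
since the outer normal of $K_\eps^{+}$ along the slice $\set{x_i = \eps}$ is $-e_i$; an analogous identity holds for $K_\eps^{-}$.

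Next I pass to the limit $\eps \to 0^{+}$, which is where the hypothesis $u_i|_{K \cap \set{x_i = 0}} \equiv 0$ plays its role. On the slice $K \cap \set{x_i = 0}$, every point other than an $\mathcal{H}^{n-1}$-negligible boundary portion lies in $\text{int}(K)$, where $u \in C^2$; a first-order Taylor expansion gives $u_i(x', x_i) = u_{ii}(x',0) x_i + o(x_i)$, hence $u_i^2/|x_i| \to 0$ pointwise and, by a uniform local bound (using $u_i \in C(K)$ together with the vanishing hypothesis) and dominated convergence, $\frac{1}{\eps} \int_{K \cap \set{x_i = \pm\eps}} u_i^2 \to 0$. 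The same Hardy-type reasoning gives integrability of $u_i^2/x_i^2$ on $K$, so adding the two pieces yields the desired equality. Finally, the inequality is the pointwise identity
\[
2 \frac{u_i}{x_i} u_{ii} - \frac{u_i^2}{x_i^2} = u_{ii}^2 - \brac{u_{ii} - \frac{u_i}{x_i}}^2 \leq u_{ii}^2
\]
integrated over $K$.

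The main obstacle is the justification of the limit $\eps \to 0^{+}$: since $u$ is only $C^2$ in $\text{int}(K)$ and $C^1$ up to $\partial K$, establishing the required Hardy-type bound controlling $u_i / x_i$ near $\set{x_i = 0}$ demands some care. The cleanest route, should the above regularity not directly suffice, is to approximate $u$ by functions in $C^2(K)$ sharing the vanishing property $u_i = 0$ on $\set{x_i = 0}$ (for example via a suitable reflection-and-mollification procedure respecting the symmetry of the hyperplane $\set{x_i = 0}$), prove the identity there, and then pass to the limit.
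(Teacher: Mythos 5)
Your argument is correct and essentially the paper's: the same splitting of $K$ along the hyperplane $\set{x_i=0}$, the same integration by parts of the field $\frac{u_i^2}{x_i}\,e_i$ with the vanishing hypothesis forcing $u_i/x_i \to u_{ii}$ at interior points of the slice, and the same completion of the square ($2ab \le a^2+b^2$) for the final inequality. The only difference is that the paper dispenses with your $\eps$-truncation and limiting step --- precisely the part you flag as the main obstacle --- by observing that, thanks to the hypothesis, the vector field $\xi := \frac{u_i^2}{x_i}\,e_i$ extends continuously by zero to $K \cap \set{x_i=0}$ and is $C^1$ in $\text{int}(K_\pm)$, so the divergence theorem applies directly on each half-body $K_\pm := K \cap \set{\pm x_i \ge 0}$ with no boundary contribution from the flat face and no slice-term estimate needed.
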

\begin{proof}
The first identity follows by integration-by-parts on $K_+ := K \cap \set{x_i \geq 0}$ and $K_{-} := K \cap \set{x_0 \leq 0}$ separately. The assumption that $u_i \equiv 0$ on $K \cap \set{x_i = 0}$ and $u \in \S_0(K)$ are crucial here, to ensure that $\lim_{x \rightarrow x^0} \frac{u_i(x)}{x_i} = u_{ii}(x^0)$ if $x^0 \in \text{int}(K) \cap \set{x_i = 0}$. Defining:
\[
\xi(x) := \begin{cases} \frac{u_i^2}{x_i} e_i & K \cap \set{x_i > 0} \\ 0 & K \cap \set{x_i = 0} \end{cases} ,
\]
it follows that the vector field $\xi$ is in $C^1(\text{int}(K_+)) \cap C(K_+)$, and hence integrating by parts:
\[
\int_{\partial K \cap \set{x_i \geq 0}} \scalar{\xi,\nu_{\partial K}} dx =  \int_{\partial K_+} \scalar{\xi,\nu_{\partial K_+}} dx = \int_{K_+} \text{div}(\xi) dx .
\]
Repeating the argument for $K_-$ and summing, the first equality follows. 
The second inequality follows by applying the Cauchy--Schwarz (Geometric--Arithmetic mean) inequality $2ab \leq a^2 + b^2$. 
\end{proof}

Applying the previous two lemmas and summing over $i=1,\ldots,n$, we deduce:
\begin{thm} \label{thm:unc}
Let $K \in \K_{\unc}$. Then for all $u \in \S_0(K)$ such that $u_i \equiv 0$ on $K \cap \set{x_i = 0}$ for all $i=1,\ldots,n$, we have:
\begin{equation} \label{eq:unc}
\int_{\partial K} \frac{u_\nu^2}{\scalar{x,\nu}} dx \leq \int_K \sum_{i=1}^n u_{ii}^2 dx \leq \int_K \norm{\nabla^2 u}^2 dx .
\end{equation}
In particular, this holds for all $u \in \S_{0,\unc}(K)$, and therefore $\BNH_{\unc}(K) = 1$. 
\end{thm}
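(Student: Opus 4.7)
The proof is essentially the concatenation of the two preceding lemmas, so my plan is to chain them together carefully and then verify that the unconditional hypothesis furnishes the vanishing needed to apply Lemma \ref{lem:on-diagonal}.

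First I would invoke the pointwise Cauchy--Schwarz inequality from the first lemma of the subsection: for $K \in \K_{\unc}$ and $\H^{n-1}$-a.e.\ $x \in \partial K$, one has $x_i \nu_i \geq 0$, so
\[
\frac{u_\nu^2}{\scalar{x,\nu}} \;\leq\; \sum_{i=1}^n \frac{u_i^2}{x_i} \nu_i .
\]
Integrating over $\partial K$ and interchanging sum and integral produces
\[
\int_{\partial K} \frac{u_\nu^2}{\scalar{x,\nu}} dx \;\leq\; \sum_{i=1}^n \int_{\partial K} \frac{u_i^2}{x_i} \nu_i \, dx .
\]

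Next I would apply Lemma \ref{lem:on-diagonal} to each of the $n$ boundary integrals on the right. The hypothesis of that lemma, namely $u_i \equiv 0$ on $K \cap \{x_i = 0\}$, is exactly what is assumed, so the lemma gives
\[
\int_{\partial K} \frac{u_i^2}{x_i} \nu_i \, dx \;\leq\; \int_K u_{ii}^2 \, dx \quad \text{for each } i = 1,\ldots,n .
\]
Summing over $i$ yields the first inequality in \eqref{eq:unc}. The second inequality is trivial since the diagonal entries of $\nabla^2 u$ are a subset of its entries, giving $\sum_{i=1}^n u_{ii}^2 \leq \norm{\nabla^2 u}^2$ pointwise.

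Finally, for the ``in particular'' statement, I would observe that every $u \in \S_{0,\unc}(K)$ automatically satisfies the vanishing hypothesis: since $u$ is invariant under the reflection $x_i \mapsto -x_i$, its partial derivative $u_i$ is \emph{odd} under this same reflection, and hence must vanish on the fixed set $\{x_i=0\}$ (and in particular on $K \cap \{x_i=0\}$). Therefore \eqref{eq:unc} applies to all $u \in \S_{0,\unc}(K)$, yielding $\BNH_{\unc}(K) \leq 1$. Combined with the lower bound $\BNH_{\unc}(K) \geq 1$ obtained at the start of the subsection by testing $u(x) = \abs{x}^2/2$ (exactly as in Lemma \ref{lem:B-lower}), we conclude $\BNH_{\unc}(K) = 1$. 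There is no serious obstacle here --- the work was done in setting up the two preceding lemmas; the only point requiring care is the continuity/one-sided limit argument inside Lemma \ref{lem:on-diagonal} that allows integration by parts despite the factor $1/x_i$, which is why the hypothesis $u_i \equiv 0$ on the coordinate hyperplane (forcing $u_i/x_i \to u_{ii}$) is indispensable.
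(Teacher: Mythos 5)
Your proposal is correct and follows essentially the same route as the paper: chain the pointwise Cauchy--Schwarz lemma for unconditional bodies with Lemma \ref{lem:on-diagonal} applied coordinate-wise, sum over $i$, and note that unconditionality makes $u_i$ odd in $x_i$ so the vanishing hypothesis holds, while the lower bound $\BNH_{\unc}(K)\geq 1$ comes from testing $u(x)=\abs{x}^2/2$. Nothing is missing.
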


\begin{rem}
Note that the latter theorem does not follow from Saroglou's global affirmation of the log-BM conjecture for unconditional convex bodies \cite{Saroglou-logBM1}. When $K,L \in \K_{\unc}$,  all relevant test functions $\Psi$ on $\partial K$ (and thus the harmonic $u$ on $K$) for the local log-BM conjecture will indeed be unconditional. However, Theorem \ref{thm:unc} confirms the \emph{stronger} sufficient condition given in Theorem \ref{thm:main-tool}, and moreover, \emph{without the requirement that $u$ be harmonic}. 
\end{rem}

\subsection{The Cube}

In the case of the cube, we can use the off-diagonal elements of $D^2 u$ to control the non-unconditional part of a general test function $u$: 

\begin{thm} \label{thm:B-Cube}
$\B(B_\infty^n) = \BNH(B_\infty^n) = 1$.
\end{thm}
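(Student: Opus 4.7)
The plan is to establish matching lower and upper bounds. The lower bound $\BNH(B_\infty^n) \geq 1$ is already supplied by Lemma \ref{lem:B-lower}; to pin down $\B(B_\infty^n) \geq 1$, I would test the explicit even harmonic function $u(x) = x_1^2 - x_2^2$: only the faces $\set{x_i = \pm 1}$, $i \in \set{1,2}$, contribute to the boundary integral (as $u_i \equiv 0$ for $i \geq 3$), and direct computation gives $\int_{\partial B_\infty^n} u_\nu^2 \, dx = 2^{n+3} = \int_{B_\infty^n} \norm{\nabla^2 u}^2 \, dx$, so the ratio equals $1$. Combined with the trivial $\B \leq \BNH$, it remains only to prove the single upper bound $\BNH(B_\infty^n) \leq 1$.

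For this I would decompose an arbitrary even $u \in \S_{0,e}(B_\infty^n)$ according to the action of the reflection group $W = (\mathbb{Z}/2\mathbb{Z})^n$ on the cube by coordinate sign flips $R_1, \ldots, R_n$. For each $S \subseteq \set{1,\ldots,n}$ with $|S|$ even, the isotypic component
\[
u_S(x) := \frac{1}{2^n} \sum_{\epsilon \in \set{-1,1}^n} \brac{\prod_{i \in S} \epsilon_i} u(\epsilon \cdot x)
\]
satisfies $u_S(R_k x) = (-1)^{\mathbf{1}_{k \in S}} u_S(x)$ and $u = \sum_{|S| \text{ even}} u_S$. Tracking the induced reflection rules for $(u_S)_i$ and $(u_S)_{ij}$ shows that for $S \neq S'$ all bulk cross-terms $\int_{B_\infty^n} (u_S)_{ij}(u_{S'})_{ij}\, dx$ vanish; so do the boundary cross-terms $\sum_\pm \int_{x_i = \pm 1}(u_S)_i(u_{S'})_i\, dx$, since if $S,S'$ differ in some coordinate $\neq i$ each single-face integral is zero, while if they differ only at $i$ the two opposite-face contributions cancel. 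Hence both sides of the desired inequality decompose additively over $S$, and it suffices to prove $\sum_\pm \int_{x_i = \pm 1} ((u_S)_i)^2 \, dx \leq \int_{B_\infty^n} \norm{\nabla^2 u_S}^2\, dx$ separately for each $|S|$-even component. The case $S = \emptyset$ is exactly Theorem \ref{thm:unc} with constant $1$.

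The substantive case is $|S| \geq 2$, which splits by direction $i$. For $i \notin S$, $(u_S)_i$ is odd in $x_i$ hence vanishes on $\set{x_i = 0}$, and Lemma \ref{lem:on-diagonal} in variable $x_i$ yields $\sum_\pm \int_{x_i = \pm 1}(u_S)_i^2\, dx \leq \int_{B_\infty^n} (u_S)_{ii}^2\, dx$. For $i \in S$, $(u_S)_i$ is \emph{even} in $x_i$ and Lemma \ref{lem:on-diagonal} fails; instead I would apply Stokes' theorem to the field $(u_S)_i^2 \, x_i \, e_i$. Using the identity $\partial_i \brac{(u_S)_i^2 x_i} = (u_S)_i^2 + 2(u_S)_i (u_S)_{ii} x_i$, the fact that $x_i \nu_i = 1$ on $\set{x_i = \pm 1}$ and $\nu_i = 0$ elsewhere, and the pointwise bound $2(u_S)_i (u_S)_{ii} x_i \leq (u_S)_i^2 + (u_S)_{ii}^2$ (noting $x_i^2 \leq 1$), one obtains
\[
\sum_\pm \int_{x_i = \pm 1} (u_S)_i^2\, dx \leq 2 \int_{B_\infty^n} (u_S)_i^2\, dx + \int_{B_\infty^n} (u_S)_{ii}^2\, dx.
\]
Since $|S| \geq 2$ one may pick any $j \in S \setminus \set{i}$; then $(u_S)_i$ is odd in $x_j$ and vanishes on $\set{x_j = 0}$, so the sharp one-dimensional Poincar\'e inequality on $[-1, 1]$ for functions vanishing at the origin (sharp constant $4/\pi^2$, coming from the lowest eigenvalue $\pi^2/4$ of $-g''$ with $g(0) = g'(\pm 1) = 0$), applied fiberwise in $x_j$ and integrated over the remaining coordinates, gives $\int_{B_\infty^n} (u_S)_i^2\, dx \leq (4/\pi^2) \int_{B_\infty^n} (u_S)_{ij}^2\, dx$. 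Fixing any perfect matching of $S$ into pairs $\set{a_m, b_m}$ and using the matched partner as the auxiliary index, summing over $i \in S$ counts each cross term $\int_{B_\infty^n}(u_S)_{a_m b_m}^2\, dx$ twice with coefficient $8/\pi^2$; combining with the $i \notin S$ bounds yields
\[
\sum_i \sum_\pm \int_{x_i = \pm 1}(u_S)_i^2\, dx \leq \sum_i \int_{B_\infty^n}(u_S)_{ii}^2\, dx + \frac{16}{\pi^2} \sum_m \int_{B_\infty^n}(u_S)_{a_m b_m}^2\, dx.
\]
Since $16/\pi^2 < 2$ and the matched pairs $\set{a_m, b_m}$ form a subset of all unordered index pairs, the right-hand side is at most $\sum_i \int(u_S)_{ii}^2\, dx + 2 \sum_{a<b} \int(u_S)_{ab}^2\, dx = \int_{B_\infty^n} \norm{\nabla^2 u_S}^2\, dx$, closing the argument.

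The main obstacle is precisely the $|S| \geq 2$, $i \in S$ case: the direction $x_i$ in which we need the boundary Neumann datum is also the one in which $u_S$ vanishes, but $(u_S)_i$ is \emph{even} (not odd) in $x_i$ and hence does not vanish on $\set{x_i = 0}$, so Lemma \ref{lem:on-diagonal} cannot be invoked along $i$. The workaround exploits $|S| \geq 2$ to find another coordinate hyperplane $\set{x_j = 0}$ on which $u_S$, and therefore $(u_S)_i$, vanishes, allowing a Poincar\'e-type bound in the transverse direction $j$; the comfortable numerical inequality $8 < \pi^2$ is what supplies the slack needed to absorb the crude weighted Stokes step without loss.
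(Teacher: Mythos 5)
Your proof is correct, and it follows the same broad strategy as the paper's (a parity decomposition, Lemma \ref{lem:on-diagonal} for the directions in which the normal derivative is odd, a weighted Stokes/fundamental-theorem identity giving the bound $2\int (u_S)_i^2 + \int (u_S)_{ii}^2$ for the problematic even directions, a transverse Poincar\'e inequality with constant $4/\pi^2$, and the test function $x_1^2-x_2^2$ for sharpness), but the details are genuinely different. The paper works one direction $i$ at a time and splits $u$ into only two pieces — the part that is even in $x_i$ and even in the remaining block $y$, and the part odd in both — then absorbs the leftover $2\int (u^-_i)^2$ term using the $(n-1)$-dimensional slice Poincar\'e inequality for mean-zero functions on $B_\infty^{n-1}$ (constant $4/\pi^2$), which automatically uses the full tangential gradient and so requires no bookkeeping of which off-diagonal Hessian entries get consumed. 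You instead perform the full $(\mathbb{Z}/2\mathbb{Z})^n$ isotypic decomposition (with $|S|$ even, by evenness of $u$), use a one-dimensional Poincar\'e inequality in a single matched coordinate $j \in S\setminus\{i\}$ for functions vanishing at the origin, and then need the perfect-matching argument to ensure each off-diagonal term $\int (u_S)_{a_m b_m}^2$ is charged at most $16/\pi^2 < 2$ times; your verification of the vanishing of all bulk and boundary cross-terms between distinct components is the extra price of the finer decomposition, and it is carried out correctly. Both arguments ultimately hinge on the same numerical slack $8<\pi^2$; the paper's route is a bit leaner, while yours isolates more precisely which second derivatives are actually needed (only $(u_S)_{ii}$ and one matched $(u_S)_{ij}$ per $i\in S$) and where the evenness of $u$ enters — solely through the existence of a second odd coordinate in $S$.
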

\begin{proof}
To show that $\BNH(B_\infty^n) \leq 1$, we need to show for all $u \in \S_{0,e}(B_\infty^n)$ that:
\[
\int_{\partial B_\infty^n} u_\nu^2 dx \leq \int_{B_\infty^n} \norm{\nabla^2 u}^2 dx .
\]
To this end, it is enough to establish for all $i=1,\ldots,n$ that:
\begin{equation} \label{eq:off-diagonal}
\int_{\partial B_\infty^n} u_i^2 \abs{\nu_i} dx \leq \int_{B_\infty^n} \abs{\nabla u_i}^2 dx .
\end{equation}
Without loss of generality, we may assume that $i=1$. For $x = (x_1,\ldots,x_n) \in \Real^n$, set $y = (x_2,\ldots,x_n)$, and write $u = u^+ + u^-$, where $u^+$ is even w.r.t. both $x_1$ and $y$ ($u^+(-x_1,y) = u^+(x_1,-y) = u^+(x_1,y)$) and $u^-$ is odd w.r.t. both $x_1$ and $y$ ($u^-(-x_1,y) = u^-(x_1,-y) = -u^-(x_1,y)$, namely:
\[
u^+(x) := \frac{1}{2} (u(x_1,y) + u(x_1,-y)) ~,~  u^-(x) := \frac{1}{2} (u(x_1,y) - u(x_1,-y)) 
\]
(recall that $u$ was assumed even). 
It is enough to verify (\ref{eq:off-diagonal}) for $u^+$ and $u^-$ separately, since it is easy to see that the behavior under reflections and the unconditionality of the cube guarantee that $\int_{\partial B_\infty^n} u^+_1 u^-_1 \abs{\nu_1} dx = 0$ and $\int_{B_\infty^n} \scalar{\nabla u^+_1, \nabla u^-_1} dx = 0$. 

Note that $u^+_1$ is odd w.r.t. $x_1$ and hence $u^+_1 \equiv 0$ on $B_\infty^n \cap \set{x_1 = 0}$. It follows by Lemma \ref{lem:on-diagonal} that:
\[
\int_{\partial B_\infty^n} (u^+_1)^2 \abs{\nu_1} dx = \int_{\partial B_\infty^n} \frac{(u^+_1)^2}{x_1} \nu_1 dx \leq \int_{B_\infty^n} (u^+_{11})^2 dx \leq \int_{B_\infty^n} \abs{\nabla u^+_1}^2 dx .
\]
As for $u^-_1$, which is even w.r.t. $x_1$, write:
\[
(u^-_1)^2(1,y) = \int_0^{1} \frac{\partial}{\partial x_1} (x_1 (u^-_1)^2(x_1,y) ) dx_1 = \int_0^1 (2 x_1 u^-_1(x_1,y) u^-_{11}(x_1,y) +  (u^-_1)^2(x_1,y)) dx_1 . 
\]
Using the evenness of the above integrand in $x_1$, we obtain:
\begin{align}
\nonumber \int_{\partial B_\infty^n} (u^-_1)^2 \abs{\nu_1} dx & = \int_{B_\infty^{n-1}} \int_{-1}^1  (2 x_1 u^-_1(x_1,y) u^-_{11}(x_1,y) +  (u^-_1)^2(x_1,y)) dx_1 dy \\
\label{eq:cube-terms} & \leq \int_{B_\infty^n} (x_1^2 (u^{-}_{11})^2(x) + 2 (u^-_1)^2(x)) dx ,
\end{align}
where the last inequality follows by completing the square. The first term on the right is trivially controlled by:
\[
\int_{B_\infty^n} x_1^2 (u^{-}_{11})^2(x) dx \leq \int_{B_\infty^n} (u^{-}_{11})^2(x) dx  .
\]
For the second term, note the $u_1^-(x_1,y)$ is odd w.r.t. $y$, and hence integrates to zero on each $(n-1)$-dimensional slice $B_t := B_\infty^n \cap \set{x_1 = t}$. Applying the Poincar\'e inequality on $B_t$, and recalling the well known fact \cite{Ledoux-Book} that $C^2_{Poin}(B_\infty^k) = \frac{4}{\pi^2}$ for any $k \geq 1$, it follows that:
\[
2 \int_{B_\infty^n} (u^-_1)^2(x) dx = 2 \int_{-1}^1 \int_{B_{x_1}} (u^-_1)^2(x_1,y) dy dx_1 \leq \frac{8}{\pi^2} \int_{-1}^1 \int_{B_{x_1}} \abs{\nabla_y u^-_{1}}^2 dy dx_1 . 
\]
Since $\frac{8}{\pi^2} <  1$, combining the contributions of the above two terms to (\ref{eq:cube-terms}), we obtain:
\[
\int_{\partial B_\infty^n} (u^-_1)^2 \abs{\nu_1} dx \leq \int_{B_\infty^n} \abs{\nabla u^{-}_1}^2 dx ,
\]
as required. 

This concludes the proof that $\BNH(B_\infty^n) \leq 1$. Consequently, we also have $\B(B_\infty^n) \leq 1$. It remains to note that the constant $1$ is sharp in both cases, as witnessed by the even harmonic function $u(x) = \frac{x_1^2}{2} - \frac{x_2^2}{2}$, and therefore $\B(B_\infty^n) = \BNH(B_\infty^n) = 1$. 
\end{proof}

\begin{rem}
Note that the value $\B(B_\infty^n) = 1$ is in precise accordance with the threshold required in Theorem \ref{thm:main-tool} for confirming the local log-BM conjecture in the case of smooth bodies in $\K^2_{+,e}$. In this formal sense, the cube can be thought as satisfying the local log-BM conjecture. We will give this a more rigorous sense in Section \ref{sec:continuity}. 
\end{rem}

\bigskip

\section{Local log-Brunn--Minkowski via the Reilly Formula} \label{sec:Reilly-logconvex}

In this section, we apply the generalized Reilly formula to a measure on $K$ with log-convex (not log-concave!) density, specifically constructed for verifying the local log-BM conjecture for certain classes of convex bodies. 

\subsection{Sufficient condition for verifying local log-Brunn--Minkowski} 

Recall by Proposition \ref{prop:p-BM-K} and Remark \ref{rem:p-BM-K} that the validity of the local log-BM conjecture (\ref{eq:local-log-BM}) for $K \in \K^2_{+,e}$ is equivalent to the validity of the following assertion:
\begin{align}
\nonumber & \forall \Psi \in C^1_e(\partial K) \;\;\; \int_{\partial K} \Psi(x) dx = 0 \;\; \Rightarrow \\
 \label{eq:log-desired} &  \int_{\partial K}\ \scalar{\II_{\partial K}^{-1} \nabla_{\partial K} \Psi, \nabla_{\partial K} \Psi} dx  \geq \int_{\partial K} H_{\partial K}(x) \Psi^2(x) dx + \int_{\partial K} \frac{\Psi^2(x)}{\scalar{x,\nu_{\partial K}(x)}} dx .
\end{align}

Given $K \in \K_e$, the associated norm $\norm{\cdot}_K$ is defined by $\norm{x}_K = \min \set{t > 0 ; x \in t K}$. 
Let  $w : [0,1] \rightarrow \Real$ denote a $C^2$ function with $w'(0) = 0$ and $w'(1) = 1$.
\[
W(x) := w(\norm{x}_K) ~,~ \mu := \exp(W(x)) dx|_K .
\]
Note that $w$ cannot be concave, and typically will be chosen to be convex, so that $\mu$ is log-convex (and not log-concave). 
Assuming $K \in \K^2_{+,e}$ and abbreviating $\norm{x} = \norm{x}_K$ and $\nu = \nu_{\partial K}$, observe that on $\partial K$:
\[
\scalar{\nabla W , \nu} = w'(1) \scalar{\nabla \norm{x},\nu} = \abs{\nabla \norm{x}} = \frac{\norm{x}}{\scalar{x,\nu}} = \frac{1}{\scalar{x,\nu}} ,
\]
and hence:
\[
H_{\partial K,\mu} = H_{\partial K} + \scalar{\nabla W,\nu} = H_{\partial K} + \frac{1}{\scalar{x,\nu}} . 
\]
Also note that:
\[
 d\mu_{\partial K} = e^{w(1)} d\H^{n-1}|_{\partial K} . 
\]

Given $\Psi \in C^1_e(\partial K)$ with $\int_{\partial K} \Psi dx = 0$, since also $\int_{\partial K} \Psi d\mu_{\partial K} = 0$, 
we may solve for $u \in \S_{N,e}(K)$ the Laplace equation:
\begin{equation} \label{eq:Laplacemu}
L_\mu u = 0 ~ \text{in $\text{int}(K)$} ~,~ u_\nu = \Psi ~ \text{on } \partial K . 
\end{equation}
Applying the Reilly formula to $u$ on $K$ equipped with the measure $\mu$, we have:
\begin{align*}
 0 = & \int_K (L_\mu u)^2 d\mu = \int_K \norm{\nabla^2 u}^2 d\mu - \int_K \scalar{ \nabla^2 W \; \nabla u, \nabla u} d\mu + \\
 & e^{w(1)} \left ( \int_{\partial K} \brac{H_{\partial K} + \frac{1}{\scalar{x,\nu}}} u_\nu^2 dx + \int_{\partial K} \scalar{\II_{\partial K}  \;\nabla_{\partial K} u,\nabla_{\partial K} u} dx  \right . \\
& \;\;\;\;\;\;\; \left . - 2 \int_{\partial K} \scalar{\nabla_{\partial K} u_\nu, \nabla_{\partial K} u} dx \right ) ~.
\end{align*}
Using $\II_{\partial K} > 0$ and applying the Cauchy--Schwarz inequality as in Theorem \ref{thm:sufficient}, we deduce (recalling that $u_\nu = \Psi$):
\begin{align}
\label{eq:logconvexReilly} & \int_{K} \scalar{\nabla^2 W \nabla u , \nabla u} e^W dx \geq \int_K \norm{\nabla^2 u}^2 e^W dx \\
\nonumber & + e^{w(1)} \brac{ \int_{\partial K} \brac{H_{\partial K} + \frac{1}{\scalar{x,\nu}}} \Psi^2 dx - \int_{\partial K} \scalar{\II_{\partial K}^{-1} \nabla_{\partial K} \Psi, \nabla_{\partial K} \Psi} dx } .
\end{align}
Comparing this with our desired inequality (\ref{eq:log-desired}), we deduce:

\begin{thm} \label{thm:dual-BL}
Let $w : [0,1] \rightarrow \Real$ denote a $C^2$ function with $w'(0) = 0$ and $w'(1) = 1$. Given $K \in \K^2_{+,e}$ denote $W(x) = w(\norm{x}_K)$, and
assume that:
\begin{align*}
\forall u \in \S_{N,e}(K) \; & \; \Delta u + \scalar{\nabla W,\nabla u} = 0 \text{ in int($K$)} \;\; \Rightarrow \;\; \\
& \int_{K} \scalar{\nabla^2 W \nabla u , \nabla u} e^W dx  \leq \int_K \norm{\nabla^2 u}^2 e^W dx .
\end{align*}
Then the local log-BM conjecture (\ref{eq:local-log-BM}) holds for $T(K)$ for all $T \in GL_n$. 
\end{thm}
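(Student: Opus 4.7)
The plan is to finish the derivation already laid out in the paragraphs preceding the theorem: I will combine the generalized Reilly formula with the hypothesized functional inequality to recover the infinitesimal form (\ref{eq:log-desired}) of the local log-BM conjecture. Since local log-BM is $GL_n$-invariant (established at the start of Section \ref{sec:LK}), it is enough to verify the conjecture for $K$ itself. By Proposition \ref{prop:p-BM-K} and Remark \ref{rem:p-BM-K} at $p = 0$, this reduces to proving (\ref{eq:log-desired}) for every $\Psi \in C^1_e(\partial K)$ with $\int_{\partial K} \Psi \, dx = 0$.

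Fix such a $\Psi$. Since $d\mu_{\partial K} = e^{w(1)} d\H^{n-1}|_{\partial K}$, the compatibility condition $\int_{\partial K} \Psi \, d\mu_{\partial K} = 0$ is automatic, so the elliptic existence/regularity theorem recalled in Section \ref{sec:Reilly} supplies $u \in \S_N(K)$ solving $L_\mu u = 0$ in $\text{int}(K)$ with $u_\nu = \Psi$ on $\partial K$. The origin-symmetry of $K$ makes $W$ even, hence $L_\mu$ commutes with $x \mapsto -x$; together with the evenness of $\Psi$ and the uniqueness of $u$ up to a constant, this forces $u(-x) = u(x) + c$ for some constant $c$, and applying the identity once more at $-x$ yields $c = 0$. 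Thus $u \in \S_{N,e}(K)$.

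Next, I apply the generalized Reilly formula to this $u$ with measure $\mu = e^W dx$. The key boundary computations are already recorded: using $w'(1) = 1$ together with the homogeneity identity $\scalar{x, \nabla \snorm{x}_K} = \snorm{x}_K$ gives $\scalar{\nabla W, \nu} = 1/\scalar{x,\nu}$, so $H_{\partial K, \mu} = H_{\partial K} + 1/\scalar{x,\nu}$, while $d\mu_{\partial K} = e^{w(1)} d\H^{n-1}|_{\partial K}$. The left-hand side $\int_K (L_\mu u)^2 d\mu$ of the Reilly identity vanishes; bounding the cross term $2 \int_{\partial K} \scalar{\nabla_{\partial K} u_\nu, \nabla_{\partial K} u} dx$ by Cauchy--Schwarz against $\II_{\partial K}$ and $\II_{\partial K}^{-1}$ (admissible since $\II_{\partial K} > 0$ for $K \in \K^2_+$) then delivers (\ref{eq:logconvexReilly}) verbatim. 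Finally, since $u \in \S_{N,e}(K)$ satisfies $\Delta u + \scalar{\nabla W, \nabla u} = 0$, the hypothesis of the theorem bounds the left-hand side of (\ref{eq:logconvexReilly}) by $\int_K \norm{\nabla^2 u}^2 e^W dx$; this cancels the matching bulk term on the right, and dividing through by $e^{w(1)} > 0$ leaves precisely (\ref{eq:log-desired}). The only genuinely nontrivial step is arranging the even solvability of the weighted Neumann problem; everything else is direct assembly of facts already in place in the preceding derivation.
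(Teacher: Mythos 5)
Your proposal is correct and follows essentially the same route as the paper: the derivation preceding the theorem (even solvability of the weighted Neumann problem for $L_\mu$ with $\mu = e^W dx$, the boundary identities $H_{\partial K,\mu}=H_{\partial K}+1/\scalar{x,\nu}$ and $d\mu_{\partial K}=e^{w(1)}d\H^{n-1}|_{\partial K}$, the generalized Reilly formula plus Cauchy--Schwarz yielding (\ref{eq:logconvexReilly}), and comparison with (\ref{eq:log-desired}) via Proposition \ref{prop:p-BM-K} and Remark \ref{rem:p-BM-K}, with $GL_n$-invariance handling $T(K)$). Your slightly more careful treatment of evenness of $u$ (uniqueness up to a constant forcing $u(-x)=u(x)+c$ and then $c=0$) is a fine refinement of the same argument.
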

\noindent
As usual, the application of the Cauchy--Schwarz inequality destroyed the linear invariance of the validity of the above sufficient condition, in contrast with the invariance of the local log-BM conjecture. 

\begin{rem} \label{rem:dual-BL}
Observe that when $w$ is convex, the sufficient condition in Theorem \ref{thm:dual-BL} is some sort of dual log-convex formulation of the classical Brascamp--Lieb inequality \cite{BrascampLiebPLandLambda1} (which in itself is known to be equivalent to the Pr\'ekopa--Leindler, and hence Brunn--Minkowski, inequality). 
\end{rem}

We can also obtain the following version of Theorem \ref{thm:dual-BL} for perturbations:
\begin{thm} \label{thm:dual-BL-pert}
With the same assumptions as in Theorem \ref{thm:dual-BL}, assume in addition the existence of $\eps > 0$ so that:
\begin{align}
\nonumber \forall u \in \S_{N,e}(K) \; & \; \Delta u + \scalar{\nabla W,\nabla u} = 0 \text{ in int($K$)} \;\;\; \Rightarrow\\
\label{eq:pert-assump}  &  \int_{K} \scalar{\nabla^2 W \nabla u , \nabla u} e^W dx  \leq (1-\eps) \int_K \norm{\nabla^2 u}^2 e^W dx .
\end{align}
Then there exists a $C^2$ neighborhood $N_{K}$ of $K$ in $\K^2_{+,e}$, so that the local log-BM conjecture (\ref{eq:local-log-BM}) holds for $T(K')$ for all $K' \in N_{K}$ and $T \in GL_n$. Equivalently, for all $T \in GL_n$ and $K_1,K_0 \in T(N_{K})$:
\[
V((1-\lambda) \cdot K_0 +_0 \lambda \cdot K_1) \geq V(K_0)^{1-\lambda} V(K_1)^{\lambda} \;\;\; \forall \lambda \in [0,1] . 
\]
\end{thm}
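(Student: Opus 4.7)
The plan is to verify the hypothesis of Theorem \ref{thm:dual-BL} for every $K'$ in a sufficiently small $C^2$-neighborhood $N^1_K$ of $K$, thereby obtaining the local log-BM conjecture for $T(K')$ for all $T \in GL_n$; Proposition \ref{prop:local-equiv} then promotes this to the global form on a possibly smaller $C^2$-neighborhood $N_K \subseteq N^1_K$, and the $GL_n$-invariance of log-BM transfers the conclusion to images $T(N_K)$.

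The core step is a perturbation argument. For $K' \in \K^2_{+,e}$ close to $K$ in $C^2$, consider the radial diffeomorphism $\Phi_{K'} : K \to K'$, $\Phi_{K'}(x) = (\|x\|_K / \|x\|_{K'}) \, x$, which depends continuously on $K'$ in the $C^2$-topology (modulo a standard mollification at the origin). A pleasant feature is that the weight is preserved under this pullback: $w(\|\Phi_{K'}(x)\|_{K'}) = w(\|x\|_K) = W(x)$. The pullback transports the Neumann--Poisson problem $L_{\mu'} u' = 0$, $u'_{\nu'} = \Psi'$ on $K'$ into a uniformly elliptic problem of the same form on the fixed domain $K$, but with a modified (no longer isotropic) principal symbol and drift, whose coefficients depend $C^0$-continuously on $K'$. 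Standard elliptic regularity then yields $H^2$-continuous dependence of the pulled-back solution $\tilde u_{K'} := u' \circ \Phi_{K'}$ on $K'$, for any fixed boundary datum $\Psi \in C^1_e(\partial K)$ (pushed forward to $\partial K'$ via $\Phi_{K'}$). Consequently, the two quadratic forms in $\Psi$ appearing in (\ref{eq:pert-assump}) depend continuously on $K'$; since at $K' = K$ they differ by the strict factor $1 - \eps$, a sufficiently small $C^2$-neighborhood $N^1_K$ of $K$ ensures that the non-strict hypothesis of Theorem \ref{thm:dual-BL} is satisfied for every $K' \in N^1_K$.

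Theorem \ref{thm:dual-BL} applied to each such $K'$ delivers the local log-BM conjecture for $T(K')$ for every $T \in GL_n$, and in particular for $K'$ itself. Invoking the equivalence $(1) \Leftrightarrow (2)$ of Proposition \ref{prop:local-equiv} at $K$ (with $p_0 = 0$) extracts a $C^2$-neighborhood $N_K \subseteq N^1_K$ such that for every $K_0, K_1 \in N_K$ and $t \in [0,1]$ the interpolant $(1-t) \cdot K_0 +_0 t \cdot K_1$ remains in $N_K$, and the global log-BM holds between any two such interpolants. Finally, the linear covariance $T(K_0 +_0 K_1) = T(K_0) +_0 T(K_1)$ recorded in Section \ref{sec:LK}, together with $V(T(\cdot)) = |\det T| \, V(\cdot)$, reduces the global log-BM for $T(K_0), T(K_1) \in T(N_K)$ to that for $K_0, K_1 \in N_K$, completing the proof. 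The principal obstacle is the continuity analysis in the second step: bookkeeping of the Jacobian factors and transformed differential operators produces $O(\|K' - K\|_{C^2})$ error terms in the pulled-back quadratic forms, and it is precisely to absorb these that the strict margin $\eps > 0$ in (\ref{eq:pert-assump}) is indispensable.
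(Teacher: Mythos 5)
Your overall architecture (verify the hypothesis of Theorem \ref{thm:dual-BL} for every body in a small $C^2$-neighborhood, then pass to the global statement via Proposition \ref{prop:local-equiv} and linear invariance) is reasonable, but the decisive step is not established, and as written the inference is invalid. You argue: for each \emph{fixed} boundary datum $\Psi$ the pulled-back solution depends $H^2$-continuously on $K'$, hence the two quadratic forms in (\ref{eq:pert-assump}) depend continuously on $K'$, hence --- since they differ by the factor $1-\eps$ at $K'=K$ --- one small neighborhood works for all data. But the hypothesis of Theorem \ref{thm:dual-BL} for $K'$ is quantified over the infinite-dimensional family of all solutions $u'\in\S_{N,e}(K')$ of the $K'$-dependent equation; per-datum continuity only yields a neighborhood whose size may depend on $\Psi$, and nothing in your argument prevents these neighborhoods from shrinking to nothing as $\Psi$ varies. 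What is actually needed is a \emph{uniform relative} estimate, e.g.\ that both forms change by at most $\omega(\norm{K'-K}_{C^2})\int_K \norm{\nabla^2 u}^2 e^W dx$ with $\omega\to 0$, equivalently upper semi-continuity in $C^2$ of the constrained quotient $\sup\set{\int_{K'}\scalar{\nabla^2 W'\nabla u',\nabla u'}e^{W'}dx \, / \int_{K'}\norm{\nabla^2 u'}^2 e^{W'}dx}$ over solutions of $L_{\mu'}u'=0$. Proving this requires uniform up-to-the-boundary $H^2$ estimates for the transported problems, control of the lower-order terms produced by $\nabla^2\Phi_{K'}$ (note also that $\Phi_{K'}$ is singular at the origin, and that you need $C^2$-convergence of the gauge functions $\norm{\cdot}_{K'}$, i.e.\ of the polar bodies, not merely of the support functions), and a compactness argument of the same nature as the one the paper only sketches for Proposition \ref{prop:continuity}. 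Your closing sentence acknowledges $O(\norm{K'-K}_{C^2})$ error terms, but does not show they are comparable to $\int_K\norm{\nabla^2 u}^2 e^W dx$ uniformly in $u$, which is exactly the point.

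For comparison, the paper's proof sidesteps this perturbation-of-the-PDE issue entirely: it never verifies the sufficient condition for nearby bodies. Plugging (\ref{eq:pert-assump}) into (\ref{eq:logconvexReilly}) leaves a surplus $-\delta\int_K\norm{\nabla^2 u}^2 dx$ with $\delta=\eps\, e^{\min w - w(1)}$, and the finiteness of $\BNH(K)\le\D(K)$ (Theorem \ref{thm:DK}) converts this surplus into the strengthened boundary inequality (\ref{eq:pBMPsiNor}) with $p_K=-\delta/\D(K)<0$; that is, the single body $K$ satisfies the local $p_K$-BM inequality for a strictly negative exponent. The $C^2$-neighborhood then comes from Proposition \ref{prop:local-equiv} (the implication (2')$\Rightarrow$(1') with $p_0=p_K$ and $p=0$), whose proof rests on the already-established $C^2$-continuity of the spectrum of $-L_K$ (Theorem \ref{thm:LK} (4)); the uniformity over test data is thus encapsulated once and for all in a spectral quantity rather than re-derived for the weighted Neumann problem. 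If you wish to salvage your route, you would essentially have to prove the continuity lemma for the constrained quotient above, which is more work than the theorem itself.
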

\begin{proof}
Plugging (\ref{eq:pert-assump}) into (\ref{eq:logconvexReilly}), we obtain for all $\Psi \in C^1_e(\partial K)$ with $\int_{\partial K} \Psi dx =0$:
\begin{equation} \label{eq:ellq-temp}
\int_{\partial K} \brac{H_{\partial K} + \frac{1}{\scalar{x,\nu}}} \Psi^2 dx - \int_{\partial K} \scalar{\II_{\partial K}^{-1} \nabla_{\partial K} \Psi, \nabla_{\partial K} \Psi} dx \leq -\delta \int_{K} \norm{\nabla^2 u}^2 dx  
\end{equation}
with $\delta = \eps e^{\min w - w(1)}$, where $u$ solves (\ref{eq:Laplacemu}). By definition:
\[
\int_{K} \norm{\nabla^2 u}^2 dx \geq \frac{1}{\BNH(K)} \int_{\partial K} \frac{u_\nu^2}{\scalar{x,\nu}} dx \;\;\; \forall u \in \S_{0,e}(K) ,
\]
and by Theorem \ref{thm:DK}, $\BNH(K) \leq \D(K) < \infty$. Consequently, we deduce from (\ref{eq:ellq-temp}) that:
\[
\int_{K} \scalar{\II_{\partial K}^{-1} \nabla_{\partial K} \Psi, \nabla_{\partial K} \Psi} dx  \geq \int_{\partial K} H_{\partial K} \Psi^2 dx + 
\brac{1 + \frac{\delta}{\D(K)}} \int_{\partial K} \frac{\Psi^2}{\scalar{x,\nu}} dx.
\]
In other words, (\ref{eq:pBMPsiNor}) holds with $p_K := - \frac{\delta}{\D(K)}$, and so the local $p_K$-BM conjecture (\ref{eq:local-p-BM}) holds for $K$. 
The assertion then follows by Proposition \ref{prop:local-equiv} (with $p_0 = p_K < 0$ and $p=0$) and the invariance under linear images.

\end{proof}

\begin{cor} \label{cor:dual-BL-pert}
The assumption and hence conclusion of Theorem \ref{thm:dual-BL-pert} hold if:
\[
Q_{K,w} := \max_{x \in K} \norm{\nabla^2 W(x)}_{op} e^{\max w - \min w} C_{Poin}^2(K) < 1 . 
\]
\end{cor}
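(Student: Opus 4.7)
The plan is to verify directly the hypothesis (\ref{eq:pert-assump}) of Theorem \ref{thm:dual-BL-pert} with $\eps := 1 - Q_{K,w} > 0$, by a short chain of elementary pointwise and $L^2$ estimates. Somewhat surprisingly, the PDE $L_\mu u = 0$ plays no role — only the inclusion $u \in \S_{N,e}(K)$ is used. Fix such a $u$, and write $M := \max_{x \in K} \norm{\nabla^2 W(x)}_{op}$. The symmetric-matrix inequality $\scalar{A v, v} \leq \norm{A}_{op} |v|^2$ together with the pointwise bound $e^W \leq e^{\max w}$ on $K$ (which follows from $\norm{x}_K \leq 1$ there) yield immediately
\begin{equation*}
\int_K \scalar{\nabla^2 W \nabla u, \nabla u} e^W dx \leq M \, e^{\max w} \int_K |\nabla u|^2 dx .
\end{equation*}

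The key step is to control $\int_K |\nabla u|^2 dx$ by $\int_K \norm{\nabla^2 u}^2 dx$ via the Poincar\'e inequality. This is where origin-symmetry of $K$ combined with evenness of $u$ enter in an essential way: each partial derivative $u_i$ is odd on the origin-symmetric domain, so that $\int_K u_i \, dx = 0$, and the definition of $C_{Poin}(K)$ applied to each $u_i$ (routinely justified by $H^1$-density since $u \in \S_{N}(K)$ implies $u_i \in H^1(K)$), followed by summation over $i=1,\ldots,n$, gives
\begin{equation*}
\int_K |\nabla u|^2 dx = \sum_{i=1}^n \int_K u_i^2 dx \leq C_{Poin}^2(K) \sum_{i=1}^n \int_K |\nabla u_i|^2 dx = C_{Poin}^2(K) \int_K \norm{\nabla^2 u}^2 dx .
\end{equation*}
Inserting the reverse exponential bound $1 \leq e^{W - \min w}$ on the right-hand side produces
\begin{equation*}
\int_K \scalar{\nabla^2 W \nabla u, \nabla u} e^W dx \leq M \, e^{\max w - \min w} C_{Poin}^2(K) \int_K \norm{\nabla^2 u}^2 e^W dx = Q_{K,w} \int_K \norm{\nabla^2 u}^2 e^W dx ,
\end{equation*}
which is exactly (\ref{eq:pert-assump}) with $\eps = 1 - Q_{K,w} > 0$. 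Theorem \ref{thm:dual-BL-pert} therefore applies and the corollary follows.

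I do not foresee any real obstacle: the argument is built entirely out of the operator-norm inequality, one application of the Poincar\'e inequality (with zero-mean of $u_i$ guaranteed precisely by the odd/even symmetry), and two trivial sandwich bounds on $e^W$. The only minor technical matter is the Sobolev regularity of $u_i$, which is standard. It is worth emphasizing that the bound is likely far from sharp, since it entirely discards the harmonicity of $u$ with respect to $L_\mu$; sharpening the Poincar\'e step by exploiting this PDE constraint is the natural direction for refining the quantitative criterion $Q_{K,w}<1$.
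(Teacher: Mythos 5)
Your proof is correct and is essentially identical to the paper's: the pointwise operator-norm bound together with $e^W \leq e^{\max w}$, the Poincar\'e inequality applied to each partial derivative $u_i$ (odd, hence mean-zero, by evenness of $u$ and origin-symmetry of $K$), and the re-insertion of the weight via $e^W \geq e^{\min w}$. The paper's argument likewise never uses the equation $L_\mu u = 0$, so your observation on that point matches the original proof as well.
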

\begin{proof}
For any $u \in \S_{N,e}(B_q^n)$:
\[
\int_{K} \scalar{\nabla^2 W \nabla u , \nabla u} e^W dx \leq \max_{x \in K} \norm{\nabla^2 W(x)}_{op} e^{\max w} \int_K \sum_{i=1}^n u_i^2(x) dx . 
\]
Since $u$ is even, $u_i$ is odd, and hence integrates to zero on $K$. Applying the Poincar\'e inequality on $K$ for each $u_i$ and summing, we obtain:
\[
\int_{K} \scalar{\nabla^2 W \nabla u , \nabla u} e^W dx \leq \max_{x \in K} \norm{\nabla^2 W(x)}_{op} e^{\max w - \min w} C_{Poin}^2(K) \int_K \norm{\nabla^2 u}^2 e^W dx .
\]
The assertion follows from Theorem \ref{thm:dual-BL-pert}. 
\end{proof}

\subsection{An alternative derivation via estimating $\B(K)$}

The approach of the previous subsection has the advantage of uncovering a certain duality between the sufficient condition of Theorem \ref{thm:dual-BL} and the Brascamp--Lieb inequality (see Remark \ref{rem:dual-BL}). In this subsection, we provide an alternative simpler derivation of an estimate very similar to that of Corollary \ref{cor:dual-BL-pert}, which is devoid of the former insight. On the other hand, it has the advantage of providing an upper estimate on $\B(K)$, so that even when the latter is strictly larger than $1$, Theorem \ref{thm:main-tool} may be used to deduce the local $p$-BM conjecture for $K$ for some $p \in (0,1)$. In addition, we do not need to assume that $K \in \K^2_+$.

\begin{thm} \label{thm:B-estimate}
Let $w : [0,1] \rightarrow \Real$ denote a $C^2$ function with $w'(0) = 0$ and $\max_{t \in [0,1]} \abs{w'(t)} = w'(1)= 1$. Given $K \in \K_e$ so that $\norm{\cdot}_K \in C^2(S^{n-1})$, denote $W(x) = w(\norm{x}_K)$, and assume that $K \supset r B_2^n$. Then:
\[
\B(K) \leq \frac{C_{Poin}(K)}{r} + C_{Poin}^2(K) \max_{x \in K} \norm{\nabla^2 W(x)}_{op} .
\]
\end{thm}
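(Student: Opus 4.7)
The key geometric observation is that on $\partial K$ the gradient of $W$ is exactly normal, because $\nabla W = w'(\|\cdot\|_K)\nabla\|\cdot\|_K$ and $w'(1)=1$, so $\nabla W\big|_{\partial K}=\nabla\|\cdot\|_K\big|_{\partial K}$ points along $\nu=\nu_{\partial K}$. By the computation already carried out in Section~\ref{sec:Reilly-logconvex}, $\scalar{\nabla W,\nu}=|\nabla\|\cdot\|_K|=1/\scalar{x,\nu}$ on $\partial K$, which gives the pointwise identity
\[
\frac{u_\nu^2}{\scalar{x,\nu}} \;=\; u_\nu\,\scalar{\nabla u,\nabla W} \;=\; \scalar{\scalar{\nabla u,\nabla W}\nabla u,\nu}
\qquad\text{on }\partial K.
\]
This is the crucial trick that lets me turn the weighted boundary term into the divergence of a vector field whose divergence can be controlled solely in terms of $\nabla^2 u$, $\nabla u$, and $\nabla^2 W$.

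The plan is then to apply the divergence theorem and use $\Delta u=0$. I compute
\[
\mathrm{div}\bigl(\scalar{\nabla u,\nabla W}\nabla u\bigr) \;=\; \scalar{\nabla\scalar{\nabla u,\nabla W},\nabla u}+\scalar{\nabla u,\nabla W}\,\Delta u,
\]
where the second term vanishes. Expanding $\nabla\scalar{\nabla u,\nabla W}=\nabla^2 u\cdot\nabla W+\nabla^2 W\cdot\nabla u$ and using symmetry of $\nabla^2 u$, I obtain
\[
\int_{\partial K}\frac{u_\nu^2}{\scalar{x,\nu}}\,dx \;=\; \int_K \scalar{\nabla^2 u\,\nabla u,\nabla W}\,dx + \int_K \scalar{\nabla^2 W\,\nabla u,\nabla u}\,dx.
\]

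Now I estimate the two terms. For the second, a pointwise bound by $\|\nabla^2 W\|_{op}\,|\nabla u|^2$ followed by the Poincar\'e inequality applied coordinate-wise to $\nabla u$ (which, by evenness of $u$, is odd, so each $u_i$ has mean zero on $K$) gives
\[
\int_K\scalar{\nabla^2 W\,\nabla u,\nabla u}\,dx \;\le\; \max_{x\in K}\|\nabla^2 W(x)\|_{op}\;C_{Poin}^2(K)\int_K\|\nabla^2 u\|^2\,dx.
\]
For the first term I use $|\scalar{\nabla^2 u\,\nabla u,\nabla W}|\le \|\nabla^2 u\|\,|\nabla u|\,|\nabla W|$. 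The bound $|\nabla W|\le 1/r$ follows from the hypotheses: $|w'|\le w'(1)=1$ on $[0,1]$, and $|\nabla\|\cdot\|_K|\le 1/r$ on $K$ because $\|\nabla\|y\|_K\|=1/h_K(\nu(y))\le 1/r$ for $y\in\partial K$ (as $K\supset rB_2^n$), extended by $0$-homogeneity to $K\setminus\{0\}$. Cauchy--Schwarz for integrals together with the same Poincar\'e estimate as above then yields
\[
\int_K\scalar{\nabla^2 u\,\nabla u,\nabla W}\,dx \;\le\; \frac{1}{r}\Bigl(\int_K\|\nabla^2 u\|^2\Bigr)^{1/2}\Bigl(\int_K|\nabla u|^2\Bigr)^{1/2} \;\le\; \frac{C_{Poin}(K)}{r}\int_K\|\nabla^2 u\|^2\,dx.
\]
Summing the two estimates gives the claimed bound on $\B(K)$.

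The main technical point to be careful about is the regularity of $W$ at the origin: although $\|\cdot\|_K\in C^2(S^{n-1})$ and $w'(0)=0$ force $\nabla W$ to extend continuously to $0$, $\nabla^2 W$ need not be continuous there. This is harmless because the divergence identity is applied on $K\setminus \varepsilon K$ and then $\varepsilon\to 0$, using that $\|\nabla^2 W\|_{op}$ is assumed bounded on $K$ and that $u\in C^1(K)$ so the cut-off boundary integral vanishes in the limit.
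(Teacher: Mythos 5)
Your proposal is correct and follows essentially the same route as the paper: the same vector field $\scalar{\nabla u,\nabla W}\nabla u$, integration by parts using harmonicity, the bound $\abs{\nabla W}\leq 1/r$, and the coordinate-wise Poincar\'e inequality applied to the odd functions $u_i$. The only (immaterial) differences are that you apply integral Cauchy--Schwarz directly where the paper uses a Young inequality with parameter $\lambda$ optimized at $\lambda = C_{Poin}(K)/r$ (these yield the identical constant), and you add a welcome technical remark handling the possible discontinuity of $\nabla^2 W$ at the origin via a cutoff $K\setminus\eps K$.
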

\begin{proof}
Let $u \in \S_{0,e}(K)$ be harmonic in $\text{int}(K)$. As usual:
\[
\nabla W(x) = w'(\norm{x}) \nabla \norm{x} = w'(\norm{x}) \frac{\norm{x}}{\scalar{x,\nu(x/\norm{x})}} \nu(x / \norm{x}) \;\; \forall  x\in K ,
\]
so $\nabla W|_{\partial K} = \frac{1}{\scalar{x,\nu}} \nu$ and $\abs{\nabla W} \leq \frac{\max_{t \in [0,1]} \abs{w'(t)}}{\max_{\nu \in S^{n-1}} h_K(\nu)} \leq \frac{1}{r}$ on $K$. 
Integrating by parts and utilizing the harmonicity of $u$:
\begin{align*}
 &\int_{\partial K} \frac{u^2_\nu}{\scalar{x,\nu}} dx = \int_{\partial K} u_{\nu} \scalar{\nabla u, \nabla W} dx =  \int_{K} {\rm div} \Bigl(  {\nabla u \langle \nabla u, \nabla W \rangle }  \Bigr) dx \\
 & = \int_{K} \Bigl( \langle \nabla^2 u \nabla u, \nabla W \rangle + \langle \nabla^2 W \nabla u, \nabla u \rangle  \Bigr)  dx .
\end{align*}
Applying Cauchy--Schwarz and the usual Poincar\'e inequality on each $u_i$, we have for any $\lambda > 0$:
\begin{align*}
& \leq \frac{\lambda}{2} \int_K \norm{\nabla^2 u}^2 dx + \frac{1}{2 \lambda} \int_K \abs{\nabla u}^2 \abs{\nabla W}^2 dx +  \max_{x \in K} \norm{\nabla^2 W(x)}_{op}  \int_K \abs{\nabla u}^2  dx  \\
&\leq \frac{\lambda}{2} \int_K \norm{\nabla^2 u}^2 dx + \brac{\frac{1}{2 \lambda r^2} +\max_{x \in K} \norm{\nabla^2 W(x)}_{op} } \int_K \abs{\nabla u}^2  dx \\
&\leq \brac{\frac{\lambda}{2} + \brac{\frac{1}{2 \lambda r^2} +\max_{x \in K} \norm{\nabla^2 W(x)}_{op} } C_{Poin}^2(K) } \int_K \norm{\nabla^2 u}^2 dx  .
\end{align*}
Setting $\lambda = \frac{C_{Poin}(K)}{r}$, the assertion follows. 
\end{proof}

It is particularly convenient to apply Theorem \ref{thm:B-estimate} to $B_q^n$, the unit-balls of $\ell_q^n$, for $q \in (2,\infty)$.
\begin{thm} \label{thm:BBqn}
For all $q \in (2,\infty)$, $\B(B_q^n) \leq C (n^{-1/q} + q n^{-2/q})$.
\end{thm}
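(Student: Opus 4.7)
The plan is to deduce the theorem as a direct consequence of Theorem~\ref{thm:B-estimate}, applied to $K=B_q^n$ with the choice $w(t) = t^2/2$. This $w$ is $C^2$ with $w'(0)=0$, $w'(1)=1$ and $\max_{t\in[0,1]}|w'(t)|=1$, so the corresponding potential is $W(x) = \tfrac12\|x\|_q^2$. Since $q\geq 2$, we have $\|x\|_q\leq |x|$, hence $B_2^n \subset B_q^n$ and the parameter $r$ in the theorem may be taken equal to $1$. Moreover, for $q>2$ the map $x\mapsto|x_i|^{q-1}\sgn(x_i)$ is $C^1$ with derivative $(q-1)|x_i|^{q-2}\delta_{ij}$, so $\|\cdot\|_q\in C^2(\mathbb R^n\setminus\{0\})$ and in particular the hypothesis $\|\cdot\|_{B_q^n}\in C^2(S^{n-1})$ is satisfied.

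The substantive step is to bound $\max_{x\in B_q^n}\|\nabla^2 W(x)\|_{op}$. A direct computation on $B_q^n\setminus\{0\}$ gives, with $f=\|x\|_q$ and $\psi_i = |x_i|^{q-1}\sgn(x_i)$,
\begin{equation*}
\nabla^2 W(x) \;=\; -(q-2)\, u(x)\otimes u(x) \;+\; (q-1)\, D(x),
\end{equation*}
where $u_i = f^{1-q}\psi_i$ and $D=\mathrm{diag}\bigl(f^{2-q}|x_i|^{q-2}\bigr)$. Setting $y_i = x_i/f$ so that $\sum_i|y_i|^q=1$ (hence $|y_i|\leq 1$), one gets $|u|^2 = \sum_i|y_i|^{2(q-1)}\leq\sum_i|y_i|^q=1$ (using $2(q-1)\geq q$ and $|y_i|\leq 1$), and $\max_i D_{ii} = \max_i|y_i|^{q-2}\leq 1$. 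Since $q>2$, the rank-one piece $-(q-2)u\otimes u$ is negative semi-definite while $(q-1)D$ is positive semi-definite, so the largest and smallest eigenvalues of $\nabla^2 W$ can be controlled separately:
\begin{equation*}
\lambda_{\max}(\nabla^2 W) \leq (q-1)\max_i D_{ii} \leq q-1, \qquad -\lambda_{\min}(\nabla^2 W) \leq (q-2)|u|^2 \leq q-2,
\end{equation*}
yielding $\|\nabla^2 W(x)\|_{op}\leq q-1\leq q$ uniformly on $B_q^n\setminus\{0\}$.

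Finally, Theorem~\ref{thm:Poincare} gives $C_{Poin}(B_q^n)\leq C\,n^{-1/q}$ for a universal constant $C$. Plugging everything into Theorem~\ref{thm:B-estimate} yields
\begin{equation*}
\B(B_q^n) \;\leq\; \frac{C_{Poin}(B_q^n)}{r} + C_{Poin}^2(B_q^n)\,\max_{x\in B_q^n}\|\nabla^2 W\|_{op}
\;\leq\; C\,n^{-1/q} + C^2 q\, n^{-2/q},
\end{equation*}
which is the claimed bound up to adjusting the constant.

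The only substantive obstacle is the operator-norm estimate in the middle paragraph; the key point there is exploiting the opposing signs of the two summands to get a bound linear (rather than quadratic) in $q$, which is what makes the second term $q\, n^{-2/q}$ and not $q^2 n^{-2/q}$. A minor technical nuisance is that $W$ fails to be $C^2$ at the origin when $q\in(2,3)$; this is harmless since $\nabla^2 W$ is essentially bounded on $B_q^n$ by the above estimate, and the proof of Theorem~\ref{thm:B-estimate} is driven by integrals of $\nabla^2 W$ against $|\nabla u|^2$, which can be justified by standard mollification of $w$ near $0$ without affecting the bound.
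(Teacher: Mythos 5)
Your proof is correct and uses the same framework as the paper — Theorem \ref{thm:B-estimate} for $K=B_q^n$ with $r=1$ (since $B_2^n\subset B_q^n$) combined with the Poincar\'e bound $C_{Poin}(B_q^n)\simeq n^{-1/q}$ of Theorem \ref{thm:Poincare} — but with a different choice of potential. The paper takes $w(t)=\frac1q t^q$, so that $W(x)=\frac1q\sum_i|x_i|^q$ is separable: then $\nabla^2 W=(q-1)\,\mathrm{diag}(|x_i|^{q-2})$ is diagonal, the bound $\norm{\nabla^2 W}_{op}\le q-1$ on $B_q^n$ is immediate, and $W$ is genuinely $C^2$ on all of $\Real^n$ for $q>2$, so no regularity caveat arises. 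Your choice $w(t)=t^2/2$ gives $W=\frac12\norm{x}_q^2$, whose Hessian is the non-diagonal $-(q-2)\,u\otimes u+(q-1)D$; your eigenvalue argument (treating the negative rank-one part and the positive diagonal part separately, and using $2(q-1)\ge q$ to get $|u|\le 1$) is correct and recovers the same linear-in-$q$ bound, so the final estimate follows identically. One small inaccuracy: $W=\frac12\norm{x}_q^2$ fails to be twice differentiable at the origin for \emph{every} $q>2$, not just $q\in(2,3)$ — its Hessian is $0$-homogeneous and non-constant, hence has no limit at $0$. This does not affect your argument: as you say, $\nabla W$ is Lipschitz with essentially bounded Hessian, so the integration by parts in the proof of Theorem \ref{thm:B-estimate} goes through (by mollification or directly for Lipschitz fields), and in any case the hypotheses of Theorem \ref{thm:B-estimate} as stated (on $w$ and on $\norm{\cdot}_K|_{S^{n-1}}$) are literally satisfied. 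In short: same route, slightly more labor-intensive potential; the paper's $w(t)=t^q/q$ is the cleaner choice since it trivializes the Hessian bound and avoids the origin issue altogether.
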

\begin{proof}
Set $w(t) = \frac{1}{q} t^q$ and $W(x) = w(\norm{x}_{\ell_q^n}) = \frac{1}{q} \sum_{i=1}^n \abs{x_i}^q$. Observe that $\nabla^2 W = (q-1) \text{diag}(\abs{x_i}^{q-2})$, and hence $\max_{x \in B_q^n} \snorm{\nabla^2 W(x)}_{op} = q-1$ whenever $q \geq 2$. It remains to recall that $C_{Poin}(B_q^n)$ is of the order of $n^{-1/q}$ by Theorem \ref{thm:Poincare} and that $B_2^n \subset B_q^n$ when $q \geq 2$, and so Theorem \ref{thm:B-estimate} yields the assertion. 
\end{proof}

\bigskip

\section{Continuity of $\B$, $\BNH$, $\D$ with application to $B_q^n$} \label{sec:continuity}

\subsection{Continuity of $\B$, $\BNH$, $\D$ in $C$-topology}

\begin{prop} \label{prop:continuity}
Let $\set{\K_i} \subset \K_e$. If $K_i \rightarrow K$ in the $C$-topology then $\BNH(K_i) \rightarrow \BNH(K)$, $\B(K_i) \rightarrow \B(K)$  and $\D(K_i) \rightarrow \D(K)$. 
\end{prop}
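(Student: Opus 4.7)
The plan is to prove the two one-sided bounds $\limsup_i \B(K_i) \leq \B(K)$ and $\liminf_i \B(K_i) \geq \B(K)$ separately, with parallel arguments for $\BNH$ and $\D$. By homogeneity of all three constants under homothety, we may assume all bodies lie in a common annulus $r B_2^n \subset K_i, K \subset R B_2^n$ for fixed $0 < r < R$. The crucial preliminary is a stability lemma for fixed test functions. Using the radial parametrization $\partial L = \set{\rho_L(\theta)\theta \; ; \; \theta \in S^{n-1}}$ and the formula $\nu = (\rho_L\theta - \nabla_{S^{n-1}}\rho_L)/\sqrt{\rho_L^2+|\nabla_{S^{n-1}}\rho_L|^2}$ for the outer unit normal, one rewrites
\[
\int_{\partial L} \frac{v_\nu^2}{\scalar{x,\nu}} dx = \int_{S^{n-1}} \brac{\rho_L \scalar{\nabla v,\theta} - \scalar{\nabla v, \nabla_{S^{n-1}}\rho_L}}^2 \rho_L^{n-4} d\theta
\]
(evaluated at $x = \rho_L(\theta)\theta$). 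For a fixed $v \in C^2(\Real^n)$, the $C$-convergence $K_i\to K$ yields uniform convergence $\rho_{K_i}\to\rho_K$ on $S^{n-1}$; moreover, since $\snorm{\cdot}_L = 1/\rho_L$ is convex on $\Real^n$, standard properties of convex functions force $\nabla_{S^{n-1}}\rho_{K_i}\to \nabla_{S^{n-1}}\rho_K$ almost everywhere, with a uniform $L^\infty$ bound $|\nabla_{S^{n-1}}\rho_L|\leq R^2/r$ depending only on $r,R$. Dominated convergence then yields $\int_{\partial K_i} v_\nu^2/\scalar{x,\nu} dx \to \int_{\partial K} v_\nu^2/\scalar{x,\nu} dx$, while $\int_{K_i}\snorm{\nabla^2 v}^2 dx \to \int_K \snorm{\nabla^2 v}^2 dx$ trivially.

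For the $\liminf$ direction applied to $\BNH$ and $\D$: given $\eps>0$, a density argument allows picking a near-optimal $u \in C^\infty(\Real^n)$ for $K$ (Sobolev-extend an approximate extremizer on $K$, exploiting the uniform Lipschitz character of the boundary, and mollify); on each $K_i$, subtract a small affine correction to restore evenness or the balance constraint $\int_{K_i}\vec{\nabla}u \, dx = \vec{0}$ (the correction tends to zero since $\int_K\vec{\nabla}u\, dx = \vec{0}$), and invoke the stability lemma. For $\B$, harmonicity must also be preserved: solve on $K_i$ the Poisson--Neumann problem $\Delta\phi_i = \Delta u$ in $\text{int}(K_i)$ with $(\phi_i)_\nu = 0$ on $\partial K_i$ (and normalize $\int_{K_i} \phi_i = 0$), then replace $u$ by $u - \phi_i$. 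Since $u$ is harmonic in $K$, $\norm{\Delta u}_{L^2(K_i)} \to 0$ as $V(K_i \triangle K) \to 0$, so $\phi_i \to 0$ in $H^2$ by elliptic regularity --- a statement classical for bodies in $\K^2_+$ and extended to general $K_i \in \K_e$ by first approximating via inf-convolution with $\eps B_2^n$ and passing to a diagonal limit.

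For the $\limsup$ direction: normalize near-optimizers $u_i$ on $K_i$ by $\int_{K_i}\snorm{\nabla^2 u_i}^2 = 1$, subtracting a constant and (for $\D$) an affine function so that $\int_{K_i} u_i = 0$ and the relevant orthogonality constraints hold. The Poincar\'e inequality applied twice --- first to $u_i$, then to each partial $\partial_j u_i$ (using that $\partial_j u_i$ has mean zero by the evenness/balance constraints) --- gives a uniform $H^2(K_i)$ bound, invoking the Hausdorff-stable Payne--Weinberger bound $C_{Poin}(K_i)\leq 2R/\pi$. Stein extension into a common ambient cube (with uniform extension constants, available for uniformly Lipschitz convex domains), followed by weak $H^2$ / strong $H^1$ compactness, produces a subsequential limit $u_\infty$. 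Evenness, the balance condition, and (for $\B$) harmonicity all pass to the limit; combined with the stability lemma and the weak lower semicontinuity of $\int\snorm{\nabla^2(\cdot)}^2$, this yields the bound.

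The principal obstacle is the convergence of the boundary integral through merely $C^0$-converging boundaries, which lacks good regularity control on $\nu$; this is resolved by the spherical reduction in the stability lemma, whose validity hinges precisely on the a.e.\ differentiability and uniform $L^\infty$ bound on the gradient of the convex gauge $\snorm{\cdot}_L$. A secondary obstacle is maintaining harmonicity for $\B$ under domain perturbation, addressed by the Poisson--Neumann correction argument in tandem with an approximation of arbitrary $K_i \in \K_e$ by bodies in $\K^2_+$ on which the classical elliptic continuity-under-domain-variation theorems directly apply.
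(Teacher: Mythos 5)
Your overall strategy is correct and runs parallel to the paper's own (sketched) proof: both directions are semicontinuity statements, the easy direction being handled by transferring near-optimal test functions from $K$ to $K_i$ and the hard ($\limsup$) direction by normalizing near-optimizers, extracting a weak $H^2$ limit, and using lower semicontinuity of the Hessian energy together with convergence of the boundary terms. Where you genuinely differ is in implementation: for the continuity of the boundary functional at a fixed test function, the paper invokes weak convergence of the support/curvature measures ($\frac{1}{\scalar{x,\nu_{\partial K}}}\nu_{\partial K}\,d\H^{n-1}|_{\partial K}$, via Schneider's Theorem 4.2.1), whereas you reduce everything to $S^{n-1}$ by the radial parametrization and use a.e.\ convergence of gradients of the convex gauges plus dominated convergence --- more elementary and self-contained. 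Your Poisson--Neumann correction to restore harmonicity on $K_i$ is also a welcome addition: the paper's one-line claim that $\B$ is a supremum of continuous functionals glosses over the fact that the admissible (harmonic) class is domain-dependent, and your device addresses it explicitly; note only that as stated the problem $\Delta\phi_i=\Delta u$, $(\phi_i)_\nu=0$ violates the compatibility condition unless $\int_{K_i}\Delta u\,dx=0$, so you must subtract the (vanishing) mean from the source or use a small constant Neumann datum --- a trivial fix which does not disturb the argument, since the numerator only sees the normal derivative.

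The one step that is too weak as written is the passage to the limit of the boundary energy in the $\limsup$ direction. Strong $H^1$ convergence of the Stein extensions in the ambient cube does \emph{not} by itself control $\int_{\partial K_i}\abs{\nabla u_i-\nabla u_\infty}^2\,\frac{d\H^{n-1}}{\scalar{x,\nu}}$, because the trace of the \emph{gradient} is estimated by the $H^2$-norm, which only converges weakly. What is needed is strong convergence of the gradient traces on the \emph{varying} boundaries, i.e.\ a compact trace embedding $H^1\to L^2(\partial K_i,\mu_{K_i})$ with constants uniform over the uniformly Lipschitz bodies (this is exactly the role of the trace-compactness result the paper cites), or equivalently the multiplicative trace inequality $\snorm{g}^2_{L^2(\partial K_i)}\leq C\snorm{g}_{L^2}\snorm{g}_{H^1}$ with uniform $C$, applied to $g=\nabla u_i-\nabla u_\infty$ (strong $L^2$ convergence times bounded $H^1$ norms). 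Relatedly, your stability lemma is proved for fixed $v\in C^2(\Real^n)$ but in this direction must be applied to the limit $u_\infty\in H^2(K)$; this too is repaired by the same uniform trace bounds (extend $u_\infty$ and approximate by smooth functions with boundary errors controlled uniformly in $i$), and one should also note, as the paper does, that the $H^2$ limit need not lie in $\S_{0,e}(K)$ and must be re-approximated by admissible test functions. With these points made explicit, your argument is complete and at the same level of rigor as the paper's sketch.
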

\begin{proof}
As this is not a cardinal point in this work, let us only sketch the proof, as providing all details would be tedious.

It is easy to see that the mappings $\K_e \ni K \mapsto \B(K),\BNH(K),\D(K)$ are lower semi-continuous in the $C$ topology, being the suprema of a continuous family of functionals (parametrized by $u$). For instance, for $\B, \BNH$ and a fixed $u$, the functional is:
\[
\K_e \ni K \mapsto \frac{\int_{\partial K} \frac{u_\nu^2}{\scalar{x,\nu_{\partial K}(x)}} dx}{\int_K \norm{\nabla^2 u}^2 dx} ,
\]
which are continuous in $C$ since the vector valued measure $\frac{1}{\scalar{x,\nu_{\partial K}(x)}}\nu_{\partial K} \H^{n-1}|_{\partial K}$ weakly converges under $C$ convergence of convex bodies (for a more general statement regarding generalized curvature measures, also known as support measures, see \cite[Theorem 4.2.1]{Schneider-Book}).

The harder part is to show the upper semi-continuity. To see this for $\D(K)$, for instance, let $\set{K_i}$ denote a sequence on which $\limsup_{K_i \rightarrow K} \D(K_i)$ is attained. Since $K \mapsto \D(K)$ is invariant under homothety, we may assume w.l.o.g. that $rB_2^n \subset K_1 \subset K_2 \subset \ldots K \subset R B_2^n$. 
Denote by $u^i$ the test function for which:
\[
 \int_{K_i} \norm{\nabla^2 u^i}^2 dx = 1 \text{ and } \int_{\partial K_i} \frac{\abs{\nabla u^i}^2}{\scalar{x,\nu_{\partial K_i}(x)}} dx \geq \D(K_i) - \frac{1}{i}.
\]
As $\{u^i\}_{i \geq j}$ are bounded in $H^2(K_j)$, they have a weakly convergent subsequence in $H^2(K_j)$, and by a diagonalization argument, we may extract a subsequence (which we continue to denote $\{u^i\}$) weakly converging to $u \in H^2(K)$, so that:
\[
\lim_{i \rightarrow \infty} \int_{K_i} \scalar{\nabla^2 (u^i - u) , \varphi} dx = 0 
\]
for any smooth $2$-tensor $\varphi$ on $K$. The weak convergence implies that $\int_K \norm{\nabla^2 u} dx \leq 1$. By compactness  \cite[Corollary 7.4]{Biegert-SobolevTraces} of the trace embedding of Sobolev space on Lipschitz domains with upper Alfhors measures (such as $\mu_K := \frac{1}{\scalar{x,\nu_{\partial K}}} d\H^{n-1}|_{\partial K}$ in our setting), which in fact holds with a uniform constant for all $K_i$ (as they are uniformly Lipschitz and $\mu_K$ is uniformly upper Ahlfors, owing to convexity and $r,R$ being uniform), the weak convergence in $H^2$ implies strong convergence in the trace $H^1$ norm:
\[
\lim_{i \rightarrow \infty} \int_{\partial K_i} \abs{\nabla u^i - \nabla u}^2 d\mu_{K_i} = 0 .
\]
By weak convergence of $\mu_{K_i}$ to $\mu_K$ (as for the lower semi-continuity direction), there exists for any $\eps > 0$ a large enough $i_\eps$, so that for all $i \geq i_{\eps}$:
\begin{align*}
\D(K) & \geq \frac{\int_{\partial K} \frac{\abs{\nabla u}^2}{\scalar{x,\nu_{\partial K}(x)}} dx}{\int_K \norm{\nabla^2 u}^2 dx} \geq \int_{\partial K} \abs{\nabla u}^2 d\mu_{K} \geq \int_{\partial K_i} \abs{\nabla u}^2 d\mu_{K_i} - \eps \\
& \geq \int_{\partial K_i} \abs{\nabla u^i}^2 d\mu_{K_i} - \int_{\partial K_i} \abs{\nabla u^i - \nabla u}^2 d\mu_{K_i} - \eps \\
&\geq  \D(K_i) - \frac{1}{i} - \int_{\partial K_i} \abs{\nabla u^i - \nabla u}^2 d\mu_{K_i} - \eps . 
\end{align*}
Taking the limit as $i\rightarrow \infty$ and $\eps \rightarrow 0+$, the upper semi-continuity of $\D$ follows.  
Note that the limiting $u$ is not guaranteed to be in $\S_{N,e}(K)$, only in $H^2(K)$, but can be approximated in $H^2(K)$ by functions in $\S_{N,e}(K)$, and by the trace embedding theorem, also in $H^1(d\mu_K)$, and hence the above lower bound on $\D(K)$ is legitimate.

The proof is identical for $\BNH(K)$. For $\B(K)$, one just has to note that the limiting $u$ will be harmonic as the weak $H^2$ limit of the harmonic $u^i$. \end{proof}

\subsection{The Cube}

We can now extend Theorem \ref{thm:B-Cube} to a result on the even spectral-gap $\lambda_{1,e}(B_\infty^n)$ of the formal Hilbert--Brunn--Minkowski operator associated to $B_\infty^n$. Recalling the notation from Section \ref{sec:LK}, and in particular the definition (\ref{eq:lambda-nonsmooth}):
\[
\lambda_{1,e}(B_\infty^n) := \liminf_{\K^2_{+,e} \ni K \rightarrow B_\infty^n \text{ in $C$}}  \lambda_{1,e}(-L_K) ,
\]
we have:
\begin{thm} \label{thm:lambda1-cube}
$\lambda_{1,e}(B_\infty^n) = \frac{n}{n-1}$. 
\end{thm}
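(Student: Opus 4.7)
I prove the theorem by establishing matching lower and upper bounds for $\lambda_{1,e}(B_\infty^n)$.

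For the \emph{lower bound} $\lambda_{1,e}(B_\infty^n) \geq \frac{n}{n-1}$: for any sequence $\K^2_{+,e} \ni K_i \to B_\infty^n$ in the $C$-topology, Theorem~\ref{thm:B2LK} applied with $T = \mathrm{Id}$ gives $\lambda_{1,e}(-L_{K_i}) \geq 1 + \frac{1}{(n-1)\B(K_i)}$. By Theorem~\ref{thm:B-Cube}, $\B(B_\infty^n) = 1$, and by Proposition~\ref{prop:continuity}, $\B$ is $C$-continuous on $\K_e$. Hence $\B(K_i) \to 1$, and passing to $\liminf$ yields the stated bound.

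For the \emph{upper bound} $\lambda_{1,e}(B_\infty^n) \leq \frac{n}{n-1}$, it suffices to exhibit one sequence $K_i \in \K^2_{+,e}$ with $K_i \to B_\infty^n$ and $\lambda_{1,e}(-L_{K_i}) \to \frac{n}{n-1}$. The sharpness in Theorem~\ref{thm:B-Cube} was witnessed by the harmonic, even function $u(x) := \tfrac{1}{2}(x_1^2 - x_2^2)$, and I use this as a universal test on every $K_i$. Set $\Psi_i := u_\nu = x_1\nu_1 - x_2\nu_2 \in C^1_e(\partial K_i)$; harmonicity of $u$ gives $\int_{\partial K_i}\Psi_i dx = 0$, and the Reilly identity for harmonic $u$ reads
\[
\int_{\partial K_i}\scalar{\II_{\partial K_i}^{-1}\nabla_{\partial K_i}\Psi_i,\nabla_{\partial K_i}\Psi_i} dx - \int_{\partial K_i} H_{\partial K_i}\Psi_i^2 dx = \int_{K_i}\|\nabla^2 u\|^2 dx + R_{K_i}(u).
\]
Inserting this into the infinitesimal $p$-BM formulation of Remark~\ref{rem:p-BM-K} and invoking Corollary~\ref{cor:LK-SG} yields the Rayleigh upper estimate
\[
\lambda_{1,e}(-L_{K_i}) \leq 1 + \frac{1}{n-1}\cdot\frac{\int_{K_i}\|\nabla^2 u\|^2 dx + R_{K_i}(u)}{\int_{\partial K_i}\Psi_i^2/\scalar{x,\nu_{\partial K_i}} dx}.
\]
Since $\|\nabla^2 u\|^2 \equiv 2$, the first numerator term converges to $2V(B_\infty^n) = 2^{n+1}$. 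For the denominator, $\Psi_i = x_1\nu_1 - x_2\nu_2$ converges pointwise to $\pm 1$ on the facets $\{x_j = \pm 1\}$ for $j = 1,2$ (and to $0$ on all other facets), and weak convergence of $\frac{1}{\scalar{x,\nu}}d\H^{n-1}|_{\partial K_i}$ (as used in the proof of Proposition~\ref{prop:continuity}) gives $\int \Psi_i^2/\scalar{x,\nu} dx \to 4 \cdot 2^{n-1} = 2^{n+1}$. So the main quotient tends to $1$, reducing the upper bound to verifying $R_{K_i}(u) \to 0$ for a suitable smoothing.

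This last step is the \emph{main technical obstacle}. Using Remark~\ref{rem:RK},
\[
R_{K_i}(u) = \int_{\partial K_i}\scalar{\II_{\partial K_i}^{-1} P_T[\nabla^2 u \cdot \nu], P_T[\nabla^2 u \cdot \nu]} dx
\]
with $\nabla^2 u \cdot \nu = \nu_1 e_1 - \nu_2 e_2$. The crucial observation is that this vector is \emph{normal} to $\partial K_i$ precisely when $\nu \in \{\pm e_1, \pm e_2\}$ (so $P_T$ annihilates it) and vanishes outright when $\nu \in \{\pm e_j\}$ for $j \geq 3$. Hence the integrand is $o(1)$ on the asymptotically full-measure flat facets, and the only contribution comes from the rounded edge/corner region $E_i \subset \partial K_i$ where $\nu$ interpolates between coordinate directions. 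For a concrete sequence in $\K^2_{+,e}$ — for instance $K_\epsilon$ with $C^\infty$ support function $h_\epsilon(x) := \sum_{j=1}^n\sqrt{x_j^2 + \epsilon^2|x|^2}$, which is the Minkowski sum of $n$ strictly convex ellipsoids (hence in $\K^2_{+,e}$) and $C$-converges to $B_\infty^n$ as $\epsilon \to 0^+$ — the boundary on $E_\epsilon$ is strictly convex with curvature of order $1/\epsilon$ (so $\II^{-1}$ of order $\epsilon$) while $\H^{n-1}(E_\epsilon) = O(\epsilon)$, and a direct geometric calculation then yields $R_{K_\epsilon}(u) = O(\epsilon) \to 0$. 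Combining with the preceding limits, $\limsup_\epsilon \lambda_{1,e}(-L_{K_\epsilon}) \leq \frac{n}{n-1}$, matching the lower bound and completing the proof.
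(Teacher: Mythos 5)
Your overall architecture is the same as the paper's: the lower bound via Theorem \ref{thm:B2LK}, Theorem \ref{thm:B-Cube} and the continuity of $\B$ is exactly the paper's argument, and your upper bound follows the paper's route as well — test the extremal even harmonic function $u=\frac{x_1^2}{2}-\frac{x_2^2}{2}$ on a smoothing sequence, use the Reilly identity to reduce the Rayleigh-type estimate to showing $R_{K_i}(u)\to 0$ (the paper does this with $K_i=B_q^n$, $q\to\infty$; you propose $K_\eps$ a Minkowski sum of thin ellipsoids). That reduction, the evenness and zero-mean of $\Psi_i=u_\nu$, and the limits of $\int_{K_i}\norm{\nabla^2 u}^2dx$ and of the boundary term are all fine.

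The gap is precisely in the step you flag as the main technical obstacle. The justification offered for $R_{K_\eps}(u)\to 0$ is wrong as stated: on the rounded edge band $E_\eps$ only the principal curvature transverse to the edge is of order $1/\eps$; the $n-2$ principal curvatures along the edge are of order $\eps$, so $\II_{\partial K_\eps}^{-1}$ has eigenvalues of order $1/\eps$ there — it is not ``of order $\eps$'' — and the bound (measure $O(\eps)$)$\times\snorm{\II^{-1}}_{op}\times\abs{P_{T_{\partial K_\eps}}[\nabla^2u\cdot\nu]}^2$ does not close. Likewise, on the facets adjacent to $\pm e_1,\pm e_2$ the vector $\nabla^2u\cdot\nu=\nu_1e_1-\nu_2e_2$ is only \emph{approximately} normal while $\II^{-1}$ blows up like $1/\eps$ (the facets of $K_\eps$ have curvature of order $\eps$), so ``the integrand is $o(1)$ on the flat facets'' is not automatic either. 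What is actually needed is the quantitative pairing of directions: the components of $P_{T_{\partial K_\eps}}[\nabla^2u\cdot\nu]$ along the eigendirections of $\II^{-1}$ with large eigenvalue must be shown to be $O(\eps)$ (on the bulk of the facets the normal deviates from $\pm e_1,\pm e_2$ by $O(\eps)$, and on the edge band the along-edge components are governed by the small $\nu_j$, $j\ge 3$), so that (eigenvalue $\sim 1/\eps$)$\times$(component$^2\sim\eps^2$) is $O(\eps)$. This is exactly the content of the paper's computation for $B_q^n$ — the explicit decomposition $\II^{-1}_{\partial B_q^n}=\Lambda^{-1/2}\tilde U\Lambda^{-1/2}$ together with a H\"older estimate showing the integrand tends to zero uniformly — and an analogous explicit computation (e.g. via $D^2h_\eps=\sum_j D^2h_j$ for your ellipsoid sum) must be carried out for your sequence; without it the upper bound, and hence the theorem, is not established by your argument.
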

\begin{proof}
By Proposition \ref{prop:continuity} and Theorem \ref{thm:B-Cube}, we have:
\[
\lim_{\K^2_{+,e} \ni K_i \rightarrow B_\infty^n \text{ in $C$}} \B(K_i) = B(B_\infty^n) = 1.
\]
By Theorem \ref{thm:B2LK}, we know that  $\lambda_{1,e}(-L_K) \geq 1 + \frac{1}{(n-1) \B(K)}$ for any $K \in \K^2_{+,e}$. Consequently:
\[
\liminf_{\K^2_{+,e} \ni K \rightarrow B_\infty^n \text{ in $C$}}\lambda_{1,e}(-L_K) \geq \frac{n}{n-1} .
\]
To see that we actually have equality in the above inequality, it is enough to test the specific sequence $\set{B_q^n} \subset \K^2_{+,e}$ which converges to $B_\infty^n$ in $C$ as $q \rightarrow \infty$. Moreover, it is enough to show that the inequality $R_{B_q^n}(u^0) \geq 0$ we employed in Theorem \ref{thm:sufficient}, when transitioning from the equivalent condition for the local log-BM conjecture to the sufficient one, is not wasteful for the extremal even harmonic function $u^0 := \frac{x_1^2}{2} - \frac{x_2^2}{2}$ for $B_\infty^n$. 
Using Remark \ref{rem:RK}, we need to show:
\begin{equation} \label{eq:Bqn-goal}
R_{B_q^n}(u^0) = \int_{\partial B_q^n} \scalar{ \II^{-1}_{\partial B_q^n} P_{T_{\partial B_q^n}} \bigl[ \nabla^2 u^0 \cdot \nu \bigr], P_{T_{\partial B_q^n}} \bigl[\nabla^2 u^0 \cdot \nu \bigr]}  dx \rightarrow 0 \;\;\; \text{as $q \rightarrow \infty$} \; .
\end{equation}
A computation verifies that on the positive orthant:
\[
\nu(x) = \frac{\{x^{q-1}_i\}_{i=1}^n}{\sqrt{\sum_{i=1}^n x^{2(q-1)}_i}} ~,~ \II_{\partial B_q^n} = \Lambda^{1/2} U \Lambda^{1/2}|_{T_{\partial B_q^n}} ,
\]
where:
\[
\Lambda(x) = \frac{q-1}{\sqrt{\sum_{i=1}^n x^{2(q-1)}_i}}{\rm{diag}}(x^{q-2}_i) ~,~ U = \langle \Lambda \nu, \nu \rangle \langle \Lambda^{-1} \nu, \nu \rangle \hat{e}_2 \otimes \hat{e}_2 + \sum_{k >2}^n  \hat{e}_k \otimes \hat{e}_k ~ ,
\]
and $\set{\hat{e}_k}_{k=1}^n$ is an orthonormal frame with:
\[
\hat{e}_i = \frac{\tilde{e}_i}{\abs{\tilde{e}_i}} ~,~ \tilde{e}_1 = \Lambda^{1/2} \nu ~,~ \tilde{e}_2 = \Lambda^{-1/2} \nu - \frac{1}{\scalar{\Lambda \nu,\nu}} \Lambda^{1/2} \nu .
\]
Consequently:
\[
\II^{-1}_{\partial B_q^n} = \bigl( \Lambda^{-\frac{1}{2}}  \tilde{U} \Lambda^{-\frac{1}{2}} \Bigr)|_{T_{\partial K}} \text{ with } \;
\tilde{U} = \frac{1}{\langle \Lambda \nu, \nu \rangle \langle \Lambda^{-1} \nu, \nu \rangle} \hat{e}_2 \otimes \hat{e}_2 
 + \sum_{k >2}^n  \hat{e}_k \otimes \hat{e}_k ~ .
\]
It follows that the integrand in (\ref{eq:Bqn-goal}) is bounded by:
\begin{multline*} \scalar{ \II^{-1}_{\partial B_q^n} P_{T_{\partial K}} \bigl[ \nabla^2 u^0 \cdot \nu \bigr], P_{T_{\partial K}} \bigl[\nabla^2 u^0 \cdot \nu \bigr]} \leq \\
\max\brac{1,\frac{1}{\langle \Lambda \nu, \nu \rangle \langle \Lambda^{-1} \nu, \nu \rangle}} \abs{\Lambda^{-\frac{1}{2}} P_{T_{\partial K}} \bigl[ \nabla^2 u^0 \cdot \nu \bigr] }^2 . 
\end{multline*}
It is easy to check that:
\[
P_{T_{\partial K}} \bigl[ \nabla^2 u^0 \cdot \nu \bigr] = \frac{\{ a_i(x) x^{q-1}_i\}_{i=1}^n}{\sqrt{\sum_{i=1}^n x^{2(q-1)}_i}} ~,~ \abs{a_i(x)} \leq 2 . 
\]
Plugging in the above expression for $\Lambda$ and applying H\"{o}lder's inequality (using $\sum_{i=1}^n x_i^q = 1$), a straightforward calculation verifies that the integrand goes to zero uniformly in $x$ as $q \rightarrow \infty$, and the claim is established. 
\end{proof}

In view of Corollary \ref{cor:LK-SG} and Proposition \ref{prop:local-equiv}, Theorem \ref{thm:intro-cube} is a reformulation of Theorem \ref{thm:lambda1-cube}. 
Similarly, recalling definition (\ref{eq:lambda-unc}), we extend Theorem \ref{thm:unc} to non-smooth $K \in \K_{\unc}$:

\begin{cor} \label{cor:lambda1-unc}
For all $K \in \K_{\unc}$, $\lambda_{1,\unc}(K) \geq \frac{n}{n-1}$.
\end{cor}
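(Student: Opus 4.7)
The plan is to follow the same strategy used for the cube in Theorem \ref{thm:lambda1-cube}, but specialized to the unconditional subspace throughout, and then to pass to the $\liminf$ along an approximating sequence. Since $\lambda_{1,\unc}(K)$ is defined by (\ref{eq:lambda-unc}) as a $\liminf$ over $\K^2_{+,\unc} \ni K_i \to K$, it suffices to establish the pointwise lower bound
\[
\lambda_{1,\unc}(-L_{K_i}) \geq \frac{n}{n-1} \quad \forall K_i \in \K^2_{+,\unc}.
\]

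First I would observe that the entire equivalence chain running from Proposition \ref{prop:LK-SG} through Propositions \ref{prop:p-BM-Sphere} and \ref{prop:p-BM-K} to the sufficient condition of Theorem \ref{thm:sufficient} preserves the unconditional subspace. Concretely: if $K \in \K^2_{+,\unc}$, then $h_K$ is unconditional, so the coefficients of $L_K$ appearing in (\ref{eq:LK-explicit}) are unconditional, and hence $L_K$ commutes with the $n$ coordinate reflections. Consequently, $E_{\unc}$ is an invariant subspace for $L_K$, and the same symmetrization argument used to pass from Corollary \ref{cor:LK-SG} to Theorem \ref{thm:main-tool} restricts cleanly: given any unconditional $\Psi \in C^1_{\unc}(\partial K)$ with $\int_{\partial K} \Psi \,dx = 0$, the harmonic function $u \in \S_{N,e}(K)$ solving $\Delta u = 0$, $u_\nu = \Psi$, is itself unconditional by the uniqueness in the Neumann problem and the symmetry of $K$. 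Thus the unconditional analog of Theorem \ref{thm:sufficient} asserts that the bound $\lambda_{1,\unc}(-L_K) \geq \tfrac{n-p}{n-1}$ is implied by the inequality
\[
\int_{\partial K} \frac{u_\nu^2(x)}{\scalar{x,\nu_{\partial K}(x)}}\,dx \leq \frac{1}{1-p}\int_K \|\nabla^2 u\|^2\,dx
\]
holding for every harmonic $u \in \S_{0,\unc}(K)$.

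Next, I would apply this with $p = 0$: the desired pointwise bound $\lambda_{1,\unc}(-L_{K_i}) \geq \tfrac{n}{n-1}$ follows as soon as every $u \in \S_{0,\unc}(K_i)$ (in fact, not even requiring harmonicity) satisfies $\int_{\partial K_i} u_\nu^2 / \scalar{x,\nu}\,dx \leq \int_{K_i}\|\nabla^2 u\|^2\,dx$. But this is exactly the content of Theorem \ref{thm:unc}, since any unconditional $u$ automatically satisfies the hypothesis $u_i \equiv 0$ on $K_i \cap \{x_i = 0\}$ (the odd parity of $u_i$ in $x_i$ forces it to vanish on that hyperplane). So Theorem \ref{thm:unc} immediately supplies the sufficient condition, yielding $\lambda_{1,\unc}(-L_{K_i}) \geq \tfrac{n}{n-1}$ for every $K_i \in \K^2_{+,\unc}$. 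Taking the $\liminf$ as $K_i \to K$ in $C$ and invoking definition (\ref{eq:lambda-unc}) completes the proof.

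The main point requiring attention — though not really an obstacle, since the symmetry argument is clean — is the verification that the entire Reilly-formula route through Theorem \ref{thm:sufficient} may be honestly restricted to unconditional test functions, i.e.\ that one can use $E_{\unc}$ as an invariant subspace without sacrificing the equivalence with the spectral bound. Everything else reduces to a direct invocation of the already-established Theorem \ref{thm:unc}.
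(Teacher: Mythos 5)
Your proposal is correct and takes essentially the same route the paper intends: the corollary is stated there without an explicit proof, as a direct combination of Theorem \ref{thm:unc} with the Reilly-based sufficient condition (Theorems \ref{thm:sufficient}, \ref{thm:main-tool}, \ref{thm:B2LK}) restricted to the unconditional subspace, followed by the $\liminf$ in definition (\ref{eq:lambda-unc}). Your explicit verifications --- that $E_{\unc}$ is $L_K$-invariant for unconditional $K$, that the harmonic Neumann extension of an unconditional $\Psi$ is itself unconditional (uniqueness up to a constant plus the involutive reflections), and that unconditionality forces $u_i \equiv 0$ on $K \cap \set{x_i=0}$ so Theorem \ref{thm:unc} applies with constant $1$, i.e. $p=0$ --- are exactly the steps the paper leaves implicit, and they are sound.
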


\subsection{Unit-balls of $\ell_q^n$}

Observe that $B_q^n \notin \K^2_+$ whenever $q \neq 2$. Consequently, we will use Proposition \ref{prop:continuity} to obtain a neighborhood $N^C_{B_q^n}$ of $B_q^n$ in the $C_e$-topology, so that the results of the previous sections may be applied to its dense subset $N^C_{B_q^n} \cap \K^2_{+,e}$. This yields Theorems \ref{thm:intro-lq} and \ref{thm:intro-ellq1} from the Introduction, which we restate here as follows:

\begin{thm} \label{thm:ellq}
For all $q \in (2,\infty)$, there exists $n_q \geq 2$ so that for all $n \geq n_q$, there exists a neighborhood $N^C_{B_q^n}$ of $B_q^n$ in the $C_e$-topology, so that 
the local log-BM conjecture (\ref{eq:local-log-BM}) holds for $T(K)$ for all $K \in N^C_{B_q^n} \cap \K^2_{+,e}$ and $T \in GL_n$. In addition, for any $K \in N^C_{B_q^n} \cap \K^2_{+,e}$, there exists a $C^2$-neighborhood $N_K$ of $K$ in $\K^2_{+,e}$ so that for all $T \in GL_n$ and $K_1,K_0 \in T(N_{K})$:
\[
V((1-\lambda) \cdot K_0 +_0 \lambda \cdot K_1) \geq V(K_0)^{1-\lambda} V(K_1)^{\lambda} \;\;\; \forall \lambda \in [0,1] . 
\]
\end{thm}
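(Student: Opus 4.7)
The plan is to chain together three ingredients already present in the paper. First, the quantitative estimate of Theorem \ref{thm:BBqn}, $\B(B_q^n) \leq C(n^{-1/q} + q n^{-2/q})$, shows that $\B(B_q^n) \to 0$ as $n \to \infty$ for each fixed $q \in (2,\infty)$, so one can choose $n_q$ with $\B(B_q^n) \leq 1/2$ for all $n \geq n_q$. This is legitimate even though $B_q^n \notin \K^2_+$, since the constants $\B, \BNH, \D$ are defined on all of $\K_e$ under only a Lipschitz-boundary hypothesis. Second, by Proposition \ref{prop:continuity} the map $K \mapsto \B(K)$ is continuous in the $C_e$-topology, yielding a $C_e$-neighborhood $N^C_{B_q^n}$ of $B_q^n$ in $\K_e$ on which $\B(K) < 1$. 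Third, the sufficient condition of Theorem \ref{thm:main-tool} converts such a bound into the local $p$-BM conjecture.

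Concretely, for any $K \in N^C_{B_q^n} \cap \K^2_{+,e}$, set $p_K := 1 - 1/\B(K) < 0$, so that $\B(K) = 1/(1-p_K)$. Applying Theorem \ref{thm:main-tool} with $T_0 = \mathrm{Id}$ and $p = p_K$ delivers the local $p_K$-BM conjecture for $T(K)$ for every $T \in GL_n$. Since $p_K < 0$, Corollary \ref{cor:LK-SG} (or directly the monotonicity in $p$ of the condition $\lambda_{1,e}(-L_K) \geq (n-p)/(n-1)$) implies that the local log-BM conjecture also holds for $T(K)$, establishing the first part of the theorem.

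For the $C^2$-neighborhood statement, apply Proposition \ref{prop:local-equiv} to $K$ with $p_0 = p_K$: the local $p_K$-BM at $K$ is statement (2'), equivalent to (1'). Taking $p = 0 > p_K$ in (1') and using its equivalent reformulation (2) there, we obtain a $C^2$-neighborhood $N_K$ of $K$ in $\K^2_{+,e}$ in which the global log-BM inequality holds between any two bodies. Since that inequality is invariant under $T \in GL_n$ (both sides scale by the factor $|\det T|$, and $L^p$-interpolation commutes with linear maps), the conclusion persists for all $K_0, K_1 \in T(N_K)$ and every $T \in GL_n$.

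No new conceptual obstacle arises in this step; the proof is essentially a packaging argument. The substantive work sits upstream in Theorem \ref{thm:BBqn}, which via Theorem \ref{thm:B-estimate} with the log-convex weight $W(x) = \tfrac{1}{q}\sum|x_i|^q$ and the estimate $C_{Poin}(B_q^n) \asymp n^{-1/q}$ forces the threshold $Cq\, n_q^{-2/q} \lesssim 1$, i.e.\ $n_q \gtrsim (Cq)^{q/2}$. This explains both why the dimension must be large in $q$ and why the method does not reach $q \in [1,2)$: the log-convex Reilly weight ceases to deliver a useful contribution in that regime, and an essentially different input (as in Theorem \ref{thm:intro-ellq1}) is needed there.
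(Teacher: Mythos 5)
Your proposal is correct and follows essentially the same route as the paper: Theorem \ref{thm:BBqn} to get $\B(B_q^n)$ small for $n \geq n_q \approx (Cq)^{q/2}$, Proposition \ref{prop:continuity} to propagate a bound $\B(K)<1$ to a $C_e$-neighborhood, Theorem \ref{thm:main-tool} to deduce a local $p$-BM inequality with $p<0$ (hence local log-BM), and Proposition \ref{prop:local-equiv} plus linear invariance for the $C^2$-neighborhood global statement. The only cosmetic difference is that the paper fixes a uniform threshold ($\B(K)\leq 3/4$, giving $p=-1/3$ on the whole neighborhood) rather than your body-dependent $p_K = 1-1/\B(K)$; both are valid.
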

\begin{proof}
Recall that by Theorem \ref{thm:BBqn}, $\B(B_q^n) \leq C (n^{-1/q} + q n^{-2/q})$. Setting $n_q = \exp(\frac{q}{2} \log(C' q))$, it follows that $\B(B_q^n) \leq \frac{1}{2}$ for all $n \geq n_q$. By Proposition \ref{prop:continuity}, there exists a neighborhood $N^C_{B_q^n}$ of $B_q^n$ in the $C_e$-topology so that $\B(K) \leq \frac{3}{4}$ for all $K \in N^C_{B_q^n}$. Consequently, Theorem \ref{thm:main-tool} implies that the local $p$-BM conjecture  (\ref{eq:local-p-BM}) holds with $p = -\frac{1}{3}$ for $T(K)$ for all $K \in N^C_{B_q^n} \cap \K^2_{+,e}$ and $T \in GL_n$, implying in particular the first assertion. The second assertion follows by invoking Proposition \ref{prop:local-equiv}. 
\end{proof}

\begin{thm} \label{thm:ellq1}
There exists a universal constant $c \in (0,1)$ so that for all $q \in [1,2)$, there exists a neighborhood $N^C_{B_q^n}$ of $B_q^n$ in the $C_e$-topology, so that 
the $p$-BM conjecture (\ref{eq:local-p-BM}) holds with $p=c$ for $T(K)$ for all $K \in N^C_{B_q^n} \cap \K^2_{+,e}$ and $T \in GL_n$. In addition, for any $K \in N^C_{B_q^n} \cap \K^2_{+,e}$, there exists a $C^2$-neighborhood $N_K$ of $K$ in $\K^2_{+,e}$ so that for all $T \in GL_n$ and $K_1,K_0 \in T(N_{K})$:
\[
V((1-\lambda) \cdot K_0 +_c \lambda \cdot K_1) \geq \brac{(1-\lambda) V(K_0)^{\frac{c}{n}} + \lambda V(K_1)^{\frac{c}{n}}}^{\frac{n}{c}} \;\;\; \forall \lambda \in [0,1] . 
\]
\end{thm}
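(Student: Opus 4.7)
The approach mirrors the proof of Theorem \ref{thm:ellq}, with one crucial substitution: instead of using the refined bound of Theorem \ref{thm:BBqn} (which only applies for $q > 2$ and yields $\B(B_q^n) < 1$), I would appeal to the cruder but universal upper bound on $\D(B_q^n)$ furnished by Lemma \ref{lem:D-Bqn}. This bound is bounded but generally larger than $1$, so only a positive $p = c$ (rather than the log-BM value $p=0$) can be extracted from Theorem \ref{thm:main-tool}, which is precisely what the statement claims.

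More concretely: by Lemma \ref{lem:D-Bqn} there exists a universal constant $C_0 \geq 1$ with $\D(B_q^n) \leq C_0$ for every $q \in [1,2]$ and every $n$. Since $\B(K) \leq \D(K)$ for all $K \in \K_e$ (by the remark after the definition of $\BNH$ and $\D$ in Section \ref{sec:Reilly}), this gives $\B(B_q^n) \leq C_0$ uniformly in $q$ and $n$. I would then set the universal constant $c := 1 - \tfrac{1}{2C_0} \in (0,1)$, so that $\tfrac{1}{1-c} = 2C_0$. Applying the continuity of $\B$ in the $C_e$-topology (Proposition \ref{prop:continuity}), there exists a $C_e$-neighborhood $N^C_{B_q^n}$ of $B_q^n$ in $\K_e$ on which $\B(K) \leq 2C_0 = \tfrac{1}{1-c}$. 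For any $K \in N^C_{B_q^n} \cap \K^2_{+,e}$, Theorem \ref{thm:main-tool} with $T_0 = \mathrm{Id}$ and $p = c$ then yields the local $c$-BM conjecture (\ref{eq:local-p-BM}) for $T(K)$ for every $T \in GL_n$.

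For the second assertion, I would fix $K \in N^C_{B_q^n} \cap \K^2_{+,e}$ and exploit the openness of $N^C_{B_q^n}$ in the $C_e$-topology: since $C^2$-convergence in $\K^2_{+,e}$ implies $C$-convergence, there is a $C^2$-neighborhood $\widetilde N_K$ of $K$ in $\K^2_{+,e}$ contained in $N^C_{B_q^n}$, and every $K' \in \widetilde N_K$ satisfies the local $c$-BM conjecture by the previous step. This verifies condition (1) of Proposition \ref{prop:local-equiv} with $p = c$, so its equivalence with (2) furnishes a (possibly smaller) $C^2$-neighborhood $N_K$ of $K$ inside which the $L^c$-Minkowski interpolation stays in $N_K$ and the global $c$-BM inequality holds between any two of its members. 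The claimed statement for $T(N_K)$ with $T \in GL_n$ then follows from the $GL_n$-invariance of the global $c$-BM conjecture recorded at the start of Section \ref{sec:LK} (and reflected in the identity $T(K_0 +_c K_1) = T(K_0) +_c T(K_1)$).

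There is no substantive obstacle: every tool is already in place, and the argument is essentially a transcription of the proof of Theorem \ref{thm:ellq}, with Lemma \ref{lem:D-Bqn} replacing Theorem \ref{thm:BBqn}. The only point worth flagging is that the estimate $\B(B_q^n) \leq \D(B_q^n)$ is genuinely lossy, which is exactly why we cannot reach $c = 0$ in this regime; obtaining log-BM for $q \in [1,2)$ would presumably require a sharper direct estimate on $\B(B_q^n)$ rather than the route through $\D$, and seems to lie beyond the present method.
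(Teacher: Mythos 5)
Your proposal is correct and takes essentially the same route as the paper's proof: Lemma \ref{lem:D-Bqn} together with $\B(K)\leq \D(K)$, continuity of $\B$ in the $C_e$-topology (Proposition \ref{prop:continuity}), Theorem \ref{thm:main-tool}, and finally Proposition \ref{prop:local-equiv} plus $GL_n$-invariance. The only cosmetic difference is that the paper works with two constants (establishing the local statement with $p=1-\tfrac{1}{2C}$ and then setting $c=1-\tfrac{1}{3C}$ so as to invoke the $(2')\Rightarrow(1')$ part of Proposition \ref{prop:local-equiv}), whereas you keep a single $c=1-\tfrac{1}{2C_0}$ and use the $(1)\Leftrightarrow(2)$ equivalence directly via a $C^2$-neighborhood inside the $C_e$-neighborhood, which works equally well.
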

\begin{proof}
By Lemma \ref{lem:D-Bqn}, there exists a universal constant $C>1$ so that for all $q \in [1,2)$, $\B(B_q^n) \leq \D(B_q^n) \leq C$. 
By Proposition \ref{prop:continuity}, there exists a neighborhood $N^C_{B_q^n}$ of $B_q^n$ in the $C_e$-topology so that $\B(K) \leq 2 C$ for all $K \in N^C_{B_q^n}$. Consequently, Theorem \ref{thm:main-tool} implies that the local $p$-BM conjecture  (\ref{eq:local-p-BM}) holds with $p = 1 - \frac{1}{2C}$ for $T(K)$ for all $K \in N^C_{B_q^n} \cap \K^2_{+,e}$ and $T \in GL_n$, implying in particular the first assertion. Setting $c = 1 - \frac{1}{3C}$, the second assertion follows by invoking Proposition \ref{prop:local-equiv}. 
\end{proof}

\bigskip

\section{Local Uniqueness for Even $L^p$-Minkowski Problem} \label{sec:Mink}

In this section, we describe an application (which is by now well-understood and standard -- see \cite{Lutwak-Firey-Sums,BLYZ-logBMInPlane}) of our local $p$-BM and log-BM inequalities to local 
uniqueness statements for the even $L^p$-Minkowski and log-Minkowski problems. 

\smallskip

The classical Minkowski problem (see \cite{Schneider-Book,LYZ-LpMinkowskiProblem} and the references therein) asks for necessary and sufficient conditions on a finite Borel measure $\mu$ on $S^{n-1}$, to guarantee the existence and uniqueness (up to translation) of a convex body $K \in \K$ so that its surface-area measure $dS_K$ coincides with $\mu$. 
It was shown by Minkowski for polytopes and by Aleksandrov for general convex bodies, that a necessary and sufficient condition is to require that the centroid of $\mu$ is at the origin and that its support is not contained in a great subsphere. In \cite{Lutwak-Firey-Sums}, Lutwak proposed to study the analogous $L^p$-Minkowski problem, where the role of the surface-area measure $dS_K$ is replaced by the $L^p$-surface-area measure:
\[
 dS_{K,p} := h_K^{1-p} dS_K .
 \]
For \emph{even} measures, Lutwak showed in \cite{Lutwak-Firey-Sums} that Minkowski's condition is again necessary and sufficient for existence and uniqueness (no translations required now) in the case $1 < p \neq n$  (see also Lutwak--Yang--Zhang \cite{LYZ-LpMinkowskiProblem} for the case $p=n$).

\smallskip

The same question may be extended to the range $p < 1$. Of particular interest is the the log-Minkowski problem, 
which pertains to the cone-measure $dV_K$ (corresponding to the case $p=0$).
 For \emph{even} measures $\mu$, a novel necessary and sufficient \emph{subspace concentration condition} ensuring the \emph{existence} question was obtained in \cite{BLYZ-logMinkowskiProblem}, and the \emph{uniqueness} question was settled in \cite{BLYZ-logBMInPlane} in dimension $n=2$; it remains open in full generality in dimension $n \geq 3$ (see also \cite{GageLogBMInPlane} for the planar uniqueness question for smooth convex bodies with strictly positive curvature, and \cite{StancuDiscreteLogBMInPlane} for existence and uniqueness in the even planar problem when $\mu$ is assumed discrete). 
   Various other partial results pertaining to the uniqueness question are known (see e.g. \cite{HuangLiuXu-UniquenessInLpMinkowski, MaLogBMInPlane, XiLeng-DarAndLogBMInPlane, ColesantiLivshyts-LocalpBMUniquenessForBall} and the references therein).
Without assuming evenness of $\mu$, both existence and uniqueness problems are more delicate, and there is a huge body of works on this topic which we do not attempt to survey here.
  Instead, let us mention the known intimate relation between the uniqueness question and the $p$-BM inequality with its equality conditions. 

\smallskip
Recall the definition (\ref{eq:Vp}) of the $L^p$-mixed-volume $V_p(K,L)$, introduced by Lutwak in \cite{Lutwak-Firey-Sums}. It was shown in \cite{BLYZ-logBMInPlane} (for $p \in (0,1)$, but the proof extends to all $p < 1$) that:
\[
V_p(K,L) = \frac{1}{n} \int_{S^{n-1}} h_L^{p} dS_{K,p} . 
\]

\begin{prop} \label{prop:Mink}
Let $K_0,K_1 \in \K_e$ and $p < 1$. Then each of the following statements implies the subsequent one:
\begin{enumerate}
\item The function $[0,1] \ni \lambda \mapsto g_p(\lambda) := \frac{1}{p} V((1-\lambda) \cdot K_0 +_p \lambda \cdot K_1)^{\frac{p}{n}}$ is concave, and it is affine 
 if and only if $K_0$ and $K_1$ are dilates. 
\item The first $L^p$-Minkowski inequality (\ref{eq:1stMink}) holds for the pair $K,L = K_0,K_1$ and for the pair $K,L = K_1,K_0$,
with equality in either of these cases if and only if $K_0$ and $K_1$ are dilates. 
\item $dS_{K_0,p} = dS_{K_1,p}$ implies $K_0 = K_1$. 
\end{enumerate}
\end{prop}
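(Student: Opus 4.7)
The plan is to establish $(1) \Rightarrow (2)$ by first-variation calculus on the family $g_p$, and then $(2) \Rightarrow (3)$ by combining the two directions of the first $L^p$-Minkowski inequality with an elementary algebraic reduction.

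For $(1) \Rightarrow (2)$, I would compute the right-derivative $g_p'(0+)$ using Aleksandrov's first-variation formula for the volume of Aleksandrov bodies (\cite[Lemma 6.5.3]{Schneider-Book}), applied to the Wulff family $w_\lambda := ((1-\lambda) h_{K_0}^p + \lambda h_{K_1}^p)^{1/p}$. Since $\frac{d}{d\lambda}\big|_{\lambda=0+} w_\lambda = \frac{1}{p} h_{K_0}^{1-p}(h_{K_1}^p - h_{K_0}^p)$ and $dS_{K_0,p} = h_{K_0}^{1-p} dS_{K_0}$, this yields
\[
g_p'(0+) = \tfrac{1}{p}\, V(K_0)^{\frac{p}{n}-1} \bigl(V_p(K_0,K_1) - V(K_0)\bigr).
\]
Concavity of $g_p$ forces $g_p'(0+) \geq g_p(1) - g_p(0) = \tfrac{1}{p}(V(K_1)^{p/n} - V(K_0)^{p/n})$, and a short rearrangement (multiplying by $V(K_0)^{1-p/n}$) produces precisely (\ref{eq:1stMink}) for $(K_0,K_1)$. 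The inequality for $(K_1,K_0)$ is obtained symmetrically by examining $g_p'(1-)$. For the equality case, equality in (\ref{eq:1stMink}) forces the concave $g_p$ to have its endpoint derivative coincide with its secant slope on $[0,1]$, hence to be affine; the assumed equality condition in (1) then yields that $K_0,K_1$ are dilates. The case $p=0$ is treated in the appropriate logarithmic limiting sense.

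For $(2) \Rightarrow (3)$, suppose $dS_{K_0,p} = dS_{K_1,p}$. The integral representation $V_p(K,L) = \tfrac{1}{n}\int h_L^p\, dS_{K,p}$ immediately gives $V_p(K_0,K_1) = V_p(K_1,K_1) = V(K_1)$, and symmetrically $V_p(K_1,K_0) = V(K_0)$. Writing $V_i := V(K_i)$ and plugging these identities into (\ref{eq:1stMink}) for each ordered pair, the two inequalities reduce (after factoring out the positive quantities $V_1^{p/n}$ and $V_0^{p/n}$ respectively) to
\[
\tfrac{1}{p}\bigl(V_1^{1-p/n} - V_0^{1-p/n}\bigr) \geq 0 \quad \text{and} \quad \tfrac{1}{p}\bigl(V_0^{1-p/n} - V_1^{1-p/n}\bigr) \geq 0.
\]
Together these force $V_1^{1-p/n} = V_0^{1-p/n}$, and since $1 - p/n > 0$ (as $p < 1 \leq n$), we obtain $V_0 = V_1$. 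But then both inequalities in (2) hold with equality, so by the equality clause of (2), $K_0$ and $K_1$ are dilates; having equal volume, $K_0 = K_1$. The limiting case $p = 0$ is handled identically, with the inequalities becoming $\log(V_1/V_0) \geq 0$ and $\log(V_0/V_1) \geq 0$.

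The main technical point is the validity of the Aleksandrov first-variation argument for the Wulff family $w_\lambda$: since $K_0,K_1$ are only assumed to lie in $\K_e$ and not in $\K^2_+$, the Wulff function $w_\lambda$ need not coincide with the support function of its Aleksandrov body, so one cannot simply differentiate under the volume integral via support functions. However, $w_\lambda$ is $C^1$ in $\lambda$ and uniformly bounded below by a positive constant (as $h_{K_0}, h_{K_1}$ are continuous and strictly positive on $S^{n-1}$), so Aleksandrov's formula does apply and yields the stated expression for $g_p'(0+)$. Beyond this, the argument is a mechanical reduction, requiring only mild care in tracking the sign of $\tfrac{1}{p}$ in the three cases $p \in (0,1)$, $p < 0$, and $p = 0$.
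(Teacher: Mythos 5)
Your proposal is correct and follows essentially the same route as the paper: the endpoint derivative $g_p'(0+) = V(K_0)^{\frac{p}{n}-1}\frac{1}{p}\bigl(V_p(K_0,K_1)-V(K_0)\bigr)$ via the first variation of the Aleksandrov body (the paper quotes this formula from B\"or\"oczky--Lutwak--Yang--Zhang, which rests on the same Aleksandrov lemma you cite), concavity versus the secant slope to get (\ref{eq:1stMink}) and the equality analysis, and then for $(2)\Rightarrow(3)$ the representation $V_p(K,L)=\frac{1}{n}\int h_L^p\,dS_{K,p}$ applied to both ordered pairs to force $V(K_0)=V(K_1)$ and hence equality, so that the equality clause gives $K_0=K_1$. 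The only (harmless) omission is the trivial verification that dilates do give equality in (\ref{eq:1stMink}), which the paper dispenses with as ``clear''.
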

Slightly more is required for the converse implications to hold, as worked out in \cite{BLYZ-logBMInPlane}, but we do not require this here.
\begin{proof}
It was shown in \cite{BLYZ-logBMInPlane} (for $p \in [0,1)$, but the proof extends to all $p<1$) that:
\begin{align*} \left . \frac{d}{d\lambda} \right |_{\lambda=0+} V((1-\lambda) \cdot K_0 +_p \lambda \cdot K_1) & = \frac{1}{p} \int_{S^{n-1}} h_{K_0}^{1-p} (h_{K_1}^p - h_{K_0}^p) dS_{K_0}\\
&  = \frac{n}{p} \brac{V_p(K_0,K_1) - V(K_0)} . 
\end{align*}
Consequently, the chain rule yields: \[
\left . \frac{d}{d\lambda}\right|_{\lambda=0+} g_p(\lambda) = V(K_0)^{\frac{p}{n}-1} \frac{1}{p} \brac{V_p(K_0,K_1) - V(K_0)} ,
\]
with the case $p=0$ understood in the limiting sense. 
The concavity in statement (1) implies
$\frac{d}{d\lambda}|_{\lambda=0+} g_p(\lambda) \geq g_p(1) - g_p(0)$, which is precisely (\ref{eq:1stMink}). Reversing the roles of $K_0,K_1$ by the symmetry of (1), (\ref{eq:1stMink}) also holds in that case. Clearly we have equality in (\ref{eq:1stMink}) if $K_0$ and $K_1$ are dilates. Conversely, equality in (\ref{eq:1stMink}) translates to $\frac{d}{d\lambda}|_{\lambda=0+} g_p(\lambda) = g_p(1) - g_p(0)$, and since $g_p$ is assumed concave, it follows that it must be affine, and so the equality conditions in (1) imply those in (2). 

If $dS_{K_0,p} = dS_{K_1,p}$, then for any $Q \in \K$:
\[
V_p(K_0,Q) = \frac{1}{n} \int_{S^{n-1}} h_{Q}^p dS_{K_0,p} = \frac{1}{n} \int_{S^{n-1}} h_{Q}^p dS_{K_1,p}  = V_p(K_1,Q) .
\]
Assuming for simplicity that $p > 0$ (but an identical argument holds for general $p$), we have by  (\ref{eq:1stMink}) for both $i=0,1$ that:
\[
V(K_i) = V_p(K_i,K_i) = V_p(K_{1-i},K_i) \geq V(K_{1-i})^{1 - \frac{p}{n}} V(K_i)^{\frac{p}{n}} .
\]
It follows that $V(K_0) = V(K_1)$ and hence we have equality in (\ref{eq:1stMink}) for the pair $K_{1-i},K_{i}$. The equality conditions in (2) therefore imply that $K_0=K_1$. 
\end{proof}

\begin{defn*}
Given $p < 1$ and $K \in \K^2_{+,e}$, we will say that the even $L^p$-Minkowski problem has a locally unique solution in a neighborhood of $K$ if there exists a $C^2$-neighborhood $N_{K,p}$ of $K$ in $\K^2_{+,e}$, so that for all $T \in GL_n$, for all $K_0,K_1 \in T(N_{K,p})$, if $dS_{K_0,p} = dS_{K_1,p}$ then $K_0 = K_1$. 
\end{defn*}

\begin{thm} \label{thm:Mink-Uniq}
Assume that the local $p_0$-BM conjecture (\ref{eq:local-p-BM}) holds for $K \in \K^2_{+,e}$ and some $p_0 < 1$. Then for any $p \in (p_0,1)$, the even $L^p$-Minkowski problem has a locally unique solution in a neighborhood $N_{K,p}$ of $K$.
\end{thm}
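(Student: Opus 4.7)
The plan is to combine Proposition 3.3 (which converts the local $p_0$-BM hypothesis at $K$ into a local-in-$K$ version of the global $p$-BM inequality for all $p > p_0$) with Proposition 11.1 (which converts the global $p$-BM \emph{together with its dilate equality case} into $L^p$-Minkowski uniqueness), using the spectral formulation of Section 5 to extract the required \emph{strict} concavity.

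First, by Corollary 5.5 the hypothesis translates into $\lambda_{1,e}(-L_K) \geq \frac{n-p_0}{n-1}$, which for $p \in (p_0,1)$ is \emph{strictly} larger than $\frac{n-p}{n-1}$. By continuity of the spectrum under $C^2$-perturbations (Theorem 5.1 (4)), this strict gap persists on some $C^2$-neighborhood of $K$ in $\K^2_{+,e}$; combining with Proposition~3.3 at the present value of $p$, I may shrink this neighborhood to a $C^2$-neighborhood $N_{K,p}$ of $K$ which is closed under $p$-Minkowski interpolation and on which $\lambda_{1,e}(-L_{K'}) > \frac{n-p}{n-1}$ holds uniformly.

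Next, I fix $T \in GL_n$ and $K_0, K_1 \in T(N_{K,p})$, set $K'_i := T^{-1}(K_i) \in N_{K,p}$, and consider the interpolation $K'_\lambda := (1-\lambda) \cdot K'_0 +_p \lambda \cdot K'_1 \in N_{K,p}$. By the $GL_n$-equivariance $T(K'_\lambda) = (1-\lambda) \cdot K_0 +_p \lambda \cdot K_1$ (equation 5.1), the function $g_p(\lambda) := \frac{1}{p} V((1-\lambda) \cdot K_0 +_p \lambda \cdot K_1)^{p/n}$ differs from its analogue for $(K'_0, K'_1)$ by a positive multiplicative constant (or an additive constant when $p=0$), so concavity, strict concavity, and affineness are all invariant under $T$. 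Applying Proposition~4.1 (equivalently, Proposition~5.2) at each $K'_\lambda$ and invoking the strict spectral gap, I obtain $g_p''(\lambda) \leq 0$, with equality precisely when the corresponding test function $z_\lambda := \frac{1}{p}(h_{K'_1}^p - h_{K'_0}^p)/h_{K'_\lambda}^p$ (read $\log(h_{K'_1}/h_{K'_0})$ when $p=0$) lies in the kernel of the even quadratic form, which under the strict gap is reduced to the constants. A direct algebraic inspection gives that $z_\lambda$ is constant on $S^{n-1}$ if and only if $h_{K'_1}^p$ is a scalar multiple of $h_{K'_0}^p$, i.e.\ iff $K'_0, K'_1$ (hence $K_0, K_1$) are homothetic dilates.

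Thus $g_p$ is concave on $[0,1]$ and affine if and only if $K_0, K_1$ are dilates, which is precisely statement~(1) of Proposition~11.1 for the pair $K_0, K_1$. The implications $(1) \Rightarrow (2) \Rightarrow (3)$ of Proposition~11.1 then yield $dS_{K_0,p} = dS_{K_1,p} \Rightarrow K_0 = K_1$, completing the proof. The main obstacle is the equality analysis: one must verify that the only even null-direction of the shifted Hilbert--Brunn--Minkowski quadratic form is the constants, and then trace through the explicit formula for $z_\lambda$ to translate constancy into dilation. Everything else — namely the transfer of the local strict spectral gap from $K$ to a $C^2$-neighborhood via Theorem 5.1~(4), the $GL_n$-invariance via Theorem 5.3, and the standard chain in Proposition 11.1 — is used essentially as a black box.
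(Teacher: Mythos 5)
Your proof is correct, and its skeleton is the same as the paper's: use Proposition \ref{prop:local-equiv} (together with the $C^2$-continuity of the spectrum from Theorem \ref{thm:LK} (4) and the $+_p$-convex neighborhood construction) to get a neighborhood $N_{K,p}$ on which $g_p$ is concave, establish the equality case ``$g_p$ affine $\Rightarrow$ dilates'', and then run the chain (1) $\Rightarrow$ (2) $\Rightarrow$ (3) of Proposition \ref{prop:Mink}. The only genuine difference is how the equality case is extracted from the slack $p>p_0$. The paper stays at the endpoint $K_0$: it invokes the local $p_1$-BM inequality with the intermediate exponent $p_1=\frac{p+p_0}{2}$, equates the $V(z_0 h_{K_0};2)$ terms in the two instances of (\ref{eq:p-BM-mixed-vols}), and arrives at the reversed Cauchy--Schwarz inequality $V(z_0^2 h_{K_0};1)\le V(z_0 h_{K_0};1)^2/V(K_0)$, whose equality case forces $z_0$ to be constant $dS_{K_0}$-a.e.\ (enough, since $dS_{K_0}$ is equivalent to Lebesgue measure for $K_0\in\K^2_{+}$). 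You instead argue along the geodesic: you demand the strict gap $\lambda_{1,e}(-L_{K'_\lambda})>\frac{n-p}{n-1}$ at every interpolant $K'_\lambda$ --- legitimate, since $K\mapsto\lambda_{1,e}(-L_K)$ is $C^2$-continuous and the neighborhood can be taken convex with respect to $+_p$ as in the proof of Proposition \ref{prop:local-equiv} --- so that $g_p''(\lambda)=0$ forces the centered part of $z_\lambda$ to vanish identically, and your algebraic reduction of ``$z_\lambda$ constant'' to homothety of $K_0',K_1'$ is fine for both $p\neq 0$ and $p=0$. Both mechanisms exploit exactly the same strictness $p>p_0$; yours is more transparently spectral and avoids the mixed-volume manipulation, at the (mild) cost of needing the strict gap at all interpolating bodies rather than a single inequality at the endpoint, and of carrying the interpolation-closedness of $N_{K,p}$ explicitly through the equality analysis. (Your cross-references are numbered slightly differently from the paper's, but the intended statements are unambiguous.)
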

\begin{proof}
Given $p \in (p_0,1)$, denote $p_1 = \frac{p + p_0}{2} \in (p_0,p)$.
Proposition \ref{prop:local-equiv} ensures the existence of a neighborhood $N_{K,p}$ so that for all $T \in GL_n$ and $K_0,K_1 \in T(N_{K,p})$, $K_0$ satisfies the local $p_1$-BM inequality (\ref{eq:local-p-BM}), and in addition, the function $g_p(\lambda)$ appearing in Proposition \ref{prop:Mink} (1) is concave. It remains to establish the equality conditions in Proposition \ref{prop:Mink}  (1) to deduce the local uniqueness statement in (3). Assume that $K_0,K_1 \in T(N_{K,p})$ are such that the function $g_p(\lambda)$ is affine. It follows by the argument in the proof of Lemmas \ref{lem:global2local} and \ref{lem:loc-to-global} that equality holds in the local $p$-BM inequality (\ref{eq:local-p-BM}) for the body $K_0$ and $\frac{1}{p} f^p_0 = \frac{1}{p} h_{K_1}^p - \frac{1}{p} h_{K_0}^p \in C_e(S^{n-1})$. Recalling the equivalent formulations of the local $p$-BM equality and $p_1$-BM inequality derived in Section \ref{sec:equivalent}, and denoting:
\[
z_0 := \begin{cases} \frac{1}{h_{K_0}^p} \frac{f^p_0}{p} = \frac{1}{p} \brac{\brac{\frac{h_{K_1}}{h_{K_0}}}^p - 1} & p \neq 0 \\ \log f_0 = \log \frac{h_{K_1}}{h_{K_0}} & p = 0 \end{cases} \\
\]
as in (\ref{eq:z}), we deduce (say, using the formulation of (\ref{eq:p-BM-mixed-vols})):
\begin{align*}
\frac{1}{V(K_0)} V(z_0 h_{K_0};1)^2 = \frac{n-1}{n-p} V(z_0 h_{K_0} ; 2) + \frac{1-p}{n-p} V(z^2_0 h_{K_0};1) ~, \\
\frac{1}{V(K_0)} V(z_0 h_{K_0};1)^2 \geq \frac{n-1}{n-p_1} V(z_0 h_{K_0} ; 2) + \frac{1-p_1}{n-p_1} V(z^2_0 h_{K_0};1) .
\end{align*}
Equating the $V(z_0 h_{K_0} ; 2)$ terms above and using that $p_1 < p < 1 \leq n$, it follows that:
\[
V(z^2_0 h_{K_0} ; 1) \leq \frac{1}{V(K_0)} V(z_0 h_{K_0} ; 1)^2 . 
\]
On the other hand, the reverse inequality is always satisfied by Cauchy--Schwarz (\ref{eq:CS}). By the equality conditions of Cauchy--Schwarz, it follows that $z_0$ must be a constant $dS_{K_0}$-a.e. on $S^{n-1}$. Using the fact that $dS_{K_0}$ and the Lebesgue measure are equivalent since $K_0 \in \K^2_{+}$, and as support functions are continuous, it follows that $h_{K_1} = C h_{K_0}$ identically on $S^{n-1}$ for some $C > 0$, and the equality case in Proposition \ref{prop:Mink} (1) is established. 
\end{proof}

It is now immediate to translate the results of this work into the following:
\begin{thm}
The even $L^p$-Minkowski problem has a locally unique solution in a neighborhood of $K$ for all $p \in (p_K,1)$, in the following cases:
\begin{enumerate}
\item For any $K \in \K^2_{+,e}$ and $p_K = 1 - \frac{c}{n^{3/2}}$. 
\item For $K = B_2^n$ and $p_K = -n$. 
\item For any $\eps > 0$, for all $K \in N^{C,\eps}_{B_\infty^n} \cap \K^2_{+,e}$ and $p_K = \eps$, where $N^{C,\eps}_{B_\infty^n}$ is an appropriate $C$-neighborhood of $B_\infty^n$ (depending on $\eps$).
\item If $q \in (2,\infty)$, for all $K \in N^{C}_{B_q^n} \cap \K^2_{+,e}$ and $p_K = 1 - \frac{c}{n^{-1/q} + q n^{-2/q}}$, where $N^{C}_{B_q^n}$ is an appropriate $C_e$-neighborhood of $B_q^n$. 
\item If $q \in [1,2)$, for all $K \in N^{C}_{B_q^n} \cap \K^2_{+,e}$ and $p_K = c \in (0,1)$, where $N^{C}_{B_q^n}$ is an appropriate $C_e$-neighborhood of $B_q^n$. 
\end{enumerate}
\end{thm}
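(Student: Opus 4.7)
The entire theorem is a direct corollary of Theorem \ref{thm:Mink-Uniq}, which reduces local uniqueness in the even $L^p$-Minkowski problem (for $p$ in an open right-neighborhood of $p_K$, inside $(p_K,1)$) to verifying the local $p_K$-BM inequality (\ref{eq:local-p-BM}) for the body $K$ in question. So my plan is to simply pair Theorem \ref{thm:Mink-Uniq} with the appropriate local $p$-BM result established earlier in the paper for each of the five cases.

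For case (1), local $p_K$-BM with $p_K = 1 - c/n^{3/2}$ was established for every $K \in \K^2_{+,e}$ in Theorem \ref{thm:main}. For case (2), we invoke Theorem \ref{thm:Ball} to conclude that the local $(-n)$-BM inequality holds for $K = B_2^n$. In both cases a direct application of Theorem \ref{thm:Mink-Uniq} yields the claim.

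For cases (3)--(5), I would first transfer the quantitative estimates on $\B(K)$ from the reference body ($B_\infty^n$ or $B_q^n$) to all $K \in \K^2_{+,e}$ in a suitable $C$-neighborhood via the continuity asserted in Proposition \ref{prop:continuity}, and then invoke Theorem \ref{thm:main-tool} to obtain the corresponding local $p_K$-BM for $K$. Concretely: in case (3), since $\B(B_\infty^n) = 1$ by Theorem \ref{thm:B-Cube}, continuity yields a $C$-neighborhood $N^{C,\eps}_{B_\infty^n}$ on which $\B(K) \leq 1/(1-\eps)$, hence local $\eps$-BM holds on $N^{C,\eps}_{B_\infty^n} \cap \K^2_{+,e}$; applying Theorem \ref{thm:Mink-Uniq} with $p_0 = \eps$ gives local uniqueness for all $p > \eps$. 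Case (4) is analogous, starting from $\B(B_q^n) \leq C(n^{-1/q} + q n^{-2/q})$ of Theorem \ref{thm:BBqn} and using continuity to produce $N^{C}_{B_q^n}$ on which $\B$ remains bounded by (a slightly larger multiple of) the same quantity. Case (5) proceeds identically, using $\B(B_q^n) \leq \D(B_q^n) \leq C$ from Lemma \ref{lem:D-Bqn} (with a universal constant, independent of $q \in [1,2)$ and $n$) and shrinking the neighborhood so that $\B \leq 2C$, which yields local $c$-BM with $c = 1 - 1/(2C)$.

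There is no substantive obstacle here: everything has already been proved in the relevant sections, and the only thing to verify is that the quantitative $\B$-estimates survive a small $C$-perturbation (done via Proposition \ref{prop:continuity}) and that the resulting local $p_K$-BM may be fed into Theorem \ref{thm:Mink-Uniq}. If anything merits attention, it is a book-keeping check that in (4) and (5) we may replace the $c$-BM on $K$ itself by the same $c$-BM on $T(K)$ for arbitrary $T \in GL_n$ when passing to the uniqueness statement: this is precisely what Theorem \ref{thm:Mink-Uniq} already accommodates, since the definition of local uniqueness in a neighborhood of $K$ quantifies over all $T \in GL_n$ and $K_0,K_1 \in T(N_{K,p})$, and the local $p$-BM conjecture is itself $GL_n$-invariant as shown in Section \ref{sec:LK}.
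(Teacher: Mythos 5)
Your proposal is correct and is essentially the paper's own argument: each case is Theorem \ref{thm:Mink-Uniq} applied to the corresponding local $p$-BM statement, which for (1) and (2) you cite exactly as the paper does (Theorems \ref{thm:main} and \ref{thm:Ball}), and for (3)--(5) you re-derive from the $\B$-estimates (Theorem \ref{thm:B-Cube}, Theorem \ref{thm:BBqn}, Lemma \ref{lem:D-Bqn}) combined with Proposition \ref{prop:continuity} and Theorem \ref{thm:main-tool}, rather than citing the packaged Theorem \ref{thm:lambda1-cube} with Corollary \ref{cor:LK-SG} and Theorems \ref{thm:ellq}, \ref{thm:ellq1}, which are themselves proved by precisely these ingredients. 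If anything, your unpacked route states case (4) more directly, since it yields the quantitative $p_K = 1-\frac{c}{n^{-1/q}+q n^{-2/q}}$ for all $n$, whereas the paper's citation of Theorem \ref{thm:ellq} as stated only covers $p=0$ and $n \geq n_q$.
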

\begin{proof}
(1) follows from Theorem \ref{thm:main}. (2) follows from Theorem \ref{thm:Ball}. (3) follows from Theorem \ref{thm:lambda1-cube} and Corollary \ref{cor:LK-SG}.
(4) follows from Theorem \ref{thm:ellq}. (5) follows from Theorem \ref{thm:ellq1}. 
\end{proof}

Case (2) should be compared with a result of Colesanti and Livshyts \cite{ColesantiLivshyts-LocalpBMUniquenessForBall}, who considered local uniqueness for the log-Minkowski problem (the case $p=0$), and showed the existence of a $C^2$-neighborhood $N_{B_2^n}$ of $B_2^n$, so that for all $K \in N_{B_2^n}$, if $dV_K = dV_{B_2^n}$ then necessarily $K = B_2^n$.

 \section{Stability Estimates for Brunn--Minkowski and Isoperimetric Inequalities} \label{sec:stability}

It is well-known that by differentiating the Brunn--Minkowski inequality, one can obtain isoperimetric inequalities. Recall that the sharp anisotropic isoperimetric inequality is the statement that:
\begin{equation} \label{eq:isop-inq}
 P_{L}(K) \geq n V(L)^{\frac{1}{n}} V(K)^{\frac{n-1}{n}} ,
\end{equation}
with equality when $K = L$, where:
  \begin{align}
  \label{eq:perimeter} P_{L}(K) & := \lim_{\varepsilon \to 0^{+}} \frac{V(K + \eps L) - V(K)}{\eps} = n V(L,K,\ldots,K) \\
  \nonumber & = \int_{S^{n-1}} h_L dS_K = \int_{\partial K} h_{L}(\nu_{\partial K}) \ dx ,
  \end{align}
  is the anisotropic perimeter of $K$ with respect to the convex body $L$. Note that in the isotropic case, when $L = B_2^n$, $P_{B_2^n}(K)$ boils down to the usual surface area $S(K)$ of $K$. 
  While the Brunn--Minkowski and isoperimetric inequalities hold for all Borel sets $K$, we restrict (as usual in this work) our discussion to convex bodies only. 
  
  \smallskip
  Since the (local) $p$-BM inequality for $p<1$ is a strengthening of the classical (local) BM inequality (in the class of origin-symmetric convex bodies), it is natural to expect that some stability results
 for isoperimetric inequalities could be derived from it.  In the first part of this section, we rewrite the equivalent formulations for the local $p$-BM inequality obtained in Section \ref{sec:equivalent} in a manner which is 
 more suitable for obtaining stability estimates. In particular, we obtain an interesting interpretation of the Hilbert--Brunn--Minkowski operator $L_K$ as the operator controlling the deficit in Minkowski's second inequality, and as a consequence, deduce new stability estimates for the anisotropic isoperimetric inequality and the (global) Brunn--Minkowski inequality for origin-symmetric convex bodies. 

 In the second part of this section, we show how to derive a strengthening of the best known stability estimates in the (global) Brunn--Minkowski and anisotropic isoperimetric inequalities for the class of all (not necessarily origin-symmetric) convex bodies.

\subsection{New stability estimates for origin-symmetric convex bodies with respect to variance}

Throughout this subsection, the body $K$ is assumed to be a convex body in $\K^2_{+,e}$. Recall that $L_K$ denotes the Hilbert--Brunn--Minkowski operator on $L^2(dV_K)$ associated to $K$ defined in Section \ref{sec:LK}, and that the local $p$-BM inequality for $K$ is equivalent by Proposition \ref{prop:LK-SG} to the inequality:
\begin{equation} \label{eq:pbm}
\int_{S^{n-1}} (-L_K z) z dV_K \geq \frac{n-p}{n-1} \Var_{dV_K}(z)  \;\;\; \forall z \in C^2_e(S^{n-1}) ,
\end{equation}
where:
\[
\Var_{dV_K}(z) := \int_{S^{n-1}} z^2 dV_K - \frac{1}{V(K)} \brac{\int_{S^{n-1}} z dV_K}^2 . 
\]
Also recall that one of our main results in this work is verifying the validity of (\ref{eq:pbm}) for $p = 1 - c n^{-3/2}$.

\begin{thm}[Stability estimate for Minkowski's second inequality] \label{thm:Mink-stability}
Assume that $K \in \K^2_{+,e}$ satisfies the local $p$-BM inequality (\ref{eq:pbm}) with $p \leq 1$. Then for all $L \in \K^2_{+,e}$: 
\begin{align*}
\frac{V(L,K,\ldots,K)^2}{V(K)} & \geq V(L,L,K,\ldots,K) + \frac{1-p}{n-p} R_K(L) \\
& \geq V(L,L,K,\ldots,K) + \frac{1-p}{n-1} \Var_{dV_K}\brac{\frac{h_L}{h_K}} ,
\end{align*}
where:
\begin{align}
\nonumber R_K(L) & :=  \int_{S^{n-1}} \brac{-L_K \frac{h_L}{h_K}} \frac{h_L}{h_K} dV_K \\
\label{eq:RKL} & = \frac{1}{n (n-1)} \int_{S^{n-1}} \scalar{(D^2 h_K)^{-1} \xi, \xi} dS_K \\
\label{eq:RKL2} & = \frac{1}{n (n-1)} \int_{\partial K} \scalar{\II_{\partial K} \zeta , \zeta } dx  ,
\end{align}
and:
\begin{align*}
\xi & := h_K \nabla_{S^{n-1}} \frac{h_L}{h_K} =  \nabla_{S^{n-1}} h_L - h_L \nabla_{S^{n-1}} \log h_K ~,~ \\
\zeta & := \xi(\nu_{\partial K}) = \nabla_{S^{n-1}} h_L(\nu_{\partial K}) - h_L(\nu_{\partial K}) \frac{x - \scalar{x,\nu_{\partial K}} \nu_{\partial K}}{\scalar{x,\nu_{\partial K}}} . 
\end{align*}
In particular, $R_K(L) = 0$ if and only if $L=C K$ for some constant $C > 0$. \end{thm}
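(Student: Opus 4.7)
The strategy is to apply the equivalent infinitesimal formulation of the local $p$-BM inequality furnished by Proposition \ref{prop:2nd-p-Minkowski} (the second $L^p$-Minkowski inequality) to the test function
\[
z := \frac{h_L}{h_K} \in C^2_e(S^{n-1}),
\]
which is legitimate since $K, L \in \K^2_{+,e}$ ensures $h_K, h_L \in C^2_{h,e}(S^{n-1})$ with $h_K$ uniformly positive. Under this substitution we have $z h_K = h_L$, so the three mixed-volume terms appearing in (\ref{eq:p-BM-mixed-vols}) become:
\begin{align*}
V(z h_K; 1) &= V(L, K, \ldots, K), \\
V(z h_K; 2) &= V(L, L, K, \ldots, K), \\
V(z^2 h_K; 1) &= \tfrac{1}{n} \int_{S^{n-1}} \tfrac{h_L^2}{h_K} dS_K = \int_{S^{n-1}} z^2 \, dV_K.
\end{align*}

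Next I would rewrite the inequality of Proposition \ref{prop:2nd-p-Minkowski} using $\frac{n-1}{n-p} = 1 - \frac{1-p}{n-p}$ to isolate $V(L,L,K,\ldots,K)$ on the right-hand side. This yields directly
\[
\frac{V(L, K, \ldots, K)^2}{V(K)} \geq V(L, L, K, \ldots, K) + \frac{1-p}{n-p} \Bigl[\int_{S^{n-1}} z^2 \, dV_K - V(L, L, K, \ldots, K)\Bigr],
\]
and by (\ref{eq:tilde-LK-as-mixedvol}) the bracketed expression equals $\int_{S^{n-1}} z^2 dV_K - \int_{S^{n-1}} (\tilde L_K z) z \, dV_K = \int_{S^{n-1}} (-L_K z) z \, dV_K$, which is the first claimed formula for $R_K(L)$. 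The Dirichlet form representation (\ref{eq:Dirichlet}) from Theorem \ref{thm:LK}(3), combined with $dV_K = \tfrac{1}{n} h_K dS_K$ and the identity $\xi = h_K \nabla_{S^{n-1}} z$, then gives the surface-integral formula (\ref{eq:RKL}). The boundary formula (\ref{eq:RKL2}) is obtained by transferring from $S^{n-1}$ to $\partial K$ via the Gauss map exactly as in Subsection \ref{subsec:FromSToPartialK}: under the change of variables $x = \nabla h_K(\theta)$, $dS_K$ transfers to Lebesgue measure on $\partial K$, the quadratic form $(D^2 h_K)^{-1}$ transfers to $\II_{\partial K}$, and $\xi$ transfers to $\zeta$.

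For the second (sharper) inequality, it suffices to apply the local $p$-BM hypothesis (\ref{eq:pbm}) to the same $z = h_L/h_K$: this gives $R_K(L) \geq \tfrac{n-p}{n-1} \Var_{dV_K}(h_L/h_K)$, and multiplying by $\tfrac{1-p}{n-p}$ produces the stated bound. Finally, the equality case $R_K(L) = 0$ is read off formula (\ref{eq:RKL}): since $K \in \K^2_{+,e}$ the tensor $(D^2 h_K)^{-1}$ is positive definite pointwise, so $R_K(L) = 0$ forces $\xi \equiv 0$, i.e.\ $\nabla_{S^{n-1}}(h_L/h_K) \equiv 0$; by connectedness of $S^{n-1}$, this means $h_L = C h_K$ for some constant $C > 0$, i.e.\ $L = CK$, and the converse is clear.

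I do not anticipate a serious obstacle in executing this plan: the entire analytic content has already been absorbed into Proposition \ref{prop:2nd-p-Minkowski}, Theorem \ref{thm:LK}, and the $S^{n-1} \leftrightarrow \partial K$ dictionary of Subsection \ref{subsec:FromSToPartialK}. The only point requiring mild care is verifying that the identity $\frac{n-1}{n-p} = 1 - \frac{1-p}{n-p}$ rearranges cleanly into the stated form without hidden sign issues when $p < 0$ (so that $\frac{1-p}{n-p} > 0$ still); this is immediate since $p \leq 1 \leq n$ guarantees $n - p > 0$.
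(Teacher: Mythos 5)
Your plan is correct and follows essentially the same route as the paper's proof: substitute $z=h_L/h_K$ into Proposition \ref{prop:2nd-p-Minkowski}, recognize the correction term as the Dirichlet form of $-L_K$ via (\ref{eq:tilde-LK-as-mixedvol}), invoke (\ref{eq:pbm}) for the second inequality, and pass to (\ref{eq:RKL}) and (\ref{eq:RKL2}) through Theorem \ref{thm:LK}(3) and the Gauss-map change of variables. The only (harmless) deviation is the equality case, which you read off the pointwise positive-definiteness of $(D^2 h_K)^{-1}$ in (\ref{eq:RKL}) rather than citing that the kernel of $-L_K$ is the constants, as the paper does; both arguments are immediate.
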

\begin{rem}
Note that in the isotropic case (when $L = B_2^n$), $h_L(\theta) \equiv 1$ and $\nabla_{S^{n-1}} h_L = 0$, so the above expressions for $\xi$ and $\zeta$ simplify considerably. In addition, it will be clear from the proof that $L$ need not be a convex body, and that any $w \in C^2_e(S^{n-1})$ will work equally well. 
\end{rem}

\begin{proof}[Proof of Theorem \ref{thm:Mink-stability}]
Recall that by (\ref{eq:p-BM-mixed-vols}), the local $p$-BM inequality for $K$ is equivalent to:
\[
\forall z \in C^2_e(S^{n-1}) \;\;\; \frac{1}{V(K)} V(z h_K;1)^2 \geq \frac{n-1}{n-p} V(z h_K ; 2) + \frac{1-p}{n-p} V(z^2 h_K;1) ,
\]
where as usual $V(w; m)$ denotes the mixed volume of $w$ repeated $m$ times and $h_K$ repeated $n-m$ times. 
Applying this to $z := \frac{h_L}{h_K}$, we deduce:
\[
\frac{V(L,K,\ldots,K)^2}{V(K)} \geq V(L,L,K,\ldots,K) + \frac{1-p}{n-p} \brac{ V(z^2 h_K ; 1) - V(z h_K ; 2)} .
\]
On the other hand, recall by (\ref{eq:tilde-LK-as-mixedvol}) that $L_K = \tilde{L}_K - \text{Id}$ satisfies
\[
R_K(L) = \int_{S^{n-1}} (-L_K z) z dV_K = \int_{S^{n-1}} z^2 dV_K - \int_{S^{n-1}} (\tilde{L}_K z) z dV_K = V(z^2 h_K ; 1) - V(z h_K; 2) . 
\]
This verifies the first asserted inequality. The second asserted inequality follows immediately by (\ref{eq:pbm}). Note that $R_K(L) = 0$ if and only if $h_L = C h_K$ since $0$ is an eigenvalue of $-L_K$ with corresponding eigenspace spanned by the constant functions.

It remains to verify the alternative expressions derived for $R_K(L)$. Recall that by (\ref{eq:Dirichlet}):
\begin{align*}
\int_{S^{n-1}} (-L_K z) z dV_K & = \frac{1}{n-1} \int_{S^{n-1}} h_K ((D^2 h_K)^{-1})^{i,j} z_i z_j dV_K \\
& = \frac{1}{n (n-1)}\int_{S^{n-1}} ((D^2 h_K)^{-1})^{i,j} (h_K z_i) (h_K z_j) dS_K ,
\end{align*}
which yields the expression in (\ref{eq:RKL}). The one in (\ref{eq:RKL2}) follows by the change-of-variables $\theta = \nu_{\partial K}(x)$, as in Subsection \ref{subsec:FromSToPartialK}. Finally, recall that the inverse of the latter mapping is the Weingarten map $x = \nabla_{\Real^n} h_K(\theta)$, and since $\nabla_{S^{n-1}} h_K(\theta) = \nabla_{\Real^n} h_K(\theta) - \scalar{\nabla_{\Real^n} h_K, \theta} \theta$, we obtain the last expression for $\zeta$. 
\end{proof}

To deduce an isoperimetric inequality from Theorem \ref{thm:Mink-stability}, we employ the following well-known consequence of the Aleksandrov--Fenchel inequality (or simply Minkowski's inequality for mixed-volumes) \cite[p. 334]{Schneider-Book}:
\[
V(L,L,K,\ldots,K) \geq V(L)^{\frac{2}{n}} V(K)^{\frac{n-2}{n}} .
\]
It is also possible to employ the stronger inequality:
\[
  V(L,L,K,\ldots,K) \geq V(L)^{\frac{1}{n-1}} V(L,K,\ldots,K)^{\frac{n-2}{n-1}} ,
\]
but this would yield a slightly less aesthetically pleasing expression below, and so we leave this possibility to the interested reader. Together with (\ref{eq:perimeter}), we immediately obtain:

\begin{cor}[Stability estimate for anisotropic isoperimetric inequality] \label{cor:isop-stability} 
Assume that $K \in \K^2_{+,e}$ satisfies the local $p$-BM inequality (\ref{eq:pbm}) with $p \leq 1$. Then for all $L \in \K^2_{+,e}$: 
\begin{align*}
P_L^2(K) & \geq \brac{n V(L)^{\frac{1}{n}} V(K)^{\frac{n-1}{n}}}^2 + n^2 \frac{1-p}{n-p} V(K) R_K(L) \\
& \geq \brac{n V(L)^{\frac{1}{n}} V(K)^{\frac{n-1}{n}}}^2 + n^2 \frac{1-p}{n-1} V(K) \Var_{dV_K}\brac{\frac{h_L}{h_K}} .
\end{align*}
\end{cor}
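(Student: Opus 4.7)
The statement is a direct consequence of Theorem \ref{thm:Mink-stability} combined with a classical inequality for mixed volumes, and the author has essentially laid out the entire strategy in the paragraph preceding the corollary. My plan is as follows.

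First I would start from the two-sided chain in Theorem \ref{thm:Mink-stability}:
\[
\frac{V(L,K,\ldots,K)^2}{V(K)} \;\geq\; V(L,L,K,\ldots,K) + \frac{1-p}{n-p} R_K(L) \;\geq\; V(L,L,K,\ldots,K) + \frac{1-p}{n-1}\Var_{dV_K}\!\brac{\tfrac{h_L}{h_K}},
\]
and multiply throughout by $V(K)$. The left-hand side then becomes $V(L,K,\ldots,K)^2$, which by the differential identity (\ref{eq:perimeter}) equals $\frac{1}{n^2}P_L(K)^2$, so that multiplying by $n^2$ at the end gives exactly the quantity $P_L^2(K)$ appearing in the statement.

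Next I would estimate the leading mixed-volume term on the right using the standard consequence of the Aleksandrov--Fenchel inequality (alternatively of Minkowski's inequality for mixed volumes, see \cite[p.~334]{Schneider-Book}) that the author explicitly cites, namely
\[
V(L,L,K,\ldots,K) \;\geq\; V(L)^{\frac{2}{n}}V(K)^{\frac{n-2}{n}}.
\]
Multiplying by $V(K)$ gives $V(L)^{\frac{2}{n}}V(K)^{\frac{2(n-1)}{n}} = \bigl(V(L)^{1/n}V(K)^{(n-1)/n}\bigr)^2$, which after multiplying by $n^2$ becomes exactly $\bigl(nV(L)^{1/n}V(K)^{(n-1)/n}\bigr)^2$, i.e.\ the squared sharp anisotropic isoperimetric quantity from (\ref{eq:isop-inq}). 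Assembling everything yields the two asserted inequalities of the corollary.

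There is essentially no obstacle here: both ingredients (Theorem \ref{thm:Mink-stability} and the Aleksandrov--Fenchel-type lower bound on $V(L,L,K,\ldots,K)$) are already in hand, and the proof consists only in combining them with the identification $P_L(K) = n V(L,K,\ldots,K)$ from (\ref{eq:perimeter}). The only modest point worth remarking on is that, as the author notes, one could instead use the sharper inequality $V(L,L,K,\ldots,K) \geq V(L)^{1/(n-1)} V(L,K,\ldots,K)^{(n-2)/(n-1)}$; this would improve the deficit term but produce a less clean right-hand side, so I would present the simpler form stated in the corollary and mention the sharpening as a remark.
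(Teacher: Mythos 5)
Your argument is correct and coincides with the paper's own proof: the corollary is obtained exactly by combining Theorem \ref{thm:Mink-stability} with the Aleksandrov--Fenchel consequence $V(L,L,K,\ldots,K)\geq V(L)^{\frac{2}{n}}V(K)^{\frac{n-2}{n}}$ and the identity $P_L(K)=nV(L,K,\ldots,K)$ from (\ref{eq:perimeter}). Your closing remark about the sharper bound $V(L,L,K,\ldots,K)\geq V(L)^{\frac{1}{n-1}}V(L,K,\ldots,K)^{\frac{n-2}{n-1}}$ is also exactly the trade-off the authors mention.
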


Note that whenever $p<1$, the above inequalities constitute a strict improvement over the classical sharp anisotropic isoperimetric inequality (\ref{eq:isop-inq}); the latter is recovered by applying Corollary \ref{cor:isop-stability} with $p=1$ (which is always valid by the classical Brunn--Minkowski inequality, regardless of origin-symmetry). Consequently,  the terms on the right-hand-sides above may be viewed as some type of isoperimetric deficits. The term involving $\Var_{dV_K}(h_L/h_K)$ is a very intuitive way of quantifying how different $L$ is  from $K$. The term involving $R_K(L)$ is less intuitive, but nevertheless may be sometimes estimated from below by a simple geometric expression. 

\medskip

Let us demonstrate this for $n=2$. Denote by $r$ and $R$ the in and out radii of $K$ with respect to $L$, respectively, namely the best constants so that $r L \subset K \subset R L$, or equivalently:
\[
r := \min_{\theta \in S^{n-1}} \frac{h_K(\theta)}{h_L(\theta)} ~,~ R:= \max_{\theta \in S^{n-1}} \frac{h_K(\theta)}{h_L(\theta)}  .
\]

\begin{lem}
For all $K,L \in \K^2_{+,e}$ in $\Real^2$, we have:
\[
R_K(L) \geq 4 \max\brac{\frac{\brac{\frac{1}{r} - \frac{1}{R}}^2}{V(K^{\circ})} , \frac{\log^2\brac{\frac{R}{r}}}{V(L^\circ)}} ,
\]
where $Q^\circ$ denotes the polar-body to $Q$, i.e. the unit-ball for the norm $h_Q$. 
\end{lem}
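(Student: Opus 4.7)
The plan is to reduce $R_K(L)$ to an explicit 1-dimensional integral in the planar setting and then obtain both inequalities from two parallel applications of Cauchy--Schwarz, with a crucial factor of $2$ coming from the $\pi$-periodicity of $h_L/h_K$ forced by origin-symmetry.

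First I would parametrize $S^{1}$ by the angle $\theta$ and use that in dimension $n=2$ the Hessian $D^{2}h_{K}$ and the second fundamental form are scalar, with $D^{2}h_{K}=h_{K}''+h_{K}$ and $dS_{K}=(h_{K}''+h_{K})\,d\theta$. In formula (\ref{eq:RKL}) the factor $(D^{2}h_{K})^{-1}=1/(h_{K}''+h_{K})$ cancels exactly against $dS_{K}$, and writing $\rho:=h_{L}/h_{K}$ one obtains the clean identity
\[
R_{K}(L)=\tfrac{1}{2}\int_{0}^{2\pi} h_{K}^{2}(\rho')^{2}\,d\theta
       =\tfrac{1}{2}\int_{0}^{2\pi} h_{L}^{2}\bigl((\log\rho)'\bigr)^{2}\,d\theta,
\]
where the second equality uses $(h_{L}/h_{K})'=\rho(\log\rho)'$, so $h_{K}\rho'=h_{L}(\log\rho)'$. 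I would also recall the standard 2D identity $V(Q^{\circ})=\tfrac12\int_{0}^{2\pi}h_{Q}^{-2}\,d\theta$, since the radial function of $Q^{\circ}$ is $1/h_{Q}$.

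Next, for the estimate in terms of $V(K^{\circ})$, I would apply Cauchy--Schwarz to the factorization $|\rho'|=(h_{K}|\rho'|)\cdot h_{K}^{-1}$, giving
\[
\Bigl(\int_{0}^{2\pi}|\rho'|\,d\theta\Bigr)^{2}
\le \Bigl(\int h_{K}^{2}(\rho')^{2}\,d\theta\Bigr)\Bigl(\int h_{K}^{-2}\,d\theta\Bigr)
= 4\,R_{K}(L)\,V(K^{\circ}).
\]
The key observation is that $h_{K},h_{L}$ are even on $S^{1}$, so $\rho$ is $\pi$-periodic, and hence the $2\pi$-total variation of $\rho$ is \emph{twice} the total variation on one period, itself at least $2(\max\rho-\min\rho)=2(1/r-1/R)$. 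Therefore $\int|\rho'|\,d\theta\ge 4(1/r-1/R)$, and substituting gives exactly $R_{K}(L)\ge 4(1/r-1/R)^{2}/V(K^{\circ})$.

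For the estimate with $V(L^{\circ})$, I would repeat the same argument but with the second form of $R_{K}(L)$: Cauchy--Schwarz on $|(\log\rho)'|=(h_{L}|(\log\rho)'|)\cdot h_{L}^{-1}$ yields
\[
\Bigl(\int|(\log\rho)'|\,d\theta\Bigr)^{2}\le 4\,R_{K}(L)\,V(L^{\circ}),
\]
and the same even/$\pi$-periodic argument applied to $\log\rho$ (whose oscillation on one period is $\log(R/r)$, since $\max\rho/\min\rho=R/r$) gives $\int|(\log\rho)'|\,d\theta\ge 4\log(R/r)$, and hence $R_{K}(L)\ge 4\log^{2}(R/r)/V(L^{\circ})$. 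There is no real obstacle here; the only point requiring care is the bookkeeping of the factor of $4$, which decomposes as $2$ from the two passages between max and min on a single period and $2$ from the double period due to origin-symmetry, squaring to the required $16$ after Cauchy--Schwarz.
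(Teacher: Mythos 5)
Your proposal is correct and follows essentially the same route as the paper's proof: the reduction of $R_K(L)$ to $\frac{1}{2}\int_{S^1}\bigl(h_K (h_L/h_K)'\bigr)^2 d\theta$, the two applications of Cauchy--Schwarz (with weights $h_K^{-1}$ and $h_L^{-1}$, the latter for $\log(h_L/h_K)$), the polar-coordinate identity $\int_{S^1} h_Q^{-2}\,d\theta = 2V(Q^\circ)$, and the lower bound $4$ times the oscillation on the total variation coming from the $\pi$-periodicity forced by origin-symmetry. Your bookkeeping of the constants matches the stated inequality, so nothing is missing.
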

\begin{proof}
When $n=2$, since $S^{n-1}$ is one dimensional and $dS_K = \det(D^2 h_K)$, we see from (\ref{eq:RKL}) that:
\[
R_K(L) = \frac{1}{2} \int_{S^1} (h_K (h_L/h_K)')^2 d\theta . 
\]
To obtain the first estimate above, we applying Cauchy--Schwarz in the following form:
\[
R_K(L) \geq \frac{1}{2} \frac{\brac{\int_{S^1} \abs{(h_L/h_K)'} d\theta}^2}{\int_{S^1} 1/h_K^2 d\theta} .
\]
By origin-symmetry, it is clear that the function $h_L / h_K$ changes its value from $1/r$ to $1/R$ to $1/r$ in half a revolution of $S^1$, and similarly for the second half, and so:
\[
\int_{S^1} \abs{(h_L/h_K)'} d\theta \geq 4 \brac{\frac{1}{r} - \frac{1}{R}} .
\]
On the other hand, since $h_K(\theta) = \norm{\theta}_{K^{\circ}}$, it follows by integration in polar coordinates that:
\[
\int_{S^1} 1/h_K^2 d\theta = 2 V(K^{\circ}) ,
\]
yielding the first estimate. The second estimate is established similarly by applying Cauchy--Schwarz in the following form:
\[
R_K(L) = \frac{1}{2} \int_{S^1} \brac{ h_L \log(h_L/h_K)' }^2 d\theta \geq \frac{1}{2} \frac{\brac{\int_{S^1} \abs{\log (h_L/h_K)'} d\theta}^2}{\int_{S^1} 1/h_L^2 d\theta} .
\]
It is clear that other variations in this spirit are also possible. 
\end{proof}

Plugging the latter estimate on $R_K(L)$ into Corollary \ref{cor:isop-stability}, we deduce:

 \begin{cor}
 For all $K,L \in \K^2_{+,e}$ in the plane, so that $K$ satisfies (\ref{eq:pbm}) for some $p \leq 0$, we have:
 \[  \frac{P_L(K)^2}{V(K)}\ge   4 V(L) + \frac{2 (1-p)}{2-p}\max \brac{\frac{8}{V(K^{\circ})} \brac{\frac{1}{r} - \frac{1}{R}}^2 , \frac{8}{V(L^{\circ})}  \log^2 \Bigl( \frac{R}{r}\Bigr) }
 \]  \end{cor}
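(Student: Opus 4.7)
The plan is essentially to assemble the two immediately preceding results, with no new analytic input required: Corollary \ref{cor:isop-stability} converts the local $p$-BM hypothesis into a quantitative isoperimetric surplus governed by $R_K(L)$, and the preceding Lemma converts that abstract Dirichlet-form quantity into the explicit two-dimensional geometric expressions appearing in the statement. The hypothesis $p \le 0$ is used only to guarantee that (\ref{eq:pbm}) is a genuine strengthening of the classical case and that the factor $\frac{1-p}{2-p}$ is positive.

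Concretely, I would first apply Corollary \ref{cor:isop-stability} with $n=2$ to obtain
\[
P_L^2(K) \;\ge\; 4\, V(L)\, V(K) \;+\; \frac{4(1-p)}{2-p}\, V(K)\, R_K(L),
\]
and then divide by $V(K)>0$ to get
\[
\frac{P_L(K)^2}{V(K)} \;\ge\; 4\, V(L) \;+\; \frac{4(1-p)}{2-p}\, R_K(L).
\]
Next I would insert the planar lower bound on $R_K(L)$ supplied by the preceding Lemma, namely
\[
R_K(L) \;\ge\; 4\, \max\!\left(\frac{(\frac{1}{r}-\frac{1}{R})^2}{V(K^{\circ})},\ \frac{\log^2(R/r)}{V(L^{\circ})}\right).
\]
Multiplying the coefficients gives $\frac{4(1-p)}{2-p}\cdot 4 = \frac{16(1-p)}{2-p}$ in front of the maximum, which, after writing $16 = 2 \cdot 8$ and absorbing the $8$ into each entry of the max, matches the stated expression.

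There is no real obstacle: the two ingredients have already absorbed all of the work. The only small bookkeeping concern is to verify that the constants line up, which they do (the $8$ inside each entry of the maximum and the coefficient $\frac{2(1-p)}{2-p}$ outside reconstitute the $\frac{16(1-p)}{2-p}$ produced by the chain above). If one wished to polish the statement, one could remove the separate assumption $p \le 0$ (the derivation is valid for any $p\le 1$ with the convention $\frac{1-p}{2-p}\ge 0$), but as written the proof is simply the two-line concatenation described.
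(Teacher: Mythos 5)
Your proposal is correct and follows essentially the same route as the paper: the corollary is exactly the concatenation of Corollary \ref{cor:isop-stability} specialized to $n=2$ (then divided by $V(K)$) with the planar lower bound $R_K(L)\ge 4\max\brac{\frac{(\frac1r-\frac1R)^2}{V(K^{\circ})},\frac{\log^2(R/r)}{V(L^{\circ})}}$ from the preceding lemma, and your bookkeeping $\frac{4(1-p)}{2-p}\cdot 4=\frac{2(1-p)}{2-p}\cdot 8$ matches the stated constant. Your side remark that the argument works for any $p\le 1$ is also accurate, since both ingredients are stated in that generality.
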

 \noindent
  Recall that by the result of B\"{o}r\"{o}czky--Lutwak--Yang--Zhang \cite{BLYZ-logBMInPlane}, (\ref{eq:pbm}) always holds for any $K \in \K^2_{+,e}$ in the plane with $p=0$, and so by approximation, the above estimate clearly holds for all origin-symmetric $K,L$ in the plane with $p=0$. 
 
 \begin{rem}
  The above isoperimetric-deficit estimates should be compared with the classical estimate of Bonnesen (see \cite[p. 324]{Schneider-Book}): 
 \[
 \frac{P_L(K)^2}{V(K)} \ge 4 V(L) + \frac{V(L)^2}{V(K)} (R-r)^2. 
 \]
 Note that in the isotropic case, when $L = B_2^n$, Bonnesen showed that the constant $V(L)^2 = \pi^2$ improves to $4 \pi$. It is not hard to check that Bonnesen's numeric constant is better than the one in our estimates in the isotropic case when $K$ is itself close to a Euclidean ball. On the other hand, in the non-isotropic case, it is not clear to us whether the estimates are comparable. In addition, note that by Theorem \ref{thm:Ball} and Proposition \ref{prop:local-equiv} we know that when $K$ is $\eps$-close to a Euclidean-ball in $C^2$ then (\ref{eq:pbm}) in fact holds with $p \rightarrow -2$ as $\eps \rightarrow 0$, and so our constant $8$ improves to $ \frac{2 (1-p)}{2-p} 8 \rightarrow 12$, which is already better than Bonnesen's anisotropic constant $\pi^2$ (but worse than the isotropic constant $4 \pi$). 
  \end{rem}

To conclude this subsection, we use the well-known equivalence between the family of all anisotropic isoperimetric inequalities for all (say origin-symmetric) convex bodies $K,L$ and (global) Brunn-Minkowski inequalities for such bodies, to obtain stability estimates for the Brunn--Minkowski inequality. 

\begin{cor}[Stability estimate for Brunn--Minkowski inequality] \label{cor:BM-stable}
Let $K,L \in \K^2_{+,e}$ so that $K+L$ satisfies (\ref{eq:pbm}) for some $p \leq 1$. Then:
\[
\frac{V(K + L)^{\frac{1}{n}}}{V(K)^{\frac{1}{n}} + V(L)^{\frac{1}{n}}} - 1 \geq 2 \frac{1-p}{n-p} \frac{R_{K+L}(K)}{V(K+L)} \geq 2 \frac{1-p}{n-1} \frac{\Var_{dV_{K+L}}\brac{\frac{h_K}{h_K + h_L}}}{V(K+L)} .
\]
\end{cor}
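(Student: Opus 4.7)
The plan is to derive the stability of the Brunn--Minkowski inequality from the stability of the anisotropic isoperimetric inequality (Corollary \ref{cor:isop-stability}) applied twice, exactly mirroring the classical derivation of Brunn--Minkowski from the family of anisotropic isoperimetric inequalities. Set $M := K + L$. The starting point is the elementary identity
\[
 n V(M) \;=\; P_K(M) + P_L(M) ,
\]
obtained by expanding $n V(M) = n V(K+L, M, \ldots, M)$ via multilinearity of the mixed volume and recalling the integral representation (\ref{eq:perimeter}).

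The geometric observation that allows us to combine the two isoperimetric stability estimates symmetrically is that
\[
 R_M(K) \;=\; R_M(L) \qquad \text{and} \qquad \Var_{dV_M}(h_K/h_M) \;=\; \Var_{dV_M}(h_L/h_M) .
\]
Both follow from the fact that $h_K/h_M + h_L/h_M \equiv 1$ on $S^{n-1}$: since $L_M$ annihilates constants and is self-adjoint on $L^2(dV_M)$ by Theorem \ref{thm:LK}, the quadratic form $R_M(\cdot) = \int (-L_M z) z \, dV_M$ is invariant under $z \mapsto z + c$ and under $z \mapsto -z$, and hence takes the same value on $z = h_K/h_M$ and $z = h_L/h_M = 1 - h_K/h_M$; the variance enjoys the same two invariances trivially.

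Now invoke Corollary \ref{cor:isop-stability} twice, with body $M$ (which satisfies (\ref{eq:pbm}) by hypothesis) and test bodies $K$ and $L$ respectively. Setting $a_K := n V(K)^{1/n} V(M)^{(n-1)/n}$, $a_L := n V(L)^{1/n} V(M)^{(n-1)/n}$ and $c := n^2 \frac{1-p}{n-p} V(M) R_M(K)$, this produces $P_K(M) \geq \sqrt{a_K^2 + c}$ and $P_L(M) \geq \sqrt{a_L^2 + c}$. Summing and using the elementary Minkowski-type inequality
\[
 \sqrt{a^2+c} + \sqrt{b^2+c} \;\geq\; \sqrt{(a+b)^2 + 4c} \qquad (a,b,c \geq 0) ,
\]
whose verification reduces to $c(a-b)^2 \geq 0$ after squaring twice, we arrive at $n^2 V(M)^2 \geq (a_K+a_L)^2 + 4c$. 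Dividing by $V(M)^{2(n-1)/n}$ and factorizing the resulting difference of squares as
\[
 V(M)^{2/n} - (V(K)^{1/n}+V(L)^{1/n})^2 = (V(M)^{1/n} - V(K)^{1/n} - V(L)^{1/n})(V(M)^{1/n} + V(K)^{1/n}+V(L)^{1/n}) ,
\]
and then applying the \emph{classical} Brunn--Minkowski inequality $V(M)^{1/n} \geq V(K)^{1/n}+V(L)^{1/n}$ twice -- to bound the second factor above by $2 V(M)^{1/n}$, and to bound $V(K)^{1/n}+V(L)^{1/n} \leq V(M)^{1/n}$ after dividing through -- yields the first asserted inequality. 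The second inequality follows at once by plugging in the variance lower bound $R_M(K) \geq \frac{n-p}{n-1} \Var_{dV_M}(h_K/h_M)$ from the second part of Theorem \ref{thm:Mink-stability}.

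The main obstacle is identifying the correct way to combine the two isoperimetric stability estimates: a crude addition via $\sqrt{A}+\sqrt{B} \geq \sqrt{A+B}$ loses a factor of $2$ in the deficit term and fails to produce the clean constant $2\frac{1-p}{n-p}$. The sharper inequality $\sqrt{a^2+c}+\sqrt{b^2+c} \geq \sqrt{(a+b)^2+4c}$, which is nothing but the Euclidean triangle inequality applied to the planar vectors $(a,\sqrt{c})$ and $(b,\sqrt{c})$, is precisely what is needed; it is applicable only because the two deficit contributions coincide, which is exactly the content of the symmetry $R_M(K) = R_M(L)$.
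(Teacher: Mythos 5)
Your proof is correct and follows essentially the same route as the paper: two applications of Corollary \ref{cor:isop-stability} to the pairs $(K+L,K)$ and $(K+L,L)$, summed via $P_K(K+L)+P_L(K+L)=nV(K+L)$ and combined using the symmetry $R_{K+L}(K)=R_{K+L}(L)$. The only difference is the finishing algebra: where you use the planar triangle inequality $\sqrt{a^2+c}+\sqrt{b^2+c}\ge\sqrt{(a+b)^2+4c}$ followed by factoring a difference of squares and two appeals to the classical Brunn--Minkowski inequality, the paper applies convexity of $x\mapsto\sqrt{1+x^2}$ to reach the self-referential bound $M\ge\sqrt{1+4BM^2}$ for the normalized volume ratio $M$, which it solves to get $M\ge(1-4B)^{-1/2}\ge 1+2B$ --- an equivalent (in fact marginally stronger) conclusion obtained without invoking classical Brunn--Minkowski at that stage.
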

\begin{rem}
Note that both expressions on the right-hand-side above are symmetric in $K$ and $L$ since $\frac{h_L}{h_K + h_L} = 1 - \frac{h_K}{h_K + h_L}$. 
\end{rem}
\begin{proof}[Proof of Corollary \ref{cor:BM-stable}]
By Corollary \ref{cor:isop-stability} applied to the pair $K+L$ and $Q$ for $Q \in \K^2_{+,e}$, we have:
\[
P_Q(K+L) \geq n V(Q)^{\frac{1}{n}} V(K+L)^{\frac{n-1}{n}} \sqrt{1 + B_{K+L}(Q) \brac{\frac{V(K+L)}{V(Q)}}^{\frac{2}{n}} }
\]
where we denote for brevity $B_{K+L}(Q) := \frac{1-p}{n-p} \frac{R_{K+L}(Q)}{V(K+L)}$. Applying the above inequality to $Q=K$, $Q=L$, and summing, since $P_{K}(K+L) + P_{L}(K+L) = P_{K+L}(K+L) = n V(K+L)$, we deduce:
\begin{align*}
\frac{V(K + L)^{\frac{1}{n}}}{V(K)^{\frac{1}{n}} + V(L)^{\frac{1}{n}}} & \geq  \alpha \sqrt{1 + B_{K+L}(K) \brac{\frac{V(K+L)}{V(K)}}^{\frac{2}{n}} } \\
& + (1-\alpha) \sqrt{1 + B_{K+L}(L) \brac{\frac{V(K+L)}{V(L)}}^{\frac{2}{n}} } ,
\end{align*}
with:
\[
\alpha := \frac{V(K)^{\frac{1}{n}}}{V(K)^{\frac{1}{n}} + V(L)^{\frac{1}{n}}} .
\]
Denote the expression on the left-hand-side above by $M$, and recall that $B_{K+L}(K) = B_{K+L}(L)$. By convexity of the function $x \mapsto \sqrt{1 + x^2}$, we conclude that:
\begin{align*}
M & \geq \sqrt{1 + B_{K+L}(K) \brac{\alpha \brac{\frac{V(K+L)}{V(K)}}^{\frac{1}{n}} + (1-\alpha) \brac{\frac{V(K+L)}{V(L)}}^{\frac{1}{n}}}^2} \\
& = \sqrt{1 + 4 B_{K+L}(K) M^2 } .
\end{align*}
Consequently, we see that $4 B_{K+L}(K) < 1$ and:
\[
M \geq \frac{1}{\sqrt{1 - 4 B_{K+L}(K)}} \geq 1 + 2 B_{K+L}(K) ,
\]
verifying the first asserted inequality. As usual, the second inequality follows immediately by (\ref{eq:pbm}). 
\end{proof}
 
  \subsection{Improved stability estimates for all convex bodies with respect to asymmetry}
  
  In this subsection, $K$ and $L$ are only assumed to be two convex bodies in $\Real^n$ (no origin-symmetry is required). 
The anisotropic isoperimetric deficit of $K$ with respect to $L$ is defined as:
\[
\delta(K,L) := \frac{P_{L}(K) }{n V(L)^{\frac{1}{n}} V(K)^{\frac{n-1}{n}}}  -1 .
\]  
The Brunn--Minkowski deficit of $K$ and $L$ is defined as:
\[
\beta(K,L) := \frac{V(K + L)^{\frac{1}{n}}}{V(K)^{\frac{1}{n}} + V(L)^{\frac{1}{n}}} - 1 . 
\]
The relative asymmetry of $K$ with respect to $L$ is defined as:
\[
A(K,L) := \inf_{x_0 \in \Real^n} \frac{V(K \triangle (x_0 + r L))}{V(K)},
\]
where $r > 0$ is such that $V(r L) = V(K)$. The scaling factor of $K$ and $L$ is then defined as:
\[
\sigma(K,L) := \max\brac{r , \frac{1}{r}} = \max \brac{ \frac{V(K)^{\frac{1}{n}}}{V(L)^{\frac{1}{n}}} ,  \frac{V(L)^{\frac{1}{n}}}{V(K)^{\frac{1}{n}}} } .
\]

The classical anisotropic isoperimetric inequality and Brunn--Minkowski inequality state that $\delta(K,L) \geq 0$ and $\beta(K,L) \geq 0$, respectively. It is also known that equality in both cases occurs if and only if $K = x_0 + r L$ for some $r  > 0$ and $x_0 \in \Real^n$, i.e. when $A(K,L) = 0$. A natural question is then whether one can obtain stable versions of these inequalities, lower bounding the deficits $\delta(K,L)$ and $\beta(K,L)$ by some function of $A(K,L)$.

In a groundbreaking work, it was shown by N.~Fusco, F.~Maggi and A.~Pratelli \cite{FuscoMaggiPratelli-SharpIsop} in the isotropic case (when $L=B_2^n$), that for all Borel sets $K$ in $\Real^n$ of finite perimeter:
\begin{equation} \label{eq:delta-A}
\delta(K,L) \geq \frac{1}{C(n)^2} A(K,L)^2 ,
\end{equation}
for some constant $C(n)$ depending on the dimension $n$. Various other estimates on $\delta(K,L)$ as a function of $A(K,L)$ had been previously known (see \cite[p. 326]{Schneider-Book}, \cite{FuscoMaggiPratelli-SharpIsop} and the references therein), but that was the first time the best-possible quadratic dependence on $A(K,L)$ was obtained. The approach in \cite{FuscoMaggiPratelli-SharpIsop} was based on a careful analysis of symmetrization of $K$, and thus confined to the isotropic case $L=B_2^n$. In another major milestone \cite{FMP-Inventiones}, it was shown by A.~Figalli, Maggi and Pratelli that (\ref{eq:delta-A}) holds for all convex bodies $L$ and sets of finite perimeter $K$ with $C(n)$ of the order of $n^{8.5}$. The approach in \cite{FMP-Inventiones} was based on M.~Gromov's optimal-transport proof of the anisotropic isoperimetric inequality \cite{Milman-Schechtman-Book}, which was itself inspired by Kn\"{o}the's proof of the Brunn--Minkowski inequality \cite{KnotheMap}. As already mentioned, it is well-known that the family of anisotropic isoperimetric inequalities for all convex $K,L$ is equivalent to the Brunn--Minkowski inequality for all convex $K,L$; consequently, Figalli--Maggi--Pratelli also obtained in \cite{FMP-Inventiones} the following stable version of the Brunn--Minkowski inequality for convex bodies $K,L$:
\begin{equation} \label{eq:beta-A}
\beta(K,L) \geq \frac{1}{C(n)^2} \frac{A(K,L)^2}{\sigma(K,L)} . 
\end{equation}
 Soon after, Figalli--Maggi--Pratelli gave a second simpler proof of (\ref{eq:beta-A}) (and hence of (\ref{eq:delta-A})) in \cite{FMP-StableBM} for all convex bodies $K,L$, which however yielded an exponential dependence of $C(n)$ on $n$. By carefully refining their argument, A.~Segal was able in \cite{Segal-StableBM} to improve their estimate (in both (\ref{eq:delta-A}) and (\ref{eq:beta-A}) for all convex bodies $K,L$) to $C(n)$ of the order of $n^{3.5}$ (improving further to $n^3$ in (\ref{eq:delta-A}) when $K$ is origin-symmetric, and in (\ref{eq:beta-A}) when both $K$ and $L$ are origin-symmetric). An improvement over Segal's estimates by a factor of $\sqrt{n}$ was claimed by Harutyunyan in \cite{Harutyunyan-Error}, but unfortunately the proof contains a mistake (formula (3.18) in \cite{Harutyunyan-Error} is off by a factor of $\sqrt{n}$), a correction of which precisely cancels out the claimed improvement. To the best of our knowledge, prior to our own results, the best known estimate on $C(n)$ was due to Segal. 

\medskip

In this subsection, we improve the best-known dependence on $C(n)$ further in both (\ref{eq:delta-A}) and (\ref{eq:beta-A}) for all convex bodies (regardless of origin-symmetry) as follows:
\begin{thm} \label{thm:improved-stability}
The stable versions of the anisotropic isoperimetric inequality (\ref{eq:delta-A}) and Brunn--Minkowski inequality (\ref{eq:beta-A}) hold with $C(n) \leq C n^{\frac{11}{4}}$ for all convex bodies $K,L$ in $\Real^n$. 
\end{thm}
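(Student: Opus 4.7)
The plan is to revisit Segal's argument from \cite{Segal-StableBM}, which yields (\ref{eq:delta-A}) and (\ref{eq:beta-A}) for all convex bodies with $C(n) \leq C n^{7/2}$, and to substitute, at the single analytic step where a Kannan--Lov\'asz--Simonovits-type estimate enters quantitatively, the recent Lee--Vempala bound on the Poincar\'e constant of isotropic convex bodies \cite{LeeVempala-KLS} -- a bound which is already central to this paper, see equation (\ref{eq:LV}) and Corollary~\ref{cor:DK}. The overall scheme remains Segal's: first reduce by affine invariance to the case where the dilate $rL$ with $V(rL)=V(K)$ is in an appropriate canonical position with respect to $K$ and $K$ itself is in isotropic position, so that $c\sqrt{n}\, B_2^n \subset K \subset C\sqrt{n}\, B_2^n$; next use a Kn\"othe (or Brenier) transport map $T : K \to rL$ to rewrite the Brunn--Minkowski/anisotropic isoperimetric deficit as a quadratic-in-$(T-\mathrm{id})$ expression on $K$; and finally bound this quadratic term from below by the asymmetry $A(K,L)$ via a trace-type Poincar\'e inequality on $K$.

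The concrete step where I would diverge from Segal is precisely this trace/Poincar\'e step, in which the Poincar\'e constant $C_{\mathrm{Poin}}(K)$ enters Segal's final estimate with total multiplicative exponent three (two copies arising in the quadratic gradient bound on $|T-\mathrm{id}|$ and one more from estimating a boundary integral by $\mathrm{div}$-integration, exactly as in the proof of Theorem~\ref{thm:DK}). With the classical KLS estimate $C_{\mathrm{Poin}}(K) \lesssim \sqrt{n}$, combined with the isotropic diameter bound $R \lesssim \sqrt{n}$ and inradius bound $r \gtrsim 1$, Segal's calculation produces exactly $C(n) \lesssim n^{7/2}$. Substituting the Lee--Vempala bound $C_{\mathrm{Poin}}(K) \lesssim n^{1/4}$ saves a factor $(n^{1/2}/n^{1/4})^{3} = n^{3/4}$, upgrading $n^{7/2}$ to $n^{11/4}$. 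Once (\ref{eq:beta-A}) is obtained with the improved constant, (\ref{eq:delta-A}) follows by the standard equivalence between the two families of inequalities via the perimeter formula (\ref{eq:perimeter}), exactly in the spirit of Corollary~\ref{cor:BM-stable}.

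The main obstacle will be a careful bookkeeping audit of Segal's proof to verify two things: first, that $C_{\mathrm{Poin}}(K)$ indeed enters with the exponent three I have claimed, and that no other hidden application of the weaker KLS bound is lurking in any of the Cauchy--Schwarz, symmetrization, or localization steps, since any such hidden use would eat into the announced net saving of $n^{3/4}$; and second, that the preliminary reduction to isotropic position does not alter the asymmetry $A(K,L)$ or the scaling factor $\sigma(K,L)$ beyond universal constants -- this second point is essentially routine, since both quantities are affine invariants once one replaces $L$ by its matching affine image, but it must be carried out explicitly to ensure that the improved constant $C n^{11/4}$ survives the reduction. Once these two audits are completed, the theorem follows.
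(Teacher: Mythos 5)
There is a genuine gap: your bookkeeping misidentifies where the spectral constant enters, and consequently the proposed ``substitution'' has nothing to substitute into. Segal's proof (following Figalli--Maggi--Pratelli) does not use the Poincar\'e constant at all: the only trace-type ingredient is the purely geometric inequality of Lemma \ref{lem:FMP}, whose constant is $\frac{\sqrt{2}}{\log 2}\frac{n R(K)}{r(K)}$, and Segal's exponent $n^{3.5}$ comes from $n^{3/2}$ (stable AM--GM, Cauchy--Schwarz and the $\ell_2^n/\ell_1^n$ passage, as in (\ref{eq:Segal1})--(\ref{eq:Segal2})) times $n R/r\lesssim n^{2}$ in John position. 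In particular, your claim that $C_{\mathrm{Poin}}(K)$ appears with total exponent three, so that Lee--Vempala saves $(n^{1/2}/n^{1/4})^{3}=n^{3/4}$, does not correspond to any step of that argument; the matching final exponent $11/4$ is a numerical coincidence. Moreover, merely moving to isotropic position and invoking Lee--Vempala cannot improve Segal's bound, because in isotropic position $nR/r\approx n^{2}$ as well (the correct radii are $r\gtrsim 1$ and $R\lesssim n$, see (\ref{eq:FMP-isotropic}); your claimed sandwich $c\sqrt{n}B_2^n\subset K\subset C\sqrt{n}B_2^n$ and the bound $R\lesssim\sqrt{n}$ are false for general convex bodies -- e.g. an isotropic cross-polytope has outradius of order $n$).

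The missing idea is the new trace inequality that the paper proves (Lemma \ref{lem:improved-FMP}): via the divergence/integration-by-parts trick of Theorem \ref{thm:DK} one replaces the FMP constant $\frac{nR}{r}$ by $\frac{n\,C_{Che}(K)+R(K)}{r(K)}$, i.e.\ the Cheeger (equivalently Poincar\'e, up to universal constants) constant enters \emph{linearly}, multiplied by $n$. Only then does isotropic position pay off: with $r\gtrsim 1$, $R\lesssim n$ and $C_{Che}(K)\lesssim n^{1/4}$ from (\ref{eq:FMP-LV}), the trace constant is $\lesssim n^{5/4}$, and Segal's chain $A(K,L)\lesssim n^{3/2}\cdot\frac{nC_{Che}+R}{r}\cdot\sqrt{\delta(K,L)}$ gives $C(n)\lesssim n^{11/4}$ (the Brunn--Minkowski form (\ref{eq:beta-A}) then follows from (\ref{eq:delta-A}) by the standard equivalence, the reverse order from what you propose, though that is immaterial). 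You do gesture at a divergence-type boundary estimate as one of your three ``copies'' of the Poincar\'e constant, but you treat it as already present in Segal's argument rather than as a new lemma to be formulated and proved, and your exponent-three audit would not survive the verification step you yourself flag; as written, the proposal does not yield the theorem.
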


All the proofs in \cite{FMP-StableBM,Segal-StableBM,Harutyunyan-Error} employ the following so-called ``Poincar\'e-type trace inequality" proved by Figalli--Maggi--Pratelli in \cite{FMP-StableBM}. We prefer referring to it as a ``Cheeger-type boundary inequality", since it involves $L^1$ rather than $L^2$ norms. 

\begin{lem}[Figalli--Maggi--Pratelli] \label{lem:FMP}
Let $K$ be a convex body in $\Real^n$ with:
\[
r(K) B_2^n \subset K \subset R(K) B_2^n .
\]
Then:
\[
\inf_{c \in \Real} \int_{\partial K} \abs{f(x) - c} dx \leq \frac{\sqrt{2}}{\log 2} \frac{n R(K)}{r(K)} \int_K \abs{\nabla f} dx \;\;\; \forall f \in C^1(K) . 
\]
\end{lem}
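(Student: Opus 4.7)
The plan is to reduce the stated trace inequality to a geometric Cheeger-type isoperimetric statement about subsets of $K$, and then to prove that statement by foliating $K$ with radial segments emanating from an interior ball.

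For the reduction, I would use the coarea formula together with layer-cake for the boundary. For any $c \in \mathbb{R}$,
\[
\int_{\partial K} |f - c|\,dH^{n-1} = \int_{c}^{\infty} H^{n-1}(\{f>t\}\cap \partial K)\,dt + \int_{-\infty}^{c} H^{n-1}(\{f\le t\}\cap \partial K)\,dt,
\]
which is minimized when $c$ is a median of $f|_{\partial K}$ with respect to $H^{n-1}$, giving
\[
\inf_{c}\int_{\partial K}|f-c|\,dH^{n-1} \le \int_{\mathbb{R}}\min\bigl(H^{n-1}(\{f>t\}\cap\partial K),\ H^{n-1}(\{f\le t\}\cap \partial K)\bigr)\,dt,
\]
while $\int_K |\nabla f|\,dx = \int_{\mathbb{R}} H^{n-1}(\{f=t\}\cap K)\,dt$. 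Hence the lemma follows, applied to the level sets $E = \{f>t\}$, from the pointwise relative-isoperimetric inequality
\[
\min\bigl(H^{n-1}(E\cap\partial K),\ H^{n-1}(\partial K\setminus E)\bigr) \le \frac{\sqrt{2}}{\log 2}\,\frac{nR}{r}\, H^{n-1}(\partial^{*} E\cap \mathrm{int}\,K) \quad (\star)
\]
for all sets $E\subset K$ of finite perimeter in $K$.

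For $(\star)$, I would exploit $rB_2^n\subset K\subset RB_2^n$. Assume (after an unavoidable translation absorbed into $R$) that $rB_2^n$ is centered at $0$, so each radial ray from $0$ to $\partial K$ has length $\rho_K(\theta)\in[r,R]$. Parameterize $K$ as $\Phi(s,\theta)=s\rho_K(\theta)\theta$ on $[0,1]\times S^{n-1}$, with Jacobian $s^{n-1}\rho_K(\theta)^n$, and note that the outer unit normal $\nu_{\partial K}$ at $\rho_K(\theta)\theta$ satisfies $\langle \nu_{\partial K},\theta\rangle \ge r/\rho_K(\theta)\ge r/R$, so the surface element on $\partial K$ is bounded above by $(R/r)\rho_K(\theta)^{n-1}d\theta$. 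Along each ray, $E$ decomposes into a union of intervals, and $H^{n-1}(\partial^{*}E\cap \mathrm{int}\,K)$ picks up the endpoint contributions weighted by $s^{n-1}\rho_K^{n-1}|\cos\text{(angle)}|^{-1}$, while the boundary trace at $s=1$ is captured by $\mathbf{1}_{E}(\Phi(1,\theta))$. One now introduces an exponential reweighting $w(s) = e^{\alpha s}$ with $\alpha$ tuned to $\log 2$: on each ray, the fundamental theorem of calculus converts the boundary value at $s=1$ into an integral over interior crossings of $\partial^{*}E$, and the $\min(\cdot,\cdot)$ on the left of $(\star)$ allows one to truncate rays once they encounter the opposite set $K\setminus E$, ensuring the weight doubles at most once. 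Optimizing $\alpha$ gives the constant $\sqrt{2}/\log 2$, while the factor $nR/r$ is the product of the Jacobian comparison $\sup s^{n-1}\rho_K^n / \rho_K^{n-1} \le R\cdot n^{-1}$ (after integration in $s$) and the normal bound $\langle\nu,\theta\rangle^{-1}\le R/r$.

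The principal obstacle is the simultaneous control of the two boundary pieces encoded in the $\min(\cdot,\cdot)$ on the left of $(\star)$: without it the inequality is false (take $E=K$), and preserving it through the foliation argument forces the doubling-type truncation mechanism that produces the $\log 2$. Once the one-ray reduction is effected, the remaining estimate is an elementary one-dimensional calculus lemma about functions of bounded variation on $[0,1]$ whose total variation controls the ``minority'' trace at the endpoint through an exponential-weight optimization; the multi-dimensional bookkeeping, specifically aligning the radial Jacobian with the surface area element on $\partial K$ via the angle $\langle\nu_{\partial K},\theta\rangle$, is where the geometric constant $nR/r$ is accumulated.
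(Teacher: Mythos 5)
Your reduction of the trace inequality to the set-level statement $(\star)$ --- layer cake on $\partial K$ around a boundary median, combined with the coarea formula in $K$ --- is correct and is indeed the standard first move. The gap is in the proof of $(\star)$ itself. A ray-by-ray argument that recovers the boundary trace from crossings of $\partial^{*}E$ along radial segments cannot work, because perimeter whose normal is orthogonal to the radial direction is invisible to such crossings. Concretely, take $E = K \cap \{x_n > 0\}$ with the inball centered at the origin: for a.e.\ direction $\theta$ the whole segment from $0$ to $\rho_K(\theta)\theta$ lies entirely in $E$ or entirely in $K\setminus E$, so no ray ever ``encounters the opposite set''; your truncation/exponential-weight mechanism then produces $0$ on the right-hand side of $(\star)$, while $\min\bigl(\mathcal{H}^{n-1}(E\cap\partial K),\,\mathcal{H}^{n-1}(\partial K\setminus E)\bigr)$ is of order $\mathcal{H}^{n-1}(\partial K)/2$. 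The relative perimeter in this example is the equatorial cross-section, which is tangent to the radial foliation and is not recovered by integrating crossing numbers over $\theta$. The constant bookkeeping also does not close: the product of your claimed Jacobian factor $\le R/n$ and angle factor $\le R/r$ is $R^{2}/(nr)$, not $nR/r$, and the tension between ``$\alpha$ tuned to $\log 2$'' and ``optimizing $\alpha$ gives $\sqrt{2}/\log 2$'' suggests the constants are being matched to the target rather than derived.

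What is missing is a genuinely global interior ingredient, and that is exactly where the $\log 2$ really comes from. Note that the paper does not prove Lemma \ref{lem:FMP} at all --- it quotes it from \cite{FMP-StableBM}; the only proof it gives in this circle of ideas is of the improved variant, Lemma \ref{lem:improved-FMP}, via the convexity/divergence identity $\int_{\partial K}\abs{f-m_f}\scalar{x,\nu}\,dx = \int_K \mathrm{div}(\abs{f-m_f}\,x)\,dx$, which transfers the boundary integral to $n\int_K \abs{f-m_f}\,dx + \int_K \scalar{\nabla\abs{f-m_f},x}\,dx$ and is then closed by an \emph{interior} Cheeger-type inequality for convex bodies (e.g.\ $C_{Che}(K)\le \frac{2}{\log 2}R(K)$, of Kannan--Lov\'asz--Simonovits type). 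The original Figalli--Maggi--Pratelli argument likewise rests on such an interior relative isoperimetric estimate, whose own proof requires a localization (needle) decomposition adapted to $E$, not a fixed radial foliation --- indeed even the interior Cheeger inequality fails for a per-ray crossing argument, by the same half-body example. Your sketch replaces this interior step by a one-dimensional calculus lemma along rays, and that is precisely the step that breaks; to repair the proposal you would need an interior Cheeger/localization input combined with a divergence- or dilation-type transfer to the boundary, at which point you are essentially reproducing the known proofs.
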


Our improved estimates in Theorem \ref{thm:improved-stability} are based on the following improvement of Lemma \ref{lem:FMP}. We denote by $C_{Che}(K)$ the (reciprocal of the) Cheeger constant of $K$, namely the best constant in the following inequality:
\[
\int_K \abs{f(x) - m_f} dx \leq C_{Che}(K) \int_K \abs{\nabla f} dx \;\;\; \forall f \in C^1(K) ,
\]
where $m_f$ denotes a median value of the law of $f$ under the Lebesgue measure on $K$. 

\begin{lem} \label{lem:improved-FMP}
With the same assumptions as in Lemma \ref{lem:FMP}:
\[
\int_{\partial K} \abs{f(x) - m_f} dx \leq \frac{n C_{Che}(K) + R(K)}{r(K)} \int_K \abs{\nabla f} dx \;\;\; \forall f \in C^1(K) . 
\]
\end{lem}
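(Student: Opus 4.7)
The strategy is to employ a simple vector-field/divergence argument: rather than invoking the full machinery used in \cite{FMP-StableBM} (which exploited a one-dimensional localization of the bound via a parameter that ultimately introduces the $\sqrt{2}/\log 2$ constant and the $R/r$ ratio in a somewhat wasteful manner), I would push the standard ``boundary-to-interior'' trick as far as possible, reducing the boundary integral to a pair of interior integrals controlled by the Cheeger constant and the circumradius respectively.

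After translation, I may assume the concentric balls are centered at the origin, so that $r B_2^n \subset K \subset R B_2^n$ with $r = r(K)$, $R = R(K)$. The key observation is that for any $x \in \partial K$ one has $\scalar{x,\nu_{\partial K}(x)} = h_K(\nu_{\partial K}(x)) \geq r$ and $\abs{x} \leq R$. Applying the divergence theorem (in its Lipschitz form) to the vector field $\abs{f(x) - m_f}\, x$, we obtain
\[
r \int_{\partial K} \abs{f - m_f}\, dx \leq \int_{\partial K} \abs{f - m_f} \scalar{x,\nu_{\partial K}}\, dx = \int_K \mathrm{div}\brac{\abs{f - m_f}\, x}\, dx .
\]
Expanding, $\mathrm{div}(\abs{f - m_f}\, x) = n \abs{f - m_f} + \scalar{\nabla \abs{f - m_f}, x}$, and the pointwise bounds $\abs{\nabla \abs{f - m_f}} \leq \abs{\nabla f}$ a.e.\ (since $f \in C^1$) and $\abs{x} \leq R$ yield
\[
r \int_{\partial K} \abs{f - m_f}\, dx \leq n \int_K \abs{f - m_f}\, dx + R \int_K \abs{\nabla f}\, dx .
\]

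Finally, the definition of the Cheeger constant gives $\int_K \abs{f - m_f}\, dx \leq C_{Che}(K) \int_K \abs{\nabla f}\, dx$ (with $m_f$ a median of $f$ under Lebesgue measure on $K$, which is precisely the cancellation constant for which Cheeger's inequality is formulated). Substituting this into the previous display and dividing by $r$ yields the asserted estimate. I expect no significant obstacles: the only mildly delicate point is justifying the divergence identity for the Lipschitz integrand $\abs{f - m_f}$ on a convex (hence Lipschitz) domain, which is standard and may alternatively be verified by splitting $K$ into $\set{f > m_f}$ and $\set{f < m_f}$ and integrating by parts on each piece after a routine smoothing of the absolute value.
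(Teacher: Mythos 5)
Your proposal is correct and follows essentially the same argument as the paper: bound $\scalar{x,\nu_{\partial K}} \geq r(K)$ on $\partial K$, apply the divergence theorem to $\abs{f-m_f}\,x$, use $\abs{x}\leq R(K)$ together with $\abs{\nabla\abs{f-m_f}}\leq\abs{\nabla f}$ a.e., and finish with the Cheeger inequality (formulated with the median, exactly as in the paper's definition of $C_{Che}(K)$). The regularity caveat you raise about the Lipschitz integrand is the same point the paper handles by noting $\int_{\set{f=m_f}}\abs{\nabla\abs{f-m_f}}\,dx=0$.
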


\begin{rem}
By a result of Kannan--Lov\'asz--Simonovits \cite{KLS} (see also Lov\'asz--Simonovits \cite{LSLocalizationLemma} for a slightly worse numeric constant), it is known that $C_{Che}(K) \leq \frac{2}{\log 2} R(K)$ for any convex body $K$, and so the estimate in Lemma \ref{lem:improved-FMP} is always better than the one in Lemma \ref{lem:FMP}, up to numeric constants. Our improvement is based on the fact that $C_{Che}(K)$ can be significantly smaller than $R(K)$ when an appropriate affine transformation is applied to $K$. 
\end{rem}

\noindent 
Our proof of Lemma \ref{lem:improved-FMP} is based on the same idea as in the proof of Theorem \ref{thm:DK}. 

\begin{proof}[Proof of Lemma \ref{lem:improved-FMP}]
Using convexity and integrating-by-parts:
\begin{align*}
\int_{\partial K} \abs{f - m_f} dx & \leq \frac{1}{r(K)} \int_{\partial K} \abs{f - m_f} \scalar{x,\nu_{\partial K}} dx \\
& =   \frac{1}{r(K)} \int_K \text{div}(\abs{f - m_f} x) dx \\
& = \frac{1}{r(K)} \brac{ n \int_K \abs{f - m_f} dx + \int_K \scalar{\nabla |f - m_f| , x} dx} \\
& \leq  \frac{n C_{Che}(K) + R(K)}{r(K)} \int_K \abs{\nabla f} dx . 
\end{align*}
Note that used that $\abs{\nabla |f - m_f|} = \abs{\nabla (f - m_f)} = \abs{\nabla f}$ on $\set{f \neq m_f}$ and the well-known $\int_{\set{f = c}} \abs{\nabla |f - c|} dx =0$, which is valid for all (say) locally Lipschitz functions $f$. 
\end{proof}

The improvement over the result of \cite{Segal-StableBM} is now achieved by substituting the application of Lemma \ref{lem:FMP} by that of Lemma \ref{lem:improved-FMP}. Observe that the desired estimates (\ref{eq:delta-A}) and (\ref{eq:beta-A}) are both invariant under a simultaneous application of a linear transformation to both $K$ and $L$, as well as individually translating $K$ or $L$ (note that $P_L(K)$ is indeed invariant under translation of $L$). Consequently, we may always assume that $K$ is in a desirable affine position. In \cite{FMP-StableBM,Segal-StableBM,Harutyunyan-Error}, the authors employed John's position, which ensures that $R(K)/r(K) \leq n$ for general convex bodies, and $R(K) / r(K) \leq \sqrt{n}$ for origin-symmetric ones. To obtain our improvement, we apply an affine transformation to $K$ so that $K$ is in isotropic position, which was already defined in Subsection \ref{subsec:general-DK}. As explained there, it is guaranteed that when $K$ is in isotropic position then:
\begin{equation} \label{eq:FMP-isotropic}
r(K) \geq \sqrt{\frac{n+2}{n}} ~,~ R(K) \leq \sqrt{(n+2)n} .
\end{equation}
As for the (reciprocal of the) Cheeger constant $C_{Che}(K)$, it is known by results of Maz'ya, Cheeger, Buser and Ledoux (see \cite{LedouxSpectralGapAndGeometry}) that for all convex bodies $K$, $C_{Che}(K)$ is equivalent (up to numeric constants, independent of the dimension) to the Poincar\'e constant $C_{Poin}(K)$, defined in Subsection \ref{subsec:general-DK}. Recall from that subsection that the conjecture of Kannan, Lov\'asz and Simonovits \cite{KLS} predicts that if $K$ is convex and in isotropic position then $C_{Che}(K) \leq C$ for some universal numeric constant $C>0$, independent of the dimension $n$; also recall that the presently best-known estimate for the KLS conjecture is due to Lee and Vempala \cite{LeeVempala-KLS}, who showed that:
\begin{equation} \label{eq:FMP-LV}
C_{Che}(K) \leq C \sqrt[4]{n} 
\end{equation}
for all isotropic convex bodies $K$ in $\Real^n$. This results in a strict improvement in Lemma \ref{lem:improved-FMP} over the estimates in \cite{FMP-StableBM,Segal-StableBM,Harutyunyan-Error}, which employed Lemma \ref{lem:FMP} -- an improvement by an order of $n^{\frac{3}{4}}$ for general convex bodies, and by an order of $n^{\frac{1}{4}}$ for origin-symmetric ones -- thereby  
yielding Theorem \ref{thm:improved-stability}. Note that confirmation of the KLS conjecture would further improve our estimate on $C(n)$ down to an order of $n^{\frac{10}{4}} = n^{2.5}$. 

\medskip

While all other arguments remain unchanged, and the reader can easily reconstruct all the remaining details by following Segal's proof from \cite{Segal-StableBM}, for the reader's convenience, we sketch the most essential steps in the proof.

\begin{proof}[Sketch of proof of Theorem \ref{thm:improved-stability}]
By scaling, one can assume that $K, L$ have equal volumes. Since $A(K,L) \leq 2$, we may assume that $\delta(K,L) \leq 1$ (say), otherwise there is nothing to prove. 
Let $$T(x) = \nabla \Phi(x)$$ be the Brenier optimal-transport map pushing forward the Lebesgue measure on $K$ onto the Lebesgue measure on  $L$, with $\Phi : K \rightarrow \Real$ convex. The classical argument of Kn\"othe and Gromov (see \cite[p. 385]{Segal-StableBM}) then yields:
\[
V(L) \delta(K,L) \geq \int_K \brac{\frac{\text{tr} \; d T(x)}{n} - (\text{det} \; d T(x))^{1/n}} dx  . \]
The usual arithmetic-geometric means (AM-GM) inequality applied to the (positive) eigenvalues of $d T(x) = \nabla^2 \Phi(x) \geq 0$ already shows that the right-hand-side is non-negative, and the goal is to obtain a lower bound depending on $A(K,L)$.

Applying a stable version of the AM-GM inequality and Cauchy--Schwarz, Segal obtains (see \cite[formula (10)]{Segal-StableBM}):
\begin{equation} \label{eq:Segal1}
2n  V(L) \sqrt{\delta(K,L) } \geq \int_{K} \norm{d T(x) - {\rm Id} } dx , 
\end{equation}
where as usual $\norm{\cdot}$ denotes the Hilbert-Schmidt norm. By translating $L$ we may always assume that the median of $T_i$ satisfies $m_{T_i} = 0$ for each $i=1,\ldots,n$. Now applying Lemma \ref{lem:improved-FMP} instead of Lemma \ref{lem:FMP} with $f(x) = T_i(x)$ to each coordinate $i=1,\ldots,n$, and passing back and forth between the $\ell_2^n$ to the $\ell_1^n$ norm (the latter step was not accounted for in \cite{Harutyunyan-Error}), we obtain:
\begin{align}
\nonumber & \int_{K} \norm{d T(x) - {\rm Id} } dx \geq \frac{1}{\sqrt{n}} \int_K \sum_{i=1}^n \abs{\nabla T_i(x) - e_i} dx \\
\label{eq:Segal2} & \geq \frac{1}{\sqrt{n} C_{Che,\partial}(K)} \int_{\partial K} \sum_{i=1}^n \abs{T_i(x) - x_i} dx \geq \frac{1}{\sqrt{n} C_{Che,\partial}(K)} \int_{\partial K} \abs{T(x) - x} dx ,
\end{align}
where:
\[
C_{Che,\partial}(K) := \frac{n C_{Che}(K) + R(K)}{r(K)} . 
\]
While the passage between the $\ell_2^n$ to the $\ell_1^n$ norms seems wasteful, we have not been able to make this step more efficient. 

Now, a beautiful geometric argument of Figalli--Maggi--Pratelli (see \cite[p. 387]{Segal-StableBM}) verifies that (recall $V(K) = V(L)$):
\begin{equation} \label{eq:Segal3}
\frac{1}{2} V(K) A(K,L) \leq \frac{1}{2} V(K \triangle L) =  V(K \setminus L)  \le \int_{\partial K} \abs{T(x) - x} dx .
\end{equation}
Combining (\ref{eq:Segal1}), (\ref{eq:Segal2}) and (\ref{eq:Segal3}), we deduce:
\[
 A(K,L) \leq   4 n^{\frac{3}{2}}  C_{Che, \partial}(K)  \sqrt{\delta(K,L) }.
\]
As already explained above, it remains to translate $K$ and apply a simultaneous linear transformation to both $K$ and $L$, so that $K$ is in isotropic position. The estimates (\ref{eq:FMP-isotropic}) and (\ref{eq:FMP-LV}) then yield:
\[
C_{Che,\partial}(K) \leq C n^{\frac{5}{4}} ,
\]
concluding the proof of Theorem \ref{thm:improved-stability} for the isoperimetric deficit (\ref{eq:delta-A}). The estimate on the Brunn--Minkowski deficit (\ref{eq:beta-A}) follows by the standard equivalence between the family of anisotropic isoperimetric inequalities for all convex bodies $K,L$ and the Brunn--Minkowski inequality for all such pairs, as in the previous subsection (see \cite[p. 203]{FMP-Inventiones}). 
\end{proof}

\bibliographystyle{plain}
\bibliography{../../../ConvexBib}

\end{document}